\DeclareMathOperator*{\argmin}{arg\,min}
\DeclareMathOperator*{\esssinf}{ess\,inf}
\DeclareMathOperator*{\esssup}{ess\,sup}
\newcommand{\norm}[1]{\left\lVert #1 \right\rVert}
\newcommand{\snorm}[1]{\left\lvert #1 \right\rvert}
\newcommand{\dual}[2]{\left\langle #1,#2 \right\rangle}
\newcommand{\dotp}[2]{\left( #1,#2 \right)}
\newcommand{\IR}{\mathbb{R}}
\newcommand{\IC}{\mathbb{C}}
\newcommand{\IN}{\mathbb{N}}
\newcommand{\IT}{\mathbb{T}}
\newcommand{\IV}{\mathbb{V}}
\newcommand{\fh}[1]{{\color{cyan}{#1}}}
\newcommand{\rev}[1]{#1} % arXiv clean: keep text, remove markup
\newcommand\encircle[1]{%
  \tikz[baseline=(X.base)] 
    \node (X) [draw, shape=circle, inner sep=0] {\strut #1};
}
\newacronym{pod}{POD}{Proper Orthogonal Decomposition}
\newacronym{svd}{SVD}{Singular Value Decomposition}
\newacronym[longplural={Reduced Order Models}]{rom}{ROM}{Reduced Order Model}
\newacronym[longplural={Full Order Models}]{fom}{FOM}{Full Order Model}
\newacronym[longplural={Spectral Elements}]{se}{SE}{Spectral Element}
\newacronym{rmsd}{RMSD}{Root-Mean-Square Deviation}
\begin{document}

\markboth{Fernando Henr\'iquez \& Jan S. Hesthaven}{Fast Approximation of Parabolic Problems}

\title{Fast Numerical Approximation of Parabolic Problems Using Model Order Reduction and the Laplace Transform\thanks{Version of \today}}

\author{Fernando Henr\'iquez}

\address{Institute for Analysis and Scientific Computing, TU Wien \\ Wiedner Hauptstra{\ss}e 8-10, A-1040 Wien, Austria\\
fernando.henriquez@asc.tuwien.ac.at}

\author{Jan S. Hesthaven}

\address{Karlsruhe Institute of Technology \\ Kaiserstraße 12, 76131 Karlsruhe, Germany\\
jan.hesthaven@kit.edu}

\maketitle

\begin{abstract}
%\fh{
We introduce a method for the \rev{fast} numerical approximation of linear, second-order
parabolic partial differential equations (PDEs for short) \rev{with time-independent coefficients} based on model order reduction techniques and the Laplace transform. We start by applying \rev{this} transform to the evolution problem, thus yielding a time-independent boundary value problem solely depending on the complex Laplace \rev{variable}. 
In an offline stage, we judiciously sample the Laplace \rev{variable} and numerically solve the corresponding collection of high-fidelity or full-order problems.
Next, we apply a proper orthogonal decomposition (POD) to this collection of solutions in order to obtain a reduced basis in the Laplace domain. We project the linear parabolic problem onto this basis and then, using any suitable time-stepping method, we solve the evolution problem.
A key insight to justify the implementation and analysis of the proposed method
\rev{consists of using Hardy spaces of analytic functions and establishing}, through the Paley-Wiener theorem, an isometry between the solution of the time-dependent problem and its Laplace transform.
As a result, one may conclude that computing a POD with samples taken in the Laplace domain produces an exponentially accurate reduced basis for the time-dependent problem.
Numerical experiments illustrate the performance of the method in terms of accuracy and, in particular, speed-up when compared to the solution obtained by solving the full-order model.
%}
\end{abstract}

\keywords{Reduced order model (ROM), Linear Parabolic Problems, Time-dependent Model Order Reduction, Laplace Transform, Proper Orthogonal Decomposition (POD), Hardy spaces, Paley-Wiener Theorem.}

\ccode{AMS Subject Classification: 65M12; 35K05; 35K20; 93A15; 44A10.}

\section{Introduction}
The fast and efficient solution of \rev{parametric PDEs (pPDEs)}
is an essential task in numerous applications within the fields of science and engineering.
In this \rev{context}, parameters can be used to describe material properties, source terms, domain perturbations,
and initial conditions, among others.
The increasing complexity of existing mathematical models, combined with
the demand for real-time and multi-query computational simulations,
necessitates the development and implementation of fast and efficient algorithms
capable of addressing these challenges.
The numerical approximation of these problems is conducted 
by computing a \emph{high-fidelity} approximation of the \emph{full-order} 
model by means of techniques such as Finite Elements, Finite Volumes,
Finite Differences or spectral methods,
coupled with a time integration \rev{scheme} in the case of evolution problems. However, the repeated computation of these high-fidelity approximations
for new sets of parametric inputs using said methods rapidly becomes unfeasible.

\subsection{Model Order Reduction for Parametric PDEs}
Model order reduction (MOR) encompasses a family of techniques aiming
at reducing the complexity of a certain \rev{class} of parametric problems, such 
as pPDEs. The success of MOR is based on the identification of an intrinsic low-dimensional 
dynamic of the full-order model, even
if the problem at first glance seems high-dimensional in nature.
In other words, the high-fidelity \rev{approximation}
is replaced by one of \rev{considerably} lower computational complexity, referred to
as the low-fidelity approximation, that can, however, 
be rapidly and accurately \rev{evaluated} for different parametric inputs.

Among the many techniques covered by MOR, the RB method stands as one of the most 
commonly used.
The RB method is divided into two distinct phases.
Firstly, in the \emph{offline} phase we compute a collection
of so-called \emph{snapshots} or {high-fidelity} solutions
of the parametrized problem for a number of parametric inputs.
Using these data, one computes a \emph{reduced} basis that
captures the \rev{behavior} of the underlying low-dimensional
structure driving the parametric problem.
Two main approaches have been reported in the literature to tackle
the construction of a reduced basis: Proper Orthogonal Decomposition (POD)\cite{liang2002proper}
and greedy strategies\cite{hesthaven2014efficient,bui-thanh_model_2008,buffa_priori_2012,devore_greedy_2013}.
POD defines \emph{a priori} a set of samples in the parameter space, to then compute the corresponding \emph{snapshots} or
high-fidelity solutions.
Using the SVD of the snapshot matrix, a reduced basis of any desired dimension can be easily and straightforwardly extracted.
On the other hand, greedy strategies aim at carefully \rev{selecting} one snapshot after the other in a serial fashion, at each  
step selecting the high-fidelity solution that improves the most the approximation of the parametric problem's \emph{solution manifold}, \rev{whereas \emph{weak} greedy strategies,
though computationally more efficient, require sharp a posteriori error estimators for \rev{their} implementation.} 
Provided that the high-fidelity problem has an intrinsic low-dimensional structure, and that the constructed reduced
basis is capable of properly \rev{representing this feature}, one only  
needs to solve a problem of dimension much smaller than that of the high-fidelity model
in order to adhere to a target accuracy. This corresponds to the \emph{online} phase of the RB method.

However, we remark that any advantage \rev{that} one  
may obtain by reducing the dimension of the problem might be diluted by the need of assembling
the high-fidelity model before projecting onto the reduced basis space for each parametric input,
\rev{for example, in the presence of non-linear terms.}
To \rev{mitigate} this issue, one needs to resort to techniques such as the
\rev{Empirical Interpolation Method\cite{barrault2004empirical}
and its discrete counterpart\cite{chaturantabut2010nonlinear}.}

There exists a vast literature on the RB method for stationary problems with certified error control.
We refer to Refs. \refcite{hesthaven2016certified,quarteroni2015reduced,prud2002reliable,rozza2014fundamentals} for 
further details and comprehensive reviews on the topic.

\subsection{MOR for Time-Dependent Problems}
For time-dependent problems, a variety of approaches have been proposed.
In Ref.~\refcite{haasdonk2008reduced}, the RB method is applied to linear evolution equations,
which are discretized in space using the Finite Volume Method.
Non-linear MOR based on local reduced-order bases is \rev{discussed} in Ref.~\refcite{lieu2006reduced}.
We also point out developments of recently proposed structure-preserving RB approaches
for Hamiltonian problems\cite{afkham2017structure,hesthaven2022rank,hesthaven2021structure,hesthaven2023adaptive},
and non-intrusive frameworks based on radial basis function interpolation\cite{audouze2013nonintrusive,xiao2017parameterized}.
Recently, \emph{data-driven} approaches have \rev{gained} traction as tractable approaches
to MOR for time-dependent problems. We mention as examples the
dynamic mode decomposition (DMD)\cite{schmid2010dynamic,kutz2016dynamic,duan2023non}, 
and operator learning\cite{peherstorfer2016data,duan2023machine}.
For a comprehensive survey of these \rev{and} other MOR techniques applied to time-dependent problems,
we refer to Ref.~\refcite{Hesthaven2022}.

% \cite{Hesthaven2022,schleus_randomized_2022,baurChapterComparisonMethods2017}, 
In the \rev{aforementioned} approaches, a key part of the algorithm 
consists \rev{of} performing a discretization in time for the computation of the snapshots. 
An alternative approach for constructing a reduced model consists \rev{of} applying
the Laplace transform to the original time-dependent problem, thus yielding a time-independent 
problem \rev{that depends solely} on the Laplace \rev{variable}. For this setting, a reduced basis for the parametric
problem can be constructed in the Laplace domain using available techniques for stationary
problems. This approach has been recently studied in Ref.~\refcite{guglielmi2023model} for parametric, linear
second-order parabolic problems, in which contour deformation techniques are used to compute the inverse
Laplace transform\cite{guglielmi2021pseudospectral}. Furthermore, the idea of using the
Laplace transform to construct a reduced basis has also been pursued in
Ref.~\refcite{huynh_laplace_2011} and Ref.~\refcite{Bigoni2020a}.
In the latter, emphasis is put 
on the wave equation, a problem for which contour deformation techniques fail, and the inverse Laplace
transform is computed using Weeks' method\cite{kuhlman_review_2013}.
Nonetheless, this technique still faces significant issues in terms of computational stability and precision.
Indeed, the computation of the inverse Laplace transform becomes a computational challenge,
especially when dealing with extended time intervals. The present work is driven by this issue
and aims to provide a first step toward a stable and accurate RB method for
parametric, time-dependent problems
using the Laplace transform.

\subsection{Main Contribution}
The primary objective of this work is to introduce a novel fast numerical method,
hereafter referred to as the Laplace Transform \rev{Model Order Reduction} (LT-MOR) method,
designed for the efficient numerical approximation of a specific class of time-evolution problems: \rev{linear, second-order parabolic problems with time-independent coefficients.} 
The \rev{method introduced here} is based on two existing mathematical tools:
The RB method and the Laplace transform.

This \rev{LT-MOR} method, like any reduced-basis scheme, is divided into two main steps

\begin{itemize}
	\item[(1)]
	{\bf Offline.}
	Firstly, we apply the Laplace transform to the 
	time-dependent parabolic PDE, and obtain an elliptic PDE
	depending on the (complex) Laplace \rev{variable}. We solve 
	this problem on a judiciously selected \emph{a priori} set 
	of instances of the complex Laplace \rev{variable}, and using POD we construct a reduced basis tailored to the problem.
	We remark at this point that no discretization in time has
	been performed. 
	\item[(2)]
	{\bf Online.}
	In a second step, we solve the time-evolution problem by projecting the high-fidelity 
	model, i.e.,~the parabolic evolution problem, onto the previously computed reduced space.
	Then, using any suitable time-stepping scheme, we compute the solution of the time-dependent problem.
\end{itemize}
A few remarks are in \rev{order}. 
In this work, we do not \rev{consider a parametric evolution problem}, just the plain parabolic evolution problem with fixed data.
However, once the Laplace transform is applied to the evolution problem we obtain a
\rev{parametric family} of elliptic PDEs, where the parameter corresponds to the
Laplace variable. In our approach, we construct a reduced basis by sampling 
the Laplace \rev{variable}. Then, in the online phase, we project the high-fidelity model
onto this low-dimensional reduced space, and solve the time-evolution problem \rev{using} any suitable
time-stepping scheme.
Observe that, by doing so, only a small number of coefficients need to be updated at each time step,
as opposed to standard time-stepping methods that update all the degrees of freedom
involved in the high-fidelity model. This renders the online step considerably faster, at the 
(hopefully low) price of computing a few high-fidelity solutions in the Laplace domain.

An important question arising from the previous description of the LT-MOR method
is the following: \emph{Why is the reduced basis constructed in the Laplace domain 
able to capture the intrinsic low-dimensional behavior of the parabolic problem?} 
To effectively answer this inquiry, we need to resort to Hardy spaces of analytic functions,
and in particular make use of the so-called Paley-Wiener representation theorem.   
In this work, we not only present the LT-MOR method as an \emph{off-the-shelf} algorithm;
we also provide a rigorous analysis of the proposed method with a particular focus on the following aspects:
\begin{itemize}
	\item[(i)]
	Rigorous convergence analysis of the LT-MOR to the high-fidelity solution.
	\item[(ii)]
	Construction of a precise rule to define the snapshots to be computed in the Laplace domain.
	\item[(iii)]
	As mentioned above, a thorough explanation of why the reduced
	basis constructed in the Laplace domain is suitable for the accurate
	numerical solution of the time-evolution parabolic problem is given
\end{itemize}

There exists a large body of work proposing \rev{approaches} to
\rev{approximate} parabolic problems.
We refer to Ref.~\refcite{thomee2007galerkin}
for a comprehensive survey. Indeed, recently space-time methods have
gained traction as a suitable approach\cite{andreev2013stability,fuhrer2021space,gantner2021further,gantner2023applications,langer2016space}.
We point out that the use of the Laplace transform for parabolic \rev{problems}
has been explored previously, see, e.g.,~Ref.~\refcite[Chapter 20]{thomee2007galerkin}
and Ref.~\refcite{sheen2003parallel}. These methods are based
on contour deformation techniques for the computation of the inverse Laplace 
transform, \rev{which in turn requires precise information of the pseudo-spectrum} 
of the leading elliptic operator involved in the parabolic problem.
% To our knowledge, the closest family of methods to the one proposed in 
% this work 
\rev{To our knowledge, the closest algorithm aiming at approximating the 
Laplace transform or the frequency content of a given
dynamical system comes from the system theoretic and dynamical systems communities\cite{Benner2015,gugercin2008h_2}. 
However, none of them address the fast solution of said parabolic problems by using model order reduction techniques in the Laplace domain as we propose in the present work.}

\subsection{Outline}
This work is structured as follows. In Section~\ref{sec:Problem} we introduce the model problem 
to be considered throughout this work,~i.e. linear, second-order parabolic PDEs
\rev{with time-independent coefficients} in bounded domains. 
In Section~\ref{sec:FRB} we introduce the \rev{Laplace Transform Model Order Reduction} (LT-MOR for short). 
Next, in Section~\ref{sec:Basis} we provide a thorough mathematical analysis of the LT-MOR method.
%A key insight in the presented analysis consists of precisely \rev{understanding} the dependence of the solution
%of the problem in the Laplace domain over the complex Laplace \rev{variable}.
In Section~\ref{sec:computation_rb} we discuss computational aspects concerning the implementation of the LT-MOR method.
Next, in Section~\ref{sec:numerical_results} we present numerical examples portraying the advantages of our method, and 
we conclude this work by providing in Section~\ref{sec:Summary} some final remarks and sketching possible directions of future research.

%%%%%%%%%%%%%%%%%%%%%%%%%%%%%%%%%%
\section{Problem Model} 
\label{sec:Problem}
In this section, we introduce the problem model to be considered in this work.
%%%%%%%%%%%%%%%%%%%%%%%%%%%%%%%%%%
\subsection{Notation and Functional Spaces}\label{sec:notation_functional_spaces}
Let $X,Y$ be real or complex Banach spaces.
We denote by a prime superscript, i.e.,~by $X'$,
the (topological) dual space of $X$, which consists of all bounded linear functionals acting on $X$.
In addition, we denote by $\mathscr{L}(X,Y)$ the Banach space of bounded linear operators
from $X$ into $Y$, and by $\mathscr{L}_{\text{iso}}(X,Y)$ we denote the (open) subspace of
$\mathscr{L}(X,Y)$ of bounded linear operators with a bounded inverse.

Let $\Omega \subset \IR^d$, $d \in \IN$, be a bounded Lipschitz domain with boundary
$\partial \Omega$. 
Let $L^p(\Omega;\mathbb{K})$, $\mathbb{K} \in \{\mathbb{R},\mathbb{C}\}$
and $p \in [1,\infty)$, be the Banach space over the field $\mathbb{K}$ of $p$-integrable functions in $\Omega$, with
the usual extension to $p=\infty$.
In particular, for $p=2$, we have that $L^2(\Omega;\mathbb{K})$ is a Hilbert
space when equipped with inner product $\dotp{\cdot}{\cdot}_{L^2(\Omega;\mathbb{K})}$
and the induced norm $\norm{\cdot}_{L^2(\Omega;\mathbb{K})} = \sqrt{\dotp{\cdot}{\cdot}_{L^2(\Omega;\mathbb{K})}}$, whereas
by $H^k(\Omega;\mathbb{K})$ we refer to the Hilbert space of functions
with $k$-th weak derivatives in $L^2(\Omega;\mathbb{K})$, which as it
is endowed with the standard inner product $(\cdot,\cdot)_{H^k(\Omega;\mathbb{K})}$
and the induced norm $\norm{\cdot}_{H^1(\Omega;\mathbb{K})}$.

For $\mathbb{K} \in \{\mathbb{R},\mathbb{C}\}$, we consider as well the
closed space $H^1_0(\Omega;\mathbb{K})$ of $H^1(\Omega;\mathbb{K})$
with vanishing Dirichlet trace on $\partial \Omega$, and denote by $H^{-1}(\Omega;\mathbb{K})$
its dual with respect to the $L^2(\Omega;\mathbb{K})$-duality pairing.
By identifying the dual space of $L^2(\Omega;\mathbb{K})$ with itself, we get that
$H^1_0(\Omega;\mathbb{K}) \subset L^2(\Omega;\mathbb{K}) \subset H^{-1}(\Omega;\mathbb{K})$
is a Gelfand triple. The duality pairing between $H^1_0(\Omega;\mathbb{K})$
and $H^{-1}(\Omega;\mathbb{K})$ is denoted by $\dual{\cdot}{\cdot}_{H^{-1}(\Omega;\mathbb{K})\times H^{1}_0(\Omega;\mathbb{K})} $.

%We denote by $\mathcal{R}: H^{-1}(\Omega) \rightarrow H^1_0(\Omega)$ Riesz's operator,
%which is an isometric isomorphism and $\norm{\mathcal{R}}_{\mathcal{L}\left(H^{-1}(\Omega;\mathbb{K}), H^1_0(\Omega;\mathbb{K})\right)} =1$.

Poincar\'e's inequality states that there exists $C_P(\Omega)>0$, depending solely on the
domain $\Omega$,
such that for any $H^1_0(\Omega;\mathbb{K})$ it holds
$\norm{u}_{L^2(\Omega;\mathbb{K})} \leq C_P(\Omega) \norm{\nabla u}_{L^2(\Omega;\mathbb{K})}$.
Therefore, $\dotp{u}{v}_{H^1_0(\Omega;\mathbb{K})} = \dotp{\nabla u}{\nabla v}_{L^2(\Omega;\mathbb{K})}$, for all $u,v \in H^1_0(\Omega)$, defines an inner product in $H^1_0(\Omega)$, thus
making $$\norm{u}_{H^1_0(\Omega;\mathbb{K})} \coloneqq \sqrt{(\nabla u,\nabla u)_{L^2(\Omega;\mathbb{K})}}$$
an equivalent norm to $\norm{\cdot}_{H^1(\Omega;\mathbb{K})}$ in $H^1_0(\Omega;\mathbb{K})$.

\subsection{Sobolev Spaces Involving Time}
Given $T>0$ we set $\mathfrak{J}=(0,T)$ and consider
either a complex or real Banach space $\left(X,\norm{\cdot}_{X}\right)$.
For each $r \in \IN_0$, we define 
$H^r(\mathfrak{J};{X})$, $r\in \IN_0$, as the Bochner space of $X$-valued, measurable
functions $u: \mathfrak{J} \rightarrow {X}$ satisfying
\begin{equation}
	\norm{u}_{H^r(\mathfrak{J} ; {X})}
	\coloneqq
	\left(
		\sum_{j=0}^r 
		\int\limits_0^T
		\norm{\partial_t^j u(t)}_{{X}}^2
		\text{d} 
		t
	\right)^{\frac{1}{2}}
	<
	\infty,
\end{equation}
where $\partial^j_t$ signifies the weak time derivative
of order $j\in \IN_0$, and  $\partial_t =  \partial^1_t$.
In particular, if $r=0$ we set $L^2(\mathfrak{J}, {X})\coloneqq
H^0(\mathfrak{J}; {X})$.

In addition, we set $\IR_+ \coloneqq \{t \in \IR : t>0 \}$, and
given $\alpha\geq0$ we denote by $L^2_\alpha(\IR_+;X)$ the Hilbert space
of $X$-valued, measurable functions $u:\IR_+ \rightarrow X$
satisfying
\begin{align}
	\norm{u}_{L^2_\alpha(\IR_+;X)}
	\coloneqq
	\sqrt{
		\dotp{
			u
		}{
			u
		}_{{L^2_\alpha(\IR_+;X)}}
	}
	< 
	\infty,
\end{align}
where for any $u,v \in L^2_\alpha(\IR_+;X)$
\begin{align}
	\dotp{u}{v}_{L^2_\alpha(\IR_+;X)} 
	\coloneqq
	\int\limits_{0}^{\infty} \dotp{u(t)}{v(t)}_{X} 
	\exp({-2\alpha t})
	\odif{t}.
\end{align}
defines an inner product in $L^2_\alpha(\IR_+;X)$.
In addition, as in Ref.~\refcite{dautray2012mathematical} (Chapter XVIII, Section 2.2, Definition 4)
we set
\begin{equation}\label{sec:funct_space_W}
	\mathcal{W}_\alpha(\IR_+;X)
	\coloneqq
	\left\{
		v \in L^2_\alpha(\IR_+;X):
		\partial_tv \in  L^2_\alpha(\IR_+;X')
	\right\},
\end{equation}
where $X'$ denotes the (topological) dual space of $X$,
and we equip it with the norm
\begin{equation}\label{eq:def_norm_W_alpha}
	\norm{
		u
	}_{\mathcal{W}_\alpha(\IR_+;X)}
	\coloneqq
	\left(
		\norm{u}^2_{L^2_\alpha(\IR_+;X)}
		+
		\norm{\partial_t u}^2_{L^2_\alpha(\IR_+;X')}
	\right)^{\frac{1}{2}},
\end{equation}
thus rendering it a Banach space
(cf. Ref.~\refcite{dautray2012mathematical}, Chapter XVIII, Section 2.2, Proposition 6).
\begin{comment}
%%%%%%%%%%%%%%%%%%%%%%%%%%%%%
\subsection{Time-dependent Partial Differential Equations} \label{eq:time_dependent_pdes}
%%%%%%%%%%%%%%%%%%%%%%%%%%%%%

We consider a general time-dependent problem of the following form: Seek  $u(\bm{x},t):\Omega\times(0,t_{n_t}] \to \IR^\ell$, $\ell =1$ being the scalar-valued case and $\ell\in\{2,\dots,d\}$ the vector-valued one, such that
\begin{equation} \label{eq:prob}
	\partial_{tt} u(\bm{x},t) + \zeta \partial_t u(\bm{x},t) + \mathcal{A}  u(\bm{x},t) = b(\bm{x},t), \quad \bm{x} \in \Omega,
\end{equation}
equipped with initial and boundary conditions.
In addition, $\mathcal{A}$ is a linear operator on $\Omega$, $\partial_{tt} u$ and $\partial_t u$ represent the second and first order time derivative, respectively. Examples of the operator $\mathcal{A}$ are: $\mathcal{A}= -\Delta$, i.e. the Laplace operator.

\end{comment}

\subsection{Linear, Second-Order Parabolic Problems}
\label{sec:parabolic_problems}
Let $\bm{A}(\bm{x}) \in L^\infty(\Omega;\mathbb{R}^{d\times d})$ be 
a symmetric, positive definite matrix satisfying
\begin{equation}\label{eq:elliptic_c}
	\esssinf_{\bm{x} \in \Omega} 
	\boldsymbol\xi^\top 
	\bm{A}(\bm{x}) 
	\boldsymbol\xi 
	\geq 
	\underline{c}_{\bm{A}} \norm{\boldsymbol\xi}^2_2, 
	\quad \forall \boldsymbol\xi \in \IR^{d} \backslash \{\bm{0}\},
\end{equation}	
for some $\underline{c}_{\bm{A}}>0$, and 
\begin{equation}\label{eq:continuous_c}
	\esssup_{\bm{x}\in \Omega} 
	\norm{
		\bm{A}(\bm{x}) 
	}_2
	\leq
	\overline{c}_{\bm{A}},
\end{equation}
with $\overline{c}_{\bm{A}}>0$.
Again, given a time horizon $T>0$ we
set $\mathfrak{J}=(0,T)$ and consider the following \rev{linear, second-order} parabolic problem in $\Omega$: 
We seek $u: \Omega \times \mathfrak{J} \rightarrow \IR$ satisfying 
\begin{equation}\label{eq:scalar_wave_eq}
	\partial_{t} u(\bm{x},t) 
	+
	\rev{
	\mathcal{A} u(\bm{x},t)
	}
	= 
	f(\bm{x},t), \quad (\bm{x},t) \in \Omega \times \mathfrak{J},
\end{equation}
where $\mathcal{A}v(\bm{x})=-\nabla \cdot \left(\bm{A} (\bm{x}) \nabla v(\bm{x}) \right) $,
$f :\Omega \times \mathfrak{J} \rightarrow \IR$, and
equipped with homogeneous Dirichlet boundary
and initial conditions
\begin{equation}\label{eq:scalar_wave_eq_bc}
	u(\bm{x},t) = 0, \quad (\bm{x},t) \in \Gamma \times \mathfrak{J}
	\quad
	\text{and}
	\quad
	u(\bm{x},0) 
	= 
	u_0(\bm{x}),
	\quad \bm{x} \in \Omega,
\end{equation}
respectively, where $u_0:\Omega \rightarrow \IR$.
\rev{
\begin{remark}
Consider
\begin{equation}
	\mu \in L^\infty(\Omega)
	\quad
	\text{and}
	\quad
	\boldsymbol{\beta} \in W^{1,\infty}(\Omega;\mathbb{R}^d),
\end{equation}
where $d$ is the problem's physical dimension. Assume that there exists
$\mu_0 >0$ such that
\begin{equation}
	\Lambda
	\coloneqq
	\mu
	-
	\nabla \cdot \boldsymbol{\beta} 
	\geq
	\mu_0
	\text{ a.e. in }
	\Omega.
\end{equation}
In general, we can consider the more general second-order partial differential operator of the form
\begin{equation}\label{eq:diff_op}
	\mathcal{A}v(\bm{x})
	=
	-
	\nabla \cdot \left(\bm{A} (\bm{x}) \nabla v(\bm{x}) \right) 
	+
	\boldsymbol{\beta}(\bm{x})
	\cdot
	\nabla v(\bm{x})
	+
	\mu(\bm{x}) v(\bm{x})
\end{equation}
instead of the $\mathcal{A}$ in \eqref{eq:scalar_wave_eq}.
\end{remark}
}
\subsection{Variational Formulation of Parabolic Problems}\label{sec:var_for}
Firstly, let us define the \emph{sesquilinear} form
$\mathsf{a}: H^1_0(\Omega;\mathbb{C}) \times H^1_0(\Omega;\mathbb{C}) \rightarrow \IC$ as
\rev{
\begin{equation}\label{eq:sesq_form}
\begin{aligned}
	\mathsf{a}(u,v) 
	\coloneqq 
	\int\limits_{\Omega}
	\nabla u(\bm{x})^\top \bm{A} (\bm{x})
	\overline{\nabla v(\bm{x})}
	\odif{\bm{x}},
	\quad
	\forall u,v \in H^1_0(\Omega;\mathbb{C}).
%	\\
%	&
%	+
%	\int\limits_{\Omega}
%	\left(
%	\boldsymbol{\beta}(\bm{x})
%	\cdot
%	\nabla u(\bm{x})
%	\overline{ v(\bm{x})}
%	\right)
%	\odif{\bm{x}}
%	+
%	\int\limits_{\Omega}
%	\mu(\bm{x})
%	u(\bm{x})
%	\overline{ v(\bm{x})}
%	\odif{\bm{x}},
\end{aligned}
\end{equation}
}%
It follows that
\begin{equation}
	\Re\left\{\mathsf{a}(u,u)\right\}
	\geq
%	\rev{
%	\mu_0
%	\norm{u}^2_{L^2(\Omega;\mathbb{C})}
%	+
	\underline{c}_{\bm{A}}
	\norm{u}^2_{H^1_0(\Omega;\mathbb{C})},
	\;\;
	\forall
	u\in H^1_0(\Omega;\mathbb{C}),
\end{equation}
and
\begin{equation}
\begin{aligned}
	\snorm{
		\mathsf{a}(u,v)	
	}
	\leq
%	&
	\overline{c}_{\bm{A}}
	\norm{u}_{H^1_0(\Omega;\mathbb{C})}
	\norm{v}_{H^1_0(\Omega;\mathbb{C})}
%	\\
%	&
%	\rev{+
%	\norm{\mu}_{L^\infty(\Omega)}
%	\norm{u}_{L^2(\Omega;\mathbb{C})}
%	\norm{v}_{L^2(\Omega;\mathbb{C})}
%	}
%	\\
%	&
%	\rev{
%	+
%	\norm{\beta}_{W^{1,\infty}(\Omega;\mathbb{R}^d)}
%	\norm{u}_{H^1_0(\Omega;\mathbb{C})}
%	\norm{v}_{L^2(\Omega;\mathbb{C})},
%	}
	\;\;
	\forall
	u,v\in H^1_0(\Omega;\mathbb{C}),
\end{aligned}
\end{equation}
respectively. 
\rev{Consequently, the sesquilinear form $\mathsf{a}(\cdot)$
is elliptic (a property sometimes referred to as strongly coercive)
and continuous.
%with constants
%\begin{equation}
%	\underline{c}_{\mathsf{a}}
%	=
%	\underline{c}_{\bm{A}}
%	\quad
%	\text{and}
%	\quad
%	\overline{c}_{\mathsf{a}}
%	=
%	\overline{c}_{\bm{A}}
%	+
%	C^2_{P}(\Omega)
%	\norm{\mu}_{L^\infty(\Omega)}
%	+
%	C_{P}(\Omega)
%	\norm{\beta}_{W^{1,\infty}(\Omega;\mathbb{R}^d)}.
%\end{equation}
}%
Observe that when
restricted to real-valued Sobolev spaces
$\mathsf{a}(\cdot,\cdot)$ as in \eqref{eq:sesq_form}
becomes a bilinear form. 

We cast \eqref{eq:scalar_wave_eq} together with 
the homogeneous Dirichlet boundary conditions into a variational formulation
as stated below.

\begin{problem}[Variational Formulation of the Parabolic Problem] 
\label{pbm:wave_equation}
Let $u_0 \in L^2(\Omega;\mathbb{R})$ and $f \in L^2(\mathfrak{J};L^2(\Omega;\mathbb{R}))$
be given.

We seek $u \in L^2(\mathfrak{J};H^1_0(\Omega;\mathbb{R}))$ with 
$\partial_t u \in L^2(\mathfrak{J};H^{-1}(\Omega;\mathbb{R}))$
such that for a.e. $t>0$ it holds
\begin{align}\label{eq:wave_eq_variational}
	\dual{\partial_{t}u\rev{(t)}}{v}_{H^{-1}(\Omega)\times H^{1}_0(\Omega)} 
	+ 
	\mathsf{a}(u\rev{(t)},v) 
	= 
	\dotp{f\rev{(t)}}{v}_{L^2(\Omega)},
	\quad
	\forall
	v \in H^1_0(\Omega),
\end{align}
and satisfying $u(0) = u_0$ in $L^2(\Omega;\mathbb{R})$.
%\footnote{As stated in \cite[Chapter XVIII, Section 2.2, Theorem 1]{dautray2012mathematical},
%the space $\mathcal{W}_\alpha(\IR_+;H^1_0(\Omega),H^{-1}(\Omega))$
%is continuously embedded into $\mathscr{C}^0([0,\infty);L^2(\Omega))$, therefore the initial
%condition is meaningful. }.
\end{problem}

\begin{remark}[Real- and Complex-valued Sobolev Spaces]
In Section~\ref{sec:notation_functional_spaces} and Section~\ref{sec:var_for},
we \rev{have} been exhaustive in differentiating real- and complex-valued 
Sobolev spaces. Parabolic problems as the one described in Section~\ref{sec:parabolic_problems}
\rev{are} usually set in a real-valued framework. However, ahead in Section~\ref{sec:FRB}, the application
of the Laplace transform requires the use of complex-valued Sobolev spaces. 
For the sake of simplicity, in what follows for real-valued Sobolev spaces we keep
the notation described in  Section~\ref{sec:notation_functional_spaces}, e.g.
$H^1_0(\Omega;\mathbb{R})$ and $H^{-1}(\Omega;\mathbb{R})$, whereas
for complex-valued ones we remove the reference to the field, i.e.~we set
$H^1_0(\Omega) \equiv H^1_0(\Omega;\mathbb{C})$. In the definition 
of norms, inner products, and duality pairings, we just drop the field, i.e. we write, for example,
$\dual{\cdot}{\cdot}_{H^{-1}(\Omega)\times H^{1}_0(\Omega)} $.

\end{remark}

%%%%%%%%%%%%%%%%%%%%%%%%%%%%%
\subsection{Semi-Discrete Problem}
\label{ssec:fe_problem}
%%%%%%%%%%%%%%%%%%%%%%%%%%%%%
Throughout, let $\{\mathbb{V}_h\}_{h>0}$ be a family of finite
dimensional subspaces of $H^1_0(\Omega;\mathbb{R})$ with discretization
parameter $h>0$.
Set $N_{h} = \text{dim}(\IV_{h})$ and consider a basis
 $\{\varphi_1,\dots,\varphi_{N_{h}}\}$ of $\mathbb{V}_{h}$.
In addition, we consider the \emph{complexification} $\mathbb{V}^\mathbb{C}_h$
of $\mathbb{V}_h$ as defined in Chapter 1, p. 53, of Ref.~\refcite{roman2005advanced}.
 
\rev{For any subspace $X\subset H^1_0(\Omega)$
we define $\mathsf{P}_{X}: H^1_0(\Omega) \rightarrow \IV_{h}$
as the projection operator onto $X$, i.e.~for each $v \in H^1_0(\Omega)$, $\mathsf{P}_{X} v$
is defined as the unique solution of the following variational problem
\begin{equation}\label{eq:projection_H1}
	\dotp{\mathsf{P}_{X} v}{w}_{H^1_0(\Omega)}
	=
	\dotp{v}{w}_{H^1_0(\Omega)},
	\quad
	\forall
	w \in X.
\end{equation}}%
%
%In addition, we define $\mathsf{Q}_h: L^2(\Omega) \rightarrow \mathbb{V}_h$
%as the $L^2(\Omega)$-projection
%onto $\mathbb{V}_h$, i.e., for each $v \in L^2(\Omega)$, $\mathsf{Q}_h v \in \mathbb{V}_h$
%is defined as the unique solution to the following variational problem
%\begin{equation}
%	\dotp{\mathsf{Q}_h v}{w}_{L^2(\Omega)}
%	=
%	\dotp{v}{w}_{L^2(\Omega)},
%	\quad
%	\forall
%	w \in \mathbb{V}_h.
%\end{equation}
\rev{In particular, we set $\mathsf{P}_h \coloneqq \mathsf{P}_{\mathbb{V}_h}$.}
Equipped with \rev{these} tools, we state the semi-discrete version of Problem~\ref{pbm:wave_equation}.
\begin{problem}[Semi-discrete Formulation of Problem~\ref{pbm:wave_equation}]
\label{prbm:semi_discrete_problem}
Let $u_0 \in \rev{H^1_0}(\Omega;\mathbb{R})$ and $f \in L^2({\mathfrak{J}}; L^2(\Omega;\mathbb{R}))$
be given. 
We seek $u_h \in H^1\left({\mathfrak{J}};\mathbb{V}_h\right)$
such that for a.e. $t \in \mathfrak{J}$
it holds
\begin{align}\label{eq:semi_discrete_time}
	\dotp{\partial_t u_h(t)}{v_h}_{L^2(\Omega)} 
	+ 
	\mathsf{a} \left(u_h(t),v_h\right) 
	= 
	\dotp{f(t)}{v_h}_{L^2(\Omega)},  
	\quad \forall v_h \in \mathbb{V}_h,
\end{align}
with initial conditions \rev{$u_h(0) = \mathsf{P}_h u_0 \in \mathbb{V}_h$}.
%where
%$u_{0,h}$ is an approximation of $u_0$ in $\mathbb{V}_h$.
\end{problem}

We proceed to describe Problem~\ref{prbm:semi_discrete_problem} in matrix form.
To this end, let us consider the following solution \emph{ansatz}
\begin{equation}
	u_h(t) 
	= 
	\sum_{j=1}^{N_h} 
	\mathsf{u}_j(t)
	\varphi_j 
	\in
	\mathbb{V}_h, 
	\quad 
	\text{for a.e. } 
	t\in {\mathfrak{J}},
\end{equation}
and set $\bm{\mathsf{u}}(t) = (\mathsf{u}_1(t),\dots, \mathsf{u}_{N_h}(t))^{\top} \in \IR^{N_h}$.
We also define $\bm{\mathsf{M}}_h \in \IR^{N_h \times N_h}$
and $\bm{\mathsf{A}}_h \in \IR^{N_h \times N_h}$ as
\begin{equation}
	(\bm{\mathsf{M}}_h)_{i,j} 
	\coloneqq
	\dotp{\varphi_i}{\varphi_j}_{L^2(\Omega)} 
	\quad 
	\text{and} 
	\quad 
	(\bm{\mathsf{A}}_h)_{i,j} 
	\coloneqq
	\mathsf{a} \left(\varphi_i,\varphi_j \right), 
	\quad i,j \in \{1,\dots,N_h\},
\end{equation}
referred to as the mass matrix and the stiffness matrix of the (bilinear in this case)
form $\mathsf{a}(\cdot,\cdot)$.
In addition, we set $\bm{\mathsf{B}}_h \in \IR^{N_h \times N_h}$ as
\begin{equation}
	(\bm{\mathsf{B}}_h)_{i,j} 
	\coloneqq
	\rev{
	\dotp{
		\varphi_i
	}{	
		\varphi_j
	}_{H^1_0(\Omega)}, 
	}
	\quad i,j \in \{1,\dots,N_h\},
\end{equation}
together with the discrete right-hand side
\begin{equation}
	(\bm{\mathsf{f}}_h(t))_{i} 
	\coloneqq
	\dotp{
		f(t)
	}{
		\varphi_i
	}_{L^2(\Omega)}, 
	\quad i\in \{1,\dots,N_h\}.
\end{equation}
Then, Problem~\ref{prbm:semi_discrete_problem} reads as follows:
Provided that $f \in \mathscr{C}^0(\overline{\mathfrak{J}}; L^2(\Omega))$,
we seek $\bm{\mathsf{u}}  \in \mathscr{C}^1\left(\overline{\mathfrak{J}};\mathbb{R}^{N_h}\right)$
such that
\begin{equation}
	\bm{\mathsf{M}}_h
	\odv[order=1]{}{t}
	\bm{\mathsf{u}}(t)  
	+
	\bm{\mathsf{A}}_h
	\bm{\mathsf{u}}(t)  
	=
	\bm{\mathsf{f}}(t),
	\quad
	\rev{t\in \mathfrak{I},}
\end{equation}
with $\bm{\mathsf{u}}(0)  = \bm{\mathsf{u}}_{0,h}\in \mathbb{R}^{N_h}$
and $ \bm{\mathsf{u}}_{0,h}$ such that
\begin{equation}
	u_h(0) 
	=
	\sum_{j=1}^{N_h}
	\left(
		\bm{\mathsf{u}}_{0,h}
	\right)_j
	\varphi_j,
\end{equation}
where \rev{$u_h(0) = \mathsf{P}_h u_0 \in \mathbb{V}_h$}
is as in Problem~\ref{prbm:semi_discrete_problem}.

%%%%%%%%%%%%%%%%%%%%%%%%%%%%%%%%%%%%%
\subsection{Construction of a RB Using POD: The Time-dependent Approach}
\label{sec:reduced_time_dependent_problem_traditional}
%%%%%%%%%%%%%%%%%%%%%%%%%%%%%%%%%%%%%
We are interested in \rev{the construction of} a reduced basis for the 
discrete in space, yet continuous in time, solution manifold
\begin{equation}\label{eq:tsnap}
	\mathcal{M}_h 
	= 
	\left\{ 
		u_h(t)\vert \;
		t
		\in
		\overline{\mathfrak{J}}
	\right\}
	\rev{\subset 
	\mathbb{V}_h.}
\end{equation}
To this end, firstly we perform a discretization in time.
Given $N_t \in \IN$, we consider a uniform partition of $\overline{\mathfrak{J}}$
defined as $t_j = \frac{j}{N_t}T$ for $j=0,\dots,N_t$.
The traditional approach to model reduction of time-dependent problems
consists \rev{of} finding a reduced space $\IV_R^{(\textrm{rb})}\subset \IV_h$
of dimension $R\in \IN$ that is hopefully considerably smaller
than that of $\IV_h$
such that
\begin{equation} \label{eq:tPOD}
%	\left\{
%		\varphi_1^\textrm{(rb)},\dots,\varphi_{R}^\textrm{(rb)}
%	\right\}
	\IV_R^{(\textrm{rb})}  
	= 
	\argmin_{
		\substack{
		X_R \subset \IV_h
		\\ 
		\text{dim}(X_R) \leq R
		}
	}
    	\sum_{j=0}^{N_t} 
	\norm{
		u_h(t_j) 
		- 
		\mathsf{P}_{X_R}
		u_h(t_j) 
	}^2_{H^1_0(\Omega;\mathbb{R})},
\end{equation}
where $u_h(t_j)$ corresponds to the solution of Problem~\ref{prbm:semi_discrete_problem}
at time $t_j$\footnote{For simplicity, we disregard the effect of the time discretization
in the computation of $u_h(t_j)$.}.

The formulation stated in \eqref{eq:tPOD} may be expressed in algebraic form as follows
\begin{equation}\label{eq:tPOD_algebraic}
	\bm{\Phi}^{\textrm{(rb)}}_R
	=
	\min_{{\bm\Phi} \in \mathscr{V}_{R}}
	\sum_{j=0}^{N_t}
	\norm{
		\bm{\mathsf{u}}_h(t_j)
		-
		\bm{\Phi}
		\bm{\Phi}^\top 
		{\bm{\mathsf{B}}_h}
		\bm{\mathsf{u}}_h(t_j)
	}_{{\bm{\mathsf{B}}_h}}^2,
\end{equation}
where
$$
	\mathscr{V}_{R}
	\coloneqq
	\left\{
		\bm{\Phi} \in \IR^{N_h \times R}:
		\bm{\Phi}^\top 
		\bm{\mathsf{B}}_h
		\bm{\Phi}
		=
		\bm{\mathsf{I}}_{R}
	\right\},
$$
and
$
	\norm{\bm{\mathsf{v}}}_{\bm{\mathsf{B}}_h}
	=
	\sqrt{
		\left(\bm{\mathsf{B}}_h\bm{\mathsf{v}},\bm{\mathsf{v}}\right)_{\IC^{N_h}}
	},
	\;
	\bm{\mathsf{v}} \in \IC^{N_h}.
$
The connection between the solution to \eqref{eq:tPOD}
and \eqref{eq:tPOD_algebraic} is as follows: Set
\begin{equation}\label{eq:def_basis_rb}
	\varphi^{\textrm{(rb)}}_k 
	= 
	\sum_{j=1}^{N_h} 
	\left( \bm{\phi}^{\textrm{(rb)}}_k \right)_j \varphi_{j}
	\in 
	\IV_h,
	\quad
	k=1,\dots,R,
\end{equation}
where
$
	\bm{\Phi}^{\textrm{(rb)}}_R
	=
	\left(
		\bm{\phi}^{\textrm{(rb)}}_1 ,
		\dots,
		\bm{\phi}^{\textrm{(rb)}}_R 
	\right),
$ and \rev{$\left( \bm{\phi}^{\textrm{(rb)}}_k \right)_j$} signifies the $j$-th component
of $\bm{\phi}^{\textrm{(rb)}}_k$.
Then $\left\{\varphi^{\textrm{(rb)}}_1,\cdots,\varphi^{\textrm{(rb)}}_R\right\}$
is an orthonormal basis of $\IV_R^{(\textrm{rb})}$ in the $H^1_0(\Omega)$-inner
product.

Let us define the snapshot matrix 
\begin{equation}
	{\bm{\mathsf{S}}}
	\coloneqq
	\left(
		\bm{\mathsf{u}}_h(t_0),
		\bm{\mathsf{u}}_h(t_1),
		\dots,
		\bm{\mathsf{u}}_h(t_{N_t-1}),
		\bm{\mathsf{u}}_h(t_{N_t})
	\right)
	\in 
	\IR^{N_h \times (N_t+1)}.
\end{equation}
and consider the matrix $\widetilde{\bm{\mathsf{S}}} = \bm{\mathsf{R}}_h{\bm{\mathsf{S}}}$,
where $\bm{\mathsf{B}}_h =  \bm{\mathsf{R}}^\top_h  \bm{\mathsf{R}}_h$ is the
Cholesky decomposition of  $\bm{\mathsf{B}}_h$ with $ \bm{\mathsf{R}}_h$ an upper triangular
matrix.

Let $\widetilde{\bm{\mathsf{S}}} = \widetilde{\bm{\mathsf{U}}} \Sigma \widetilde{\bm{\mathsf{V}}}^\top$ 
be the SVD of $\widetilde{\bm{\mathsf{S}}}$, where
\begin{equation}
	\widetilde{\bm{\mathsf{U}}}
	=
	\left(
		\widetilde{\boldsymbol{\zeta}}_1,
		\ldots,
		\widetilde{\boldsymbol{\zeta}}_{N_h}
	\right)\in \mathbb{R}^{N_h \times N_h},
	\quad 
	 \widetilde{\bm{\mathsf{V}}}
	=
	\left(
		\widetilde{\boldsymbol{\psi}}_1,
		\ldots,
		\widetilde{\boldsymbol{\psi}}_{N_t}
	\right) 
	\in 
	\mathbb{R}^{(N_t+1) \times (N_t+1)}
\end{equation}
are orthogonal matrices and 
$\Sigma \in \mathbb{R}^{N_h \times N_t}$ 
contains the singular values $\sigma_1 \geq \cdots \geq \sigma_r>0$
of $\widetilde{\bm{\mathsf{S}}}$
in decreasing order, where $r = \text{rank}(\widetilde{\bm{\mathsf{S}}}) \leq \min\{N_h,N_{t}+1\}$.

It follows from the Schmidt-Eckart-Young theorem, as stated in Proposition 6.2 of Ref.~\refcite{quarteroni2015reduced},
that for any $R\leq r$ the POD basis 
\begin{equation}\label{eq:reduced_basis_Phi}
	\bm{\Phi}^{\textrm{(rb)}}_R
	=
	\left(
		\bm{\mathsf{R}}_h ^{-1}\widetilde{\boldsymbol{\zeta}}_1, 
		\ldots,
		\bm{\mathsf{R}}_h ^{-1} \widetilde{\boldsymbol{\zeta}}_{R}
	\right),
\end{equation} 
is solution to the minimization problem \eqref{eq:tPOD_algebraic}, and it holds \rev{that}
\begin{equation}
\begin{aligned}	
	\min_{\bm{\Phi} \in \mathscr{V}_{R}}
	\sum_{i=0}^{N_t}
	\norm{
		\bm{\mathsf{u}}_h(t_i)
		-
		\bm{\Phi}
		\bm{\Phi}^\top 
		{\bm{\mathsf{B}}_h}
		\bm{\mathsf{u}}_h(t_i)
	}_{{\bm{\mathsf{B}}_h}}^2
	\\
	&
	\hspace{-2cm}
	=
	\sum_{i=0}^{n_t}
	\norm{
		\bm{\mathsf{u}}_h(t_i)
		-
		\bm{\Phi}^\textrm{(rb)}_R
		\left(
			\bm{\Phi}^\textrm{(rb)}_R
		\right)^\top
		{\bm{\mathsf{B}}_h}
		\bm{\mathsf{u}}_h(t_i)
	}_{{\bm{\mathsf{B}}_h}}^2
	\\
	&
	\hspace{-2cm}
	=
	\sum_{i=N+1}^r \sigma_i^2.
\end{aligned}
\end{equation}

Now that we have constructed the reduced basis, we can follow the
traditional approach to model reduction for time-dependent problems
by projecting Problem~\ref{pr:sdp} onto the reduced space $\IV_R^{\textrm{(rb)}}$. 

\begin{problem}[Reduced Semi-Discrete Problem] \label{pr:sdpr}
Let $u_0\in \rev{H^1_0}(\Omega;\mathbb{R})$ and $f \in L^2(\mathfrak{J};L^2(\Omega;\mathbb{R}))$.
We seek $u^{\normalfont\textrm{(rb)}}_{R} \in H^1\left(\mathfrak{J};\IV^{\normalfont\textrm{(rb)}}_R\right)$
such that for a.e. $t \in \mathfrak{J}$ it holds that
\begin{equation}\label{eq:semi_discrete_rom}
\begin{aligned}
	\dotp{
		\partial_t
		u_R^{\normalfont\textrm{(rb)}}(t)
	}{
		v_R^{\normalfont\textrm{(rb)}}
	}_{L^2(\Omega)}
	+ 
	\mathsf{a}
	\left(
		u_R^{\normalfont\textrm{(rb)}}(t)
		,
		v_R^{\normalfont\textrm{(rb)}}
	\right) 
	= 
	\dotp{f(t)}{v_R^{\normalfont\textrm{(rb)}} }_{L^2(\Omega)}, 
	\quad \forall v_R^{\normalfont\textrm{(rb)}} \in \IV_{R}^{\normalfont\textrm{(rb)}},
\end{aligned}
\end{equation}
with initial condition $u_R^{\normalfont\textrm{(rb)}}(0) \in \IV_{R}^{\normalfont\textrm{(rb)}}$
given as $u_R^{\normalfont\textrm{(rb)}}(0) = \mathsf{P}^{\normalfont\textrm{(rb)}}_R {u_{0,h}}$.
%\begin{equation}\label{eq:inital_condition_rb}
%	\mathsf{a}\dotp{u_R^{\normalfont\textrm{(rb)}}(0)}{v_R^{\normalfont\textrm{(rb)}}}
%	=
%	\mathsf{a}\dotp{u_{0}}{v_R^{\normalfont\textrm{(rb)}}},
%	\quad
%	\forall 
%	v_R^{\normalfont\textrm{(rb)}}\in \IV_{R}^{\normalfont\textrm{(rb)}}.
%\end{equation}
\end{problem}

We set $\bm{\mathsf{u}}^{\textrm{(rb)}}(t) = (\mathsf{u}^{\textrm{(rb)}}_1(t),\dots, \mathsf{u}^{\textrm{(rb)}}_{R}(t))^{\top} \in \IR^{R}$
and define
$\bm{\mathsf{M}}^{\textrm{(rb)}}_R\in \IR^{R \times R}$ and 
$\bm{\mathsf{A}}^{\textrm{(rb)}}_R\in \IR^{R \times R}$ as
\begin{equation}
	\bm{\mathsf{M}}^{\textrm{(rb)}}_R
	= 
	\bm{\Phi}^{{\textrm{(rb)}}\top}_R
	\bm{\mathsf{M}}_h 
	\bm{\Phi}^{\textrm{(rb)}}_R
	\quad
	\text{and}
	\quad
	\bm{\mathsf{A}}^{\textrm{(rb)}}_R
	= 
	\bm{\Phi}^{{\textrm{(rb)}}\top}_R
	\bm{\mathsf{A}}_h 
	\bm{\Phi}^{\textrm{(rb)}}_R, 
\end{equation}
together with
\begin{equation}
	\bm{\mathsf{f}}^{\textrm{(rb)}}_R(t)
	= 
	\bm{\Phi}^{{\textrm{(rb)}}\top}_R
	\bm{\mathsf{f}}_h(t),
\end{equation}
where $\bm{\Phi}^{\textrm{(rb)}}_R\in \IR^{N_h \times R}$
is as in \eqref{eq:reduced_basis_Phi}.

Then, Problem~\ref{pr:sdpr} reads as follows:
Provided that $f \in \mathscr{C}^0(\overline{\mathfrak{J}}; L^2(\Omega))$,
we seek $\bm{\mathsf{u}}^{\textrm{(rb)}}_R \in \mathscr{C}^1\left(\overline{\mathfrak{J}};\mathbb{R}^{R}\right)$
such that
\begin{equation}
	\bm{\mathsf{M}}^{\textrm{(rb)}}_R
	\odv[order=1]{}{t}
	\bm{\mathsf{u}}^{\textrm{(rb)}}_R(t)
	+
	\bm{\mathsf{A}}^{\textrm{(rb)}}_R
	\bm{\mathsf{u}}^{\textrm{(rb)}}_R(t)  
	=
	\bm{\mathsf{f}}^{\textrm{(rb)}}_R(t)
\end{equation}
with \rev{$\bm{\mathsf{u}}^{\textrm{(rb)}}_R(0) = \bm{\Phi}^{{\textrm{(rb)}}\top}_R \bm{\mathsf{u}}_{0,h}
\in \mathbb{R}^R$}.
\begin{remark}\label{rmk:compute_rb_time}
%\hfill
%\begin{itemize}
%\item
We point out that the exercise of computing a reduced basis for the time-evolution problem
as presented here lacks any \rev{practical} usefulness as we need to solve the high-fidelity problem in order
to compute the basis itself.
The only use of this technique would be to compress the solution of
the \rev{time-evolution} problem for storage purposes. 
%\item
%However, if we in addition include parametric variables describing, for instance, variations
%of material properties or of the physical domain itself, the construction of a reduced basis by sampling both in
%the time and in the parameter space indeed produces a basis that can be used to accelerate the online phase
%of the RB method when the solution for a new parametric input is required. 
%For the efficient sampling strategies in the parameter space, we refer to greedy algorithms
%as in \cite{grepl_posteriori_2005,Rozza2008a} for the parameter space,
%works in adaptive mesh refinement \cite{Carlberg2014,Etter2019} and incomplete sampling \cite{everson_karhunenloeve_1995,venturi_gappy_2004,willcox_unsteady_2006,Peherstorfer2016} for parametric geometries.
%\end{itemize}
\end{remark}

%%%%%%%%%%%%%%%%%%%%%%%%%%%%%
\section{The Laplace Transform Reduced Basis Method}
\label{sec:FRB}
%%%%%%%%%%%%%%%%%%%%%%%%%%%%%
In this section, we describe a new approach \rev{to} the construction
of a reduced space tailored to linear parabolic problems.
Instead of directly performing a time discretization of Problem~\ref{prbm:semi_discrete_problem}
as in Section~\ref{sec:reduced_time_dependent_problem_traditional},
we apply the Laplace transform to the high-fidelity problem and obtain an
elliptic PDE parametrically \rev{dependent on}
the complex Laplace \rev{variable}.

Recall that the Laplace transform of $f: [0,\infty)\rightarrow \IC$
is defined as
\begin{equation} \label{eq:laplace}
	\mathcal{L}
	\left\{
		f
	\right \}(s) 
	\coloneqq 
	\int\limits_{0}^{\infty}\exp({-st})f(t) \odif{t},
	\quad
	s \in \Pi_+,
\end{equation}
where $\Pi_+ \coloneqq \{ s \in \IC: \Re\{s\} >0 \}$ denotes the right complex half-plane. 
Throughout, we also use the notation $\widehat{f}(s) = \mathcal{L}\{f\}(s)$ for $s \in \Pi_+$
to denote the Laplace transform of a function.
The inverse Laplace transform admits the following representation
\begin{equation}
	f(t)
	=
	\frac{1}{2\pi \imath}
	\int\limits_{\gamma-\imath \infty}^{\gamma+\imath \infty}
	\exp({st})
	\widehat{f}(s)
	\text{d}s,
\end{equation}
where $\gamma>0$ defines a vertical contour in the complex plane
which is chosen so that all singularities of $\widehat{f}(s)$ are to the left of it.
This representation of the inverse Laplace transform is known as the Bromwich integral.
A large body of work aims at numerically computing the inverse Laplace transform.
We refer to Ref.~\refcite{davies1979numerical} for a survey and comparison of different methods.

Formally, the application of the Laplace transform to Problem~\ref{prbm:semi_discrete_problem}
together with the well-known property
$
	\mathcal{L}
	\left\{
		\partial_t
		f
	\right\}(s)
	=
	s\widehat{f}(s)-f(0)
$
yields the following problem depending on the complex
Laplace \rev{variable} $s\in \Pi_+$.

\begin{problem}[Laplace Domain Discrete Problem]\label{pbrm:laplace_discrete}
\label{pr:sdp}
Let $u_0\in \rev{H^1_0(\Omega;\mathbb{R})}$ and $f \in L^2_\alpha(\IR_+;L^2(\Omega;\mathbb{R}))$ for some $\alpha>0$.
For each $s \in \Pi_\alpha$ we seek 
$\widehat{u}_h(s) \in \IV^{\mathbb{C}}_h$, i.e.,~the complexification of $\IV_h$ introduced at the beginning of Section~\ref{ssec:fe_problem},
such that for all $v_h \in \IV^{\mathbb{C}}_h$ it holds that
\begin{align}\label{eq:semi_discrete}
	s\dotp{\widehat{u}_h(s)}{v_h}_{L^2(\Omega)} 
	+ 
	\mathsf{a}
	\left(
		\widehat{u}_h(s),v_h
	\right) 
	= 
	\dotp{\widehat{f}(s)}{v_h}_{L^2(\Omega)}
	+
	\dotp{u_{0,h}}{v_h}_{L^2(\Omega)} , 
\end{align}
where $\widehat{f}(s) =\mathcal{L}\{f\}(s)$ corresponds to the Laplace transform
of $f$ and $u_{0,h} = \rev{ \mathsf{P}_h} u_0$.
\end{problem}

We construct a reduced basis using POD as in Section~\ref{sec:reduced_time_dependent_problem_traditional}.
However, in the approach described here we rely on solutions
of Problem~\ref{pr:sdp} on a carefully selected collection of $M \in \IN$
complex points $\mathcal{P}_s = \{s_1,\dots,s_{M}\}\subset \Pi_+$.
More precisely, we are interested in finding a finite dimensional subspace
$\mathbb{V}^{\text{(rb)}}_R \subset H^1_0(\Omega;\mathbb{R})$ of dimension $R\in \IN$ such that
\begin{equation}\label{eq:fPOD}
	\mathbb{V}^{\text{(rb)}}_R
	=
	\argmin_{
	\substack{
		\IV_R \subset \mathbb{V}_h
		\\
		\text{dim}(\IV_R)\leq R	
	}
	}
    	\sum_{j=1}^{M} 
	\omega_j
	\norm{
		\Re
		\left\{
			\widehat{u}_h(s_j) 
		\right\}
		- 
		\mathsf{P}_{\IV_R}
		\Re
		\left\{
			\widehat{u}_h(s_j) 
		\right\}
	}^2_{H^1_0(\Omega)},
\end{equation}
where $\{\omega_1,\ldots,\omega_{M}\}$ are strictly positive weights.
We remark at this point that in \eqref{eq:fPOD} we have kept 
only the real part of the snapshots in the construction of
the snapshot matrix. A thorough justification of this choice 
is presented ahead in Section~\ref{sec:computation_rb}.

As in Section~\ref{sec:reduced_time_dependent_problem_traditional},
the formulation introduced in \eqref{eq:fPOD} may be expressed in algebraic form as follows
\begin{equation}\label{eq:tPOD_algebraic_frequency}
	\bm{\Phi}^{\textrm{(rb)}}_R
	=
	\argmin_{\bm\Phi \in \mathscr{V}_{R}}
	\sum_{j=1}^{M}
	\omega_j
	\norm{
		\Re\{\widehat{\bm{\mathsf{u}}}_h(s_j)\}
		- 
		\bm{\Phi}
		\bm{\Phi}^\top 
		{\bm{\mathsf{B}}_h}
		\Re\{\widehat{\bm{\mathsf{u}}}_h(s_j)\}
	}^2_{\bm{\mathsf{B}}_h},
\end{equation}
where $\widehat{\bm{\mathsf{u}}}_h(s)$ at $s \in \mathcal{P}_s$
is such that
\begin{equation}
	\widehat{u}_h(s)
	=
	\sum_{j=1}^{N_h}
	\left(
		\widehat{\bm{\mathsf{u}}}_h(s)
	\right)_j
	\varphi_j 
	\in
	\IV^{\mathbb{C}}_h.
\end{equation}  

Let us define the snapshot matrix 
containing only the real part of the solution 
to Problem~\ref{pr:sdp} for the \rev{instances} of the Laplace \rev{variable} in $\mathcal{P}_s $
\begin{equation}\label{eq:snapshot_M_LT_RB}
	{\bm{\mathsf{S}}}
	\coloneqq
	\left(
		\Re\{ \widehat{\bm{\mathsf{u}}}_h(s_1)\},
		\Re\{ \widehat{\bm{\mathsf{u}}}_h(s_2)\},
		\dots,
		\Re\{ \widehat{\bm{\mathsf{u}}}_h(s_{M-1}) \},
		\Re\{ \widehat{\bm{\mathsf{u}}}_h(s_{M})\}
	\right)
	\in 
	\IR^{N_h \times M},
\end{equation}
and define
\begin{equation}
	{\bm{\mathsf{D}}}
	=
	\text{diag}
	\left(\omega_1, \ldots, \omega_{M} \right) \in \mathbb{R}^{M \times M}.
\end{equation}
Set $\check{\bm{\mathsf{S}}} = \bm{\mathsf{R}}_h{\bm{\mathsf{S}}} {\bm{\mathsf{D}}}^{\frac{1}{2}}$
and consider its SVD $\check{\bm{\mathsf{S}}} = \check{\bm{\mathsf{U}}}\check{\Sigma} \check{\bm{\mathsf{V}}}^\top$,
where
\begin{equation}
	\check{\bm{\mathsf{U}}}
	=
	\left(
		\check{\boldsymbol{\zeta}}_1,
		\ldots,
		\check{\boldsymbol{\zeta}}_{N_h}
	\right)\in \mathbb{R}^{N_h \times N_h},
	\quad 
	\check{\bm{\mathsf{V}}}
	=
	\left(
		\check{\boldsymbol{\psi}}_1,
		\ldots,
		\check{\boldsymbol{\psi}}_{M}
	\right) 
	\in 
	\mathbb{R}^{M \times M}
\end{equation}
are orthogonal matrices, referred to as the left and right singular vectors of $\check\Sigma$,
respectively, and 
$\check\Sigma=\operatorname{diag}\left(\check\sigma_1, \ldots, \check\sigma_r\right) 
\in \mathbb{R}^{N_h \times M}$ with $\check\sigma_1 \geq \cdots \geq \check\sigma_r>0$,
where $r \leq  \min\{N_h,M\}$ \rev{is} the rank of $\check{\bm{\mathsf{S}}}$.

It follows from the Schmidt-Eckart-Young theorem, as stated in Proposition 6.2 of Ref.~\refcite{quarteroni2015reduced},
that for any $R\leq r = \text{rank}(\check{\bm{\mathsf{S}}}) \leq \min\{N_h,M\}$ the POD basis 
\begin{equation}\label{eq:reduced_basis_Phi_2}
	\bm{\Phi}^{\textrm{(rb)}}_R
	=
	\left(
		\bm{\mathsf{R}}^{-1}_h\check{\boldsymbol{\zeta}}_1, 
		\ldots,
		\bm{\mathsf{R}}^{-1}_h \check{\boldsymbol{\zeta}}_{R}
	\right),
\end{equation} 
which consist of the $R$ first left singular vectors of $\check{\bm{\mathsf{S}}}$
multiplied on the left by $\bm{\mathsf{R}}^{-1}_h$, 
is the unique solution to \eqref{eq:tPOD_algebraic_frequency},
and it holds that
\begin{equation}
\begin{aligned}	
	\min_{\bm{\Phi} \in \mathscr{V}_{R}}
	\sum_{j=1}^{M}
	\omega_j
	\norm{
		\Re\{\widehat{\bm{\mathsf{u}}}_h(s_j)\}
		- 
		\bm{\Phi}
		\bm{\Phi}^\top 
		{\bm{\mathsf{B}}_h}
		\Re\{\widehat{\bm{\mathsf{u}}}_h(s_j)\}
	}^2_{\bm{\mathsf{B}}_h}.
	\\
	&
	\hspace{-5cm}
	=
	\sum_{j=1}^{M}
	\omega_j
	\norm{
		\Re\{\widehat{\bm{\mathsf{u}}}_h(s_j)\}
		- 
		\bm{\Phi}^{\textrm{(rb)}}_R
		\bm{\Phi}^{\textrm{(rb)}\top}_R
		{\bm{\mathsf{B}}_h}
		\Re\{\widehat{\bm{\mathsf{u}}}_h(s_j)\}
	}^2_{\bm{\mathsf{B}}_h}
	\\
	&
	\hspace{-5cm}
	=
	\sum_{j=R+1}^r \check{\sigma}_j^2.
\end{aligned}
\end{equation}
\begin{remark}\label{rmk:comp_Vrb}
As pointed out in Ref.~\refcite{quarteroni2015reduced}, Algorithm 6.3,
for $N_h\leq M$ one can compute the reduced basis by following the 
following procedure, which does not require the computation of the Cholesky decomposition
of ${\bm{\mathsf{B}}_h}$:
\begin{itemize}
	\item[(i)]
	Compute the correlation matrix $\check{\bm{\mathsf{C}}} =  \check{\bm{\mathsf{S}}}^\top {\bm{\mathsf{B}}_h} \check{\bm{\mathsf{S}}} $.
	\item[(ii)]
	Solve the eigenvalue problem $\check{\bm{\mathsf{C}}} \check{\bm{\psi}}_i = \sigma^2_i \check{\bm{\psi}}_i$, for $i=1,\dots,r$.
	\item[(iii)]
	Set
	\begin{equation}%\label{eq:reduced_basis_Phi}
	\bm{\Phi}^{\normalfont\textrm{(rb)}}_R
	=
	\left(
		\frac{1}{\sigma_i}\check{\bm{\mathsf{S}}} \check{\bm{\psi}}_1,
		\ldots,
		\frac{1}{\sigma_R}\check{\bm{\mathsf{S}}} \check{\bm{\psi}}_R
	\right).
	\end{equation} 
\end{itemize}
As it will be discussed ahead in Section~\ref{eq:snap_shot_selection} only a \rev{handful} of snapshots
in the Laplace domain are \rev{needed} to compute the reduced basis. Therefore, this algorithm for
the construction of the reduced basis is better suited for the \rev{LT-MOR} method.
\end{remark}

The \rev{LT-MOR} method poses the following questions:
\begin{itemize}
	\item[{\sf \encircle{Q1}}]
	Why is $\bm{\Phi}^{\textrm{(rb)}}_R$ as in \eqref{eq:tPOD_algebraic_frequency}
	a suitable reduced basis for Problem~\ref{pr:sdpr}?
	\item[{\sf \encircle{Q2}}]
	How does the accuracy of the reduced solution improve as the 
	dimension of the reduced space increases?
	\item[{\sf \encircle{Q3}}]
	How can one judiciously \emph{a priori} select the snapshots and the weights in \eqref{eq:tPOD_algebraic_frequency}?
	\item[{\sf \encircle{Q4}}]
	How does the quality of the reduced basis improve as the number of snapshots increases?
	\item[{\sf \encircle{Q5}}]
	Why is only the real part of the snapshots required for the construction of $\bm{\Phi}^{\textrm{(rb)}}_R$?
\end{itemize}

The upcoming sections of this work aim at answering these questions.

%%%%%%%%%%%%%%%%%%%%%%%%%%%%%%%%%
\section{Analysis of the \rev{LT-MOR} Algorithm} 
\label{sec:Basis}
%%%%%%%%%%%%%%%%%%%%%%%%%%%%%%%%%
In this section, we provide a convergence analysis of the \rev{LT-MOR} method presented in Section~\ref{sec:FRB}.
This section is structured as follows. In Section~\ref{sec:laplace_hardy} we introduce the Laplace transform
in Banach spaces, and important properties to be used in this work.
In Section~\ref{sec:complexity_estimates} we provide an analysis of the dependence of Problem~\ref{pr:sdp} upon
the complex Laplace \rev{variable}. Subsequently, in Section~\ref{sec:well_posed_time_space} we state
the well-posedness of Problem~\ref{pbm:wave_equation} in the space-time Sobolev space defined in Section~\ref{sec:funct_space_W}
by using the tools introduced in Section~\ref{sec:laplace_hardy}, whereas in Section~\ref{sec:estimates_sd}
we provide estimates for the semi-discrete problem. We conclude this section 
by providing best approximation error estimates in Section~\ref{sec:best_app_error}.

\subsection{The Laplace Transform and Hardy spaces}
\label{sec:laplace_hardy}
We recall relevant properties of the Laplace transform in Banach spaces
that are used in the subsequent analysis.

\begin{proposition}[Operational Properties of the Laplace Transform]
\label{eq:operational_properties}
Let $X$ and $Y$ complex Banach spaces.
\begin{itemize}
	\item[(i)]\label{property:laplace_1}
	(Ref.~\refcite{arendt1987vector}, Corollary 1.6.2).
	Let $f \in L^1_{\normalfont\text{loc}}(\mathbb{R}_+;X)$
	and $\mathsf{T} \in \mathscr{L}(X,Y)$, and let
	$(\mathsf{T} \circ f)(t) = \mathsf{T}(f(t))$.
	Then, $\mathsf{T} \circ f \in L^1_{\normalfont\text{loc}}(\mathbb{R}_+;Y)$.
	If $\mathcal{L}\{f\}(s)$ exists, then $\mathcal{L}\{\mathsf{T} \circ f \}(s)$
	exists and equals 
	$$
		\mathcal{L}\{\mathsf{T} \circ f \}(s) = \mathsf{T}(\mathcal{L}\{f\}(s)).
	$$
	\item[(ii)]\label{property:laplace_3}
	(Ref.~\refcite{arendt1987vector}, Corollary 1.6.5).
	Let $f \in L^1_{\normalfont\text{loc}}(\mathbb{R}_+;X)$
	and let $F(t) = \int_{0}^{t}f(\tau)\normalfont\text{d}\tau$.
	If $\Re\{s\}>0$ and $\mathcal{L}\{f\}(s)$ exists, then 
	$\mathcal{L}\{F\}(s)$ exists and
	$$\mathcal{L}\{F\}(s) = \frac{\mathcal{L}\{f\}(s)}{s}. $$ 
	\item[(iii)]\label{property:laplace_2}
	(Ref.~\refcite{arendt1987vector}, Corollary 1.6.6).
	Let $f:\IR_+ \rightarrow X$ be absolutely continuous
	and differentiable a.e.
	If $\Re\{s\}>0$ and $\mathcal{L}\{\partial_tf\}(s)$ exists, then 
	$\mathcal{L}\{\partial_t f\}(s)$ exists and
	$$\mathcal{L}\{\partial_tf\}(s) = s\mathcal{L}\{f\}(s) - f(0).$$
\end{itemize}
\end{proposition}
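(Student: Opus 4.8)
The plan is to establish the three identities in sequence, reducing (iii) to (ii), (ii) to a vector-valued Fubini argument, and (i) to the interchange of a bounded operator with a Bochner integral. None of these is deep; the work lies in justifying the interchanges when the defining Laplace integral converges only improperly.

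For part (i), I would first record that boundedness of $\mathsf{T}$ gives $\norm{(\mathsf{T}\circ f)(t)}_Y \leq \norm{\mathsf{T}}\,\norm{f(t)}_X$, so local integrability transfers from $f$ to $\mathsf{T}\circ f$. The identity then rests on the fundamental property that bounded linear operators commute with the Bochner integral: for each $s \in \Pi_+$ at which $\mathcal{L}\{f\}(s)$ exists,
\[
	\mathcal{L}\{\mathsf{T}\circ f\}(s)
	=
	\int_0^\infty \e^{-st}\,\mathsf{T}(f(t))\,\odif{t}
	=
	\mathsf{T}\!\left(\int_0^\infty \e^{-st} f(t)\,\odif{t}\right)
	=
	\mathsf{T}(\mathcal{L}\{f\}(s)).
\]
If the outer integral is only improper, I would prove the middle equality first on truncated intervals $[0,R]$, where it is immediate from linearity and continuity of $\mathsf{T}$, and then let $R\to\infty$, invoking continuity of $\mathsf{T}$ once more to pass the limit inside.

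For part (ii), I would apply the vector-valued Fubini theorem on the triangular region $\{(t,\tau): 0\leq \tau\leq t<\infty\}$. Starting from
\[
	\mathcal{L}\{F\}(s)
	=
	\int_0^\infty \e^{-st}\int_0^t f(\tau)\,\odif{\tau}\,\odif{t},
\]
interchanging the order of integration yields $\int_0^\infty f(\tau)\bigl(\int_\tau^\infty \e^{-st}\,\odif{t}\bigr)\odif{\tau}$, and the inner scalar integral equals $\e^{-s\tau}/s$ precisely because $\Re\{s\}>0$. This delivers $\mathcal{L}\{F\}(s) = s^{-1}\mathcal{L}\{f\}(s)$. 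Part (iii) then follows by combining (ii) with the fundamental theorem of calculus for absolutely continuous $X$-valued functions: since $f$ is absolutely continuous, $f(t) = f(0) + \int_0^t \partial_t f(\tau)\,\odif{\tau}$. Taking Laplace transforms, using linearity together with the elementary identity $\mathcal{L}\{c\}(s) = c/s$ for the constant $c = f(0)$ and part (ii) applied to $\partial_t f$, one obtains $\mathcal{L}\{f\}(s) = s^{-1}f(0) + s^{-1}\mathcal{L}\{\partial_t f\}(s)$, and rearranging gives the stated formula.

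The main obstacle is the Fubini interchange in (ii) when $\mathcal{L}\{f\}(s)$ is only conditionally convergent, since the naive Tonelli bound requires absolute integrability of $(t,\tau)\mapsto \e^{-st}f(\tau)$ over the triangle. I would handle this by exploiting that for $\Re\{s\}>0$ the weight $\e^{-\Re\{s\}t}$ forces absolute convergence on the relevant region whenever $\mathcal{L}\{f\}(\sigma)$ exists for some $0<\sigma<\Re\{s\}$, or, failing that, by truncating at $t=R$, where Fubini is unconditional, and analyzing the tail $\int_R^\infty$ via an integration-by-parts (Abel-type) estimate that controls it in terms of the convergent improper integral. Everything else is routine and, as indicated by the citations to Arendt, entirely standard.
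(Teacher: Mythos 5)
The paper offers no proof of this proposition: all three items are imported verbatim from the cited monograph of Arendt et al.\ (Corollaries 1.6.2, 1.6.5 and 1.6.6), so there is no in-paper argument to compare against. Your reconstruction is, in substance, the standard proof from that source: truncation plus continuity of $\mathsf{T}$ for (i); Fubini on bounded triangles plus a tail estimate for (ii); and reduction of (iii) to (ii) via the vector-valued fundamental theorem of calculus. All of this is sound, and you correctly identified that the only real work is in (ii) when the Laplace integral converges merely improperly.

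One of your two proposed fixes for that step is, however, false and should be struck: the existence of $\mathcal{L}\{f\}(\sigma)$ for some $0<\sigma<\Re\{s\}$ does \emph{not} force absolute convergence at $s$. The abscissa of absolute convergence can strictly exceed the abscissa of convergence; already for $X=\mathbb{C}$ and $f(t)=\e^{t}\sin(\e^{t})$, the substitution $u=\e^{t}$ shows that $\mathcal{L}\{f\}(s)=\int_1^\infty u^{-s}\sin(u)\,\text{d}u$ converges (Dirichlet test) for all $\Re\{s\}>0$, but absolutely only for $\Re\{s\}>1$. Your fallback route is the correct and, in fact, the standard one: Fubini on the triangle $\{0\le\tau\le t\le R\}$ is unconditional for $f\in L^1_{\text{loc}}$ and yields
\begin{equation}
	\int_0^R \e^{-st}F(t)\,\text{d}t
	=
	\frac{1}{s}\int_0^R \e^{-s\tau}f(\tau)\,\text{d}\tau
	-
	\frac{\e^{-sR}}{s}\,F(R),
\end{equation}
so the entire content of (ii) is the Abel-type verification that $\e^{-sR}F(R)\rightarrow 0$ whenever $\mathcal{L}\{f\}(s)$ exists; this follows by writing $F(R)=\int_0^R \e^{s\tau}\,\text{d}H(\tau)$ with $H(t)=\int_0^t \e^{-s\tau}f(\tau)\,\text{d}\tau$ bounded and convergent, and integrating by parts. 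Finally, in (iii) you should make explicit that the a.e.-differentiability hypothesis is not decorative: absolute continuity alone does not yield the fundamental theorem of calculus for $X$-valued functions unless $X$ has the Radon--Nikod\'ym property. Given both hypotheses, one has $\norm{\partial_t f(t)}_X\le V'(t)$ a.e.\ (with $V$ the variation function), whence $\partial_t f\in L^1_{\text{loc}}(\mathbb{R}_+;X)$, and $g(t)=f(t)-f(0)-\int_0^t\partial_t f(\tau)\,\text{d}\tau$ is absolutely continuous with vanishing derivative a.e., hence constant by composing with functionals and invoking Hahn--Banach. With that one-line justification inserted and the erroneous absolute-convergence remark removed, your proof is complete and coincides with the route taken in the cited reference.
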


Following Ref.~\refcite{RR97} (Chapter 4) and Ref.~\refcite{hille1996functional} (Section 6.4)
we introduce Hardy spaces \rev{in} $\Pi_\alpha$.
Throughout, let $V$ be a Banach space equipped with the norm
$\norm{\cdot}_V$, and for $\rev{\alpha \in \mathbb{R}}$ we set
\begin{equation}
	\Pi_\alpha
	\coloneqq
	\left\{
		z\in \mathbb{C}:
		\Re\{z\}
		>\alpha
	\right\}.
\end{equation}

\begin{definition}[Hardy Spaces, {Ref.~\refcite{hille1996functional}, Definition 6.4.1}]
\label{def:hardy_spaces}
Let $V$ be a \emph{complex} Banach space equipped with the norm $\norm{\cdot}_V$.
For $p\in[1,\infty)$ and \rev{$\alpha\in \mathbb{R}$}, we denote by
$\mathscr{H}^{p}_\alpha(V)$ the set of all $V$-valued 
functions $f:\Pi_\alpha\rightarrow V$ satisfying
the following properties:
\begin{itemize}
\item[(i)] 
The function $f:\Pi_\alpha\rightarrow V$ is holomorphic.
\item[(ii)]
It holds\footnote{We have scaled the variable $\tau$ by $2\pi$ so that in Theorem~\ref{eq:laplace_transform_bijective} 
ahead the isometry stated in \eqref{eq:norm_equiv} holds without any constant.}
\begin{align}
	\norm{f}_{\mathscr{H}^{p}_\alpha(V)}
	\coloneqq
	\sup_{\sigma>\alpha}
	\left(
		\,
		\int_{-\infty}^{+\infty}
		\norm{
			f(\sigma+\imath \tau)
		}^p_V
		\frac{
		\normalfont\text{d}
		\tau}{2\pi}
	\right)^{\frac{1}{p}}
	<
	\infty.
\end{align}
\item[(iii)]
For each $f \in \mathscr{H}^p(\Pi_\alpha;V)$
$\displaystyle\lim_{\sigma \rightarrow \alpha} f(\sigma+\imath \tau) = f(\alpha+\imath \tau)$ exists for almost 
all values of $\tau$, and 
\begin{equation}
	\int_{-\infty}^{+\infty}
	\norm{
		f(\alpha+\imath \tau)
	}^p_V
	\normalfont\text{d}
	\tau
	<\infty.
\end{equation}
\end{itemize}
Equipped with the norm $\norm{\cdot}_{\mathscr{H}^{p}_\alpha(V)}$
the space $\mathscr{H}^{p}_\alpha(V)$ is a Banach one.
\end{definition}

\begin{proposition}[{Ref.~\refcite{hille1996functional}, Theorem 6.4.3}]
\label{prop:properties_hardy}
Let $p\in [1,\infty)$ and \rev{$\alpha \in \mathbb{R}$}. 
\begin{itemize}
\item[(i)]
For each $f \in \mathscr{H}^p_\alpha(V)$ the function 
\begin{equation}
	T(\sigma,f)
	= 
	\int_{-\infty}^{+\infty}
	\norm{
		f(\sigma+\imath \tau)
	}^p_V
	\normalfont\text{d}
	\tau
\end{equation}
is a continuous monotone decreasing function of $\sigma$
for $\sigma \geq \alpha$. 
In particular, $T(\alpha,f) = \norm{f}^p_{\mathscr{H}^p_\alpha(V)}$
and $\displaystyle\lim_{\sigma \rightarrow \infty} T(\sigma,f)  = 0$.
\item[(ii)]
For each $f \in \mathscr{H}^p_\alpha(V)$
\begin{equation}
	\lim_{\sigma \rightarrow \alpha}
	\int_{-\infty}^{+\infty}
	\norm{
		f(\sigma+\imath \tau)
		-
		f(\alpha+\imath \tau)
	}^p_V
	\normalfont\text{d}
	\tau
	= 0.
\end{equation}
\end{itemize}
\end{proposition}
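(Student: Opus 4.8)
The plan is to reduce both assertions to a single structural fact, namely the subharmonicity of the scalar function $u(z) \coloneqq \norm{f(z)}_V^p$ on $\Pi_\alpha$, together with the convexity of its integral over vertical lines. First I would record the key lemma: since $f$ is holomorphic and $V$-valued, the Hahn--Banach theorem gives $\norm{f(z)}_V = \sup\{\snorm{\dual{\phi}{f(z)}} : \phi \in V',\ \norm{\phi}_{V'}\le 1\}$, and each $z\mapsto \dual{\phi}{f(z)}$ is a scalar holomorphic function. Hence each $\snorm{\dual{\phi}{f(z)}}$ is subharmonic, and so is the upper envelope $\norm{f(z)}_V$, its upper semicontinuity being guaranteed by the continuity of $z\mapsto \norm{f(z)}_V$. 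Because $t\mapsto t^p$ is convex and non-decreasing on $[0,\infty)$ for $p\ge 1$, the composition $u=\norm{f}_V^p$ is again subharmonic on $\Pi_\alpha$.

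Next I would establish the monotonicity and interior continuity in part (i). Writing $T(\sigma)=T(\sigma,f)=\int_{-\infty}^{+\infty} u(\sigma+\imath\tau)\,\mathrm{d}\tau$, I integrate the sub-mean-value inequality $u(z_0)\le \frac{1}{2\pi}\int_0^{2\pi}u(z_0+r\e^{\imath\theta})\,\mathrm{d}\theta$ (valid for $0<r<\sigma-\alpha$) over $\tau_0\in\IR$ with $z_0=\sigma+\imath\tau_0$. Tonelli's theorem together with translation invariance of Lebesgue measure in the $\tau$-variable yields $T(\sigma)\le \frac{1}{2\pi}\int_0^{2\pi}T(\sigma+r\cos\theta)\,\mathrm{d}\theta$. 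This is the one-dimensional sub-mean-value inequality, so $T$ is convex on $(\alpha,\infty)$ and hence continuous there. Membership $f\in\mathscr{H}^p_\alpha(V)$ forces $\sup_{\sigma>\alpha}T(\sigma)=\norm{f}^p_{\mathscr{H}^p_\alpha(V)}<\infty$, and a convex function that is bounded above on $(\alpha,\infty)$ must be non-increasing, for otherwise convexity would force $T(\sigma)\to+\infty$. Both one-sided limits then exist, and $\lim_{\sigma\to\alpha^+}T(\sigma)=\sup_\sigma T(\sigma)=\norm{f}^p_{\mathscr{H}^p_\alpha(V)}$.

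Now the $L^p$-continuity at the boundary (part (ii)) and the identification $T(\alpha)=\norm{f}^p_{\mathscr{H}^p_\alpha(V)}$. By condition (iii) of Definition~\ref{def:hardy_spaces} the boundary trace $f(\alpha+\imath\,\cdot)\in L^p(\IR;V)$ exists, and half-plane Hardy theory provides the Poisson representation $f(\sigma+\imath\tau)=\bigl(P_{\sigma-\alpha}*f(\alpha+\imath\,\cdot)\bigr)(\tau)$ with $P_s(\tau)=\frac{1}{\pi}\frac{s}{s^2+\tau^2}$. Since $\{P_s\}_{s>0}$ is an approximate identity, convolution against it converges in the Bochner space $L^p(\IR;V)$ as $s\to 0^+$, which is exactly the claim of part (ii). Minkowski's inequality $\snorm{T(\sigma)^{1/p}-T(\alpha)^{1/p}}\le \norm{f(\sigma+\imath\,\cdot)-f(\alpha+\imath\,\cdot)}_{L^p(\IR;V)}$ then yields continuity of $T$ at $\alpha$ and confirms $T(\alpha)=\norm{f}^p_{\mathscr{H}^p_\alpha(V)}$.

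The main obstacle is the decay $\lim_{\sigma\to\infty}T(\sigma)=0$, which is genuinely analytic and cannot follow from subharmonicity alone: a harmonic $u$ with an $L^1$ boundary density would keep $T$ constant, so cancellation coming from holomorphy is essential. I would obtain it by transferring to the unit disk through a Cayley-type conformal map $\phi:\ID\to\Pi_\alpha$, under which $F(w)\coloneqq \phi'(w)^{1/p}f(\phi(w))$ belongs to the classical disk Hardy space $H^p(\ID;V)$ (a holomorphic branch of $\phi'^{1/p}$ exists since $\phi'$ is nonvanishing on the simply connected $\ID$), and the line integral $T(\sigma)$ becomes the integral of $\norm{F}_V^p$ over the image arc of $\{\Re z=\sigma\}$. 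As $\sigma\to\infty$ this arc recedes toward the single boundary point $\phi^{-1}(\infty)\in\partial\ID$ while shrinking onto $\partial\ID$, so the finiteness of $\int_{\partial\ID}\norm{F}_V^p$, equivalent to $\norm{f}^p_{\mathscr{H}^p_\alpha(V)}<\infty$, together with absolute continuity of the integral forces $T(\sigma)\to 0$. This conformal transfer, rather than any soft estimate, is where the one-sided spectral structure characteristic of Hardy functions enters, and I expect it to be the delicate step.
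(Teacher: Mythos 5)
The paper gives no proof of this proposition---it is quoted directly from Hille--Phillips, Theorem 6.4.3---so you are reconstructing a textbook argument, and most of your reconstruction follows the classical development soundly. The subharmonicity of $\norm{f(z)}_V^p$ via Hahn--Banach scalarization, the integrated sub-mean-value inequality for $T$, and the fact that a convex function bounded above on $(\alpha,\infty)$ is non-increasing are all correct (one small wrinkle: to pass from the sub-mean-value inequality to convexity you need some a priori regularity of $T$; lower semicontinuity of $T$ via Fatou plus a mollification fixes it, or you can bypass subharmonicity entirely, since the interior Poisson identity $f(\sigma+\imath\,\cdot) = P_{\sigma-\sigma'} * f(\sigma'+\imath\,\cdot)$ together with Young's inequality and $\norm{P_s}_{L^1}=1$ gives monotonicity and continuity on $(\alpha,\infty)$ in one line). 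Part (ii), via the boundary Poisson representation and the approximate-identity property in the Bochner space $L^p(\IR;V)$, is correct, granting the vector-valued Poisson formula; that formula follows from the scalar one by testing against $\phi\in V'$ and using condition (iii) of Definition~\ref{def:hardy_spaces}.

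The genuine gap is the decay $\lim_{\sigma\to\infty}T(\sigma,f)=0$. Your conformal transfer correctly converts $T(\sigma)$ into the integral of $\norm{F}_V^p$ over horocycles $C_\sigma$ shrinking to $\phi^{-1}(\infty)$, but the concluding inference---finiteness of $\int_{\partial\ID}\norm{F}_V^p$ plus ``absolute continuity of the integral'' forces the horocycle integrals to vanish---is exactly the kind of size argument you yourself ruled out one sentence earlier, and it is false at that level of generality. Quantitatively, back in the half-plane: $\int_{\IR}P_s(\tau-t)\,\mathrm{d}\tau = 1$ for every $t\in\IR$ and every $s>0$, so if $g=\norm{f(\alpha+\imath\,\cdot)}_V^p\in L^1(\IR)$ then the line integrals of the Poisson extension $P[g]$, i.e.\ of the least harmonic majorant of $\norm{F}_V^p$ transplanted back, equal $\norm{g}_{L^1}$ for \emph{all} $\sigma$---no decay, despite the boundary density being absolutely continuous. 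Nothing in your transfer step actually uses holomorphy, so the argument fails as written. To close the gap: for $p>1$ no cancellation is needed after all, since $T(\sigma)^{1/p}\le\norm{P_{\sigma-\alpha}*g^{1/p}}_{L^p}$ with $g^{1/p}\in L^p$, and $\norm{P_s}_{L^p}=C_p\,s^{1/p-1}\to 0$ combined with a density argument ($L^1\cap L^p$ dense in $L^p$, the convolution operators uniformly contractive) gives the decay. For $p=1$, where holomorphy genuinely enters, replace the Poisson kernel by the Cauchy--Szeg\H{o} kernel $K_s(\tau)=\frac{1}{2\pi(s-\imath\tau)}$: on Hardy boundary values (one-sided Fourier spectrum, via the vector-valued F.\ and M.\ Riesz theorem obtained by scalarization) convolution with $K_s$ coincides with convolution with $P_s$, hence is an $L^1$ contraction, while on the dense class of $g$ with $\widehat{g}\in C_c^\infty((0,\infty))$ one checks $\norm{g*K_s}_{L^1}\to 0$ directly; combining the two yields $T(\sigma)\to 0$. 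A final cosmetic point, not your doing: with the paper's $\mathrm{d}\tau/2\pi$ normalization of the Hardy norm, the stated identity $T(\alpha,f)=\norm{f}^p_{\mathscr{H}^p_\alpha(V)}$ is off by a factor $2\pi$, an inconsistency inherited from rescaling the Hille--Phillips statement.
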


\begin{remark}\label{rmk:def_hardy_spaces_item_3}
Even though the precise definition of Hardy spaces involves 
verifying the three items described in Definition~\ref{def:hardy_spaces},
as discussed in Ref.~\refcite{hille1996functional}, Chapter 2, Section 6.4,
the last item can be proved to be redundant. Thus, when verifying
that a given function actually belongs to a Hardy space, we refrain from proving this last statement. 
\end{remark}

% \rev{
% \begin{lemma}[{\cite[Theorem 6.4.1]{hille1996functional}}]
% \label{lmm:cauchy_poisson}
% Let $f \in \mathscr{H}^p(\Pi_\alpha,X)$, then $f$ is represented 
% by its Proper Cauchy and Poisson integral for $\sigma>\alpha$,
% that is
% \begin{subequations}
% \begin{align}
%     f(s)
%     =
%     &
%     \frac{1}{2\pi \imath}
%     \int_{\alpha-\imath \infty}^{\alpha+\imath \infty}
%     \frac{f(\mu)}{\lambda-\mu} \normalfont\text{d} \mu,
%     \quad \lambda \in \Pi_\sigma \\
%     f(\sigma + \imath \tau)
%     =
%     &
%     \frac{\sigma-\alpha}{\pi}
%     \int_{-\infty}^{+\infty}
%     \frac{f(\alpha+\imath \beta)}{(\sigma-\alpha)^2+(\tau - \beta)^2}
%     \normalfont\text{d} \beta.
%     \label{eq:Poisson_formula}
% \end{align}
% \end{subequations}
% \end{lemma}
% }%

The following result is a Hilbert space-valued version of the 
Paley-Wiener representation theorem.

\begin{theorem}[Paley-Wiener Theorem, Ref.~{\refcite{RR97}, Section 4.8, Theorem E}]
\label{eq:laplace_transform_bijective}
Let $X$ be a Hilbert space and let \rev{$\alpha\in \mathbb{R}$}.
Then, the map 
$
	\mathcal{L}:
	L^2_\alpha(\IR_+;X) 
	\rightarrow 
	\mathscr{H}^2_\alpha(X)
$
is an isometric isomorphism, i.e.,
$$\mathcal{L} \in \mathscr{L}_{\normalfont\text{iso}}(L^2_\alpha(\IR_+;X), \mathscr{H}^2_\alpha(X)),$$
and for each $f \in L^2_\alpha(\IR_+;X)$
\begin{equation}\label{eq:norm_equiv}
	\norm{f}_{L^2_\alpha\left(\mathbb{R}_+;X\right)} 
	= 
	\norm{
		\mathcal{L}\{f\}
	}_{\mathscr{H}^2_\alpha\left(X\right)}.
\end{equation}
\end{theorem}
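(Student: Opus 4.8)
The plan is to reduce to the case $\alpha = 0$ and then exploit the vector-valued Plancherel theorem, which is the reason the hypothesis that $X$ is a \emph{Hilbert} space enters. Given $f \in L^2_\alpha(\IR_+;X)$, I would set $g(t) = \exp(-\alpha t) f(t)$, so that $\norm{f}_{L^2_\alpha(\IR_+;X)} = \norm{g}_{L^2_0(\IR_+;X)}$ and $\mathcal{L}\{f\}(s) = \mathcal{L}\{g\}(s-\alpha)$, while the horizontal shift $s \mapsto s-\alpha$ carries $\Pi_\alpha$ onto $\Pi_0$ and preserves the Hardy norm of Definition~\ref{def:hardy_spaces}. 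Hence it suffices to treat $\alpha = 0$. For $f \in L^2(\IR_+;X)$, I extend $f$ by zero to $\IR$ and write $F(s) = \mathcal{L}\{f\}(s)$. For fixed $\sigma > 0$, the slice $\tau \mapsto F(\sigma + \imath\tau)$ is exactly the Fourier transform of $g_\sigma(t) = \exp(-\sigma t) f(t) \mathbf{1}_{\{t>0\}} \in L^2(\IR;X)$. Since $X$ is Hilbert the Plancherel identity holds verbatim on $L^2(\IR;X)$, giving
\begin{equation}
	\int_{-\infty}^{+\infty} \norm{F(\sigma+\imath\tau)}_X^2 \frac{\mathrm{d}\tau}{2\pi} = \int_0^\infty \norm{f(t)}_X^2 \exp(-2\sigma t)\,\mathrm{d}t.
\end{equation}
Holomorphy of $F$ on $\Pi_0$ follows by differentiating under the integral sign (or via Morera's theorem), and letting $\sigma \downarrow 0$ by monotone convergence shows that the supremum over $\sigma$ equals $\norm{f}^2_{L^2(\IR_+;X)}$. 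This places $F \in \mathscr{H}^2_0(X)$ with $\norm{F}_{\mathscr{H}^2_0(X)} = \norm{f}_{L^2(\IR_+;X)}$, proving injectivity and the isometry \eqref{eq:norm_equiv} at once.

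For surjectivity I would invert this construction. Given $F \in \mathscr{H}^2_0(X)$, for each $\sigma > 0$ the slice $F(\sigma + \imath\cdot)$ lies in $L^2(\IR;X)$ by Proposition~\ref{prop:properties_hardy}(i), so I define $\phi_\sigma = \mathcal{F}^{-1}[F(\sigma+\imath\cdot)]$. Applying the $X$-valued Cauchy theorem to $s \mapsto \exp(st) F(s)$ on rectangles $[\sigma_1,\sigma_2]\times[-R,R] \subset \Pi_0$ and sending $R \to \infty$ shows that $f(t) := \exp(\sigma t)\phi_\sigma(t)$ is independent of $\sigma$. Plancherel then gives $\int_{-\infty}^{+\infty}\norm{f(t)}_X^2 \exp(-2\sigma t)\,\mathrm{d}t = \int_{-\infty}^{+\infty}\norm{F(\sigma+\imath\tau)}_X^2\frac{\mathrm{d}\tau}{2\pi} \leq \norm{F}^2_{\mathscr{H}^2_0(X)}$ for every $\sigma > 0$. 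Because $\exp(-2\sigma t) \to \infty$ as $\sigma \to \infty$ pointwise on $\{t<0\}$, this uniform bound forces $f = 0$ a.e.\ on $(-\infty,0)$; letting $\sigma \downarrow 0$ by monotone convergence then yields $f \in L^2(\IR_+;X)$ with $\mathcal{L}\{f\} = F$. Hence $\mathcal{L}$ is a norm-preserving bijection, i.e.\ an isometric isomorphism.

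The hard part will be the surjectivity argument, and within it the control of the two horizontal sides of the rectangle in Cauchy's theorem: their lengths stay fixed, so one must verify $\int_{\sigma_1}^{\sigma_2}\norm{F(\sigma\pm\imath R)}_X\,\mathrm{d}\sigma \to 0$ along a suitable sequence $R_n \to \infty$. This I would extract from Fubini applied to the finite double integral $\int_{\sigma_1}^{\sigma_2}\int_{-\infty}^{+\infty}\norm{F(\sigma+\imath\tau)}_X^2\,\mathrm{d}\tau\,\mathrm{d}\sigma$, whose finiteness rests on the uniform Hardy bound of Proposition~\ref{prop:properties_hardy}(i), combined with the uniform boundedness of $|\exp(st)|$ on the strip for fixed $t$. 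I emphasize that the Hilbert-space hypothesis is used essentially twice, in both directions through Plancherel, since the $L^2$ Fourier isometry characterizes precisely the Hilbert setting; the remaining ingredients (holomorphy, the vector-valued Cauchy theorem, and the boundary values supplied by Definition~\ref{def:hardy_spaces}(iii)) transfer from the scalar theory to the $X$-valued one with only notational changes.
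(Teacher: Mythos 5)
Your proposal cannot be compared against an internal argument, because the paper offers none: Theorem~\ref{eq:laplace_transform_bijective} is imported verbatim from the cited reference (Section 4.8, Theorem E), and the only related computation in the source is the (unused) exponential-shift reduction you also employ. Judged on its own, your sketch is the standard proof of the vector-valued Paley--Wiener theorem and is correct in outline. The shift $g(t)=\exp(-\alpha t)f(t)$ with $\mathcal{L}\{f\}(s)=\mathcal{L}\{g\}(s-\alpha)$ transports both norms faithfully, so reducing to $\alpha=0$ is legitimate for every real $\alpha$; the slice identity is exactly Plancherel on $L^2(\IR;X)$, valid because $X$ is Hilbert (and, as you correctly remark, essentially only then, by Kwapie\'n's theorem); your $\mathrm{d}\tau/2\pi$ normalization matches the paper's footnote scaling of the $\mathscr{H}^2_\alpha$-norm; and monotone convergence as $\sigma\downarrow 0$ identifies the supremum with $\norm{f}^2_{L^2(\IR_+;X)}$, so isometry and injectivity follow at once (by Remark~\ref{rmk:def_hardy_spaces_item_3} you need only verify items (i)--(ii) of Definition~\ref{def:hardy_spaces}, which you do). On surjectivity you isolated precisely the two delicate points: the horizontal edges of the Cauchy rectangle, killed along a subsequence $R_n\to\infty$ extracted by Fubini and Cauchy--Schwarz from the finite double integral over the strip, and the support argument, where Fatou applied to the uniform-in-$\sigma$ bound forces $f=0$ a.e.\ on $(-\infty,0)$. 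Two routine details a full write-up should spell out: first, since $F(\sigma+\imath\cdot)$ lies only in $L^2$, the inverse transform $\phi_\sigma$ is an $L^2$-limit rather than a pointwise integral, so the identity $\exp(\sigma_1 t)\phi_{\sigma_1}=\exp(\sigma_2 t)\phi_{\sigma_2}$ from the contour shift must be established after pairing with test functions, or for the truncated transforms over $\snorm{\tau}\le R_n$ followed by an $L^2$ passage to the limit, exactly as in the scalar case; second, the closing identity $\mathcal{L}\{f\}=F$ is clean because $\exp(-\sigma t)f(t)\in L^1\cap L^2$ on $\IR_+$ by Cauchy--Schwarz, so the Plancherel transform coincides with the defining Laplace integral. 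With those caveats, your argument is complete in all essentials and is the proof the cited reference itself formalizes.
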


Equipped with these tools, \rev{we analyze} the
\rev{LT-MOR} algorithm introduced in Section~\ref{sec:FRB}.
Let $u \in L^2_\alpha(\IR_+;X)$ for some \rev{$\alpha \in \mathbb{R}$}.
We are interested in finding 
a finite dimensional subspace $X_R$ of $X$ of dimension $R\in \mathbb{N}$
such that
\begin{equation}\label{eq:minimization_problem}
	X_R
	=
	\argmin_{
	\substack{
		X_R \subset X
		\\
		\text{dim}(X_R)\leq R	
	}	
	}
	\norm{
		u
		-
		\mathsf{P}_{X_R} u
	}^2_{L^2_\alpha(\IR_+;X)},
\end{equation}
where $\mathsf{P}_{X_R}: X \rightarrow X_R$ denotes the orthogonal 
projection operator onto $X_R$.

We resort to~\eqref{eq:laplace_transform_bijective}
to obtain an equivalent expression for \eqref{eq:minimization_problem}
in terms of the Laplace transform of $u$.
Firstly, according to~\eqref{eq:laplace_transform_bijective}
one has that $\widehat{u} \coloneqq \mathcal{L}\{u\} \in \mathscr{H}^2_\alpha(X)$
as $u \in L^2_\alpha(\IR_+;X)$.
Next, recalling \eqref{eq:norm_equiv} in Theorem~\ref{eq:laplace_transform_bijective}
\begin{equation}
\begin{aligned}
	\norm{
		u
		-
		\mathsf{P}_{X_R} u
	}^2_{L^2_\alpha(\IR_+;X)}
	&
	=
	\norm{
		\widehat{u}
		-
		\mathsf{P}_{X_R}
		\widehat{u}
	}^2_{\mathscr{H}^2_\alpha(X)}
	\\
	&
	\stackrel{(\clubsuit)}{=}
	\sup_{\sigma>\alpha}
	\int_{-\infty}^{+\infty}
	\,
	\norm{
		\widehat{u}
		(\sigma + \imath \tau)
		-
		\left(
			\mathsf{P}_{X_R} 
			\widehat{u}(\sigma + \imath \tau)
		\right)
	}^2_{X}
	\frac{\normalfont\text{d} \tau}{2\pi}
	\\
	&
	\stackrel{(\spadesuit)}{=}
	\int_{-\infty}^{+\infty}
	\,
	\norm{
		\widehat{u}
		(\alpha + \imath \tau)
		-
		\mathsf{P}_{X_R} 
		\widehat{u}(\alpha + \imath \tau)
	}^2_{X}
	\frac{\normalfont\text{d} \tau}{2\pi}.
\end{aligned}
\end{equation}
In $(\clubsuit)$ we have used the definition
of the $\mathscr{H}^2_\alpha(X)$-norm and
item (i) in Proposition~\ref{eq:operational_properties}, whereas in $(\spadesuit)$
we have used Proposition~\ref{prop:properties_hardy}, item (i).
Therefore, one has
\begin{equation}\label{eq:minimization_problem_2}
\begin{aligned}
	X_R
	&
	=
	\argmin_{
	\substack{
		X_R \subset X
		\\
		\text{dim}(X_R)\leq R	
	}	
	}
	\norm{
		u
		-
		\mathsf{P}_{X_R} u
	}^2_{L^2_\alpha(\IR_+;X)}
	\\
	&
	=
	\argmin_{
	\substack{
		X_R \subset X
		\\
		\text{dim}(X_R)\leq R	
	}	
	}
	\norm{
		\widehat{u}
		-
		\mathsf{P}_{X_R} \widehat{u}
	}^2_{\mathscr{H}^2_\alpha(X)}
	\\
	&
	=
	\argmin_{
	\substack{
		X_R \subset X
		\\
		\text{dim}(X_R)\leq R	
	}	
	}
	\,
	\int_{-\infty}^{+\infty}
	\norm{
		\widehat{u}
		(\alpha + \imath \tau)
		-
		\mathsf{P}_{X_R} 
		\widehat{u}(\alpha + \imath \tau)
	}^2_{X}
	\frac{\normalfont\text{d} \tau}{2\pi},
\end{aligned}
\end{equation}
\rev{i.e.,} the minimization problem \eqref{eq:minimization_problem}
stated in the time domain can be solved in the Laplace domain using the Laplace transform
of $u \in L^2_\alpha(\IR_+;X)$.
Indeed, \eqref{eq:fPOD} corresponds to a numerical approximation
of the last integral in \eqref{eq:minimization_problem_2}
with quadrature points $\mathcal{P}_s$ and quadrature weights 
$\{\omega_1,\ldots,\omega_{M}\}$. We further elaborate on this
ahead in Section~\ref{sec:computation_rb}.
\subsection{Laplace \rev{Variable} Explicit Estimates}
\label{sec:complexity_estimates}
%%%%%%%%%%%%%%%%%%%%%%%%%%%%%%%%%%%%%%%%%%
%In this section we prove that the the Laplace-based reduced basis
%algorithm produces a basis capable of approximating the solution 
%to the parabolic evolution problem with exponential accuracy. 
To properly analyze the \rev{LT-MOR} algorithm one needs to precisely
understand the dependence of $\widehat{u}(s)$ upon
the complex Laplace \rev{variable} $s\in \Pi_\alpha$,
where \rev{$\alpha \in \mathbb{R}$}.

Formally, one can notice that, for each
$s \in \Pi_\alpha$ and some $\alpha\geq0$, $\widehat{u}(s)$ is solution to
the following problem in strong form:
Find $\widehat{u}(s): \Omega \rightarrow \IC$ such that
\begin{equation}
	s
	\widehat{u}(s)
	+
	\mathcal{A}
	\widehat{u}(s)
	=
	\widehat{f}(s)
	+
	u_0
	\quad
	\text{in }
	\Omega,
\end{equation}
where $\mathcal{A}$ is as in \eqref{eq:scalar_wave_eq}, $\widehat{f}(s)$ corresponds to the Laplace transform of
$f: \IR_+\times \Omega \rightarrow \IR$, and equipped with homogeneous Dirichlet
boundary conditions.
%where $\gamma: H^1(\Omega) \rightarrow H^{\frac{1}{2}}(\Gamma)$
%signifies the trace operator.

This problem admits the following variational formulation.

\begin{problem}[Laplace Domain Continuous Variational Formulation]\label{prb:frequency_problem}
Let $f \in L^2_\alpha(\IR_+;L^2(\Omega;\mathbb{R}))$
for some \rev{$\alpha \in \mathbb{R}$}, and let $u_0 \in \rev{H^1_0(\Omega;\mathbb{R})}$.

For each $s\in \Pi_\alpha$ we seek $\widehat{u}(s) \in H^1_0(\Omega)$
satisfying
\begin{equation}\label{eq:complex_frequency_Laplace}
	\mathsf{b}(\widehat{u}(s),v;s)
	=
	\mathsf{g}(v;s),
	\quad
	\forall
	v\in H^1_0(\Omega), 
\end{equation}
where, for each $s \in \Pi_\alpha$, the sesquilinear
form $\mathsf{b}(\cdot,\cdot;s):  H^1_0(\Omega) \times  H^1_0(\Omega) 
\rightarrow \IC$ is defined as 
\begin{equation}
	\mathsf{b}(w,v;s)
	\coloneqq
	s
	\left(
		w
		,
		v
	\right)_{L^2(\Omega)}
	+
	\mathsf{a}
	\left(
		w
		,
		v
	\right),
	\quad
	\forall
	w,v
	\in 
	H^1_0(\Omega),
\end{equation}
whereas the antilinear form $\mathsf{g}(\cdot;s): H^1_0(\Omega)\rightarrow \IC$
is defined as
\begin{equation}\label{eq:f_rhs}
	\mathsf{g}(w;s)
	\coloneqq
	\dotp{
		\widehat{f}(s)
	}{w}_{L^2(\Omega)}
	+
	\dotp{
		u_0
	}{w}_{L^2(\Omega)}	,
	\quad
	\forall
	w
	\in 
	H^1_0(\Omega),
\end{equation}
where $\widehat{f} \in \mathscr{H}^2_\alpha(L^2(\Omega))$
is the Laplace transform of $f \in L^2_\alpha(\IR_+;L^2(\Omega;\mathbb{R}))$
(cf.~(\ref{eq:laplace_transform_bijective})).
\end{problem}

Observe that Problem~\ref{prb:frequency_problem} corresponds 
to the continuous counterpart of Problem~\ref{pr:sdp}, as the former
is set in $H^1_0(\Omega)$ as opposed to the latter, which is set in the
finite dimensional subspace $\mathbb{V}^{\mathbb{C}}_h$ of $H^1_0(\Omega)$.

We prove the following auxiliary result. 
\begin{lemma}\label{lmm:error_bound_L2}
Let \rev{$\alpha>-\frac{\underline{c}_{\bm{A}}}{C_P(\Omega)}$} and $q \in H^{-1}(\Omega)$.
Then, for each $s\in \Pi_\alpha$ there exists 
a unique $p(s) \in H^1_0(\Omega)$ solution to
\begin{equation}\label{eq:expand_p_s}
	\mathsf{b}(p(s),v;s)
	=
	\dual{q}{v}_{H^{-1}(\Omega) \times H^{1}_0(\Omega)},
	\quad
	\forall
	v \in H^1_0(\Omega).
\end{equation}
In addition, for each $s\in \Pi_\alpha$ it holds
\begin{equation}
	\norm{p(s)}_{H^{1}_0(\Omega)}
	\leq
	\frac{1}{\underline{c}_{\bm{A}}}
	\norm{
		q
	}_{H^{-1}(\Omega)}
	\;
	\text{and}
	\;
	\norm{p(s)}_{H^{-1}(\Omega)}
	\leq
	\frac{1}{\snorm{s}}
	\left(
	1
	+
    \frac{
		\rev{\overline{c}_{\bm{A}}}
	}{
		\rev{\gamma(\alpha)}
	}
	\right)
	\norm{
		q
	}_{H^{-1}(\Omega)}
\end{equation}
\rev{with
$
    \gamma(\alpha)
    \coloneqq
    \rev{\underline{c}_{\bm{A}}}
    +
    C_P(\Omega)
    \min
    \{
        \alpha
        ,
       0
    \}.
$
}
\end{lemma}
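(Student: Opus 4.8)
The plan is to regard \eqref{eq:expand_p_s} as a stationary variational problem for each fixed $s\in\Pi_\alpha$ and to invoke the complex (Lax--Milgram) representation theorem for continuous, coercive sesquilinear forms. First I would record continuity: writing $s=\sigma+\imath\tau$, the bound $\snorm{\mathsf{b}(w,v;s)}\le \snorm{s}\,\norm{w}_{L^2(\Omega)}\norm{v}_{L^2(\Omega)}+\snorm{\mathsf{a}(w,v)}$ together with Poincar\'e's inequality and the continuity of $\mathsf{a}$ yields a constant of the form $\snorm{s}C_P(\Omega)^2+\overline{c}_{\bm{A}}$. The decisive point is coercivity: taking real parts,
\[
\Re\{\mathsf{b}(w,w;s)\}=\sigma\norm{w}^2_{L^2(\Omega)}+\Re\{\mathsf{a}(w,w)\}\ge \sigma\norm{w}^2_{L^2(\Omega)}+\underline{c}_{\bm{A}}\norm{w}^2_{H^1_0(\Omega)}.
\]
When $\sigma\ge 0$ the first term is nonnegative and may be discarded, giving coercivity with constant $\underline{c}_{\bm{A}}$; when $\sigma<0$ I would use $\sigma>\alpha$ and Poincar\'e's inequality to absorb the negative $L^2$-contribution into the $H^1_0$-norm, arriving at the coercivity constant $\gamma(\alpha)=\underline{c}_{\bm{A}}+C_P(\Omega)\min\{\alpha,0\}$. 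The hypothesis $\alpha>-\underline{c}_{\bm{A}}/C_P(\Omega)$ is precisely what forces $\gamma(\alpha)>0$, so Lax--Milgram delivers a unique $p(s)\in H^1_0(\Omega)$.

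For the a priori bounds I would test \eqref{eq:expand_p_s} with $v=p(s)$ and take real parts, obtaining $\Re\{\mathsf{b}(p(s),p(s);s)\}=\Re\{\dual{q}{p(s)}\}\le\norm{q}_{H^{-1}(\Omega)}\norm{p(s)}_{H^1_0(\Omega)}$. Bounding the left-hand side below by the coercivity just established and dividing through yields the first estimate; in particular, whenever $\Re\{s\}\ge 0$ the term $\Re\{s\}\norm{p(s)}^2_{L^2(\Omega)}$ drops out before ellipticity of $\mathsf{a}$ is applied, which produces exactly the constant $\underline{c}_{\bm{A}}$.

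The second estimate exploits the scaling in $s$. Rearranging \eqref{eq:expand_p_s} gives $s\,\dotp{p(s)}{v}_{L^2(\Omega)}=\dual{q}{v}-\mathsf{a}(p(s),v)$ for all $v\in H^1_0(\Omega)$. Since $p(s)\in L^2(\Omega)\hookrightarrow H^{-1}(\Omega)$, the Gelfand-triple identification gives $\norm{p(s)}_{H^{-1}(\Omega)}=\sup_{0\neq v\in H^1_0(\Omega)}\snorm{\dotp{p(s)}{v}_{L^2(\Omega)}}/\norm{v}_{H^1_0(\Omega)}$. Dividing the identity by $s$, estimating $\snorm{\dual{q}{v}}\le\norm{q}_{H^{-1}(\Omega)}\norm{v}_{H^1_0(\Omega)}$ and $\snorm{\mathsf{a}(p(s),v)}\le\overline{c}_{\bm{A}}\norm{p(s)}_{H^1_0(\Omega)}\norm{v}_{H^1_0(\Omega)}$, and finally inserting the $H^1_0$-bound in the sharpened form $\norm{p(s)}_{H^1_0(\Omega)}\le\gamma(\alpha)^{-1}\norm{q}_{H^{-1}(\Omega)}$, I arrive at $\norm{p(s)}_{H^{-1}(\Omega)}\le\snorm{s}^{-1}\bigl(1+\overline{c}_{\bm{A}}/\gamma(\alpha)\bigr)\norm{q}_{H^{-1}(\Omega)}$.

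The only genuinely delicate step is the coercivity analysis for $\Re\{s\}<0$, which is relevant exactly when $\alpha<0$: the negative term $\sigma\norm{w}^2_{L^2(\Omega)}$ must be controlled against the $H^1_0$-norm via Poincar\'e's inequality, and it is the threshold $\alpha>-\underline{c}_{\bm{A}}/C_P(\Omega)$ that keeps $\gamma(\alpha)$ strictly positive and thereby makes the whole argument go through uniformly in $s\in\Pi_\alpha$. Everything else---continuity, the test-function computation, and the dual-norm characterization underlying the $H^{-1}$ estimate---is routine.
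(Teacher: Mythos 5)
Your proof is correct and follows essentially the same route as the paper: Lax--Milgram with the two-regime coercivity bound $\Re\{\mathsf{b}(w,w;s)\}\ge\gamma(\alpha)\norm{w}^2_{H^1_0(\Omega)}$ (discarding the nonnegative $L^2$ term when $\Re\{s\}\ge 0$, absorbing it via Poincar\'e when $\alpha<0$), the a priori $H^1_0(\Omega)$ bound from testing with $v=p(s)$, and the $H^{-1}(\Omega)$ estimate obtained by dividing the variational identity by $s$ and using the Gelfand-triple dual-norm characterization, exactly as in the paper's proof. One cosmetic remark: the bound $\norm{p(s)}_{H^1_0(\Omega)}\le\gamma(\alpha)^{-1}\norm{q}_{H^{-1}(\Omega)}$ that you insert into the second estimate is the \emph{weaker} form (since $\gamma(\alpha)\le\underline{c}_{\bm{A}}$), not a ``sharpened'' one as you call it --- but it is precisely the bound the paper itself uses at that step, so the argument is unaffected.
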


\begin{proof}
For each $s\in \Pi_\alpha$ \rev{and assuming $\alpha\geq 0$}
\begin{equation}\label{eq:ellipticity_b}
\begin{aligned}
	\Re\{
		\mathsf{b}(w,w;s)
	\}
	&
	=
	\Re
	\left\{
		s
		\left(
			w
			,
			w
		\right)_{L^2(\Omega)}
		+
		\mathsf{a}
		\left(
			w
			,
			w
		\right)
	\right\}
	\\
	&
	\geq
	\alpha
	\norm{
		w
	}^2_{L^2(\Omega)}
	+
	\rev{\underline{c}_{\bm{A}}}
	\norm{\nabla w}^2_{L^2(\Omega)},
	\\
	&
	\geq
	\rev{\underline{c}_{\bm{A}}}
	\norm{w}^2_{H^1_0(\Omega)},
	\quad
	\forall w \in H^1_0(\Omega),
\end{aligned}
\end{equation}
\rev{%
whereas for $\alpha \in (-\frac{\underline{c}_{\bm{A}}}{C_P(\Omega)},0)$
\begin{equation}
    \Re\{
	\mathsf{b}(w,w;s)
    \}
    \geq
    \left(
        \rev{\underline{c}_{\bm{A}}}
        +
        \alpha
        C_P(\Omega)
    \right)
    \norm{w}^2_{H^1_0(\Omega)},
    \quad
    \forall w \in H^1_0(\Omega),
\end{equation}
hence for any $\alpha>-\frac{\underline{c}_{\bm{A}}}{C_P(\Omega)}$
\begin{equation}
    \Re\{
	\mathsf{b}(w,w;s)
    \}
    \geq
    \left(
        \rev{\underline{c}_{\bm{A}}}
        +
        C_P(\Omega)
        \min
        \{
            \alpha
            ,
            0
        \}
    \right)
    \norm{w}^2_{H^1_0(\Omega)},
    \quad
    \forall w \in H^1_0(\Omega),
\end{equation}
}%
In addition, for each $s\in \Pi_\alpha$ the sesquilinear form
$\mathsf{b}(\cdot,\cdot;s):  H^1_0(\Omega) \times  H^1_0(\Omega) 
\rightarrow \IC$ is linear and continuous, i.e.,~$\forall w,v \in  H^1_0(\Omega)$
\begin{equation}
\begin{aligned}
	\snorm{
		\mathsf{b}(w,v;s)
	}
	\leq
	&
	\snorm{s}
	\norm{
		w
	}_{L^2(\Omega)}
	\norm{
		v
	}_{L^2(\Omega)}
	+
	\rev{\overline{c}_{\bm{A}}}
	\norm{
		\nabla
		w
	}_{L^2(\Omega)}
	\norm{
		\nabla
		v
	}_{L^2(\Omega)}
	\\
	\leq
	&
	\left(
		\snorm{s}
		C^2_P(\Omega)
		+
		\rev{\overline{c}_{\bm{A}}}
	\right)
	\norm{
		\nabla
		w
	}_{L^2(\Omega)}
	\norm{
		\nabla
		v
	}_{L^2(\Omega)},
\end{aligned}
\end{equation}
where $C_P(\Omega)>0$ corresponds to Poincar\'e's constant. 

Consequently, for each $s\in \Pi_\alpha$ there exists a unique
$p(s) \in H^1_0(\Omega)$ solution to \eqref{eq:expand_p_s}
satisfying
\begin{equation}\label{eq:a_priori_bound}
	\norm{
		p(s)
	}_{H^1_0(\Omega)}
	\leq
	\frac{
	\norm{
		q
	}_{H^{-1}(\Omega)}
    }{\rev{\gamma(\alpha)}}.
\end{equation}
Let us calculate
\begin{equation}
\begin{aligned}
	\norm{{p}(s)}_{H^{-1}(\Omega)}
	&
	=
	\sup_{0\neq v\in H^1_0(\Omega)}
	\frac{
		\snorm{
		\dual{
			{p}(s)
		}{
			v
		}_{H^{-1}(\Omega) \times H^{1}_0(\Omega)}
		}
	}{
		\norm{v}_{H^1_0(\Omega)}
	}
	\\
	&
	=
	\frac{1}{\snorm{s}}
	\sup_{0\neq v\in H^1_0(\Omega)}
	\frac{
		\snorm{
		\dual{q}{v}_{H^{-1}(\Omega) \times H^{1}_0(\Omega)}
		-
		\mathsf{a}
		\left(
			{p}(s)
			,
			v
		\right)
		}
	}{
		\norm{v}_{H^1_0(\Omega)}
	}
	\\
	&
	\leq
	\frac{1}{\snorm{s}}
	\left(
	\norm{
		q
	}_{H^{-1}(\Omega)}
	+
	\rev{\overline{c}_{\bm{A}}}
	\norm{
		{p}(s)
	}_{H^{1}_0(\Omega)}
	\right)
	\\
	&
	\leq
	\frac{1}{\snorm{s}}
	\left(
	1
	+
	\frac{
		\rev{\overline{c}_{\bm{A}}}
	}{
		\rev{\gamma(\alpha)}
	}
	\right)
	\norm{
		q
	}_{H^{-1}(\Omega)},
\end{aligned}
\end{equation}
therefore concluding the proof.
\end{proof}

It follows from Lemma~\ref{lmm:error_bound_L2} that for each $s\in \Pi_\alpha$
there exists a unique $\widehat{u}\rev{(s)} \in H^1_0(\Omega)$ solution to Problem~\ref{prb:frequency_problem}
satisfying
\begin{equation}\label{eq:a_priori_bound}
	\norm{
		\widehat{u}(s)
	}_{H^1_0(\Omega)}
	\leq
	\frac{C_P(\Omega)}{\rev{\gamma(\alpha)}}
	\left(
		\norm{
			\widehat{f}(s)
		}_{L^2(\Omega)}
		+
		\norm{
			u_0
		}_{L^{2}(\Omega)}
	\right).
\end{equation}

However, this result is not yet satisfactory as the
bound \eqref{eq:a_priori_bound} \rev{cannot} be integrated along any infinite
line in the complex plane that is parallel to the imaginary axis and with real part 
equal to $\alpha$. This will be needed ahead in the proof of \eqref{eq:lemma_Hardy_space_uh}.
The next lemma addresses this issue.

\begin{lemma}\label{lmm:laplace_transform_u_bounds}
Let $f \in L^2_\alpha(\IR_+;L^2(\Omega;\mathbb{R}))$
for some $\rev{\alpha>-\frac{\underline{c}_{\bm{A}}}{C_P(\Omega)}}$, and let $u_0 \in H^1_0(\Omega;\mathbb{R})$.
Then, for each $s\in \Pi_\alpha$, there exists a unique
$\widehat{u}(s) \in H^1_0(\Omega)$ solution to \eqref{prb:frequency_problem}
satisfying
\begin{equation}
	\norm{
		\widehat{u}(s)
	}_{H^1_0(\Omega)}
	\leq
	\frac{C_P(\Omega)}{\rev{\gamma(\alpha)}}
	\norm{
		\widehat{f}(s)
	}_{L^2(\Omega)}
	+
	\frac{1}{\snorm{s}}
	\left(
		1
		+
		\rev{\frac{\overline{c}_{\bm{A}}}{\gamma(\alpha)}}
	\right)
	\norm{
		u_0
	}_{H^1_0(\Omega)},
\end{equation}
\end{lemma}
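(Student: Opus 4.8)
The plan is to exploit the linearity of Problem~\ref{prb:frequency_problem} and split the solution according to its two sources, treating the initial datum with a shift that manufactures the missing $\frac{1}{\snorm{s}}$ decay. Concretely, I would write $\widehat{u}(s) = \widehat{u}_f(s) + \widehat{u}_0(s)$, where $\widehat{u}_f(s)$ solves $\mathsf{b}(\widehat{u}_f(s),v;s) = \dotp{\widehat{f}(s)}{v}_{L^2(\Omega)}$ and $\widehat{u}_0(s)$ solves $\mathsf{b}(\widehat{u}_0(s),v;s) = \dotp{u_0}{v}_{L^2(\Omega)}$ for all $v \in H^1_0(\Omega)$. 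Existence, uniqueness, and the coercivity bound $\norm{p(s)}_{H^1_0(\Omega)} \leq \frac{1}{\gamma(\alpha)}\norm{q}_{H^{-1}(\Omega)}$ for each subproblem are provided by Lemma~\ref{lmm:error_bound_L2}, since each right-hand side is a bounded antilinear functional on $H^1_0(\Omega)$.

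For the source contribution, I would observe that by Cauchy--Schwarz and Poincar\'e's inequality the functional $v \mapsto \dotp{\widehat{f}(s)}{v}_{L^2(\Omega)}$ has $H^{-1}(\Omega)$-norm at most $C_P(\Omega)\norm{\widehat{f}(s)}_{L^2(\Omega)}$. The $H^1_0(\Omega)$-estimate of Lemma~\ref{lmm:error_bound_L2} then yields $\norm{\widehat{u}_f(s)}_{H^1_0(\Omega)} \leq \frac{C_P(\Omega)}{\gamma(\alpha)}\norm{\widehat{f}(s)}_{L^2(\Omega)}$, which is precisely the first term in the claim.

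The crux, and the step I expect to be the main obstacle, is the initial-datum contribution: the naive $H^{-1}(\Omega)$-bound on $v \mapsto \dotp{u_0}{v}_{L^2(\Omega)}$ does not decay in $\snorm{s}$, which is exactly the defect of \eqref{eq:a_priori_bound}. Here I would use the extra regularity $u_0 \in H^1_0(\Omega)$ and set $w(s) \coloneqq \widehat{u}_0(s) - \frac{1}{s}u_0 \in H^1_0(\Omega)$. Substituting $\widehat{u}_0(s) = w(s) + \frac{1}{s}u_0$ into the weak form cancels the two $\dotp{u_0}{v}_{L^2(\Omega)}$ terms and leaves $\mathsf{b}(w(s),v;s) = -\frac{1}{s}\mathsf{a}(u_0,v)$ for all $v \in H^1_0(\Omega)$. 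Since $\snorm{\mathsf{a}(u_0,v)} \leq \overline{c}_{\bm{A}}\norm{u_0}_{H^1_0(\Omega)}\norm{v}_{H^1_0(\Omega)}$, the new right-hand side has $H^{-1}(\Omega)$-norm at most $\frac{\overline{c}_{\bm{A}}}{\snorm{s}}\norm{u_0}_{H^1_0(\Omega)}$, and Lemma~\ref{lmm:error_bound_L2} gives $\norm{w(s)}_{H^1_0(\Omega)} \leq \frac{\overline{c}_{\bm{A}}}{\snorm{s}\,\gamma(\alpha)}\norm{u_0}_{H^1_0(\Omega)}$.

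Finally, I would recombine: the triangle inequality gives $\norm{\widehat{u}_0(s)}_{H^1_0(\Omega)} \leq \norm{w(s)}_{H^1_0(\Omega)} + \frac{1}{\snorm{s}}\norm{u_0}_{H^1_0(\Omega)} \leq \frac{1}{\snorm{s}}\bigl(1 + \frac{\overline{c}_{\bm{A}}}{\gamma(\alpha)}\bigr)\norm{u_0}_{H^1_0(\Omega)}$, and adding the two contributions $\widehat{u}(s) = \widehat{u}_f(s) + \widehat{u}_0(s)$ produces the stated bound. The essential subtlety is that the shift by $\frac{1}{s}u_0$ requires $u_0 \in H^1_0(\Omega)$ so that $\mathsf{a}(u_0,\cdot)$ is a bounded functional; this is exactly the mechanism that trades the $H^1_0$-regularity of the datum for one power of $\snorm{s}$, which is what makes the resulting bound integrable along vertical lines as needed for \eqref{eq:lemma_Hardy_space_uh}.
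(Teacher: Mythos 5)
Your proposal is correct and follows essentially the same route as the paper: the key step in both is the shift $\widehat{u}(s) \mapsto \widehat{u}(s) - \frac{1}{s}u_0$, which cancels the $\dotp{u_0}{\cdot}_{L^2(\Omega)}$ pairing and replaces it by $-\frac{1}{s}\mathsf{a}(u_0,\cdot)$, after which Lemma~\ref{lmm:error_bound_L2} and the triangle inequality give the bound. Your additional superposition split $\widehat{u} = \widehat{u}_f + \widehat{u}_0$ is only a cosmetic reorganization --- by uniqueness, your $\widehat{u}_f(s) + w(s)$ coincides with the paper's $\widehat{w}(s)$, and the paper simply estimates both right-hand-side contributions in a single application of the lemma.
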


\begin{proof}
Existence and uniqueness follows from Lemma~\ref{lmm:error_bound_L2}.
Define
\begin{equation}\label{eq:decomposition_h_u_s}
	\widehat{w}(s)
	\coloneqq
	\widehat{u}(s)
	-
	\frac{1}{s}
	%\mathcal{R}
	%(u_0)
	u_0
	\in
	H^1_0(\Omega).
\end{equation}
%where $\mathcal{R}:  H^{-1}(\Omega) \rightarrow H^{1}_0(\Omega) $
%corresponds to Riesz's operator.

Observe that $\widehat{w}(s) \in H^1_0(\Omega)$ satisfies the following
variational problem
\begin{equation}
\begin{aligned}
	\mathsf{b}(\widehat{w}(s),v;s)
	=
	&
	s
	\left(
		\widehat{w}(s)
		,
		v
	\right)_{L^2(\Omega)}
	+
	\mathsf{a}
	\left(
		\widehat{w}(s)
		,
		v
	\right),
	\\
	=
	&
	s
	\left(
		\widehat{u}(s)
		-
		\frac{1}{s}
		u_0
%		\mathcal{R}
%		(u_0)
		,
		v
	\right)_{L^2(\Omega)}
	+
	\mathsf{a}
	\left(
		\widehat{u}(s)
		-
		\frac{1}{s}
		u_0
%		\mathcal{R}
%		(u_0)
		,
		v
	\right)
	\\
	=
	&
	s
	\left(
		\widehat{u}(s)
		,
		v
	\right)_{L^2(\Omega)}
	+
	\mathsf{a}
	\left(
		\widehat{u}(s)
		,
		v
	\right)
	-
	\left(
%		\mathcal{R}
%		(u_0)
		u_0
		,
		v
	\right)_{L^2(\Omega)}
	-
	\frac{1}{s}
	\mathsf{a}
	\left(
		u_0
%		\mathcal{R}
%		(u_0)
		,
		v
	\right)
	\\
	=
	&
	\dotp{
		\widehat{f}(s)
	}{v}_{L^2(\Omega)}
	+
	\dotp{
		u_0
	}{v}_{L^2(\Omega)}
	\\
	&
	-
	\left(
		u_0
%		\mathcal{R}
%		(u_0)
		,
		v
	\right)_{L^2(\Omega)}
	-
	\frac{1}{s}
	\mathsf{a}
	\left(
%		\mathcal{R}
%		(u_0)
		u_0
		,
		v
	\right),
	\quad
	\forall
	v\in H^1_0(\Omega).
\end{aligned}
\end{equation}
Therefore, for each $s\in \Pi_\alpha$,
$\widehat{w}(s) \in H^1_0(\Omega)$ is solution to the following
variational problem
\begin{equation}\label{eq:equation_w_s}
\begin{aligned}
	\mathsf{b}(\widehat{w}(s),v;s)
	=
	\dotp{
		\widehat{f}(s)
	}{v}_{L^2(\Omega)}
	-
	\frac{1}{s}
	\mathsf{a}
	\left(
%		\mathcal{R}
%		(u_0)
		u_0
		,
		v
	\right),
	\quad
	\forall
	v\in H^1_0(\Omega).
\end{aligned}
\end{equation}
Recalling Lemma~\ref{lmm:error_bound_L2}
\begin{equation}\label{eq:bound_hw_s}
\begin{aligned}
	\norm{\widehat{w}(s)}_{H^1_0(\Omega)}
	&
	\leq
	\frac{1}{\rev{\gamma(\alpha)}}
	\left(
		C_P(\Omega)
		\norm{
			\widehat{f}(s)
		}_{L^2(\Omega)}
		+
		\frac{\rev{\overline{c}_{\bm{A}}}}{\snorm{s}}
		\norm{
			u_0
		}_{H^{1}_0(\Omega)}
	\right).
%	\\
%	&
%	\leq
%	\frac{1}{\underline{c}_{\bm{A}}}
%	\left(
%		\norm{
%			\widehat{f}(s)
%		}_{L^2(\Omega)}
%		+
%		\frac{\overline{c}_{\bm{A}}}{\snorm{s}}
%		\norm{
%			u_0
%		}_{H^{1}_0(\Omega)}
%	\right)
%	\\
%	&
%	\leq
%	\frac{1}{\underline{c}_{\bm{A}}}
%	\left(
%		\norm{
%			\widehat{f}(s)
%		}_{L^2(\Omega)}
%		+
%		\frac{1}{\snorm{s}}
%		\norm{
%			u_0
%		}_{H^{-1}(\Omega)}
%	\right),
\end{aligned}
\end{equation}
%where we have used $\norm{\mathcal{R}}_{\mathscr{L}\left(H^{-1}(\Omega),H^1_0(\Omega)\right)}=1$.
%and $\norm{u_0}_{H^{-1}(\Omega)}\leq \norm{u_0}_{L^2(\Omega)}$.
Therefore, for each $s \in \Pi_\alpha$, one has
\begin{equation}
\begin{aligned}
	\norm{\widehat{u}(s)}_{H^1_0(\Omega)}
	&
	\leq
	\frac{1}{\rev{\gamma(\alpha)}}
	\left(
		C_P(\Omega)
		\norm{
			\widehat{f}(s)
		}_{L^2(\Omega)}
		+
		\frac{\rev{\overline{c}_{\bm{A}}}}{\snorm{s}}
		\norm{
			u_0
		}_{H^1_0(\Omega)}
	\right)
	+
	\frac{1}{\snorm{s}}
	\norm{
%		\mathcal{R}
%		\left(
%			u_0
%		\right)
		u_0
	}_{H^1_0(\Omega)}
	\\
	&
	\leq
	\frac{C_P(\Omega)}{\rev{\gamma(\alpha)}}
	\norm{
		\widehat{f}(s)
	}_{L^2(\Omega)}
	+
	\frac{1}{\snorm{s}}
	\left(
		1
		+
		\rev{\frac{\overline{c}_{\bm{A}}}{\gamma(\alpha)}}
	\right)
	\norm{
		u_0
	}_{H^1_0(\Omega)},
\end{aligned}
\end{equation}
as claimed.
\end{proof}

We show that Problem~\ref{prb:frequency_problem} is not only well-posed 
for each $s\in \Pi_\alpha$, but also in $\mathscr{H}^2_\alpha(H^1_0(\Omega))$.

\begin{lemma}\label{eq:lemma_Hardy_space_uh}
Let $f \in L^2_\alpha(\IR_+;L^2(\Omega;\mathbb{R}))$
for some $\alpha>0$, and let $u_0 \in H^1_0(\Omega;\mathbb{R})$.
Then, $\widehat{u} \in \mathscr{H}^2_\alpha(H^1_0(\Omega))$,
where $\widehat{u}(s) \in H^1_0(\Omega)$ is the solution to Problem~\ref{prb:frequency_problem}
for each $s \in \Pi_{\alpha}$, and
\begin{equation}\label{eq:a_priori_1}
	\norm{
		\widehat{u}
	}_{\mathscr{H}^2_\alpha(H^1_0(\Omega))}
	\rev{\lesssim}
	\frac{C_P(\Omega)}{\rev{\gamma(\alpha)}}
	\norm{
		\widehat{f}
	}_{\mathscr{H}^2_\alpha(L^2(\Omega))}
	+
	\frac{1}{\rev{\sqrt{\alpha}}}
	\left(
		1
		+
		\rev{\frac{\overline{c}_{\bm{A}}}{\gamma(\alpha)}}
	\right)
	\norm{
		u_0
	}_{H^1_0(\Omega)}.
\end{equation}

\end{lemma}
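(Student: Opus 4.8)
The plan is to verify the two non-redundant defining properties of $\mathscr{H}^2_\alpha(H^1_0(\Omega))$ from Definition~\ref{def:hardy_spaces}: holomorphy of $s\mapsto\widehat{u}(s)$ on $\Pi_\alpha$ (item (i)) and finiteness of the $\mathscr{H}^2_\alpha$-norm together with the estimate \eqref{eq:a_priori_1} (item (ii)); item (iii) may be skipped by Remark~\ref{rmk:def_hardy_spaces_item_3}. Pointwise existence and uniqueness of $\widehat{u}(s)$ for each $s\in\Pi_\alpha$ are already supplied by Lemma~\ref{lmm:error_bound_L2}, so the task genuinely reduces to regularity in the variable $s$ and to a single integral estimate. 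Since $\alpha>0$, throughout one has $\gamma(\alpha)=\underline{c}_{\bm{A}}$.

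For holomorphy I would introduce the operator $\mathcal{B}(s)\in\mathscr{L}(H^1_0(\Omega),H^{-1}(\Omega))$ defined by $\dual{\mathcal{B}(s)w}{v}_{H^{-1}(\Omega)\times H^1_0(\Omega)}=\mathsf{b}(w,v;s)$ and observe that $\mathcal{B}(s)=s\,\mathcal{M}+\mathcal{A}$ is affine, hence entire, in $s$, where $\mathcal{M}$ and $\mathcal{A}$ encode $\dotp{\cdot}{\cdot}_{L^2(\Omega)}$ and $\mathsf{a}(\cdot,\cdot)$. By Lemma~\ref{lmm:error_bound_L2}, $\mathcal{B}(s)\in\mathscr{L}_{\text{iso}}(H^1_0(\Omega),H^{-1}(\Omega))$ for all $s\in\Pi_\alpha$, and since inversion is holomorphic on $\mathscr{L}_{\text{iso}}$, the map $s\mapsto\mathcal{B}(s)^{-1}$ is holomorphic on $\Pi_\alpha$. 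The datum $s\mapsto\widehat{f}(s)+u_0\in H^{-1}(\Omega)$ is holomorphic because $\widehat{f}\in\mathscr{H}^2_\alpha(L^2(\Omega))$ is holomorphic and $u_0$ is independent of $s$; hence $\widehat{u}(s)=\mathcal{B}(s)^{-1}(\widehat{f}(s)+u_0)$ is holomorphic as a product of holomorphic maps. Equivalently, differentiating the variational identity in $s$ reveals that $\partial_s\widehat{u}(s)$ solves the same boundedly invertible problem with data $-\dotp{\widehat{u}(s)}{\cdot}_{L^2(\Omega)}+\dotp{\partial_s\widehat{f}(s)}{\cdot}_{L^2(\Omega)}$, whose solvability is again guaranteed by Lemma~\ref{lmm:error_bound_L2}.

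To establish item (ii), I would square the pointwise bound of Lemma~\ref{lmm:laplace_transform_u_bounds}, apply $(a+b)^2\le 2a^2+2b^2$, and integrate along the vertical line $\Re\{s\}=\sigma$ for fixed $\sigma>\alpha$, obtaining
\begin{equation}
	\int_{-\infty}^{+\infty}
	\norm{\widehat{u}(\sigma+\imath\tau)}^2_{H^1_0(\Omega)}
	\frac{\text{d}\tau}{2\pi}
	\leq
	2\left(\frac{C_P(\Omega)}{\underline{c}_{\bm{A}}}\right)^2
	\int_{-\infty}^{+\infty}
	\norm{\widehat{f}(\sigma+\imath\tau)}^2_{L^2(\Omega)}
	\frac{\text{d}\tau}{2\pi}
	+
	2\left(1+\frac{\overline{c}_{\bm{A}}}{\underline{c}_{\bm{A}}}\right)^2
	\norm{u_0}^2_{H^1_0(\Omega)}
	\int_{-\infty}^{+\infty}
	\frac{1}{\sigma^2+\tau^2}
	\frac{\text{d}\tau}{2\pi}.
\end{equation}
The first integral is bounded by $\norm{\widehat{f}}^2_{\mathscr{H}^2_\alpha(L^2(\Omega))}$ uniformly in $\sigma>\alpha$ by the monotonicity stated in Proposition~\ref{prop:properties_hardy}(i), while the second evaluates to $\tfrac{1}{2\sigma}\le\tfrac{1}{2\alpha}$. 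Taking the supremum over $\sigma>\alpha$, then a square root, and using subadditivity of $\sqrt{\cdot}$ yields \eqref{eq:a_priori_1}, with the implicit constant (the $\lesssim$) absorbing the factors of $\sqrt{2}$.

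The crux of the argument—and the reason the sharper Lemma~\ref{lmm:laplace_transform_u_bounds} is invoked rather than the cruder pointwise bound derived directly from Lemma~\ref{lmm:error_bound_L2}—is the explicit $\snorm{s}^{-1}$ decay of the initial-datum contribution: a term constant in $\tau$ would fail to be square-integrable along vertical lines, whereas $\int_{-\infty}^{+\infty}(\sigma^2+\tau^2)^{-1}\,\text{d}\tau=\pi/\sigma<\infty$ is precisely what places $\widehat{u}$ in $\mathscr{H}^2_\alpha(H^1_0(\Omega))$. I expect the holomorphy verification to be the more delicate point, but it is standard once $\widehat{u}$ is represented through the affinely $s$-dependent, boundedly invertible operator $\mathcal{B}(s)$; the integral estimate is then essentially mechanical.
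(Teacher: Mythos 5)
Your proposal is correct and follows essentially the same route as the paper: holomorphy via the affine dependence of $\mathsf{b}(\cdot,\cdot;s)$ on $s$ combined with bounded invertibility (Lemma~\ref{lmm:error_bound_L2}) and the holomorphy of operator inversion, then the norm bound by squaring the pointwise estimate of Lemma~\ref{lmm:laplace_transform_u_bounds} and using $\int_{-\infty}^{+\infty}(\alpha^2+\tau^2)^{-1}\,\mathrm{d}\tau=\pi/\alpha$, which is exactly the paper's calculation in \eqref{eq:calculation_hardy}. Your closing observation---that the $\snorm{s}^{-1}$ decay of the initial-datum term is the whole point of invoking Lemma~\ref{lmm:laplace_transform_u_bounds} rather than the cruder bound \eqref{eq:a_priori_bound}---matches the paper's own stated motivation, and your uniform-in-$\sigma$ bound over $\sigma>\alpha$ is a harmless (if anything slightly cleaner) variant of the paper's evaluation on the boundary line via Proposition~\ref{prop:properties_hardy}.
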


\begin{proof}
We proceed to show that $\Pi_\alpha \ni s \mapsto \widehat{u}(s) \in H^1_0(\Omega)$
verifies item (i) and (ii) in Definition~\ref{def:hardy_spaces} (cf. Remark\ref{rmk:def_hardy_spaces_item_3}).

{\bf Item (i).}
The map $\Pi_\alpha \ni s \mapsto \mathsf{b}(\cdot,\cdot;s) \in  \mathscr{L}_{\rev{\text{sesq}}}
\left(H^1_0(\Omega)\times H^1_0(\Omega); \IC\right)$ is holomorphic as
it depends linearly on  $s \in \Pi_\alpha$.
Since $f\in L^2_\alpha(\IR_+;L^2(\Omega;\mathbb{R}))$ Theorem~\eqref{eq:laplace_transform_bijective}
guarantees that $\widehat{f} \in  \mathscr{H}^2_\alpha(L^2(\Omega))$. 
Therefore, the map $\Pi_\alpha \ni s \mapsto \mathsf{g}(\cdot;s) \in L^2(\Omega)$
is holomorphic, with $\mathsf{g}(\cdot;s)$ as in \eqref{eq:f_rhs}. 
Also, for each $s\in \Pi_\alpha$ the sesquilinear form
$\mathsf{b}(\cdot,\cdot;s) \in  \mathscr{L}_{\rev{\text{sesq}}} \left( H^1_0(\Omega)\times H^1_0(\Omega); \IC\right)$ 
has a bounded inverse as stated in Lemma~\ref{lmm:error_bound_L2}.
The inversion of bounded linear operators with 
bounded inverse is itself a holomorphic map.
We may conclude that the map $\Pi_\alpha \ni s \mapsto \widehat{u}(s) \in H^1_0(\Omega)$
is holomorphic, thus verifying item (i) in Definition~\ref{def:hardy_spaces}.
%In view of eq.~(\ref{eq:decomposition_h_u_s}), the map
%$\Pi_\alpha \ni s \mapsto \widehat{w}(s) \in H^1_0(\Omega)$ is holomorphic as well.

{\bf Item (ii).} Recalling the definition of the $\mathscr{H}^2_\alpha(H^1_0(\Omega))$-norm
and Proposition~\ref{prop:properties_hardy}
\begin{equation}\label{eq:calculation_hardy}
\begin{aligned}
	\norm{
		\widehat{u}
	}^2_{\mathscr{H}^2_\alpha(H^1_0(\Omega))}
	%\hspace{-2cm}
	=
	&
	\int_{-\infty}^{+\infty}
	\norm{
		\widehat{u}
		(\alpha + \imath \tau)
	}^2_{H^1_0(\Omega)}
	\,
	\frac{\text{d}
	\tau}{2\pi}
	\\
	%\hspace{-2cm}
	\leq
	&
	2
	\frac{C^2_P(\Omega)}{\underline{c}^2_{\bm{A}}}
	\int_{-\infty}^{+\infty}
	\norm{
		\widehat{f}(\alpha + \imath \tau)
	}^2_{L^2(\Omega)}
	\,
	\frac{\text{d}
	\tau}{2\pi}
	\\
	&
	+
	\frac{1}{\pi}
	\left(
		1
		+
		\rev{\frac{\overline{c}_{\bm{A}}}{\gamma(\alpha)}}
	\right)^2
	\norm{
		u_0
	}^2_{H^1_0(\Omega)}
	\int_{-\infty}^{+\infty}
	\frac{	\text{d}\tau}{\snorm{\alpha + \imath \tau}^2}.
%	\\
%	&
%	+
%	\norm{
%		u_0
%	}^2_{H^1_0(\Omega)}
%	\int_{-\infty}^{+\infty}
%	\frac{1}{\snorm{\alpha + \imath \tau}^2}
%	\,
%	\text{d}
%	\tau.
\end{aligned}
\end{equation}
Recalling that
$
	\int_{-\infty}^{+\infty}
	\frac{	\text{d}
	\tau}{\snorm{\alpha + \imath \tau}^2}
	=
	\int_{-\infty}^{+\infty}
	\frac{	\text{d}
	\tau}{{\alpha^2+\tau^2}}
	=
	\frac{\pi}{\alpha}
$
we obtain
\begin{equation}
	\norm{
		\widehat{u}
	}^2_{\mathscr{H}^2_\alpha(H^1_0(\Omega))}
	\leq
	2
	\frac{C^2_P(\Omega)}{\rev{\gamma^2(\alpha)}}
	\norm{
		\widehat{f}
	}^2_{\mathscr{H}^2_\alpha(L^2(\Omega))}
	+
	\frac{1}{\alpha}
	\left(
		1
		+
		\rev{\frac{\overline{c}_{\bm{A}}}{\gamma(\alpha)}}
	\right)^2
	\norm{
		u_0
	}^2_{H^1_0(\Omega)},
\end{equation}
thus verifying item (ii) in Definition~\ref{def:hardy_spaces}, and proving \eqref{eq:a_priori_1}.
\end{proof}

%%%%%%%%%%%%%%%%%%%%%%%%%%%%%%
\subsection{Well-posedness in Sobolev Spaces involving time}
\label{sec:well_posed_time_space}
%%%%%%%%%%%%%%%%%%%%%%%%%%%%%%
We establish well-posedness of the linear,
second-order parabolic problem, i.e.,
Problem~\ref{pbm:wave_equation}. Standard results make use of the
so-called Faedo-Galerkin approach, see e.g. \cite{evans2022partial}.
However, these are established over finite time intervals.
For the sake of completeness, here we provide a different proof that uses
the tools introduced in Section~\ref{sec:laplace_hardy}. 
%More precisely, 
%we establish well-posednes of the aforementioned parabolic problem in space-time Sobolev 
%spaces.
A complete proof is included as an appendix. 

\begin{theorem}\label{eq:well_posedness}
Let $f  \in L^2_{\alpha}(\IR_+;L^2(\Omega;\mathbb{R}))$ for some $\alpha_0>0$,
and $u_0 \in  H^1_0(\Omega;\mathbb{R})$.
Then, there exists a unique 
${u} \in \mathcal{W}_\alpha\left(\IR_+;H^{1}_0(\Omega;\mathbb{R})\right)$
solution to Problem~\ref{pbm:wave_equation} satisfying
\begin{equation}\label{eq:a_priori_estimate}
\begin{aligned}
    \rev{
	\norm{u}_{\mathcal{W}_\alpha\left(\IR_+;H^{1}_0(\Omega;\mathbb{R})\right)}
	%\norm{\partial_t {u}}_{L^2_\alpha\left(\IR_+;H^{-1}(\Omega)\right)}
	\lesssim
	\norm{f}_{L^2_\alpha\left(\IR_+;L^2(\Omega)\right)}
    +
	\norm{
		u_0
	}_{H^{1}_0(\Omega)}.
    }
\end{aligned}
\end{equation}
\rev{with a hidden constant depending only on $\alpha,\underline{c}_{\bm{A}},\overline{c}_{\bm{A}}$, 
and $C_P(\Omega)$.}
\end{theorem}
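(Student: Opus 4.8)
The plan is to solve the problem entirely in the Laplace domain, where it collapses to the family of elliptic problems already analysed, and then to transport both the solution and its time derivative back to the time domain via the Paley--Wiener isometry of Theorem~\ref{eq:laplace_transform_bijective}. Concretely, for each $s\in\Pi_\alpha$ let $\widehat{u}(s)\in H^1_0(\Omega)$ be the unique solution of Problem~\ref{prb:frequency_problem} furnished by Lemma~\ref{lmm:error_bound_L2}. Lemma~\ref{eq:lemma_Hardy_space_uh} already establishes that $\widehat{u}\in\mathscr{H}^2_\alpha(H^1_0(\Omega))$ together with the quantitative bound \eqref{eq:a_priori_1}. Since $H^1_0(\Omega)$ is a Hilbert space, Theorem~\ref{eq:laplace_transform_bijective} applied with $X=H^1_0(\Omega)$ produces a unique $u\in L^2_\alpha(\IR_+;H^1_0(\Omega))$ with $\mathcal{L}\{u\}=\widehat{u}$ and $\norm{u}_{L^2_\alpha(\IR_+;H^1_0(\Omega))}=\norm{\widehat{u}}_{\mathscr{H}^2_\alpha(H^1_0(\Omega))}$; this is the candidate solution, and its $L^2_\alpha(\IR_+;H^1_0(\Omega))$-norm is already controlled by the right-hand side of \eqref{eq:a_priori_estimate}.

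It remains to control $\partial_t u$ in $L^2_\alpha(\IR_+;H^{-1}(\Omega))$. The key observation is that \eqref{eq:complex_frequency_Laplace} may be rewritten, for each $s\in\Pi_\alpha$, as
\begin{equation}
	\widehat{g}(s)
	\coloneqq
	s\widehat{u}(s)-u_0
	=
	\widehat{f}(s)-\mathcal{A}\widehat{u}(s)
	\in
	H^{-1}(\Omega),
\end{equation}
the two expressions coinciding as functionals on $H^1_0(\Omega)$. The second is the useful one: since $\widehat{f}\in\mathscr{H}^2_\alpha(L^2(\Omega))\hookrightarrow\mathscr{H}^2_\alpha(H^{-1}(\Omega))$ and $\mathcal{A}\in\mathscr{L}(H^1_0(\Omega),H^{-1}(\Omega))$ with $\norm{\mathcal{A}v}_{H^{-1}(\Omega)}\leq\overline{c}_{\bm{A}}\norm{v}_{H^1_0(\Omega)}$, holomorphy of $\widehat{g}$ is inherited from that of $\widehat{f}$ and $\widehat{u}$, and the pointwise estimate $\norm{\widehat{g}(s)}_{H^{-1}(\Omega)}\leq C_P(\Omega)\norm{\widehat{f}(s)}_{L^2(\Omega)}+\overline{c}_{\bm{A}}\norm{\widehat{u}(s)}_{H^1_0(\Omega)}$ integrates over the line $\Re\{s\}=\alpha$ to give $\widehat{g}\in\mathscr{H}^2_\alpha(H^{-1}(\Omega))$ with $\norm{\widehat{g}}_{\mathscr{H}^2_\alpha(H^{-1}(\Omega))}\lesssim\norm{\widehat{f}}_{\mathscr{H}^2_\alpha(L^2(\Omega))}+\norm{\widehat{u}}_{\mathscr{H}^2_\alpha(H^1_0(\Omega))}$. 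Applying Theorem~\ref{eq:laplace_transform_bijective} now with $X=H^{-1}(\Omega)$ yields a unique $g\in L^2_\alpha(\IR_+;H^{-1}(\Omega))$ with $\mathcal{L}\{g\}=\widehat{g}$ and matching norm.

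To identify $g$ with $\partial_t u$ and to fix the initial datum, I would avoid the differentiation rule, which presupposes the absolute continuity one is trying to prove, and instead invoke the integration rule, item (ii) of Proposition~\ref{eq:operational_properties}. Writing $G(t)=\int_0^t g(\tau)\,\odif{\tau}$, that property gives $\mathcal{L}\{G\}(s)=\widehat{g}(s)/s=\widehat{u}(s)-u_0/s$ for $s\in\Pi_\alpha$; since the constant function $t\mapsto u_0$ lies in $L^2_\alpha(\IR_+;H^{-1}(\Omega))$ with transform $u_0/s$, injectivity of $\mathcal{L}$ forces $u(t)=u_0+\int_0^t g(\tau)\,\odif{\tau}$ for a.e.\ $t$. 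This exhibits $u$ as an absolutely continuous $H^{-1}(\Omega)$-valued function with $\partial_t u=g\in L^2_\alpha(\IR_+;H^{-1}(\Omega))$ and trace $u(0)=u_0$, so that $u\in\mathcal{W}_\alpha(\IR_+;H^1_0(\Omega))$ and the a priori bound \eqref{eq:a_priori_estimate} follows by combining the two $L^2_\alpha$-norm estimates through \eqref{eq:def_norm_W_alpha}. Inverse-transforming \eqref{eq:complex_frequency_Laplace} tested against a fixed $v\in H^1_0(\Omega)$, using item (i) of Proposition~\ref{eq:operational_properties} to commute the bounded functionals $\dotp{\cdot}{v}_{L^2(\Omega)}$ and $\mathsf{a}(\cdot,v)$ with $\mathcal{L}^{-1}$, then recovers the weak identity \eqref{eq:wave_eq_variational} for a.e.\ $t$, confirming that $u$ solves Problem~\ref{pbm:wave_equation}.

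Uniqueness follows by transforming any solution $w\in\mathcal{W}_\alpha(\IR_+;H^1_0(\Omega))$ of the homogeneous problem: its transform solves the homogeneous Laplace-domain problem, which by the coercivity in Lemma~\ref{lmm:error_bound_L2} admits only the trivial solution for every $s\in\Pi_\alpha$, whence $w\equiv0$ by injectivity of $\mathcal{L}$. The step I expect to be most delicate is precisely the clean identification of $g$ with $\partial_t u$ and the simultaneous recovery of the initial condition: this is where one must avoid circularly assuming the regularity being proved, and where the integration property (rather than the differentiation property) of the Laplace transform does the essential work; verifying that $t\mapsto u_0$ carries the expected transform and that every identity may be read consistently in $H^{-1}(\Omega)$ requires care with the embeddings $H^1_0(\Omega)\hookrightarrow L^2(\Omega)\hookrightarrow H^{-1}(\Omega)$.
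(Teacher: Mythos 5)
Your proposal is correct and follows essentially the same route as the paper's proof in Appendix~\ref{sec:existence}: both pass to the Laplace-domain problem, use Lemmas~\ref{lmm:error_bound_L2}--\ref{eq:lemma_Hardy_space_uh} to place $\widehat{u}$ and $\widehat{p}(s)=s\widehat{u}(s)-u_0$ (your $\widehat{g}$) in $\mathscr{H}^2_\alpha(H^1_0(\Omega))$ and $\mathscr{H}^2_\alpha(H^{-1}(\Omega))$, transport both back via the Paley--Wiener isometry of Theorem~\ref{eq:laplace_transform_bijective}, and identify $\partial_t u$ and the initial datum through the integration rule, item~(ii) of Proposition~\ref{eq:operational_properties}, exactly as the paper does to avoid circularity. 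The only cosmetic difference is that you bound $\widehat{g}(s)=\widehat{f}(s)-\mathcal{A}\widehat{u}(s)$ directly via continuity of $\mathcal{A}$ and the $H^1_0(\Omega)$-estimate on $\widehat{u}(s)$, whereas the paper obtains the same $H^{-1}(\Omega)$-bound from the resolvent-type second estimate of Lemma~\ref{lmm:error_bound_L2} applied to $\widehat{w}(s)=\widehat{u}(s)-u_0/s$.
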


\begin{proof}
The proof of this result may be found in Section~\ref{sec:existence},
which uses the results of Section~\ref{sec:complexity_estimates}.
\end{proof}
%\subsubsection{Regularity Estimates}

\begin{comment}
The next results addresses regularity in time of the 
solution to Problem~\ref{pbm:wave_equation}.
\fh{Is this really useful? I actually don't think so. }
\begin{theorem}\label{thm:regularity}
Let $f  \in \mathcal{W}_{\alpha_0}(\IR_+;L^2(\Omega),L^2(\Omega))$
for some $\alpha_0>0$,
and let $u_0 \in H^1_0(\Omega)\cap H^2(\Omega)$.

Then, for any $\alpha\geq \alpha_0$ there exists a unique 
${u} \in \mathcal{W}_\alpha\left(\IR_+;H^{1}_0(\Omega),H^{-1}(\Omega)\right)$ with
$\partial_t {u} \in \mathcal{W}_\alpha\left(\IR_+;H^{1}_0(\Omega),H^{-1}(\Omega)\right)$
solution to Problem~\ref{pbm:wave_equation}
satisfying
\begin{equation}\label{eq:a_priori_estimate_reg}
\begin{aligned}
	\norm{u}_{L^2_\alpha(\IR_+;H^1_0(\Omega))}
	+
	\norm{\partial_t {u}}_{L^2_\alpha\left(\IR_+;H^{1}_0(\Omega)\right)}
	+
	&
	\norm{\partial^2_t {u}}_{L^2_\alpha\left(\IR_+;H^{-1}(\Omega)\right)}
	\\
	%\hspace{-0.5cm}
	\lesssim
	&
	\norm{f}_{\mathcal{W}_\alpha\left(\IR_+;L^2(\Omega),L^2(\Omega)\right)}
	\\
	&
	%\hspace{-0.2cm}
	+
	\left(
		1
		+
		\frac{1}{\sqrt{\alpha}}
	\right)
	\norm{u_0}_{H^1_0(\Omega)},
\end{aligned}
\end{equation}
where the implied constant is independent of $\alpha$.
\end{theorem}
\begin{proof}
The proof of this result is included in Section~\ref{sec:regularity}.
\end{proof}
\end{comment}

%%%%%%%%%%%%%%%%%%%%%%%%%%%%%%%%
\subsection{Estimates for the Semi-Discrete Problem}\label{sec:estimates_sd}
%%%%%%%%%%%%%%%%%%%%%%%%%%%%%%%%
In this section, we extend the results obtained in Section~\ref{sec:complexity_estimates}
to the solution of Problem~\ref{pr:sdp}.

\begin{lemma}\label{lmm:laplace_transform_u_bounds_discrete}
Let $f \in L^2_\alpha(\IR_+;L^2(\Omega;\mathbb{R}))$
for some $\alpha>0$, and let $u_0 \in H^1_0(\Omega;\mathbb{R})$.
Then, for each $s\in \Pi_\alpha$, there exists a unique
$\widehat{u}_h(s) \in \IV^{\mathbb{C}}_h$ solution to Problem~\ref{pr:sdp}
satisfying
\begin{equation}
	\norm{
		\widehat{u}_h(s)
	}_{H^1_0(\Omega)}
	\rev{\lesssim}
	\frac{C_P(\Omega)}{\rev{\gamma(\alpha)}}
	\norm{
		\widehat{f}(s)
	}_{L^2(\Omega)}
	+
	\frac{1}{\snorm{s}}
	\left(
		1
		+
		\rev{\frac{\overline{c}_{\bm{A}}}{\gamma(\alpha)}}
	\right)
	\norm{
		u_{0,h}
	}_{H^{1}_0(\Omega)},
\end{equation}
where $u_{0,h} = \rev{\mathsf{P}_h} u_0 \in  \mathbb{V}_h$.
\end{lemma}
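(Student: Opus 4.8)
The plan is to transcribe the proof of Lemma~\ref{lmm:laplace_transform_u_bounds} to the finite-dimensional setting, exploiting that the coercivity and continuity of $\mathsf{b}(\cdot,\cdot;s)$ are inherited by the subspace $\IV^{\mathbb{C}}_h \subset H^1_0(\Omega)$. First I would settle existence and uniqueness: the ellipticity estimate \eqref{eq:ellipticity_b} and the continuity bound established in the proof of Lemma~\ref{lmm:error_bound_L2} hold for \emph{all} $w,v \in H^1_0(\Omega)$, hence in particular for $w,v \in \IV^{\mathbb{C}}_h$. Thus the restriction of $\mathsf{b}(\cdot,\cdot;s)$ to $\IV^{\mathbb{C}}_h$ is coercive with the same constant $\gamma(\alpha)$ and continuous, and since $\IV^{\mathbb{C}}_h$ is finite-dimensional the Lax--Milgram lemma furnishes a unique $\widehat{u}_h(s) \in \IV^{\mathbb{C}}_h$ solving Problem~\ref{pr:sdp}.

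For the a priori bound I would reproduce the shift argument from Lemma~\ref{lmm:laplace_transform_u_bounds}. Setting $\widehat{w}_h(s) \coloneqq \widehat{u}_h(s) - \tfrac{1}{s}u_{0,h}$, the decisive observation is that $u_{0,h} = \mathsf{P}_h u_0 \in \IV_h$, so that $\widehat{w}_h(s)$ stays inside the discrete trial space $\IV^{\mathbb{C}}_h$ and may legitimately be used as a test function. Substituting $\widehat{u}_h(s) = \widehat{w}_h(s) + \tfrac{1}{s}u_{0,h}$ into \eqref{eq:semi_discrete} and cancelling the $\dotp{u_{0,h}}{v_h}_{L^2(\Omega)}$ terms exactly as in the derivation of \eqref{eq:equation_w_s}, one finds that $\widehat{w}_h(s)$ solves
\begin{equation*}
	\mathsf{b}(\widehat{w}_h(s),v_h;s)
	=
	\dotp{\widehat{f}(s)}{v_h}_{L^2(\Omega)}
	-
	\frac{1}{s}\mathsf{a}(u_{0,h},v_h),
	\quad
	\forall v_h \in \IV^{\mathbb{C}}_h.
\end{equation*}

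Testing with $v_h = \widehat{w}_h(s)$, taking real parts, bounding the left-hand side below by $\gamma(\alpha)\norm{\widehat{w}_h(s)}^2_{H^1_0(\Omega)}$ via coercivity, and controlling the right-hand side with Poincar\'e's inequality (constant $C_P(\Omega)$) and the continuity of $\mathsf{a}(\cdot,\cdot)$ (constant $\overline{c}_{\bm{A}}$), I obtain after dividing by $\norm{\widehat{w}_h(s)}_{H^1_0(\Omega)}$ the discrete analogue of \eqref{eq:bound_hw_s},
\begin{equation*}
	\norm{\widehat{w}_h(s)}_{H^1_0(\Omega)}
	\leq
	\frac{1}{\gamma(\alpha)}
	\left(
		C_P(\Omega)\norm{\widehat{f}(s)}_{L^2(\Omega)}
		+
		\frac{\overline{c}_{\bm{A}}}{\snorm{s}}\norm{u_{0,h}}_{H^1_0(\Omega)}
	\right).
\end{equation*}
The claimed estimate then follows from the triangle inequality $\norm{\widehat{u}_h(s)}_{H^1_0(\Omega)} \leq \norm{\widehat{w}_h(s)}_{H^1_0(\Omega)} + \tfrac{1}{\snorm{s}}\norm{u_{0,h}}_{H^1_0(\Omega)}$ and a regrouping of the two $u_{0,h}$ contributions.

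I do not anticipate a genuine obstacle, since the argument is a verbatim finite-dimensional copy of the continuous proof: the Galerkin problem inherits the same coercivity and continuity constants from $H^1_0(\Omega)$. The only point requiring care is precisely that $u_{0,h}$ is taken in $\IV_h$ rather than merely in $H^1_0(\Omega)$, as this is what keeps the shifted function $\widehat{w}_h(s)$ inside $\IV^{\mathbb{C}}_h$ so that the discrete coercivity estimate can be invoked; replacing $u_{0,h}$ by $u_0$ would break this step.
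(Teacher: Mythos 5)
Your proposal is correct and follows essentially the same route as the paper, whose proof of this lemma consists precisely of the remark ``Exactly as in the proof of Lemma~\ref{lmm:laplace_transform_u_bounds}'': you carry out that transcription, correctly identifying the one point of substance, namely that $u_{0,h}=\mathsf{P}_h u_0\in\mathbb{V}_h$ keeps the shifted function $\widehat{w}_h(s)=\widehat{u}_h(s)-\tfrac{1}{s}u_{0,h}$ inside $\IV^{\mathbb{C}}_h$. The only cosmetic deviation is that you derive the bound on $\widehat{w}_h(s)$ by testing with $\widehat{w}_h(s)$ itself rather than by invoking the discrete analogue of Lemma~\ref{lmm:error_bound_L2} applied to the right-hand side functional, which yields the identical estimate with the same constants $\gamma(\alpha)$, $C_P(\Omega)$, and $\overline{c}_{\bm{A}}$.
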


\begin{proof}
Exactly as in the proof of Lemma~\ref{lmm:laplace_transform_u_bounds}.
\end{proof}

\begin{lemma}\label{eq:lemma_Hardy_space_uh_discrete}
Let $f \in L^2_\alpha(\IR_+;L^2(\Omega;\mathbb{R}))$
for some $\alpha>0$, and let $u_0 \in H^1_0(\Omega;\mathbb{R})$.
Then, $\widehat{u}_h \in \mathscr{H}^2_\alpha(\IV^{\mathbb{C}}_h)$,
where $\widehat{u}_h(s) \in \IV^{\mathbb{C}}_h$ \rev{is} solution to Problem~\ref{pr:sdp}
for each $s \in \Pi_{\alpha}$, and
\begin{equation}\label{eq:a_priori_1}
\begin{aligned}
	\norm{
		\widehat{u}_h
	}_{\mathscr{H}^2_\alpha(H^1_0(\Omega))}
	\lesssim
    &
	\frac{C_P(\Omega)}{\rev{\gamma(\alpha)}}
	\norm{
		\widehat{f}
	}_{\mathscr{H}^2_\alpha(L^2(\Omega))}
%	\\
%    &
    +
	\frac{1}{\rev{\sqrt{\alpha}}}
	\left(
		1
		+
		\rev{\frac{\overline{c}_{\bm{A}}}{\gamma(\alpha)}}
	\right)
	\norm{
		u_{0,h}
	}_{H^1_0(\Omega)}.
\end{aligned}
\end{equation}
where $u_{0,h} = \mathsf{P}_h u_0 \in  \mathbb{V}_h$.
\end{lemma}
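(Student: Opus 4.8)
The plan is to show that the map $\Pi_\alpha \ni s \mapsto \widehat{u}_h(s) \in \IV^{\mathbb{C}}_h$ verifies items (i) and (ii) of Definition~\ref{def:hardy_spaces}, proceeding exactly as in the proof of Lemma~\ref{eq:lemma_Hardy_space_uh}; the only difference is that the Galerkin problem is now posed on the finite-dimensional subspace $\IV^{\mathbb{C}}_h \subset H^1_0(\Omega)$. First I would observe that the ellipticity and continuity of $\mathsf{b}(\cdot,\cdot;s)$ established in Lemma~\ref{lmm:error_bound_L2} restrict verbatim to $\IV^{\mathbb{C}}_h \times \IV^{\mathbb{C}}_h$ (coercivity and boundedness are inherited by any subspace), so that for every $s \in \Pi_\alpha$ the Lax--Milgram theorem furnishes a unique $\widehat{u}_h(s) \in \IV^{\mathbb{C}}_h$, and the pointwise bound of Lemma~\ref{lmm:laplace_transform_u_bounds_discrete} is available.

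For item (i) I would argue holomorphy as in the continuous case: the map $s \mapsto \mathsf{b}(\cdot,\cdot;s)$ is holomorphic since it depends affinely on $s$; by the Paley--Wiener theorem (Theorem~\ref{eq:laplace_transform_bijective}) one has $\widehat{f} \in \mathscr{H}^2_\alpha(L^2(\Omega))$, so $s \mapsto \mathsf{g}(\cdot;s)$ is holomorphic; and by Lemma~\ref{lmm:error_bound_L2} the operator associated with $\mathsf{b}(\cdot,\cdot;s)$ lies in $\mathscr{L}_{\text{iso}}$ for each $s \in \Pi_\alpha$, where inversion is itself holomorphic. Hence $s \mapsto \widehat{u}_h(s)$ is holomorphic. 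In the discrete setting this is even more transparent: writing the Galerkin system in matrix form, $\widehat{\bm{\mathsf{u}}}_h(s) = \left(s\bm{\mathsf{M}}_h + \bm{\mathsf{A}}_h\right)^{-1}\left(\widehat{\bm{\mathsf{f}}}_h(s) + \bm{\mathsf{M}}_h\bm{\mathsf{u}}_{0,h}\right)$, the matrix $s\bm{\mathsf{M}}_h + \bm{\mathsf{A}}_h$ is invertible for all $s \in \Pi_\alpha$ and its entries depend holomorphically on $s$, so the solution does as well.

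For item (ii) I would insert the pointwise estimate of Lemma~\ref{lmm:laplace_transform_u_bounds_discrete} into the integral defining $\norm{\widehat{u}_h}^2_{\mathscr{H}^2_\alpha(H^1_0(\Omega))}$ along the boundary line $\Re\{s\} = \alpha$ (justified by Proposition~\ref{prop:properties_hardy}), apply $(a+b)^2 \leq 2a^2 + 2b^2$ to separate the $\widehat{f}$- and $u_{0,h}$-contributions, and evaluate $\int_{-\infty}^{+\infty}\snorm{\alpha + \imath\tau}^{-2}\,\text{d}\tau = \pi/\alpha$ for the initial-condition term. The $\widehat{f}$-term integrates to $\norm{\widehat{f}}^2_{\mathscr{H}^2_\alpha(L^2(\Omega))}$ precisely because $\widehat{f} \in \mathscr{H}^2_\alpha(L^2(\Omega))$; the finiteness of the resulting right-hand side simultaneously yields the claimed bound and certifies membership $\widehat{u}_h \in \mathscr{H}^2_\alpha(\IV^{\mathbb{C}}_h)$.

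I do not anticipate any genuine obstacle, as the argument is a direct transcription of the continuous proof. The only point warranting minor care is that the norm on the left is the $H^1_0(\Omega)$-norm while the iterates live in $\IV^{\mathbb{C}}_h$, but this is harmless since $\IV^{\mathbb{C}}_h \subset H^1_0(\Omega)$ and the norm is simply the restriction. The decisive structural feature—identical to the continuous case—is the $1/\snorm{s}$ decay of the initial-condition contribution in Lemma~\ref{lmm:laplace_transform_u_bounds_discrete}, which is exactly what renders that term square-integrable along the vertical contour $\Re\{s\} = \alpha$.
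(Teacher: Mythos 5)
Your proposal is correct and follows exactly the route the paper takes: the paper's proof of this lemma is literally a reference back to the continuous case (Lemma~\ref{eq:lemma_Hardy_space_uh}), i.e.~verifying items (i) and (ii) of Definition~\ref{def:hardy_spaces} via the holomorphy argument and the pointwise bound of Lemma~\ref{lmm:laplace_transform_u_bounds_discrete}, with the $1/\snorm{s}$ decay making the initial-condition term square-integrable along $\Re\{s\}=\alpha$. Your additional observation that holomorphy is transparent from the matrix form $\widehat{\bm{\mathsf{u}}}_h(s) = \left(s\bm{\mathsf{M}}_h + \bm{\mathsf{A}}_h\right)^{-1}\bigl(\widehat{\bm{\mathsf{f}}}_h(s) + \bm{\mathsf{M}}_h\bm{\mathsf{u}}_{0,h}\bigr)$ is a valid (and pleasant) shortcut in the finite-dimensional setting, but it does not change the substance of the argument.
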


\begin{proof}
Exactly as in the proof of Theorem~\ref{eq:lemma_Hardy_space_uh}.
\end{proof}

\begin{remark}[A-priori Estimates of the Semi-discrete Full-order and Reduced Problems]
\label{rmk:existence_semi_discrete_full}
%\hfill
%\begin{itemize}
%\item
Similarly to Theorem~\ref{eq:well_posedness}, one can show that for any $R \in \IN$,
any discretization parameter $h>0$, and for
\begin{equation}	
	u_h
	\in
	\mathcal{W}_\alpha\left(\IR_+;\mathbb{V}_h\right)
	\quad
	\text{and}
	\quad
	u^{\normalfont\text{(rb)}}_R 
	\in 
	\mathcal{W}_\alpha\left(\IR_+;\mathbb{V}^{\normalfont\text{(rb)}}_R\right)
\end{equation}
\rev{solutions to Problem~\ref{prbm:semi_discrete_problem}
and Problem~\ref{pr:sdpr}, respectively,
\emph{a priori} estimates as in \eqref{eq:a_priori_estimate} may be stated as well.}
Indeed, it holds
\begin{equation}%\label{eq:a_priori_estimate}
\begin{aligned}
	\norm{u_h}_{\mathcal{W}_\alpha\left(\IR_+;H^{1}_0(\Omega;\mathbb{R})\right)}
	%\norm{\partial_t {u}}_{L^2_\alpha\left(\IR_+;H^{-1}(\Omega)\right)}
	\lesssim
	\norm{f}_{L^2_\alpha\left(\IR_+;L^2(\Omega)\right)}
    	+
	\rev{\norm{
		u_{0}
	}_{H^{1}_0(\Omega)}},
\end{aligned}
\end{equation}
\rev{with a hidden constant depending on $\alpha,\underline{c}_{\bm{A}},\overline{c}_{\bm{A}}$, 
and $C_P(\Omega)$.}
An equivalent bound holds for $u^{\normalfont\text{(rb)}}_R $
with the corresponding initial condition \rev{projected in the reduced space}.
%, with
%a constant independent of $\alpha$
%a
%nd, most importantly,
%independent of the discretization parameters $h>0$ and $R>0$
%describing the finite dimensional spaces $\mathbb{V}_h$ and $\mathbb{V}^{\normalfont\text{(rb)}}_R$,
%respectively.
%\item
%Under the assumptions on the data stated in Theorem~\ref{thm:regularity} one can further
%conclude that
%\begin{equation}	
%	\partial_t
%	u_h
%	\in
%	\mathcal{W}_\alpha\left(\IR_+;\mathbb{V}_h,\mathbb{V}'_h\right)
%	\quad
%	\text{and}
%	\quad
%	\partial_t
%	u^{\normalfont\text{(rb)}}_R 
%	\in 
%	\mathcal{W}_\alpha\left(\IR_+;\mathbb{V}^{\normalfont\text{(rb)}}_R,\mathbb{V}^{\normalfont\text{(rb)}'}_R\right)
%\end{equation}
%and that estimates as in eq.~(\ref{eq:a_priori_estimate}), with
%an implicit constant, which again, is independent of $\alpha$ and, most importantly,
%independent of the discretization parameter $h>0$ and $R>0$
%describing the finite dimensional spaces $\mathbb{V}_h$ and $\mathbb{V}^{\normalfont\text{(rb)}}_R$.
%\end{itemize}
\end{remark}

\subsection{\rev{Low-Rank Approximation}}
\label{sec:best_app_error}
%%%%%%%%%%%%%%%%%%%%%%%%%%%%%%%
%Let $X$ be a \emph{complex} Hilbert space endowed with the 
%inner product $\dotp{\cdot}{\cdot}_X$ and induced norm $\norm{\cdot}_X$.
For $\eta\rev{>0}$, define 
\begin{equation}
	\mathcal{D}_{\eta}  
	\coloneqq 
	\left\{
		z \in \IC: \snorm{z}<\eta
	\right\},
\end{equation}
and we set \rev{$\mathcal{D} \coloneqq \mathcal{D}_1$. }

\rev{For $\alpha \in \mathbb{R}$ and $\beta >0$,
consider the following M\"{o}bius transform and its inverse
\begin{equation}\label{eq:mobius}
	\mathcal{M}:\Pi_\alpha \rightarrow \mathcal{D}:s \mapsto \frac{s -\alpha -\beta}{s - \alpha + \beta}
	\quad
	\text{and}
	\quad
	\mathcal{M}^{-1}:\mathcal{D}\rightarrow \Pi_\alpha: z \mapsto 
	\alpha
	-
	\beta
	\frac{
		z+1
	}{
		z-1
	},
\end{equation}
respectively.
The latter maps the interior of the centered circle of radius $\eta>0$
to the exterior of the circle of center and radius
\begin{equation}\label{eq:center_radius_Mobius}
    c_{\eta,\alpha,\beta}
    =
    \alpha
    +
    \beta
    \frac{1+\eta^2}{1-\eta^2}
    \quad
    \text{and}
    \quad
    \rho_{\eta,\alpha,\beta}
    = 
    \frac{2\beta\eta}{\snorm{1-\eta^2}},
\end{equation}
and $\partial \mathcal{D}$ to the line $\Re\{z\} = \alpha$, 
see, e.g.,~see Ref.~\refcite[Lemma 2.2]{giunta1989more}.
In the following, we denote by $\mathcal{C}_{\eta,\alpha,\beta}$
the circle of center and radius $ c_{\eta}$ and $ \rho_{\eta}$, respectively, and recall
as well that the circles $\mathcal{C}_{\eta,\alpha,\beta}$ and $\mathcal{C}_{1/\eta,\alpha,\beta}$
are mirror images of each other with respect to the vertical line $\Re\{z\} = \alpha$.
}

We are interested in the \rev{low-rank} approximation of
the solution to Problem~\ref{prbm:semi_discrete_problem}.
To this end, we resort to Hardy spaces of analytic functions
and work under the assumptions stated below. 

\begin{assumption}[Data Regularity]\label{assump:data_f}
In the following, in addition to $f \in L_{\alpha_0}(\IR_+,L^2(\Omega;\mathbb{R}))$ for some
$\alpha_0>0$, we assume the following:
\begin{itemize}
	\item[(i)] $\partial_t f \in L_{\alpha_0}(\IR_+,L^2(\Omega;\mathbb{R}))$.
	\item[(ii)] There exists $C_f>0$ and an open set $\mathcal{O} \subset \mathbb{C}\backslash \overline{\Pi_{\alpha_0}}$,
	such that $\mathcal{L}\{\partial_t f\}(s)$ admits a holomorphic extension to 
	$\mathcal{O}^c$, i.e., the complement of $\mathcal{O}$.
	\item[(iii)] $u_0 \in H^1_0(\Omega) \cap H^2(\Omega)$ and $\bm{A} \in \mathscr{C}^{1}(\overline{\Omega};\mathbb{R}^{d \times d})$.
\end{itemize}
\end{assumption}

Consider the eigenvalue problem of finding the finitely many eigenpairs
$\{\left(\zeta_{h,i},\lambda_{h,i} \right)\}_{i=1}^{N_h} \subset \mathbb{V}_h \times \mathbb{C}$
with $\norm{\zeta_{h,i}}_{L^2(\Omega)}=1$ such that
\begin{equation}
    \mathsf{a}
    \dotp{\zeta_{h,i}}{v_h}
    =
    \lambda_{h,i}
    \dotp{\zeta_{h,i}}{v_h}_{L^2(\Omega)},
    \quad
    \forall v_h \in \mathbb{V}_h.
\end{equation}
In the following, we assume $\lambda_{h,1}\leq \dots \leq \lambda_{h,N_h}$.
Recall that $\norm{\zeta_{h,i}}_{\mathbb{V}_h} = \norm{\zeta_{h,i}}^{-\frac{1}{2}}_{\mathbb{V}_h} = \lambda^{-\frac{1}{2}}_{h,i} $
and that for any $v_h \in \mathbb{V}_h$ one has that
\begin{equation}
	\norm{v_h}^2_{H^1_0(\Omega)}
	=
	\sum_{i=1}^{N_h}
	\snorm{(v_h,\zeta_{h,i})_{L^2(\Omega)} }^2 \lambda_{h,i}.
\end{equation}

\rev{
\begin{lemma}\label{eq:approximation_Hardy_spaces}
Let Assumption~\ref{assump:data_f} be satisfied for some $\rev{\alpha_0 > 0}$ and
let $u \in \mathcal{W}_{\rev{\alpha_0}}(\IR_+;\mathbb{V}_h)$
be the solution to Problem~\ref{prbm:semi_discrete_problem}.

Then, for any $\alpha>\alpha_0$ and any $\beta > 0$,
there exists $\eta_{\alpha,\beta}>1$
such that for $\eta \in (1,\eta_{\alpha,\beta})$ and
$R \in \{1,\dots,N_h\}$ it holds that
\begin{equation}
\begin{aligned}
	\inf_{
	\substack{
		\mathbb{V}_R \subset \mathbb{V}_h
		\\
		\normalfont\text{dim}(\mathbb{V}_R)\leq R	
	}
	}	
	\norm{
		u_h
		-
		\mathsf{P}_{\mathbb{V}_R}
		u_h
	}_{L^2_\alpha(\IR_+;H^1_0(\Omega))}
	\lesssim
	\frac{\eta^2}{(\eta-1)\sqrt{\alpha \Lambda}}
	&
	\left(
	\sup_{ s \in \partial \mathcal{C}_{\eta,\alpha, \beta}}
	\norm{
		\Pi_h (\mathcal{L}\{\partial_t f\}(s))
	}_{H^1_0(\Omega)}
	\right.
	\\
	&
	+
	\norm{\Pi_h (f(0))}_{H^1_0(\Omega)}
	\\
	&
	\left.
	+
	\norm{\Pi_h ( \nabla \cdot( \bm{A} \nabla u_{0}))}_{H^1_0 (\Omega)}
	\right)
	\eta^{-R}.
\end{aligned}
\end{equation}
where $\mathsf{P}_{\mathbb{V}_R}: \mathbb{V}_h \rightarrow \mathbb{V}_R$
denotes the orthogonal projection onto $\mathbb{V}_R$, $\Pi_h: L^2(\Omega)
\rightarrow \mathbb{V}_h$ signifies the
$L^2(\Omega)$ projection onto $\mathbb{V}_h$, and
\begin{equation}\label{eq:lambda}
	\Lambda
	\coloneqq
	\min
	\left\{
	\left(
		\alpha+\lambda_{h,N_h}- \beta\frac{\eta - 1}{ \eta+1}
	\right)^2
	,
	\left(
		\alpha+\lambda_{h,1}- \beta\frac{\eta + 1}{ \eta-1}
	\right)^2
	\right\}.
\end{equation}
\end{lemma}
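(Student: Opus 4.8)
The plan is to exhibit, for each $R$, a single admissible subspace $\mathbb{V}_R$ that realizes the claimed rate; since the statement is an infimum, an upper bound suffices. First I would pass to the Laplace domain. As $\mathsf{P}_{\mathbb{V}_R}$ is a bounded operator on $H^1_0(\Omega)$, Proposition~\ref{eq:operational_properties}(i) gives $\mathcal{L}\{u_h-\mathsf{P}_{\mathbb{V}_R}u_h\}=\widehat{u}_h-\mathsf{P}_{\mathbb{V}_R}\widehat{u}_h$, and the Paley--Wiener isometry (Theorem~\ref{eq:laplace_transform_bijective}), exactly as in \eqref{eq:minimization_problem_2}, rewrites the left-hand side as $\int_{\mathbb{R}}\norm{(\mathsf{I}-\mathsf{P}_{\mathbb{V}_R})\widehat{u}_h(\alpha+\imath\tau)}_{H^1_0(\Omega)}^2\,\frac{\mathrm{d}\tau}{2\pi}$. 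It therefore suffices to approximate the analytic curve $\tau\mapsto\widehat{u}_h(\alpha+\imath\tau)$ on the line $\Re\{s\}=\alpha$.

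Next I would decompose the snapshot. Writing $\mathsf{A}_h\colon\mathbb{V}_h\to\mathbb{V}_h$ for the operator $(\mathsf{A}_h w,v)_{L^2(\Omega)}=\mathsf{a}(w,v)$, Problem~\ref{pr:sdp} reads $(s+\mathsf{A}_h)\widehat{u}_h(s)=\Pi_h\widehat{f}(s)+u_{0,h}$. Using $\widehat{f}(s)=\tfrac1s(\mathcal{L}\{\partial_t f\}(s)+f(0))$ (Proposition~\ref{eq:operational_properties}(iii)), the resolvent identity $(s+\mathsf{A}_h)^{-1}u_{0,h}=\tfrac1s u_{0,h}-\tfrac1s(s+\mathsf{A}_h)^{-1}\mathsf{A}_h u_{0,h}$, and $\mathsf{A}_h u_{0,h}=-\Pi_h(\nabla\cdot(\bm{A}\nabla u_0))$ (valid, up to a consistency term, by Assumption~\ref{assump:data_f}(iii)), I obtain
\begin{equation}
	\widehat{u}_h(s)=\frac1s u_{0,h}+\frac1s(s+\mathsf{A}_h)^{-1}\Pi_h\!\left(\mathcal{L}\{\partial_t f\}(s)+f(0)+\nabla\cdot(\bm{A}\nabla u_0)\right).
\end{equation}
The first summand is rank one (it lies in $\mathrm{span}\{u_{0,h}\}$ for every $s$); I set $h(s):=(s+\mathsf{A}_h)^{-1}\Pi_h(\cdots)$, so the remainder equals $\tfrac1s h(s)$, with $h$ holomorphic and bounded wherever the resolvent and $\mathcal{L}\{\partial_t f\}$ are analytic.

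The geometric heart is the M\"obius change of variables. Pulling back along $\mathcal{M}^{-1}$ I set $H(z):=h(\mathcal{M}^{-1}(z))$; by Assumption~\ref{assump:data_f}(ii), and since the poles $\{-\lambda_{h,i}\}$ of the resolvent are real and negative, $H$ is holomorphic on a disc $\mathcal{D}_{\eta_{\alpha,\beta}}$ with $\eta_{\alpha,\beta}>1$ the largest $\eta$ for which $\mathcal{C}_{\eta,\alpha,\beta}$ still encloses every pole and every singularity of $\mathcal{L}\{\partial_t f\}$. Because $f$ and $u_0$ are real, $\widehat{u}_h(\bar s)=\overline{\widehat{u}_h(s)}$, so the Taylor coefficients $H(z)=\sum_{k\ge0}h_k z^k$ satisfy $h_k\in\mathbb{V}_h$ (real). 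I then take $\mathbb{V}_R:=\mathrm{span}_{\mathbb{R}}\{u_{0,h},h_0,\dots,h_{R-2}\}$, of dimension at most $R$. Using $\tfrac1s u_{0,h}+\tfrac1s\sum_{k=0}^{R-2}h_k z^k\in\mathbb{V}_R^{\mathbb{C}}$ as competitor, the projection error is bounded pointwise by $\tfrac1{\snorm{s}}\norm{\sum_{k\ge R-1}h_k z^k}_{H^1_0(\Omega)}$. On $\snorm{z}=1$, Cauchy's estimate $\norm{h_k}_{H^1_0(\Omega)}\le\eta^{-k}M_\eta$ with $M_\eta:=\sup_{s\in\mathcal{C}_{\eta,\alpha,\beta}}\norm{h(s)}_{H^1_0(\Omega)}$ yields the uniform tail $\tfrac{\eta^2}{\eta-1}M_\eta\eta^{-R}$; integrating against $\tfrac{\mathrm{d}\tau}{2\pi}$ and using $\int_{\mathbb{R}}\frac{\mathrm{d}\tau}{\alpha^2+\tau^2}=\frac\pi\alpha$ produces the prefactor $\tfrac{1}{\sqrt{\alpha}}\,\tfrac{\eta^2}{\eta-1}$.

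It remains to bound $M_\eta$. In the $\mathsf{A}_h$-eigenbasis, using $\norm{v_h}_{H^1_0(\Omega)}^2=\sum_i\snorm{(v_h,\zeta_{h,i})_{L^2(\Omega)}}^2\lambda_{h,i}$, one has $\norm{(s+\mathsf{A}_h)^{-1}}_{H^1_0(\Omega)\to H^1_0(\Omega)}=\max_i\snorm{s+\lambda_{h,i}}^{-1}=\mathrm{dist}(s,\{-\lambda_{h,i}\})^{-1}$. Computing the nearest approach of the real poles to the extreme points $c_{\eta,\alpha,\beta}\pm\rho_{\eta,\alpha,\beta}$ of the circle gives $\inf_{s\in\mathcal{C}_{\eta,\alpha,\beta}}\mathrm{dist}(s,\{-\lambda_{h,i}\})^2\ge\Lambda$, the two entries of \eqref{eq:lambda} arising from the right and left extremities. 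A triangle inequality over the three sources then gives $M_\eta\le\Lambda^{-1/2}\big(\sup_{\mathcal{C}_{\eta,\alpha,\beta}}\norm{\Pi_h\mathcal{L}\{\partial_t f\}(s)}_{H^1_0(\Omega)}+\norm{\Pi_h f(0)}_{H^1_0(\Omega)}+\norm{\Pi_h(\nabla\cdot(\bm{A}\nabla u_0))}_{H^1_0(\Omega)}\big)$, and combining with the previous paragraph concludes. I expect the main obstacle to be this resolvent/geometry step producing exactly $\Lambda$—pinning down $\eta_{\alpha,\beta}$ from the enclosed singularities and the careful distance computation on $\mathcal{C}_{\eta,\alpha,\beta}$—together with the bookkeeping that isolates the rank-one $u_{0,h}$-mode (so the bound features $\nabla\cdot(\bm{A}\nabla u_0)$ rather than $u_{0,h}$) and truncates at $R-1$ to yield the $\eta^2$ factor.
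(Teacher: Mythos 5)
Your proposal is correct and follows essentially the same route as the paper's proof: Paley--Wiener isometry to pass to the line $\Re\{s\}=\alpha$, extraction of the rank-one $u_{0,h}/s$ mode via $\mathsf{a}(u_{0,h},v_h)=-(\nabla\cdot(\bm{A}\nabla u_0),v_h)_{L^2(\Omega)}$, M\"obius pullback to the disc, Taylor truncation at degree $R-2$ with Cauchy estimates yielding the $\frac{\eta^2}{\eta-1}\eta^{-R}$ tail, the resolvent-distance bound producing $\Lambda$, and $\int_{\IR}\frac{\mathrm{d}\tau}{\alpha^2+\tau^2}=\frac{\pi}{\alpha}$ for the $\alpha^{-1/2}$ prefactor. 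The only (harmless) deviations are cosmetic: you phrase the eigen-expansion \eqref{eq:exp_u} in resolvent-operator form, and you bound the projection error pointwise on the line against the competitor $\widehat{v}_R(s)\in\mathbb{V}_R^{\mathbb{C}}$, which lets you bypass the paper's explicit verification that the rational weights $\widehat{\omega}_j$ lie in $\mathscr{H}^2_\alpha$.
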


\begin{proof}
Let us consider $w_h \coloneqq u_h - u_{0,h}$, which is solution to
\begin{equation}\label{eq:sol_w_h_exp}
\begin{aligned}
	\left(\partial_t w_h(t), v_h\right)_{L^2(\Omega)}
	+
	\mathsf{a}\dotp{w_h}{v_h}
	=
	&
	\dotp{
		f(t)
	}{v_h}_{L^2(\Omega)}
	-
	\mathsf{a}
	\left(
		u_{0,h}
		,
		v_h
	\right)
	\\
	=
	&
	\dotp{
		{f}(t)
	}{v_h}_{L^2(\Omega)}
	+
	\left(
		\nabla \cdot( \bm{A} \nabla u_{0})
		,
		v_h
	\right)_{L^2(\Omega)},
\end{aligned}
\end{equation}
for all $v_h\in \mathbb{V}_h$.
%It follows from Remark~\ref{rmk:existence_semi_discrete_full} that
%$u_h \in L^2_{\alpha_0}(\IR_+;\mathbb{V}_h)$, thefore, it clearly holds
%that $u_h \in L^2_{\alpha}(\IR_+;\mathbb{V}_h)$ for any $\alpha> \alpha_0$,
%and consequently $\widehat{u} \in \mathscr{H}^2_\alpha(\mathbb{V}^\mathbb{C}_h)$
%for any $\alpha> \alpha_0$.
% 
For $v_h \in L^2_{\alpha}(\IR_+;\mathbb{V}_h)$ to be specified one can conclude
similarly as in \eqref{eq:minimization_problem_2} that %for any $\alpha > \alpha_0$
\begin{equation}
\begin{aligned}
	\norm{
		u_h - v_h
	}^2_{L^2_\alpha(\IR_+;H^1_0(\Omega))}
	=
	\int_{-\infty}^{+\infty}
	\,
	\norm{
		\widehat{u}_h(\alpha + \imath \tau) - \widehat{v}_h(\alpha + \imath \tau)
	}^2_{H^1_0(\Omega)}
	\frac{\text{d}\tau}{2\pi},
\end{aligned}
\end{equation}
where, as is customary, the hat indicates
the application of the Laplace transform to the underlying function.
In particular, if we set $\widehat{v}_h(s) = \frac{u_{0,h}}{s}  - \widehat{z}_h(s) $ 
we obtain
\begin{equation}
\begin{aligned}
	\norm{
		u_h - v_h
	}^2_{L^2_\alpha(\IR_+;H^1_0(\Omega))}
	=
	\int_{-\infty}^{+\infty}
	\,
	\norm{
		\widehat{w}_h(\alpha + \imath \tau) - \widehat{z}_h(\alpha + \imath \tau)
	}^2_{H^1_0(\Omega)}
	\frac{\text{d}\tau}{2\pi}.
\end{aligned}
\end{equation}

Set $s(\theta) = \alpha +\imath \beta \cot\left(\frac{\theta}{2}\right)$ 
and $\tau = \beta \cot\left(\frac{\theta}{2}\right)$, $\theta \in (0,2\pi)$,
with $\beta>0$. Then, one has the following
\begin{equation}
\begin{aligned}\label{eq:hardy_norm_exp_conv}
	\int_{-\infty}^{+\infty}
	\,
	\norm{
		\widehat{u}
		(\alpha +\imath \tau)
		-
		\widehat{v}
		(\alpha +\imath \tau)
	}^2_{H^1_0(\Omega)}
	\text{d}
	\tau
	&
	\\
	&\hspace{-4cm}
	=
	\int_{0}^{2\pi}
	\frac{
		\beta
	}{
		2\sin^2
		\left(
			\frac{\theta}{2}
		\right)
	}
	\norm{
		\widehat{u}
		\left(s(\theta))\right)
		-
		\widehat{v}
		\left(s(\theta))\right)
	}^2_{H^1_0(\Omega)}
	\text{d}
	\theta
	\\
	&\hspace{-4cm}
	=
	\int_{0}^{2\pi}
	\frac{
		\beta
	}{
		2\sin^2
		\left(
			\frac{\theta}{2}
		\right)
		\snorm{s(\theta)}^2
	}
	\norm{
		s(\theta)
		\left(
            \widehat{u}\left(s(\theta))\right)
		      -
		      \widehat{v}\left(s(\theta))\right)
        \right)
	}^2_{H^1_0(\Omega)}
	\text{d}
	\theta.
\end{aligned}
\end{equation}
Recall that with $z = \exp(\imath\theta)$ one has
$\cot\left(\frac{\theta}{2}\right)
	=
	\imath
	\frac{
		z+1
	}{
		z-1
	}
$, thus
	$s(\theta) = s(z) =	\alpha
	-
	\beta
	\frac{
		z+1
	}{
		z-1
	}
$.

Furthermore, the solution to \eqref{eq:sol_w_h_exp}
admits for each $ s \in \Pi_\alpha$ the following expression 
\begin{equation}\label{eq:exp_u}
\begin{aligned}
    \widehat{w}_h(s)
    =
    &
    \sum_{i=1}^{N_h}
    \frac{
        \dotp{\mathcal{L}\{f\}(s)}{\zeta_{h,i}}_{L^2(\Omega)}
        +
        \dotp{\nabla \cdot( \bm{A} \nabla u_{0})}{\zeta_{h,i}}_{L^2(\Omega)}
    }{s+\lambda_{h,i}}
    \zeta_{h,i}
    \\
    =
    &
    \sum_{i=1}^{N_h}
    \left(
    \frac{
        \dotp{\mathcal{L}\{\partial_t f\}(s)}{\zeta_{h,i}}_{L^2(\Omega)}
        +
         \dotp{f(0)}{\zeta_{h,i}}_{L^2(\Omega)}
    }{s(s+\lambda_{h,i})}
    +
    \frac{
        \dotp{\nabla \cdot( \bm{A} \nabla u_{0})}{\zeta_{h,i}}_{L^2(\Omega)}
    }{s(s+\lambda_{h,i})}
    \right)
    \zeta_{h,i},
\end{aligned}
\end{equation}
where the second equality follows from item (i) in
Assumption~\ref{assump:data_f}.
It follows from item (ii) in Assumption~\ref{assump:data_f}
that this representation admits a unique holomorphic extension to
$s \in \mathcal{O}^c \cap \left(\mathbb{C} \backslash \{0,-\lambda_{h,1},\dots,-\lambda_{h,N_h}\}\right)$.

Let us set
\begin{equation}
	g(z)
	=
	\left(
    \alpha
	-
	\beta
	\frac{
		z+1
	}{
		z-1
	}
	\right)
	\widehat{u}
	\left(
	\alpha
	-
	\beta
	\frac{
		z+1
	}{
		z-1
	}
	\right),
	\quad
	z \in \mathcal{D}.
\end{equation}
As pointed out in, e.g., the proof of Lemma 2.2 in Ref.~\refcite{giunta1989more}
and in Section 2 of Ref.~\refcite{weideman2023fully}, provided that $\beta\geq\alpha-\alpha_0$,
the half plane $\Pi_{\rev{\alpha}}$
is mapped \rev{through $\mathcal{M}$} to a disk of center $\delta_{\alpha,\beta}$
and radius $1-\delta_{\alpha,\beta}$ for some $\delta_{\alpha,\beta}<0$
\rev{that depends on $\alpha$ and $\beta$}.
Observe that $g(z) \coloneqq
\left(\mathcal{M}^{-1}(z)\right)\widehat{u}\left(\mathcal{M}^{-1}(z)\right)$,
$z \in \overline{\mathcal{D}}$, is well-defined
and, furthermore, is analytic in $\overline{\mathcal{D}}$.
On the other hand, provided that $\beta<\alpha-\alpha_0$, the
half plane $\Pi_{\rev{\alpha}}$ is mapped to the exterior of a disk of center
$\delta_{\alpha,\beta}>1$ and radius $\delta_{\alpha,\beta}-1$.
Hence $g(z)$ for $z \in \overline{\mathcal{D}}$ is well-defined, and,
as in the previous case, is analytic in $\overline{\mathcal{D}}$.
In either case, there exists $\eta_{\alpha,\beta}>1$ (depending on $\alpha$ and $\beta$)
such that $g(z)$ is analytic in $\mathcal{D}_{\eta_{\alpha,\beta}}$
\rev{and such that $\mathcal{O} \cup \{0,-\lambda_{h,1},\dots,-\lambda_{h,N_h}\} \subset \mathcal{C}_{\eta,\alpha,\beta}$
for any $\eta \in (1,\eta_{\alpha,\beta})$.}

We consider the Taylor series expansion of $g(z)$ centered at the origin
of the complex plane, i.e.~for $z \in \mathcal{D}_{\eta}$ and
$\eta \in (1,\eta_{\alpha,\beta})$
\begin{equation}\label{eq:laurent_series}
	g(z)
	=
	\sum_{j =0}^{\infty}
	c_j
	z^j
	\quad
	\text{with}
	\quad
	c_j
	=
	\frac{1}{2\pi \imath}
	\int_{q \in \partial \mathcal{D}_\eta}
	\frac{g(q)}{q^{j+1}}
	\text{d}q
	\in
	\mathbb{V}^\mathbb{C}_h,
	\quad
	j\in \mathbb{N}_0.
\end{equation}

%Using Cauchy's formula, one has that for any $\nu \in (0,1)$
%and $\eta >\eta_{\alpha,\beta,u}$
%\begin{equation}
%	\int_{q \in \partial \mathcal{D}_\nu}
%	\frac{g(q)}{q^{j+1}}
%	\text{d}q
%	-
%	\int_{q \in \partial \mathcal{D}_\eta}
%	\frac{g(q)}{q^{j+1}}
%	\text{d}q
%	=0
%\end{equation}
%Using Cauchy's integral formula and the fact that
%the map $\mathcal{D}_{\eta_{\alpha,\beta,u}} \ni z \mapsto g(z) \in X$
%is holomorphic, one can deform the integration
%contour in the definition of $c_j \in X$ stated in eq.~(\ref{eq:laurent_series}) 
%and integrate on the circle of radius $\eta_{\alpha,\beta,u}>1$.

Consequently, for any $\eta \in (1,\eta_{\alpha,\beta,u})$, we obtain
\begin{equation}
	\norm{c_j}_{H^1_0(\Omega)}
	\leq
	\eta^{-{j}}
	\left(
		\int_{0}^{2\pi}
      	\norm{g(\eta \exp(\imath \theta))}^2_{H^1_0(\Omega)}
		\text{d}\theta
	\right)^{\frac{1}{2}},
	\quad
	j \in \mathbb{N}_0.
\end{equation}
For each $s \in \mathcal{O}^c \cap \left(\mathbb{C} \backslash \{-\lambda_{1,h},\dots,-\lambda_{N_h,h}\}\right)$, we calculate
\begin{equation}
\begin{aligned}
	\norm{\widehat{u}_h(s)}^2_{H^1_0(\Omega)}
	\lesssim
    \sum_{i=1}^{N_h}
    &
    \left(
    \frac{
        \snorm{\dotp{\mathcal{L}\{\partial_t f\}(s)}{\zeta_{h,i}}_{L^2(\Omega)}}^2
        +
        \snorm{\dotp{f(0)}{\zeta_{h,i}}_{L^2(\Omega)}}^2
    }{\snorm{s}^2\snorm{s+\lambda_{h,i}}^2}
    \right.
    \\
    &
    \left.
    +
    \frac{
        \snorm{ \dotp{\nabla \cdot( \bm{A} \nabla u_{0})}{\zeta_{h,i}}_{L^2(\Omega)}}^2
    }{\snorm{s}^2\snorm{s+\lambda_{h,i}}^2}
    \right)
    \lambda_{h,i},
\end{aligned}
\end{equation}
Observe that for each $i \in \{1,\dots,N_h\}$
\begin{equation*}
\begin{aligned}
	\snorm{\mathcal{M}^{-1}(\eta \exp(\imath \theta))+\lambda_{h,i}}^2
	=
	&
	\left(
		\alpha+\lambda_{h,i}- \beta\frac{\eta^2 - 1}{ \eta^2 +1-2\eta \cos \theta}
	\right)^2
	\\
	&
	+
	\beta^2
	\left(
		\frac{2 \eta \sin(\theta)}{\eta^2 +1-2\eta \cos \theta}
	\right)^2
%	\\
%	\geq
%	&
%	\min
%	\left\{
%	\left(
%		\alpha+\lambda_{h,i}- \beta\frac{\eta - 1}{ \eta+1}
%	\right)^2
%	,
%	\left(
%		\alpha+\lambda_{h,i}- \beta\frac{\eta + 1}{ \eta-1}
%	\right)^2
%	\right\}
	\\
	\geq
	&
	\min
	\left\{
	\left(
		\alpha+\lambda_{h,N_h}- \beta\frac{\eta - 1}{ \eta+1}
	\right)^2
	,
	\left(
		\alpha+\lambda_{h,1}- \beta\frac{\eta + 1}{ \eta-1}
	\right)^2
	\right\},
\end{aligned}
\end{equation*}
thus
\begin{equation}
	\int_{0}^{2\pi}
	\frac{
		\text{d}\theta
	}{
		\snorm{\mathcal{M}^{-1}(\eta \exp(\imath \theta))+\lambda_{h,i}}^2
	}
	\lesssim
	\frac{1}{\Lambda},
\end{equation}
where $\Lambda$ is as in \eqref{eq:lambda}, and
\begin{equation}
\begin{aligned}
	\int_{0}^{2\pi}
	\norm{g(\eta \exp(\imath \theta))}^2_{\mathbb{V}^\mathbb{C}_h}
	\text{d}\theta
	\lesssim
	&
	\frac{1}{\Lambda}
	\left(
	\sup_{ s \in \mathcal{C}_{\eta,\alpha, \beta}}
	\norm{
		\Pi_h (\mathcal{L}\{\partial_t f\}(s))
	}^2_{H^1_0(\Omega)}
	\right.
	\\
	&
	\left.
	+
	\norm{\Pi_h (f(0))}^2_{H^1_0(\Omega)}
	+
	\norm{\Pi_h (\nabla \cdot( \bm{A} \nabla u_{0}))}^2_{H^1_0 (\Omega)}
	\right).
\end{aligned}
\end{equation}

Let us set for each $R\in \IN$
\begin{equation}\label{eq:taylor_expansion}
	{g}_\rev{R}(z)
	=
	\sum_{j=0}^{R-2}
	c_j
	z^j
	\quad
	\text{for }
	z \in \overline{\mathcal{D}},
\end{equation}
thus for $z \in \overline{\mathcal{D}}$ and any $\eta \in (1,\eta_{\alpha,\beta})$
\begin{equation}\label{eq:error_bound_X}
\begin{aligned}
	\norm{
		g(z)
		-
		{g}_\rev{R}(z)
	}_{H^1_0(\Omega)}
	\lesssim
	\frac{\eta^2}{(\eta-1)\sqrt{\Lambda}}
	&
	\left(
	\sup_{ s \in \mathcal{C}_{\eta,\alpha, \beta}}
	\norm{
		\Pi_h (\mathcal{L}\{\partial_t f\}(s))
	}_{H^1_0(\Omega)}
	\right.
	\\
	&
	+
	\norm{\Pi_h (f(0))}_{H^1_0(\Omega)}
	\\
	&
	\left.
	+
	\norm{\Pi_h  (\nabla \cdot( \bm{A} \nabla u_{0}))}_{H^1_0 (\Omega)}
	\right)
	\eta^{-R}.
\end{aligned}
\end{equation}
For each $R \in \mathbb{N}$, set
\begin{equation}
	\widehat{v}_R(s)
	=
	\frac{u_{0,h}}{s}
	+
	\frac{1}{s}
	g_R
	\left(
		\frac{\alpha -s  + \beta}{\alpha -s  - \beta}
	\right),
	\quad
	s\in\Pi_\alpha.
\end{equation}
and $V_{R} \coloneqq\text{span}\left\{u_{0,h},c_{0},\dots,c_{R-2}\right\} \subset \mathbb{V}_h$,
therefore $\text{dim}(V_R) \rev{\leq} R$.

Observe that for each $s \in \Pi_\alpha$ one has
\begin{equation}\label{eq:expasion_u_Mobius}
	\widehat{v}_R(s)
	=
	\frac{u_{0,h}}{s}
	+
	\sum_{j=0}^{R-2}
	\underbrace{
	\frac{1}{s}
	\left(
		\frac{\alpha -s+ \beta}{\alpha -s - \beta}
	\right)^j
	}_{\eqcolon\widehat{\omega}_{j}(s)}	
	c_j
	\in V_R.
\end{equation}
For each $j \in \mathbb{N}_0$,
the function $\widehat{\omega}_{j}: \Pi_\alpha\rightarrow\IC$ is holomorphic
as it is a rational function with a pole of 
multiplicity $j$, which is located outside $\Pi_\alpha$, and a pole of multiplicity one
located at the origin of the complex plane.
Therefore, it verifies item (i) in Definition~\ref{def:hardy_spaces}.
Recalling the definition of the $\mathscr{H}^2_\alpha$-norm
and Proposition~\ref{prop:properties_hardy}, we obtain
\begin{equation}
	\norm{
		\widehat{\omega}_{j}
	}^2_{\mathscr{H}^2_\alpha}
	%\hspace{-2cm}
	=
	\int_{-\infty}^{+\infty}
	\snorm{
		\widehat{\omega}_{j}
		(\alpha  + \imath \tau)
	}^2
	\,
	\frac{\text{d}\tau}{2\pi}
	\leq
	\frac{1}{2\pi}
	\int_{-\infty}^{+\infty}
	\frac{	\text{d}
	\tau}{\snorm{\alpha + \imath \tau}^2}.
\end{equation}
Again, recalling that
\begin{equation}
	\frac{1}{2\pi}
	\int_{-\infty}^{+\infty}
	\frac{	\text{d}
	\tau}{\snorm{\alpha + \imath \tau}^2}
	=
	\frac{1}{2\pi}
	\int_{-\infty}^{+\infty}
	\frac{	\text{d}
	\tau}{{\alpha^2+\tau^2}}
	=
	\frac{1}{2\alpha}
\end{equation}
we obtain
\begin{equation}
	\norm{\widehat{\omega}_{j}}_{\mathscr{H}^2_\alpha}
	\leq 
	\frac{1}{\sqrt{2\alpha}},
	\quad
	j\in \mathbb{N}_0,
\end{equation}
thus verifying item (ii) in Definition~\ref{def:hardy_spaces}.
This, in turn, according to Theorem~(\ref{eq:laplace_transform_bijective}), implies that
$v_R = \mathcal{L}^{-1}\left \{\widehat{v}_R\right\} \in L^2_\alpha(\IR_+) \otimes V_R \subset L^2_\alpha(\IR_+;H^1_0(\Omega))$.

By replacing $v$ by $w_R$ in \eqref{eq:hardy_norm_exp_conv} we obtain
for any $\eta \in (1,\eta_{\alpha,\beta,u})$
%{\small
\begin{equation}
\begin{aligned}
	\inf_{
	\substack{
		\mathbb{V}_R \subset \mathbb{V}_h
		\\
		\normalfont\text{dim}(\mathbb{V}_R)\leq R	
	}
	}	
	\norm{
		u_h
		-
		\mathsf{P}_{\mathbb{V}_R}
		u_h
	}^2_{L^2_\alpha(\IR_+;H^1_0(\Omega))}
	\leq
	&
	\norm{
		u_h
		-
		v_R
	}^2_{L^2_\alpha(\IR_+;X)}
	\\
	\stackrel{(\ref{eq:hardy_norm_exp_conv})}{\leq}
	&
	\frac{\beta}{4\pi}
	\int_{0}^{2\pi}
	\frac{
		\norm{
			g(\exp(\imath \theta))
			-
			g_R(\exp(\imath \theta))
		}^2_{H^1_0(\Omega)}
	}{
		\sin^2
		\left(
			\frac{\theta}{2}
		\right)
		\snorm{s(\theta)}^2
	}
	\text{d}
	\theta
	\\
	\lesssim
	&
	\sup_{z\in\overline{\mathcal{D}}}
	\norm{
		g(z)
		-
		g_R(z)
	}^2_{H^1_0(\Omega)}
	\int_{0}^{2\pi}
	\frac{
		\beta
		\text{d}
		\theta
	}{
		\sin^2
		\left(
			\frac{\theta}{2}
		\right)
		\snorm{s(\theta)}^2
	}.
\end{aligned}
\end{equation}
We calculate
\begin{equation}
	\int_{0}^{2\pi}
	\frac{
		\beta
		\text{d}
		\theta
	}{
		\sin^2
		\left(
			\frac{\theta}{2}
		\right)
		\snorm{s(\theta)}^2
	}
	\lesssim
	 \norm{\frac{1}{s}}^2_{\mathscr{H}^2_\alpha} 
	 \lesssim
	 \frac{1}{\alpha}.
\end{equation}
Recalling \eqref{eq:error_bound_X}, we conclude the proof. 
\end{proof}

}

\subsection{Convergence Analysis of the \rev{LT-MOR} Method}
\label{sec:convergence_lt_rb}
%%%%%%%%%%%%%%%%%%%%%%%%%%%%%%%%%%%%
In this section, we establish the exponential convergence of the \rev{LT-MOR} method.
Let $u_h \in L^2_\alpha\left(\IR_+;\mathbb{V}_h\right)$ 
be the solution to Problem~\ref{prbm:semi_discrete_problem}.
Clearly, it holds that
\begin{equation}\label{eq:l2_bound}
	\int_{0}^{\infty}
	\norm{{u}_h(t)}^2_{H^1_0(\Omega)}
	\exp(-2\alpha t)
    \rev{\text{d}
    t
    }
	<
	\infty,
\end{equation}
thus rendering ${u}_h(t)$ a Hilbert-Schmidt kernel.
\rev{(For further details on Hilbert-Schmidt kernel
and operators we refer to Ch.~IV in Ref.~\refcite{gohberg1996traces}).}
Define the operator
$\mathsf{T}:L^2_\alpha(\IR_+) \rightarrow \mathbb{V}_{h} $ as
\begin{equation}
	\mathsf{T} g
	\coloneqq
%	(u_h,g)_{L^2_\alpha(\IR_+)}
%	=
	\int_{0}^{\infty}
	u_h(t)
	g(t)
	\exp(-2\alpha t)
	\text{d} t,
	\quad 
	\forall g \in L^2_\alpha(\IR_+).
\end{equation}
Its adjoint $\mathsf{T}^\star: \IV_{h} \rightarrow L^2_\alpha(\IR_+)$
is defined as
\begin{equation}
	\dotp{
		g
	}{
		\mathsf{T}^\star 
		v_h
	}_{ L^2_\alpha(\IR_+)}
	=
	\dotp{
		\mathsf{T} g
	}{
		 v_h
	}_{H^1_0(\Omega)},
	\quad 
	\forall g \in L^2_\alpha(\IR_+), 
	\quad
	\forall
	v_h \in \mathbb{V}_h,
\end{equation}
thus yielding for a.e. $t>0$
\begin{equation}
	\left(
		\mathsf{T}^\star 
		v_h
	\right)(t)
	=
	\dotp{
		{u}_h(t)
	}{
		v_h
	}_{H^1_0(\Omega)}
	,
	\quad 
	\forall 
	v_h
	\in \mathbb{V}_h.
\end{equation}
It follows from \eqref{eq:l2_bound} that
$\mathsf{T}$ is a Hilbert-Schmidt operator,
which in turn is compact.
The Hilbert-Schmidt norm of $\mathsf{T}$ admits the following expression
\begin{equation}
	\norm{\mathsf{T}}_{\text{HS}} 
	= 
	\norm{u_h}_{L^2_\alpha\left(\IR_+;H^1_0(\Omega)\right)}.
\end{equation}
Since $\mathsf{T}$ is compact, 
$\mathsf{K}= \mathsf{T} \mathsf{T}^\star:\mathbb{V}_h \rightarrow \mathbb{V}_h$ 
and 
$\mathsf{C}= \mathsf{T}^\star \mathsf{T}: L^2_\alpha(\IR_+) \rightarrow L^2_\alpha(\IR_+)$ 
are self-adjoint, non-negative, compact operators, given by
\begin{equation}
\begin{aligned}
	\left(\mathsf{C} g\right)(t)
	&
	=
	\int_{0}^\infty
	\dotp{
		{u}_h(t)
	}{
		{u}_h(\tau)
	}_{H^1_0(\Omega)}
	\,
	g(\tau)
	\exp(-2\alpha \tau)
	\text{d} 
	\tau,
	\quad
	\forall g \in L^2(\IR_+,\alpha),
	\\
	\mathsf{K} 
	v_h
	&
	=
	\int_{0}^\infty 
	{u}_h(\tau)
	\dotp{
		{u}_h(\tau)
	}{
		v_h
	}_{H^1_0(\Omega)}
	\exp(-2\alpha \tau)
	\text{d} \tau,
	\quad
	\forall 
	v_h
	\in \mathbb{V}_h.
\end{aligned}
\end{equation}
The operator $\mathsf{K}$ can be represented by the $N_h \times N_h$ 
symmetric, positive definite matrix
\begin{equation}
	\left(
		{\bm{\mathsf{K}}}
	\right)_{i,j}
	=
	\int_{0}^{\infty} 
	\dotp{
		{u}_h(\tau)
	}{
		\varphi_i
	}_{H^1_0(\Omega)}
	\dotp{
		\varphi_j
	}{
		{u}_h(\tau)
	}_{H^1_0(\Omega)}
	\exp(-2\alpha \tau)
	\text{d}
	\tau,
\end{equation}
whose eigenvalues $\sigma_1^2 \geq \cdots \geq \sigma_r^2 \geq 0$,
being $r$ the rank of the matrix ${\bm{\mathsf{K}}}$,
and associated eigenvectors $\zeta_i \in$ $\IC^{N_h}$
satisfy
\begin{equation}\label{eq:eig_K}
	{\bm{\mathsf{K}}}
	\boldsymbol\zeta_i
	=
	\sigma_i^2 \boldsymbol\zeta_i, \quad i=1, \ldots, r.
\end{equation}
Set
\begin{align}
	\zeta_{i}
	=
	\sum_{j=1}^{N_h}
	\left(
	\boldsymbol\zeta_{i}
	\right)_j
	\varphi_{j}
	\in 
	\mathbb{V}_h.
\end{align}
The functions $\psi_i \in L^2_\alpha(\IR_+)$ defined as
\begin{equation}
	\psi_i
	=
	\frac{1}{\sigma_i} 
	\mathsf{T}^\star \zeta_i, \quad i=1, \ldots, r,
\end{equation}
are the eigenvectors of $\mathsf{C}$. 
Finally, $u_h(t) \in \mathbb{V}_h$ admits the expansion for a.e. $t>0$
\begin{equation}\label{eq:decom_T_HS}
	u_h(t)
	=
	\sum_{i=1}^r 
	{\sigma}_i {\zeta}_i \psi_i(t)
	=
	\sum_{i=1}^r 
	{\zeta}_i
	\dotp{u_h(t)}{{\zeta}_i}_{H^1_0(\Omega)},
\end{equation}
and the following decomposition holds for $\mathsf{T}$
\begin{equation}
	\mathsf{T}
	=
	\sum_{i=1}^r 
	\sigma_i 
	\zeta_i
	\left(
		\psi_i
		,
		\cdot
	\right)_{L^2(\IR_+,\alpha)}.
\end{equation}
Next, for $R\leq r$ we set
\begin{equation}\label{eq:v_rb_optimal}
	\mathbb{V}^{\text{(rb)}}_R
	=
	\text{span}
	\left\{	
		\zeta_1
		,
		\dots
		,
		\zeta_R
	\right\}
	\subset
	\mathbb{V}_h.
\end{equation}
Let $\mathsf{P}^{\text{(rb)}}_R: H^1_0(\Omega) \rightarrow \mathbb{V}^{\text{(rb)}}_R$
be the projection operator onto $\mathbb{V}^{\text{(rb)}}_R$ and, for any 
finite dimensional $X_R \subset H^1_0(\Omega)$, let
$\mathsf{P}_{X_R}: H^1_0(\Omega) \rightarrow X_R$ be the projection operator
onto a finite dimensional subspace ${X_R}$ as defined in \eqref{eq:projection_H1}.
Then, it holds (see, e.g., Ref.~\refcite{quarteroni2015reduced}, Section 6.4)
\begin{equation}\label{eq:min_time_domain}
	\norm{
		u_h
		-
		\mathsf{P}^{\text{(rb)}}_R
		u_h
	}^2_{L^2_\alpha(\IR_+;H^1_0(\Omega))}
	=
	\min_{
	\substack{
		\IV_R \subset \mathbb{V}_h
		\\
		\text{dim}(\IV_R)\leq R	
	}
	}	
	\norm{
		u_h
		-
		\mathsf{P}_{\IV_R}
		u_h
	}^2_{L^2_\alpha(\IR_+;H^1_0(\Omega))},
\end{equation}
thus with $\mathbb{V}^{\text{(rb)}}_R$ as in \eqref{eq:v_rb_optimal}
\begin{equation}\label{eq:min_time_domain_2}
	\mathbb{V}^{\text{(rb)}}_R
	=
	\argmin_{
	\substack{
		\IV_R \subset \mathbb{V}_h
		\\
		\text{dim}(X_R)\leq R	
	}
	}	
	\norm{
		u_h
		-
		\mathsf{P}_{\IV_R}
		u_h
	}^2_{L^2_\alpha(\IR_+;H^1_0(\Omega))}.
\end{equation}
However, the computation of the solution to the minimization
problem stated in \eqref{eq:min_time_domain} is not feasible.
Even if we consider a time discretization of the right-hand side
in \eqref{eq:min_time_domain} as discussed in Section~\ref{sec:reduced_time_dependent_problem_traditional},
this would entail the computation
of the solution itself, which we eventually would like to approximate
using the reduced basis method, thus defeating the purpose of the
algorithm (cf. Remark~\ref{rmk:compute_rb_time}).

The key insight of the \rev{LT-MOR} method consists in identifying the
norm equivalence stated in Theorem~\ref{eq:laplace_transform_bijective}
and casting the minimization problem \eqref{eq:min_time_domain}
in terms of the Laplace transform
$\widehat{u}_h = \mathcal{L}\{u_h\} \in \mathscr{H}^2_\alpha(\mathbb{V}^\mathbb{C}_h)$
of $u_h \in L^2_\alpha(\IR_+;\mathbb{V}_h)$ as follows
\begin{equation}\label{eq:min_laplace_domain}
	\mathbb{V}^{\text{(rb)}}_R
    =	
	\argmin_{
	\substack{
		X_R \subset \mathbb{V}^\mathbb{C}_h
		\\
		\text{dim}(X_R)\leq R	
	}
	}	
	\norm{
		\widehat{u}_h
		-
		\mathsf{P}_{X_R}
		\widehat{u}_h
	}^2_{\mathscr{H}^2_\alpha(H^1_0(\Omega))}.
\end{equation}

%\begin{remark}
%Observe that in eq.~(\ref{eq:min_laplace_domain}), unlike in eq.~(\ref{eq:tPOD_algebraic_frequency}),
%we do not use the real part of $\widehat{u}_h$ to compute the reduced space. The justification
%as to why only the real part is needed is thoroughly provided in Section~\ref{sec:real_complex_basis}.
%\end{remark}

\begin{remark}
Even though at a continuous level \eqref{eq:min_time_domain_2} and \eqref{eq:min_laplace_domain}
yield the same reduced space $\mathbb{V}^{\normalfont\text{(rb)}}_R$,
at a discrete level the offline
computations required to construct $\mathbb{V}^{\normalfont\text{(rb)}}_R$
differ vastly, as described in Section~\ref{sec:FRB}.
This and other computational aspects of the \rev{LT-MOR} method
are extensively discussed in Section~\ref{sec:computation_rb} ahead.
\end{remark}

\begin{remark}[Inverse Inequality]\label{rmk:inv_inequality}
Being $\mathbb{V}_h$ a \rev{finite-dimensional} subspace of $H^1_0(\Omega)$, 
all norms are equivalent. 
This implies the existence of $C^{\normalfont\text{inv}}_h>0$, which depends
on the discretization parameter $h>0$, such that 
\begin{equation}
	\norm{v_h}_{H^1_{0}(\Omega)} \leq C^{\normalfont\text{inv}}_h \norm{v_h}_{L^2(\Omega)},
	\quad
	\forall v_h \in \mathbb{V}_h. 
\end{equation}
In general, under the assumption that as $h\rightarrow 0^{+}$
the sequence of spaces $\{\mathbb{V}_h\}_{h>0}$
provides a more precise approximation of $H^1_0(\Omega)$, we can 
reasonably expect $C^{\normalfont\text{inv}}_h \rightarrow +\infty$ as
$h\rightarrow 0^{+}$.
\end{remark}

Equipped with these results, we are now in a position  
of stating and proving the main result concerning the  
exponential convergence of the \rev{LT-MOR} method.
We prove first the following lemma.

\rev{
\begin{theorem}[Exponential Convergence of the LT-MOR]
\label{eq:convergence_lt_rb_method}
Let Assumption~\ref{assump:data_f} be satisfied with $\alpha_0>0$.
Furthermore, let $u_h \in \mathcal{W}_{\rev{\alpha_0}}(\IR_+;\mathbb{V}_h)$
be the solution to Problem~\ref{prbm:semi_discrete_problem},
and let $u^{\normalfont\text{(rb)}}_R\in \mathcal{W}_{\rev{\alpha_0}}(\IR_+;\IV^{\normalfont\text{(rb)}}_R)$
be the solution to Problem~\ref{pr:sdpr} with 
$\IV^{\normalfont\text{(rb)}}_R$ be as in \eqref{eq:min_laplace_domain}.

Then, for any $\alpha>\alpha_0$ and any $\beta > 0$
there exists $\eta_{\alpha,\beta}>1$
such that for $\eta \in (1,\eta_{\alpha,\beta})$ and
$R \in \{1,\dots,N_h\}$ it holds that
\begin{equation}
\begin{aligned}
	\norm{
		u_h - u^{\normalfont\text{(rb)}}_R
	}_{L^2_\alpha\left(\IR_+;H^1_0(\Omega)\right)}
	\lesssim
    &
	\frac{ C^{\normalfont\text{inv}}_h \eta^2}{(\eta-1)\sqrt{\alpha \Lambda}}
	\left(
	\sup_{ s \in  \partial\mathcal{C}_{\eta,\alpha, \beta}}
	\norm{
		\Pi_h (\mathcal{L}\{\partial_t f\}(s))
	}_{H^1_0(\Omega)}
	\right.
	\\
	&
	\left.
	+
	\norm{\Pi_h (f(0))}_{H^1_0(\Omega)}
	+
	\norm{\Pi_h ( \nabla \cdot( \bm{A} \nabla u_{0}))}_{H^1_0 (\Omega)}
	\right)
	\eta^{-R}
    \\
    &
    +
    C^{\normalfont\text{inv}}_h
    \norm{
    \left(
		\normalfont\text{Id}
		-
		\mathsf{P}^{\text{(rb)}}_R
	\right)
    u_{0,h}
    }_{H^1_0(\Omega)},
\end{aligned}
\end{equation}
where the implicit constant is independent of the discretization parameter $h>0$
and $\Lambda>0$ is as in \eqref{eq:lambda}.
\end{theorem}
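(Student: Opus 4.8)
The plan is to split the error $u_h-u^{\text{(rb)}}_R$ into a pure Galerkin part and an initial-condition correction, estimate each for fixed $s$ in the Laplace domain, and transfer the bounds to $L^2_\alpha(\IR_+;H^1_0(\Omega))$ through the Paley--Wiener isometry of Theorem~\ref{eq:laplace_transform_bijective}. Concretely, I introduce the auxiliary reduced solution $u^{\star}_R\in\mathcal{W}_\alpha(\IR_+;\IV^{\text{(rb)}}_R)$ solving Problem~\ref{pr:sdpr} but with the \emph{unprojected} initial datum $u_{0,h}$; equivalently $\widehat{u}^{\star}_R(s)$ solves the reduced counterpart of Problem~\ref{pr:sdp} with right-hand side $\dotp{\widehat f(s)}{\cdot}_{L^2(\Omega)}+\dotp{u_{0,h}}{\cdot}_{L^2(\Omega)}$. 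Writing $u_h-u^{\text{(rb)}}_R=(u_h-u^{\star}_R)+(u^{\star}_R-u^{\text{(rb)}}_R)$ and bounding each summand separately will produce exactly the two terms of the claim.

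For the first summand, $\widehat e^{\star}(s)=\widehat u_h(s)-\widehat u^{\star}_R(s)$ is Galerkin orthogonal, i.e. $\mathsf{b}(\widehat e^{\star}(s),v_R;s)=0$ for all $v_R$ in the complexification of $\IV^{\text{(rb)}}_R$, since $\widehat u_h$ and $\widehat u^{\star}_R$ share the same right-hand side when tested against the reduced space. The main obstruction is that the coercivity constant of $\mathsf{b}(\cdot,\cdot;s)$ is the $s$-uniform $\gamma(\alpha)$ from Lemma~\ref{lmm:error_bound_L2}, whereas its continuity constant $|s|C^2_P(\Omega)+\overline{c}_{\bm A}$ blows up along the contour $\Re\{s\}=\alpha$ and cannot be integrated. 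The crucial step to circumvent this is to run C\'ea's argument using as comparison element $\mathsf{Q}_R\widehat u_h(s)$, where $\mathsf{Q}_R$ denotes the $L^2(\Omega)$-orthogonal projection onto $\IV^{\text{(rb)}}_R$: because $\widehat u_h(s)-\mathsf{Q}_R\widehat u_h(s)$ is $L^2(\Omega)$-orthogonal to $\IV^{\text{(rb)}}_R$, the term $s\dotp{\widehat u_h(s)-\mathsf{Q}_R\widehat u_h(s)}{\chi}_{L^2(\Omega)}$ in the residual vanishes and only the $s$-independent $\mathsf{a}(\cdot,\cdot)$-contribution survives. This yields the $s$-uniform estimate $\norm{\widehat e^{\star}(s)}_{H^1_0(\Omega)}\leq(1+\overline{c}_{\bm A}/\gamma(\alpha))\norm{(\mathrm{Id}-\mathsf{Q}_R)\widehat u_h(s)}_{H^1_0(\Omega)}$, which after integration in $\tau$ and Paley--Wiener gives $\norm{u_h-u^{\star}_R}_{L^2_\alpha(\IR_+;H^1_0(\Omega))}\lesssim\norm{(\mathrm{Id}-\mathsf{Q}_R)u_h}_{L^2_\alpha(\IR_+;H^1_0(\Omega))}$.

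It then remains to pass from the $L^2(\Omega)$-projection error to the $H^1_0(\Omega)$-best-approximation error. Exploiting $(\mathrm{Id}-\mathsf{Q}_R)u_h=(\mathrm{Id}-\mathsf{Q}_R)(u_h-\mathsf{P}^{\text{(rb)}}_R u_h)$ together with the inverse inequality of Remark~\ref{rmk:inv_inequality} applied to the reduced-space component $\mathsf{Q}_R(u_h-\mathsf{P}^{\text{(rb)}}_R u_h)$, I obtain $\norm{(\mathrm{Id}-\mathsf{Q}_R)u_h}_{L^2_\alpha(\IR_+;H^1_0(\Omega))}\lesssim C^{\text{inv}}_h\norm{u_h-\mathsf{P}^{\text{(rb)}}_R u_h}_{L^2_\alpha(\IR_+;H^1_0(\Omega))}$; this is precisely where the factor $C^{\text{inv}}_h$ enters. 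By the optimality of $\IV^{\text{(rb)}}_R$ in \eqref{eq:min_time_domain} the right-hand side equals the infimum estimated in Lemma~\ref{eq:approximation_Hardy_spaces}, whose bound is the first term of the claim with its $\eta^{-R}$ decay and $\Lambda$-dependent prefactor.

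For the second summand, $d\coloneqq u^{\star}_R-u^{\text{(rb)}}_R\in\mathcal{W}_\alpha(\IR_+;\IV^{\text{(rb)}}_R)$ solves the \emph{homogeneous} reduced parabolic problem ($f\equiv0$) with initial datum $\mathsf{Q}_R(\mathrm{Id}-\mathsf{P}^{\text{(rb)}}_R)u_{0,h}$, the difference of the two reduced initial conditions. Applying the reduced a-priori estimate of Remark~\ref{rmk:existence_semi_discrete_full} gives $\norm{d}_{L^2_\alpha(\IR_+;H^1_0(\Omega))}\lesssim\norm{\mathsf{Q}_R(\mathrm{Id}-\mathsf{P}^{\text{(rb)}}_R)u_{0,h}}_{H^1_0(\Omega)}$, and a final use of the inverse inequality (the argument lies in $\IV^{\text{(rb)}}_R$), combined with $\norm{\cdot}_{L^2(\Omega)}\leq C_P(\Omega)\norm{\cdot}_{H^1_0(\Omega)}$ and the $L^2(\Omega)$-contractivity of $\mathsf{Q}_R$, bounds this by $C^{\text{inv}}_h\norm{(\mathrm{Id}-\mathsf{P}^{\text{(rb)}}_R)u_{0,h}}_{H^1_0(\Omega)}$, the second term of the claim. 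Summing the two contributions by the triangle inequality concludes the proof; the recurring difficulty is the $|s|$-growth of the continuity constant, consistently resolved by the $L^2(\Omega)$-projection trick.
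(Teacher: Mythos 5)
Your proposal is correct, and it reaches the theorem by a genuinely different route than the paper, although both arguments pivot on the same two mechanisms. The paper works entirely in the time domain: it sets $\eta^{\text{(rb)}}_R = u^{\text{(rb)}}_R - \mathsf{P}^{\text{(rb)}}_R u_h$ (which has vanishing initial condition), derives a reduced parabolic error equation driven by $(\mathrm{Id}-\mathsf{P}^{\text{(rb)}}_R)\partial_t u_h$ and $\mathsf{a}\bigl((\mathrm{Id}-\mathsf{P}^{\text{(rb)}}_R)u_h,\cdot\bigr)$, then shifts the error variable by $\mathsf{Q}^{\text{(rb)}}_R(\mathrm{Id}-\mathsf{P}^{\text{(rb)}}_R)u_h$ so that the time-derivative forcing is annihilated by $L^2(\Omega)$-orthogonality; this shift is what creates the nonzero initial datum $-\mathsf{Q}^{\text{(rb)}}_R(\mathrm{Id}-\mathsf{P}^{\text{(rb)}}_R)u_{0,h}$, and the conclusion follows from the $\mathcal{W}_\alpha$-stability estimate of Theorem~\ref{eq:well_posedness} (cf.~Remark~\ref{rmk:existence_semi_discrete_full}) together with the inverse inequality. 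You instead split off an auxiliary reduced solution $u^{\star}_R$ chosen so that Galerkin orthogonality $\mathsf{b}(\widehat u_h(s)-\widehat u^{\star}_R(s),v_R;s)=0$ holds frequency-by-frequency, run a C\'ea argument with the comparison element $\mathsf{Q}_R\widehat u_h(s)$ so that the non-uniform term $s\,\dotp{\cdot}{\cdot}_{L^2(\Omega)}$ drops out, and transfer back via Paley--Wiener; the initial-condition discrepancy is isolated in a homogeneous reduced evolution with datum $\mathsf{Q}_R(\mathrm{Id}-\mathsf{P}^{\text{(rb)}}_R)u_{0,h}$. The shared core is that in both proofs the $L^2(\Omega)$-projection neutralizes the term whose continuity constant grows like $\snorm{s}$ (equivalently, the $\partial_t$ term), and the inverse inequality of Remark~\ref{rmk:inv_inequality} is the sole source of $C^{\normalfont\text{inv}}_h$; both then close with the optimality of $\IV^{\text{(rb)}}_R$ from \eqref{eq:min_laplace_domain} and Lemma~\ref{eq:approximation_Hardy_spaces}. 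What your route buys is a pointwise-in-$s$ quasi-optimality bound $\norm{\widehat u_h(s)-\widehat u^{\star}_R(s)}_{H^1_0(\Omega)}\lesssim\norm{(\mathrm{Id}-\mathsf{Q}_R)\widehat u_h(s)}_{H^1_0(\Omega)}$ with $s$-uniform constants, which is slightly more local than the integrated time-domain estimate, and it only needs the stability theorem for the homogeneous reduced problem; the paper's route avoids introducing the auxiliary solution and the attendant bookkeeping. Two small points you should make explicit to be complete: first, in the time domain your $u^{\star}_R$ has initial condition $\mathsf{Q}_R u_{0,h}$ (the $L^2$-projection of $u_{0,h}$), not $u_{0,h}$ itself, which is exactly why the second summand's datum is $\mathsf{Q}_R(\mathrm{Id}-\mathsf{P}^{\text{(rb)}}_R)u_{0,h}$ as you state; second, before invoking the isometry \eqref{eq:norm_equiv} you must verify $\widehat u_h-\widehat u^{\star}_R\in\mathscr{H}^2_\alpha(H^1_0(\Omega))$ (holomorphy of the reduced solution map plus your uniform bound along vertical lines, exactly as in Lemma~\ref{eq:lemma_Hardy_space_uh_discrete}) and note that $\mathsf{Q}_R$ commutes with $\mathcal{L}$ by item (i) of Proposition~\ref{eq:operational_properties}; both are immediate with the paper's toolkit and do not affect the validity of your argument.
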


\begin{proof}
For a.e. $t \in \IR_{+}$ define 
\begin{equation}\label{eq:def_eta}
	\eta^{\text{(rb)}}_R(t) 
	\coloneqq 
	u^{\normalfont\text{(rb)}}_R(t)
	   -
	\left(\mathsf{P}^{\text{(rb)}}_R u_h\right)(t)
	\in
	\IV^{\text{(rb)}}_R.
\end{equation} 
By subtracting \eqref{eq:semi_discrete_time} and \eqref{eq:semi_discrete_rom},
and recalling that $\IV^{\text{(rb)}}_R\subset\mathbb{V}_h$ we obtain
that for all $v^{\text{(rb)}}_R\in \IV^{\text{(rb)}}_R$ it holds
\begin{equation}\label{eq:subs_rb}
	\dotp{
		\partial_t
		\left(
			u^{\normalfont\text{(rb)}}_R\rev{(t)}
			-
			u_h\rev{(t)}
		\right)
	}{
		v^{\text{(rb)}}_R
	}_{L^2(\Omega) }	
	+
	\mathsf{a}
	\dotp{
		u^{\normalfont\text{(rb)}}_R\rev{(t)}
		-
		u_h\rev{(t)}
	}{
		v^{\text{(rb)}}_R
	}
	=
	0.
\end{equation}
Using \eqref{eq:subs_rb} we may conclude that
$\eta^{\text{(rb)}}_R(t)$ satisfies for a.e.~$t \in \IR_+$ and for each $v^{\text{(rb)}}_R \in \IV^{\text{(rb)}}_R$
the following problem
\begin{equation}
\begin{aligned}
	\dotp{	
		\partial_t
		\eta^{\text{(rb)}}_R
	}{
		v^{\text{(rb)}}_R
	}_{L^2(\Omega) }	
    +
  %  &
	\mathsf{a}
	\dotp{
		\eta^{\text{(rb)}}_R
	}{
		v^{\text{(rb)}}_R
	}
	%\\
	=
    &
	\dotp{
		\left(
			\text{Id}
			-
			\mathsf{P}^{\text{(rb)}}_R
		\right)
		\partial_t
		 u_h 
	}{
		v^{\text{(rb)}}_R
	}_{L^2(\Omega)}
	\\
	&
    +
	\mathsf{a}
	\dotp{
		\left(
			\text{Id}
			-
			\mathsf{P}^{\text{(rb)}}_R
		\right)
		u_h
	}{
		v^{\text{(rb)}}_R
	},
\end{aligned}
\end{equation}
equipped with the initial condition
\begin{equation}
	\eta^{\text{(rb)}}_R(0) 
	= 
	u^{\normalfont\text{(rb)}}_R(0)
	 -
	\left(\mathsf{P}^{\text{(rb)}}_R u_h\right)(0)
	= 
	0.
\end{equation}
Furthermore, let $\mathsf{Q}^{\text{(rb)}}_R: \mathbb{V}_h \rightarrow \mathbb{V}^{\text{(rb)}}_R$ be the 
$L^2(\Omega)$-based projection operator onto $\mathbb{V}^{\text{(rb)}}_R$,
i.e.,~defined for $w_h \in \mathbb{V}_h$ as
\begin{equation}
    	\dotp{
		\left(
			\text{Id}
			-
			\mathsf{Q}^{\text{(rb)}}_R
		\right)
        w_h
	}{
		v^{\text{(rb)}}_R
	}_{L^2(\Omega)}
    =
    0,
    \quad
    \forall
    v^{\text{(rb)}}_R \in \mathbb{V}^{\text{(rb)}}_R,
\end{equation}
and define 
\begin{equation}
    \zeta^{\text{(rb)}}_R(t)
    = 
    \eta^{\text{(rb)}}_R(t) 
    - 
    \mathsf{Q}^{\text{(rb)}}_R
    \left(
			\text{Id}
			-
			\mathsf{P}^{\text{(rb)}}_R
	\right)
    u_h(t).
\end{equation}
Therefore, $\zeta^{\text{(rb)}}_R$ is solution to
\begin{equation}
\begin{aligned}
	\dotp{	
		\partial_t
		\zeta^{\text{(rb)}}_R
	}{
		v^{\text{(rb)}}_R
	}_{L^2(\Omega) }	
    +
   % &
	\mathsf{a}
	\dotp{
		\zeta^{\text{(rb)}}_R
	}{
		v^{\text{(rb)}}_R
	}
	% \\
	=
    &
    \mathsf{a}
	\dotp{
        \mathsf{Q}^{\text{(rb)}}_R
		\left(
			\text{Id}
			-
			\mathsf{P}^{\text{(rb)}}_R
		\right)
		  u_h
	}{
		v^{\text{(rb)}}_R
	}
	\\
	&
    	+
	\mathsf{a}
	\dotp{
		\left(
			\text{Id}
			-
			\mathsf{P}^{\text{(rb)}}_R
		\right)
		  u_h(t) 
	}{
		v^{\text{(rb)}}_R
	},
\end{aligned}
\end{equation}
with initial condition
\begin{equation}
    \zeta^{\text{(rb)}}_R(0)
    =
    -
    \mathsf{Q}^{\text{(rb)}}_R
    \left(
			\text{Id}
			-
			\mathsf{P}^{\text{(rb)}}_R
	\right)
    u_{0,h}.
\end{equation}
Recalling Theorem~\ref{eq:well_posedness} and Remark~\ref{rmk:inv_inequality}
\begin{equation}\label{eq:error_including_IC}
\begin{aligned}
    \norm{\zeta^{\text{(rb)}}_R}_{L^2_\alpha\left(\IR_+;H^{1}_0(\Omega)\right)}
    \lesssim
    &
    C^{\normalfont\text{inv}}_h
    \norm{
		\left(
			\text{Id}
			-
			\mathsf{P}^{\text{(rb)}}_R
		\right)
		  u_h
    }_{L^2_\alpha(\mathbb{R}_+;H^1_0(\Omega))}
  %   \\
  %   &
  %   +
  %   \norm{
		% \left(
		% 	\text{Id}
		% 	-
		% 	\mathsf{P}^{\text{(rb)}}_R
		% \right)
		%   u_h
  %   }_{L^2_\alpha(\mathbb{R}_+;H^1_0(\Omega))}
    \\
    &
    +
    C^{\normalfont\text{inv}}_h
    \norm{
    \left(
		\text{Id}
		-
		\mathsf{P}^{\text{(rb)}}_R
	\right)
    u_{0,h}
    }_{H^1_0(\Omega)},
\end{aligned}
\end{equation}
where we have used that $\norm{\mathsf{Q}^{\text{(rb)}}_R w_h}_{L^2(\Omega)} \leq \norm{w_h}_{L^2(\Omega)}$
for any $w \in \mathbb{V}_h$.

Therefore, by combining \eqref{eq:def_eta} and \eqref{eq:error_including_IC},
we obtain
\begin{equation}
\begin{aligned}
    \norm{
		u_h - u^{\normalfont\text{(rb)}}_R
	}_{L^2_\alpha\left(\IR_+;H^1_0(\Omega)\right)}
	\lesssim
    &
    (1+C^{\normalfont\text{inv}}_h)
    \norm{
		\left(
			\text{Id}
			-
			\mathsf{P}^{\text{(rb)}}_R
		\right)
		  u_h
    }_{L^2_\alpha(\mathbb{R}_+;H^1_0(\Omega))}
    \\
    &
    +
    C^{\normalfont\text{inv}}_h
    \norm{
    \left(
		\text{Id}
		-
		\mathsf{P}^{\text{(rb)}}_R
	\right)
    u_{0,h}
    }_{H^1_0(\Omega)}.
\end{aligned}
\end{equation}
This bound together with Lemma~\ref{eq:approximation_Hardy_spaces} yields the final result.
\end{proof}
}

\begin{remark}[Non-Vanishing Initial Condition]
Observe that Theorem~\ref{eq:convergence_lt_rb_method}
states exponential convergence of the \rev{LT-MOR} method
up to the approximation of the initial condition $u_{0,h}$ in $\mathbb{V}^{\text{(rb)}}_R$.
Consider $w_h(t) =  u_h(t)- u_{0,h} $,
where $w_h(t)$ is solution to 
\begin{equation}
\begin{aligned}%\label{eq:wave_eq_variational}
	\dotp{\partial_{t}w_h(t)}{v_h}_{L^2(\Omega)} 
	+ 
	\mathsf{a}
	\dotp{w_h(t)}{v_h}
	= 
	\dotp{f(t)}{v_h}_{L^2(\Omega)}
	-
	\mathsf{a}
	\dotp{u_{0,h}}{v_h},
	\quad
	\forall v_h \in H^1_0(\Omega,\mathbb{R}),
\end{aligned}
\end{equation}
equipped with vanishing initial conditions, i.e., $w_h(0) = 0$.
One can apply the \rev{LT-MOR} method as described in Section~\ref{sec:FRB}
to $w_h$ and reconstruct the solution through $u_h(t) = u_{0,h}+w_h(t)$.
\end{remark}

\begin{remark}[Optimal value of $\beta$]
So far, $\beta > 0$ has not been fixed.
Following Ref.~\cite{weideman1999algorithms}, and assuming that
$f(t) = 0$, the optimal values of $\beta$ and $\eta$ (in the sense that $\eta$
is maximized) are
\begin{equation}\label{eq:optimal_values}
	\beta_{\normalfont\text{opt}}
	=
	\sqrt{(\alpha+\lambda_{h,1})(\alpha+\lambda_{h,N_h})}
	\quad
	\text{and}
	\quad
	\eta_{\alpha,\beta_{\normalfont\text{opt}}}
	=
	\snorm{
	\frac{
		-\lambda_{h,N_h}
            -
            \alpha-\beta_{\normalfont\text{opt}}
	}{
		-\lambda_{h,N_h}
            -
            \alpha+\beta_{\normalfont\text{opt}}
	}}
	>1,
\end{equation}
The dependence of $\eta_{\alpha,\beta_{\normalfont\text{opt}},u_h}$ upon
$\alpha,\beta_{\normalfont\text{opt}}$ and $u_h$
comes through the eigenvalues of $\mathsf{a}(\cdot,\cdot)$.
Furthermore, for the case $f(t) \neq 0$, 
if all the poles of $\mathcal{L}\{\partial_t f\}$ are located in the interior of the 
circle $\mathcal{C}_{\eta_{\normalfont\text{opt}},\alpha,\beta_{\normalfont\text{opt}}}$,
then the result still is valid. 

This makes $\eta_{\alpha,\beta_{\normalfont\text{opt}}}$ dependent on the
discretization parameter $h>0$ of $\mathbb{V}_h$, and, in general, one has that
$\eta_{\alpha,\beta_{\normalfont\text{opt}}} \rightarrow 1^+$ as $h\rightarrow 0^+$.
%\end{remark}
\end{remark}

\section{Computational Aspects of the \rev{LT-MOR} Method}
\label{sec:computation_rb}
%%%%%%%%%%%%%%%%%%%%%%%%
In Section~\ref{sec:FRB}, we have introduced the \rev{LT-MOR} method, whereas
in Section~\ref{sec:Basis} a convergence analysis of the aforementioned method
is performed. We \rev{show} that if the reduced space is chosen as in \eqref{eq:min_laplace_domain},
exponential convergence toward the high-fidelity solution is expected.
However, very little practical indications are provided as to how this reduced space is computed.
In this section, we elaborate on certain computational aspects of the
\rev{LT-MOR} method that are of importance for implementation purposes. 

%%%%%%%%%%%%%%%%%%%%%%%%
\subsection{Real vs Complex Basis}
\label{sec:real_complex_basis}
%%%%%%%%%%%%%%%%%%%%%%%%
The \rev{LT-MOR} method delivers a reduced basis built 
using \rev{snapshots} computed in the Laplace domain, therefore
these belong to a complex Hilbert space. However, Problem~\ref{pr:sdpr}
is posed in a real-valued setting.
In the following, we argue that the following statements are true:
\begin{itemize}
	\item[(i)]
	The computation of the reduced basis through the \rev{LT-MOR} method
	by using only the real part of the snapshots produces the same
	reduced space as if the basis were to be computed using a complex snapshot matrix.
	\item[(ii)]
	The computation of the basis through the \rev{LT-MOR} method using the complex-valued
	snapshots and a complex SVD delivers a reduced basis with
	vanishing imaginary part, i.e.~each element of the basis is real-valued.
\end{itemize}

To prove these statements, we introduce the Hilbert transform.
For $\varphi \in L^2(\IR)$ it is defined as 
\begin{equation}
	\left(
		\mathcal{H}
		\varphi
	\right)
	(x)
	\coloneqq
	\frac{1}{\pi}
	\text{p.v.}
	\int_{-\infty}^{+\infty}
	\frac{ 
		\varphi(y)
	}{
		x-y
	}
	\text{d}y,
	\quad
	x \in \mathbb{R},
\end{equation}
where ``p.v.'' signifies that the integral is understood 
in the sense of Cauchy's principal value.
The Hilbert transform $\mathcal{H}:L^p(\IR) \rightarrow L^p(\IR)$
defines a bounded linear operator for $p\in (0,\infty)$
In particular, for $p=2$ it holds (see, e.g., Ref.~\refcite{laeng2010simple})
\begin{equation}\label{eq:properties_hilbert_transform}
	\norm{\mathcal{H} \varphi}_{L^2(\IR)}=\norm{\varphi}_{L^2(\IR)}
	\quad
	\text{for any }\varphi \in L^2(\IR).
\end{equation}

In the following proposition, we recall an important result
stated at the beginning of Section 4.22 in Ref.~\refcite{king2009hilbert}
which is usually known as Titchmarsh’s theorem.

\begin{proposition}\label{prop:titchmarsh}
Let $F \in L^2(\IR)$ be complex-valued and denote by $\Re\{F(x)\}$ and $\Im\{F(x)\}$
the real and imaginary part of $F$, respectively, for a.e. $x\in \mathbb{R}$.
The following statements are equivalent
\begin{itemize}
	\item[(i)] $\Im\{F(x)\} = \left(\mathcal{H}\Re\{F\}\right)(x)$  for a.e. $x \in \IR$.
	\item[(ii)]  $\Re\{F(x)\} = -\left(\mathcal{H}\Im\{F\}\right)(x)$ for a.e. $x \in \IR$.
	\item[(iii)] If $f(t)$ denotes the inverse Fourier transform of $F(x)$, then $f(t)=0$ for $t<0$.
	\item[(iv)] $F(x+\imath y)$ is an analytic function in the upper half plane and for a.e. $x \in \IR$
	\begin{equation*}
		F(x) = \lim_{y\rightarrow 0^+}F(x+\imath y)
		\quad
		\text{and}
		\quad
		\int_{-\infty}^{\infty}\snorm{F(x+\imath y)}^2 \normalfont\text{d}x
		<
		\infty,
		\quad
		\text{for }y>0.
	\end{equation*}
\end{itemize}
\end{proposition}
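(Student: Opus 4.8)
The plan is to route every equivalence through the Fourier side, where the Hilbert transform becomes a bounded multiplication operator. The single engine of the proof is the multiplier identity
\[
	\mathcal{F}\{\mathcal{H}\varphi\}(\xi) = -\imath\,\sgn(\xi)\,\mathcal{F}\{\varphi\}(\xi),
	\quad \varphi \in L^2(\IR),
\]
where $\mathcal{F}$ denotes the Fourier transform, with the convention fixed so that the analytic extension in (iv) lands in the upper half-plane. First I would establish this identity, which is the genuinely analytic step: since $\mathcal{H}$ is the principal-value convolution against $1/(\pi x)$, the symbol must be read off from the distributional Fourier transform of the p.v.\ kernel, most cleanly by truncating the kernel away from the origin and passing to the limit using the boundedness recorded in \eqref{eq:properties_hilbert_transform}. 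Two consequences are immediate and used repeatedly: the symbol has modulus one a.e., so $\mathcal{H}$ is an isometry (recovering \eqref{eq:properties_hilbert_transform}), and its square equals $(-\imath\,\sgn(\xi))^2 = -1$ a.e., whence $\mathcal{H}^2 = -\mathrm{Id}$ on $L^2(\IR)$.

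The equivalence (i)$\Leftrightarrow$(ii) then costs nothing: applying $\mathcal{H}$ to $\Im\{F\} = \mathcal{H}\Re\{F\}$ and using $\mathcal{H}^2 = -\mathrm{Id}$ yields $\mathcal{H}\Im\{F\} = -\Re\{F\}$, and the reverse implication is symmetric. For (i)$\Leftrightarrow$(iii) I would write $F = u + \imath v$ with real-valued $u = \Re\{F\}$ and $v = \Im\{F\}$, and pass to the inverse Fourier transform. Transporting the multiplier identity to $\mathcal{F}^{-1}$ and setting $a = \mathcal{F}^{-1}\{u\}$, condition (i), namely $v = \mathcal{H}u$, becomes $\mathcal{F}^{-1}\{v\} = \pm\imath\,\sgn(\cdot)\,a$, so that $f = \mathcal{F}^{-1}\{F\} = a + \imath\,\mathcal{F}^{-1}\{v\} = \big(1 \mp \sgn(\cdot)\big)\,a$. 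The right-hand side vanishes on exactly one half-line; with the convention fixed so that this is $\{t<0\}$, this is precisely (iii). The converse runs backwards: support of $f$ on a half-line forces the relation $\mathcal{F}^{-1}\{v\} = \pm\imath\,\sgn(\cdot)\,a$, hence $v = \mathcal{H}u$. The real-valuedness of $u,v$ enters only through the Hermitian symmetry of $a$, which is automatically consistent and needs no extra argument.

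Finally, (iii)$\Leftrightarrow$(iv) is exactly a Paley--Wiener/Hardy-space statement, and here I would lean on the machinery already in Section~\ref{sec:laplace_hardy}. For (iii)$\Rightarrow$(iv), if $f(t)=0$ for $t<0$ then $F(x+\imath y) = \int_{0}^{\infty} f(t)\,e^{\imath (x+\imath y)t}\,\mathrm{d}t$ defines a holomorphic extension to the upper half-plane, the factor $e^{-yt}$ with $y>0$ furnishing absolute convergence, while Plancherel gives the uniform $L^2$ bound over horizontal lines and the $L^2$ boundary convergence as $y\to 0^+$; this is exactly membership in the Hardy space of the half-plane in the sense of Definition~\ref{def:hardy_spaces} and Proposition~\ref{prop:properties_hardy}, up to the rotation exchanging the right half-plane of Theorem~\ref{eq:laplace_transform_bijective} with the upper one. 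For the converse (iv)$\Rightarrow$(iii), analyticity together with the uniform $L^2$ bounds places $F$ in that Hardy space, and the isometric characterization of Theorem~\ref{eq:laplace_transform_bijective}, in its scalar rotated instance, identifies the support of $f$ as a half-line. The main obstacle I anticipate is the first step: pinning down the Fourier symbol of the principal-value singular integral rigorously, and the attendant bookkeeping of signs and of the Fourier convention so that ``support in $t<0$'' in (iii) is matched to ``analytic in the upper half-plane'' in (iv). Once the multiplier identity and $\mathcal{H}^2=-\mathrm{Id}$ are in hand, (i)$\Leftrightarrow$(ii) and (i)$\Leftrightarrow$(iii) are short algebra, and (iii)$\Leftrightarrow$(iv) reduces to the Paley--Wiener theorem already available in the paper.
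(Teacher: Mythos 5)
The paper does not prove this proposition at all: it is recalled verbatim as Titchmarsh's theorem and attributed to Section~4.22 of Ref.~\refcite{king2009hilbert}, so there is no in-paper argument to compare yours against. Your sketch is the standard textbook proof and is essentially correct. The multiplier identity $\mathcal{F}\{\mathcal{H}\varphi\}(\xi)=-\imath\,\sgn(\xi)\mathcal{F}\{\varphi\}(\xi)$ does everything you claim: it gives the isometry \eqref{eq:properties_hilbert_transform}, gives $\mathcal{H}^2=-\mathrm{Id}$ and hence (i)$\Leftrightarrow$(ii) in one line, and reduces (i)$\Leftrightarrow$(iii) to the algebra $f=(1\pm\sgn)\,a$ on the transform side; your converse computation, writing $\mathcal{F}^{-1}\Re\{F\}$ and $\mathcal{F}^{-1}\Im\{F\}$ as the Hermitian and anti-Hermitian parts $\frac{1}{2}\bigl(f(t)+\overline{f(-t)}\bigr)$ and $\frac{1}{2\imath}\bigl(f(t)-\overline{f(-t)}\bigr)$ of a half-line--supported $f$, does force the $\sgn$ relation as you say. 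For (iii)$\Leftrightarrow$(iv), the forward direction is elementary (absolute convergence from $e^{-yt}$, Plancherel on horizontal lines), while the direction (iv)$\Rightarrow$(iii) is genuinely the surjectivity half of Paley--Wiener; your plan to invoke Theorem~\ref{eq:laplace_transform_bijective} in a rotated scalar instance is sound, and is in fact the same rotation $\mathcal{T}_\alpha: z\mapsto -\imath z+\alpha$ the paper itself uses in the proof of Lemma~\ref{lmm:real_basis} when it applies this proposition. Two small points of hygiene: in (iv)$\Rightarrow$(iii) you should note that membership in the Hardy space requires only analyticity and the uniform $L^2$ bound (items (i)--(ii) of Definition~\ref{def:hardy_spaces}; item (iii) is redundant by Remark~\ref{rmk:def_hardy_spaces_item_3}), with the a.e.\ boundary limit in (iv) then used to identify the boundary function with $F$ via the isometry; and the sign/convention bookkeeping you flag is indeed the only place the argument can silently break --- one fixed convention must simultaneously make the p.v.\ kernel's symbol $-\imath\,\sgn$, make ``$f=0$ for $t<0$'' the vanishing half-line, and make the extension land in the \emph{upper} half-plane. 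Once that single convention is pinned down, every step you outline closes.
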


The Hilbert transform together with Proposition~\ref{prop:titchmarsh} are
important \rev{ingredients} in the proof of the following result.
\begin{lemma}\label{lmm:real_basis}
Let $\widehat{u}_h = \mathcal{L}\{u_h\} \in \rev{\mathscr{H}^2_\alpha}(\mathbb{V}^\mathbb{C}_h)$
be the Laplace transform of $u_h \in L^2(\IR_+;\mathbb{V}_h)$, and let
$\mathbb{V}^{\normalfont\text{(rb)}}_R$ be as in \eqref{eq:min_laplace_domain}.
Then, 
\begin{itemize}
	\item[(i)]
	It holds that
	\begin{equation*}
	\mathbb{V}^{\normalfont\text{(rb)}}_R
	=
	\argmin_{
	\substack{
		\IV_R \subset \mathbb{V}_h
		\\
		\normalfont\text{dim}\left(\mathbb{V}_R\right)\leq R	
	}
	}
	\int_{-\infty}^{\infty}
	\,
	\norm{
		\Re\left\{\widehat{u}_h(\alpha+\imath \tau)\right\}
		-
		\mathsf{P}_{{\IV}_R}
		\Re\left\{\widehat{u}_h (\alpha+\imath \tau)\right\}
	}^2_{H^1_0(\Omega)}
	\normalfont\text{d}\tau.	
	\end{equation*}
	\item[(ii)]
	$\mathbb{V}^{\normalfont\text{(rb)}}_R\subset \mathbb{V}_h$ when
	$\mathbb{V}_h$ is viewed as a vector space over the field of real numbers. 
\end{itemize}
\end{lemma}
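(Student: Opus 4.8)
The plan is to recast the Hardy-space minimisation \eqref{eq:min_laplace_domain} as the minimisation of a single line integral over $\Re\{s\}=\alpha$, to identify its minimiser with the dominant eigenspace of a complex correlation operator, and then to exploit the conjugate symmetry of $\widehat{u}_h$ together with the Hilbert-transform machinery just introduced to show that this eigenspace is spanned by real vectors and coincides with the minimiser of the real-part-only functional.

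First I would set $F(\tau) \coloneqq \widehat{u}_h(\alpha+\imath\tau) \in \mathbb{V}^{\mathbb{C}}_h$. Since a constant-in-$s$ projection preserves membership in $\mathscr{H}^2_\alpha(\mathbb{V}^{\mathbb{C}}_h)$, Theorem~\ref{eq:laplace_transform_bijective} together with Proposition~\ref{prop:properties_hardy} reduces the objective in \eqref{eq:min_laplace_domain} to $\int_{-\infty}^{+\infty}\norm{F(\tau)-\mathsf{P}_{X_R}F(\tau)}^2_{H^1_0(\Omega)}\,\frac{\mathrm{d}\tau}{2\pi}$. By the Schmidt--Eckart--Young/POD theory invoked in Section~\ref{sec:convergence_lt_rb}, the minimiser over complex subspaces of dimension at most $R$ is the span of the $R$ dominant eigenvectors of the self-adjoint, positive, finite-rank operator $\mathsf{C}_{\mathbb{C}}\colon \mathbb{V}^{\mathbb{C}}_h\to\mathbb{V}^{\mathbb{C}}_h$ defined by $\mathsf{C}_{\mathbb{C}}v = \int_{-\infty}^{+\infty} F(\tau)\,\dotp{v}{F(\tau)}_{H^1_0(\Omega)}\,\frac{\mathrm{d}\tau}{2\pi}$.

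The structural facts all stem from $u_h$ being real-valued, which gives $\widehat{u}_h(\bar s)=\overline{\widehat{u}_h(s)}$ and hence $F(-\tau)=\overline{F(\tau)}$; writing $F=F_R+\imath F_I$ this means $F_R\coloneqq\Re\{F\}$ is even and $F_I\coloneqq\Im\{F\}$ is odd in $\tau$. Substituting $\tau\mapsto-\tau$ in the integral defining $\mathsf{C}_{\mathbb{C}}$ shows $\mathsf{C}_{\mathbb{C}}(\bar v)=\overline{\mathsf{C}_{\mathbb{C}}(v)}$, i.e.~$\mathsf{C}_{\mathbb{C}}$ commutes with complex conjugation. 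Consequently each of its eigenspaces is conjugation-invariant and therefore admits an orthonormal basis of real vectors; this already yields statement (ii) and shows that the optimal subspace is the complexification of a real subspace $\mathbb{V}^{\text{(rb)}}_R\subset\mathbb{V}_h$. Restricting $\mathsf{C}_{\mathbb{C}}$ to real $v$ and using the parities of $F_R,F_I$ gives, for real $v$, the identity $\mathsf{C}_{\mathbb{C}}v = \frac1\pi\int_0^{\infty}\bigl(F_R\dotp{v}{F_R}_{H^1_0(\Omega)}+F_I\dotp{v}{F_I}_{H^1_0(\Omega)}\bigr)\mathrm{d}\tau$.

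The decisive step, and the one I expect to be the main obstacle, is to show that the $F_I$-contribution equals the $F_R$-contribution, so that $\mathsf{C}_{\mathbb{C}}$ restricted to $\mathbb{V}_h$ is a positive multiple of the real correlation operator $\mathsf{C}_{\Re}v=\int_{-\infty}^{+\infty}F_R\dotp{v}{F_R}_{H^1_0(\Omega)}\,\frac{\mathrm{d}\tau}{2\pi}$ governing the real-part-only problem. Expanding $\widehat{u}_h$ in a real basis of $\mathbb{V}_h$ reduces this to a scalar claim: for each fixed real $v$ the functions $\tau\mapsto\dotp{v}{F_R(\tau)}_{H^1_0(\Omega)}$ and $\tau\mapsto\dotp{v}{F_I(\tau)}_{H^1_0(\Omega)}$ form a Hilbert-transform pair. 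This is precisely Titchmarsh's theorem (Proposition~\ref{prop:titchmarsh}), applied after the conformal map sending $\Pi_\alpha$ to the upper half-plane, using that membership in $\mathscr{H}^2_\alpha$ supplies the required analyticity, boundary trace, and square-integrability hypotheses (cf.~Remark~\ref{rmk:def_hardy_spaces_item_3}). The isometry \eqref{eq:properties_hilbert_transform} of the Hilbert transform then gives $\int(\dotp{v}{F_I}_{H^1_0(\Omega)})^2=\int(\dotp{v}{F_R}_{H^1_0(\Omega)})^2$, and polarisation promotes this to the operator identity $\int_{-\infty}^{+\infty}F_I\dotp{\cdot}{F_I}_{H^1_0(\Omega)}=\int_{-\infty}^{+\infty}F_R\dotp{\cdot}{F_R}_{H^1_0(\Omega)}$. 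Hence $\mathsf{C}_{\mathbb{C}}$ restricted to $\mathbb{V}_h$ equals $2\,\mathsf{C}_{\Re}$; since proportional self-adjoint operators share eigenspaces, the top-$R$ eigenspace of $\mathsf{C}_{\mathbb{C}}$ coincides with that of $\mathsf{C}_{\Re}$, which is exactly the minimiser in statement (i). The remaining care is purely technical: keeping the right-half-plane/upper-half-plane rotation consistent, justifying that minimising over genuinely complex subspaces cannot beat the real optimum (handled by the conjugation invariance), and tracking the harmless factor of two.
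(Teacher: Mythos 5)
Your proposal is correct, and it reaches the conclusion by a genuinely different route than the paper, even though both proofs share the same analytic core: Titchmarsh's theorem (Proposition~\ref{prop:titchmarsh}) applied after the affine rotation $z\mapsto -\imath z+\alpha$ carrying the upper half-plane onto $\Pi_\alpha$, together with the $L^2$-isometry \eqref{eq:properties_hilbert_transform} of the Hilbert transform. The paper argues at the level of the \emph{objective functionals}: it establishes the pointwise splitting $\norm{\widehat{u}_h(\alpha+\imath\tau)}^2_{H^1_0(\Omega)}=\norm{\Re\{F_h\}}^2_{H^1_0(\Omega)}+\norm{\left(\mathcal{H}\Re\{F_h\}\right)}^2_{H^1_0(\Omega)}$, applies the Hilbert transform coefficient-wise through the Cholesky factor $\bm{\mathsf{R}}_h$ to convert the second term, integrates to obtain that the Hardy-norm objective equals exactly twice the real-part objective, and then invokes the same identity for the residual $\widehat{u}_h-\mathsf{P}_{\IV_R}\widehat{u}_h$, so that proportional objectives share their argmin. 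You argue instead at the level of \emph{correlation operators}: complex POD is the dominant eigenspace of $\mathsf{C}_{\mathbb{C}}$; the conjugate symmetry $F(-\tau)=\overline{F(\tau)}$ (a consequence of $u_h$ being real-valued, which the paper's proof of this lemma never uses) makes $\mathsf{C}_{\mathbb{C}}$ commute with conjugation, so its eigenspaces admit real bases; and the Hilbert-pair identity, upgraded by polarization, yields the operator proportionality $\mathsf{C}_{\mathbb{C}}$ restricted to $\mathbb{V}_h$ equals $2\,\mathsf{C}_{\Re}$, whence the top-$R$ eigenspaces coincide. Your version buys precision exactly where the paper is terse: the final sentence of the paper's proof tacitly assumes that a genuinely complex subspace $X_R$ (not conjugation-invariant, for which $\Re\{\mathsf{P}_{X_R}\widehat{u}_h\}$ need not equal the projection of $\Re\{\widehat{u}_h\}$ onto a real space) cannot beat the real optimum, and that claim (ii) is ``straightforward''; your conjugation-invariance observation proves both points explicitly. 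What the paper's version buys in exchange is brevity: once the norm identity is in hand, no spectral machinery, no parity bookkeeping, and no polarization argument are needed, only the Schmidt--Eckart--Young characterization already used in Section~\ref{sec:FRB}.
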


\begin{proof}
According to Lemma~\ref{eq:lemma_Hardy_space_uh} the map 
$\Pi_\alpha \ni s\mapsto \widehat{u}_h(s) \in \mathbb{V}^{\mathbb{C}}_h$ is holomorphic. 
Set $F_h(z) = \widehat{u}_h(\mathcal{T}_\alpha(z)) \in \mathbb{V}^\mathbb{C}_h$, for $z \in \Pi^+$,
where $\mathcal{T}_\alpha: \Pi^+ \rightarrow \Pi_\alpha: z \mapsto -\imath z+\alpha$,
and $\Pi^+\coloneqq \left\{z\in \IC:\; \Im\{z\}>0\right\}$.

We verify item (iv) in Proposition~\ref{prop:titchmarsh}.
The map $\Pi^+ \ni z \mapsto F_h(z) \in \mathbb{V}^\mathbb{C}_h \subset H^1_0(\Omega)$
is holomorphic, and according to item (iii) in Proposition~\ref{prop:properties_hardy} for any $y>0$
\begin{equation}
\begin{aligned}
	\int_{-\infty}^{\infty}\norm{F_h(x+\imath y)}^2_{H^1_0(\Omega)} \normalfont\text{d}x
	&
	=
	\int_{-\infty}^{\infty}\norm{ \widehat{u}_h(\mathcal{T}_\alpha(x+\imath y))}^2_{H^1_0(\Omega)} \normalfont\text{d}x
	\\
	&
	=
	\int_{-\infty}^{\infty}\norm{ \widehat{u}_h(-\imath x+y+\alpha)}^2_{H^1_0(\Omega)} \normalfont\text{d}x
	\\
	&
	=
	\int_{-\infty}^{\infty}\norm{ \widehat{u}_h(\alpha+y+\imath \tau)}^2_{H^1_0(\Omega)} \normalfont\text{d}\tau
	<
	\infty.
\end{aligned}
\end{equation}
In addition, from item (i) in Proposition~\ref{prop:properties_hardy} we conclude that
$F_h(x) = \displaystyle\lim_{y\rightarrow 0^+}F_h(x+\imath y)$.

Therefore, we can use Proposition~\ref{prop:titchmarsh}. In particular item (i) 
asserts that 
\begin{equation}
	\Im\{F_h(x)\} = \left(\mathcal{H}\Re\{F_h\}\right)(x)
	\quad
	\text{for a.e. } x \in \IR.
\end{equation}
Notice that $F_h(-\tau) = \widehat{u}_h(\mathcal{T}_\alpha(-\tau)) = \widehat{u}_h(\alpha+\imath \tau)$,
therefore
\begin{equation}\label{eq:real_imag_parts}
\begin{aligned}
	\norm{
		\widehat{u}_h(\alpha+\imath \tau)
	}^2_{H^1_0(\Omega)}
	&
	=
	\norm{
		\Re\{F_h(-\tau) \}
		+
		\imath
		\Im\{F_h(-\tau) \}
	}^2_{H^1_0(\Omega)}
	\\
	&
	=
	\norm{
		\Re\{F_h(-\tau) \}
		+
		\imath
		\left(\mathcal{H}\Re\{F_h\}\right)(-\tau)
	}^2_{H^1_0(\Omega)}
	\\
	&
	=
	\norm{
		\Re\{F_h(-\tau) \}
	}^2_{H^1_0(\Omega)}
	+
	\norm{
		\left(\mathcal{H}\Re\{F_h\}\right)(-\tau)
	}^2_{H^1_0(\Omega)}.
\end{aligned}
\end{equation}
The application of the Hilbert transform in \eqref{eq:real_imag_parts} is understood to be coefficient-wise in
the expansion of $F_h(x) \in \mathbb{V}^\mathbb{C}_h$, as $\mathbb{V}^\mathbb{C}_h$ is a finite
dimensional subspace of $H^1_0(\Omega)$. 
More precisely, one has 
$$\widehat{u}_h(s) = \sum_{j=1}^{N_h}(\widehat{\bm{\mathsf{u}}}_h(s))_j \varphi_j\in  \mathbb{V}^\mathbb{C}_h,$$
with $\widehat{\bm{\mathsf{u}}}_h \in \mathscr{H}^2_\alpha(\IC^{N_h})$. Thus,
\begin{equation}\label{eq:Hilbert_transform_finite_dimension}
	\left(\mathcal{H}\Re\{F_h\}\right)(-\tau)
	=
	\sum_{j=1}^{N_h}
	\mathcal{H}
	\left(
	\Re\left\{\left(\widehat{\bm{\mathsf{u}}}_h(\alpha+\imath \cdot)\right)_j\right\} 
	\right)(-\tau)\varphi_j,
\end{equation}
where the dot in the expression on the right-hand side of \eqref{eq:Hilbert_transform_finite_dimension}
denotes integration with respect to that variable in the definition of the Hilbert transform.

Next, we calculate
\begin{equation}
	\norm{
		\left(\mathcal{H}\Re\{F_h\}\right)(-\tau)
	}_{H^1_0(\Omega)}
	=
	\norm{
		\bm{\mathsf{v}}_h	
		(-\tau)
	}_{\IC^{N_h}},
\end{equation}
where, for each $\tau \in \IR$, 
\begin{equation}
	\bm{\mathsf{v}}_h	
	(-\tau)
	=
	\begin{pmatrix}
		\mathcal{H} \left( \Re\left\{\left( \bm{\mathsf{R}}_h \widehat{\bm{\mathsf{u}}}_h(\alpha+\imath \cdot)\right)_1\right\} \right)(-\tau)\\
		\vdots\\
		\mathcal{H} \left( \Re\left\{\left(  \bm{\mathsf{R}}_h \widehat{\bm{\mathsf{u}}}_h(\alpha+\imath \cdot)\right)_{N_h}\right\} \right)(-\tau)
	\end{pmatrix}
\end{equation}
and $\bm{\mathsf{B}}_h =  \bm{\mathsf{R}}^\top_h  \bm{\mathsf{R}}_h$ is the
Cholesky decomposition of  $\bm{\mathsf{B}}_h$ with $ \bm{\mathsf{R}}_h$ an upper triangular
matrix.

Then, for any $y>0$
\begin{equation}
\begin{aligned}\label{eq:usage_of_ht_1}
	\int_{-\infty}^{\infty}
	\norm{
		\widehat{u}_h(\alpha+y+\imath \tau)}^2_{H^1_0(\Omega)
	} \normalfont\text{d}\tau
	=
	&
	\int_{-\infty}^{\infty}
	\norm{
		\Re\{F_h(-\tau) \}
	}^2_{H^1_0(\Omega)}
	\normalfont\text{d}\tau
	\\
	&
	+
	\int_{-\infty}^{\infty}
	\norm{
		\left(\mathcal{H}\Re\{F_h\}\right)(-\tau)
	}^2_{H^1_0(\Omega)}
	\normalfont\text{d}\tau,
\end{aligned}
\end{equation}
and
\begin{equation}\label{eq:usage_of_ht_2}
\begin{aligned}
	\int_{-\infty}^{\infty}
	\norm{
		\left(\mathcal{H}\Re\{F_h\}\right)(-\tau)
	}^2_{H^1_0(\Omega)}
	\normalfont\text{d}\tau
	=
	&
	\sum_{j=1}^{N_h}
	\int_{-\infty}^{\infty}
	\snorm{
		\mathcal{H} \left( \Re\left\{\left( \bm{\mathsf{R}}_h \widehat{\bm{\mathsf{u}}}_h(\alpha+\imath \cdot)\right)_j\right\} \right)(-\tau)
	}^2
	\normalfont\text{d}\tau
	\\
	=
	&
	\sum_{j=1}^{N_h}
	\norm{
		\mathcal{H} \left( \Re\left\{\left( \bm{\mathsf{R}}_h \widehat{\bm{\mathsf{u}}}_h(\alpha+\imath \cdot)\right)_j\right\} \right)
	}^2_{L^2(\IR)}
	\\
	=
	&
	\sum_{j=1}^{N_h}
	\norm{
		\Re\left\{\left( \bm{\mathsf{R}}_h \widehat{\bm{\mathsf{u}}}_h(\alpha+\imath \cdot)\right)_j\right\}
	}^2_{L^2(\IR)}
	\\
	=
	&
	\sum_{j=1}^{N_h}
	\int_{-\infty}^{\infty}
	\snorm{
		\left( 
	 		\bm{\mathsf{R}}_h
			\Re\left\{\widehat{\bm{\mathsf{u}}}_h(\alpha+\imath \tau)\right\}
		\right)_j
	}^2
	\normalfont\text{d}\tau
	\\
	=
	&
	\int_{-\infty}^{\infty}
	\norm{
	 	\bm{\mathsf{R}}_h
		\Re\left\{\widehat{\bm{\mathsf{u}}}_h(\alpha+\imath \tau)\right\}
	}^2_{\IC^{N_h}}
	\normalfont\text{d}\tau,
\end{aligned}
\end{equation}
where we have used \eqref{eq:properties_hilbert_transform}.

It follows from \eqref{eq:usage_of_ht_1} and \eqref{eq:usage_of_ht_2}
that for any $y>0$
\begin{equation}
	\int_{-\infty}^{\infty}
	\norm{
		\widehat{u}_h(\alpha+y+\imath \tau)}^2_{H^1_0(\Omega)
	} \normalfont\text{d}\tau
	=
	2
	\int_{-\infty}^{\infty}
	\norm{
		\Re\left\{\widehat{u}_h(\alpha+y+\imath \tau)\right\}
	}^2_{H^1_0(\Omega)}
	\normalfont\text{d}\tau.
\end{equation}
Hence, recalling Proposition~\ref{prop:properties_hardy}
\begin{equation}
\begin{aligned}
	\norm{
		\widehat{u}_h
	}^2_{\mathscr{H}^2_\alpha(H^1_0(\Omega))}
	=
	&
	\int_{-\infty}^{\infty}
	\norm{
		\widehat{u}_h(\alpha+\imath \tau)}^2_{H^1_0(\Omega)
	} \normalfont\text{d}\tau
	\\
	=
	&
	2
	\int_{-\infty}^{\infty}
	\norm{
		\Re\left\{\widehat{u}_h(\alpha+\imath \tau)\right\}
	}^2_{H^1_0(\Omega)}
	\normalfont\text{d}\tau.
\end{aligned}
\end{equation}
The exact same analysis applies to 
$\widehat{u}_h - \mathsf{P}_{\IV_R} \widehat{u}_h$ instead of $\widehat{u}_h$, thus proving
the first claim. The second one follows straightforwardly 
from the first one.
\end{proof}

%%%%%%%%%%%%%%%%%%%%%%%
\subsection{Snapshot Selection}
\label{eq:snap_shot_selection}
%%%%%%%%%%%%%%%%%%%%%%%
As discussed in Section~\ref{sec:FRB}, the implementation
of the \rev{LT-MOR} method relies on solving Problem~\ref{pr:sdp} on a set
$\mathcal{P}_s$ of \rev{points} in the Laplace domain.
Indeed, in practice we construct the reduced space $\mathbb{V}^{\text{(rb)}}_{R,M}$
by solving\footnote{As opposed to $\mathbb{V}^{\text{(rb)}}_{R}$ in \eqref{eq:fPOD},
we use the notation $\mathbb{V}^{\text{(rb)}}_{R,M}$ to highlight the dependence on the number of snapshots
$M$.}
\begin{equation}\label{eq:compuation_RB_space_quad}
	\mathbb{V}^{\text{(rb)}}_{R,M}
	=
	\argmin_{
	\substack{
		\mathbb{V}_R \subset \mathbb{V}^\mathbb{C}_h
		\\
		\text{dim}(\mathbb{V}_R)\leq R	
	}
	}
    	\sum_{j=1}^{M} 
	\omega_j
	\norm{
		\Re\left\{
			\widehat{u}_h(s_j) 
		\right\}
		- 
		\mathsf{P}_{\mathbb{V}_R}
		\Re\left\{
			\widehat{u}_h(s_j) 
		\right\}
	}^2_{H^1_0(\Omega)},
\end{equation}
which is a computable approximation of \eqref{eq:min_laplace_domain}.
%In view of Lemma~\ref{lmm:real_basis} one could also only consider the real part of 
%$\widehat{u}_h(s_j)$ in \eqref{eq:compuation_RB_space_quad}.

For the selection of the snapshots and weights,
we propose the following choice:
Given $\beta>0$ and $M \in \IN$ we set for $i = 1,\dots,M$
\begin{equation}\label{eq:snapshots}
	\omega_i
	=
	\frac{
		\pi \beta
	}{
		M\sin^2
		\left(
			\frac{
				\theta_i
			}{2}
		\right)
	},
	\quad
	s_i
	=
	\alpha
	+
	\imath
	\beta
	\cot
	\left(
		\frac{
			\theta_i
		}{2}
	\right),
	\quad
	\text{and}
	\quad
	\theta_i
	=
	\frac{2\pi}{M} i.
\end{equation}

	One can readily observe that the snapshot computed at
	$s_M$ produces computational issues as its imaginary
	part diverges. Herein, we discuss how to tackle this.
	To this end, set
	\begin{equation}
		\phi(\theta)
		=
		\frac{
			\beta
		}{
			2\sin^2
		\left(
			\frac{
				\theta
			}{2}
		\right)
		}
		\quad
		\text{and}
		\quad
		s(\theta)
		=
		\alpha
		+
		\imath
		\beta
		\cot
		\left(
			\frac{
			\theta
		}{2}
		\right),
		\quad
		\theta \in (0,2\pi).
	\end{equation}
	Let us consider the quantity
	\begin{equation}
	\begin{aligned}
		\phi(\theta)
		\norm{
			\widehat{u}_h(s(\theta)) 
			- 
			\mathsf{P}_{X_R}
			\widehat{u}_h(s(\theta)) 
		}^2_{H^1_0(\Omega)}
		\\
		&
		\hspace{-2cm}
		=
		\frac{\phi(\theta)}{\snorm{s(\theta)}^2}
		\norm{
			s(\theta)
			\widehat{u}_h(s(\theta)) 
			- 
			\mathsf{P}_{X_R}
			s(\theta)
			\widehat{u}_h(s(\theta)) 
		}^2_{H^1_0(\Omega)},
	\end{aligned}
	\end{equation}
	and define $\psi_{h}(s(\theta)) = s(\theta) \widehat{u}_h(s(\theta)) \in \mathbb{V}_h$, which clearly satisfies 
	\begin{equation}
	\begin{aligned}
		\dotp{\psi_{h}(s(\theta))}{v_h}_{L^2(\Omega)} 
		+
		\mathsf{a}
		\dotp{\widehat{u}_h(s(\theta))}{v_h}
		=
		&
		\dotp{
			\widehat{f}(s(\theta))
		}{v_h}_{L^2(\Omega)}
		\\
		&
		+
		\dotp{
			u_{0,h}
		}{v_h}_{L^2(\Omega)},
	\end{aligned}
	\end{equation}
	for any $0\neq v_h \in \mathbb{V}_h$. Therefore
	\begin{equation}
	\begin{aligned}
		\frac{
			\snorm{\dotp{\psi_{h}(s(\theta))-u_{0,h}}{v_h}_{L^2(\Omega)} }
		}{
			\norm{v_h}_{H^1_0(\Omega)}
		}
		&
		\leq
		\frac{
		\snorm{
			\dotp{
			\widehat{f}(s(\theta))
			}{v_h}_{L^2(\Omega)}
		}}{\norm{v_h}_{H^1_0(\Omega)}}
		+
		\frac{
		\snorm{
			\mathsf{a}
			\dotp{\widehat{u}_h(s(\theta))}{v_h}
		}}{\norm{v_h}_{H^1_0(\Omega)}}
		\\
		&
		\leq
		C_P(\Omega)
		\norm{\widehat{f}(s(\theta))}_{L^2(\Omega)}
		+
		{\overline{c}_{\bm{A}}}
		\norm{\widehat{u}_h(s(\theta))}_{H^1_0(\Omega)}.
	\end{aligned}
	\end{equation}
	Recalling Lemma~\ref{lmm:laplace_transform_u_bounds}, we obtain
	\begin{equation}\label{eq:bund_S_inf}
	\begin{aligned}
		\frac{
			\snorm{\dotp{\psi_{h}(s(\theta))-u_{0,h}}{v_h}_{L^2(\Omega)}}
		}{
			\norm{v_h}_{H^1_0(\Omega)}
		}
		\leq
		&
		C_P(\Omega)
		\left(
			1
			+
			\frac{{\overline{c}_{\bm{A}}}}{\underline{c}_{\bm{A}}}
		\right)
		\norm{
		\widehat{f}(s(\theta))
		}_{L^2(\Omega)}
		\\
		&
		+
		\frac{{\overline{c}_{\bm{A}}}}{\snorm{s(\theta)}}
		\left(
			1
			+
			\frac{{\overline{c}_{\bm{A}}}}{\underline{c}_{\bm{A}}}
		\right)
		\norm{
			u_{0,h}
		}_{L^2(\Omega)}.
	\end{aligned}
	\end{equation}
	Since $\widehat{f} \in \mathscr{H}^2(\Pi_{\alpha_0};L^2(\Omega))$,
	one has $\snorm{\widehat{f}(s)} \rightarrow 0 $ as $\snorm{s}\rightarrow \infty$,
	and $\snorm{s(\theta)} \rightarrow \infty$ as $\theta \rightarrow 2\pi$. 
	Therefore, from \eqref{eq:bund_S_inf} we conclude that
	$\psi_{h}(s(\theta))\rightarrow u_{0,h} $ in $L^2(\Omega)$ as $\theta \rightarrow 2\pi$, 
	and since $u_{0,h} \in \mathbb{V}_h \subset H^1_0(\Omega)$, we have that
	$\psi_{h}(s(\theta))\rightarrow u_{0,h} $ in $H^1_0(\Omega)$.
	
	Finally, we calculate
	\begin{equation}
		\lim_{\theta \rightarrow 2\pi}
			\frac{
				\phi(\theta)
			}{
				\snorm{s(\theta)}^2
			}	
		=
		\lim_{\theta \rightarrow 2\pi}
		\frac{	
		\beta
		}{2\sin^2
		\left(
			\frac{\theta}{2}
		\right)\snorm{\alpha + \imath \beta \cot\left(\frac{\theta}{2}\right)}^2}	
		=
		\frac{1}{2\beta}.
	\end{equation}
	Therefore, in \eqref{eq:compuation_RB_space_quad} we replace
	the last term in the sum by
	\begin{equation}
		\frac{\pi}{M\beta}
		\norm{
			u_{0,h}
			- 
			\mathsf{P}_{X_R}
			u_{0,h}
		}^2_{H^1_0(\Omega)}.
	\end{equation}
	In other words, we set in \eqref{eq:compuation_RB_space_quad}
	$\omega_M = \frac{\pi}{M\beta}$ and $\widehat{u}_h(s_M) = u_{0,h} \in \mathbb{V}_h$.

For any finite dimensional subspace of $X_R \subset H^1_0(\Omega)$
of dimension $R\in \IN$, define 
\begin{align}\label{eq:discrete_error_measure}
	\varepsilon^{(M)}_{\alpha,\beta}
	\left(
		X_R
	\right)
	\coloneqq
    	\sum_{j=1}^{M} 
	\omega_j
	\norm{
		\Re\left\{\widehat{u}_h(s_j) \right\}
		- 
		\mathsf{P}_{X_R}
		\Re\left\{\widehat{u}_h(s_j) \right\}
	}^2_{H^1_0(\Omega)},
\end{align}
and
\begin{align}\label{eq:exact_error}
	\rev{\varepsilon_{\alpha}}
	\left(
		X_R
	\right)
	\coloneqq
	\int_{-\infty}^{+\infty}
	\,
	\norm{
		\Re\left\{\widehat{u}_h(\alpha + \imath \tau)\right\}
		- 
		\mathsf{P}_{X_R}
		\Re\left\{\widehat{u}_h(\alpha + \imath \tau) \right\}
	}^2_{H^1_0(\Omega)}
	\text{d}
	\tau.
\end{align}

%\begin{remark}
Even though is not stated explicitly, the dependence of 
$\varepsilon^{(M)}_{\alpha,\beta}(X_R)$ and $\rev{\varepsilon_{\alpha}}(X_R)$
on $\alpha,\beta$ comes from the definition of $\left\{(\omega_i,s_i)\right\}_{i=1}^{M}$
in \eqref{eq:snapshots}. 

Let us define
\begin{equation}
	\mathcal{E}_{\eta,\alpha,\beta}
	\coloneqq
	\left \{
		\tau \in \mathbb{C}: \;
		\alpha + \imath \tau \in \mathcal{C}_{\eta,\alpha,\beta}
	\right\}
\end{equation}
and its complement $\mathcal{E}^c_{\eta,\alpha,\beta} \coloneqq \mathbb{C} \backslash \overline{\mathcal{E}_{\eta,\alpha,\beta}}$.
%\end{remark}
\rev{
\begin{lemma}\label{lmm:holomorphic_extensions}
Let Assumption~\ref{assump:data_f} be satisfied and
let $u_h \in \mathcal{W}_{\rev{\alpha_0}}(\IR_+;\mathbb{V}_h)$
for some $\alpha_0>0$ be the solution to Problem~\ref{prbm:semi_discrete_problem}.

Then, for any $\alpha>\alpha_0$ and any $\beta > 0$,
there exists $\eta_{\alpha,\beta}>1$
such that for any $\eta \in (1,\eta_{\alpha,\beta})$ the map
\begin{equation}
	\IR
	\ni
	\tau
	\mapsto
	\kappa_{\alpha}(\tau)
	\coloneqq
	\norm{
		(\alpha+\imath \tau)
		\left(
      		\widehat{u}_h
      		(\alpha + \imath \tau)
      		-
      		\mathsf{P}_{X_R} 
      		\widehat{u}_h
      		(\alpha + \imath \tau)
		\right)
	}^2_{H^1_0(\Omega)}
\end{equation}
admits a unique bounded analytic extension to
$\mathcal{E}^c_{1/\eta,\alpha,\beta} \cap \mathcal{E}^c_{\eta,\alpha,\beta}$
bounded according to
\begin{equation}\label{eq:bound_hol_ext}
	\snorm{
		\kappa_{\alpha}(\tau)
	}
	\leq
	\norm{(\alpha + \imath \tau)\widehat{u}_h(\alpha + \imath \tau)}_{H^1_0(\Omega)}
	\norm{(\alpha - \imath \tau)\widehat{u}_h(\alpha - \imath \tau)}_{H^1_0(\Omega)},
\end{equation}
for $\tau \in \mathcal{E}^c_{1/\eta,\alpha,\beta} \cap \mathcal{E}^c_{\eta,\alpha,\beta}$.
\end{lemma}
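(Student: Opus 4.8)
The plan is to build the continuation explicitly, continuing $\widehat{u}_h$ off the critical line $\Re\{s\}=\alpha$ and then exploiting the conjugate symmetry of the Laplace transform of a real-valued function to absorb the complex conjugation hidden in the squared norm. First I would recall from the proof of Lemma~\ref{eq:approximation_Hardy_spaces} that, under Assumption~\ref{assump:data_f}, the resolvent expansion \eqref{eq:exp_u} continues $s\mapsto\widehat{u}_h(s)\in\mathbb{V}^{\mathbb{C}}_h$ holomorphically to $\mathbb{C}\setminus(\mathcal{O}\cup\{0,-\lambda_{h,1},\dots,-\lambda_{h,N_h}\})$, and that there is $\eta_{\alpha,\beta}>1$ such that for every $\eta\in(1,\eta_{\alpha,\beta})$ this whole singular set is enclosed by $\mathcal{C}_{\eta,\alpha,\beta}$. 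Denote by $\mathsf{D}_\eta$ the disk bounded by $\mathcal{C}_{\eta,\alpha,\beta}$; its rightmost point is $\alpha-\beta\frac{\eta-1}{\eta+1}<\alpha$, so $\mathsf{D}_\eta$ sits strictly to the left of $\Re\{s\}=\alpha$, whence $\mathcal{E}_{\eta,\alpha,\beta}=\{\tau:\alpha+\imath\tau\in\mathsf{D}_\eta\}$ is a disk strictly contained in the open upper half $\tau$-plane (its lowest point being $\imath\beta\frac{\eta-1}{\eta+1}$) and, symmetrically, $\mathcal{E}_{1/\eta,\alpha,\beta}$ lies strictly in the lower half-plane. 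Consequently $\IR\subset\mathcal{E}^{c}_{1/\eta,\alpha,\beta}\cap\mathcal{E}^{c}_{\eta,\alpha,\beta}$, and the latter set, being $\mathbb{C}$ minus two disjoint disks, is a connected domain.

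Second, I would record the Schwarz symmetry. As $u_h$ is real-valued, $\widehat{u}_h(\overline{s})=\overline{\widehat{u}_h(s)}$ holds on $\Pi_{\alpha_0}$, and since the continuation domain is conjugate-symmetric (the poles $\{0,-\lambda_{h,i}\}$ are real and the singular set $\mathcal{O}$ of $\mathcal{L}\{\partial_t f\}$ is conjugate-symmetric because $f$ is real) the identity theorem propagates this relation to the whole continuation. Because $X_R\subset\mathbb{V}_h$ is a real subspace, $\mathsf{P}_{X_R}$ commutes with complex conjugation, so $w(s):=(\text{Id}-\mathsf{P}_{X_R})\widehat{u}_h(s)$ satisfies $w(\overline{s})=\overline{w(s)}$ as well. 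Expanding $w(s)=\sum_i d_i(s)\,e_i$ in a real $H^1_0(\Omega)$-orthonormal basis $\{e_i\}$ of $\mathbb{V}_h$, with each $d_i$ holomorphic, this symmetry reads $d_i(\overline{s})=\overline{d_i(s)}$.

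Third is the extension itself. For $\tau\in\IR$, using $\snorm{\alpha+\imath\tau}^2=(\alpha+\imath\tau)(\alpha-\imath\tau)$ and $\snorm{d_i(\alpha+\imath\tau)}^2=d_i(\alpha+\imath\tau)\,d_i(\alpha-\imath\tau)$ (the latter by $d_i(\overline{\alpha+\imath\tau})=\overline{d_i(\alpha+\imath\tau)}$), one obtains
\[
	\kappa_\alpha(\tau)
	=
	(\alpha+\imath\tau)(\alpha-\imath\tau)
	\sum_i d_i(\alpha+\imath\tau)\,d_i(\alpha-\imath\tau),
\]
and I would take the right-hand side as the definition of the continued $\kappa_\alpha$ for complex $\tau$. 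The factor through $\alpha+\imath\tau$ is holomorphic for $\tau\in\mathcal{E}^{c}_{\eta,\alpha,\beta}$, while the factor through $\alpha-\imath\tau$ is holomorphic for $\alpha-\imath\tau\notin\mathsf{D}_\eta$; since $\mathsf{D}_{1/\eta}=2\alpha-\mathsf{D}_\eta$ (the reflection of $\mathsf{D}_\eta$ about $\Re\{s\}=\alpha$, valid because $\mathsf{D}_\eta$ is conjugate-symmetric), one has $\{\tau:\alpha-\imath\tau\in\mathsf{D}_\eta\}=\mathcal{E}_{1/\eta,\alpha,\beta}$, so this factor is holomorphic for $\tau\in\mathcal{E}^{c}_{1/\eta,\alpha,\beta}$. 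Hence the product is holomorphic on $\mathcal{E}^{c}_{1/\eta,\alpha,\beta}\cap\mathcal{E}^{c}_{\eta,\alpha,\beta}$, agrees with $\kappa_\alpha$ on $\IR$, and is the unique such continuation by the identity theorem on the connected domain above. Finally, the Cauchy--Schwarz inequality applied to $\sum_i d_i(\alpha+\imath\tau)\,d_i(\alpha-\imath\tau)$ gives
\[
	\snorm{\kappa_\alpha(\tau)}
	\leq
	\norm{(\alpha+\imath\tau)\,w(\alpha+\imath\tau)}_{H^1_0(\Omega)}
	\,
	\norm{(\alpha-\imath\tau)\,w(\alpha-\imath\tau)}_{H^1_0(\Omega)},
\]
and $\norm{\text{Id}-\mathsf{P}_{X_R}}\leq1$ replaces $w$ by $\widehat{u}_h$ in both factors, yielding \eqref{eq:bound_hol_ext}.

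The main obstacle I anticipate is the geometric bookkeeping of the two regions of holomorphy: verifying $\{\tau:\alpha-\imath\tau\in\mathsf{D}_\eta\}=\mathcal{E}_{1/\eta,\alpha,\beta}$ cleanly requires combining the reflection identity $\mathsf{D}_{1/\eta}=2\alpha-\mathsf{D}_\eta$ with the two substitutions $\tau\mapsto\alpha\pm\imath\tau$, and one must keep both disks strictly off $\IR$ so that the identity theorem is applicable and the continuation is genuinely unique. By contrast, the resolvent continuation and the Schwarz symmetry are routine consequences of the earlier results.
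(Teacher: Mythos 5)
Your proposal is correct and follows essentially the same route as the paper's proof: you polarize the squared norm into the bilinear pairing of $g_{1,\alpha}(\tau)=(\alpha+\imath\tau)\left(\text{Id}-\mathsf{P}_{X_R}\right)\widehat{u}_h(\alpha+\imath\tau)$ and $g_{2,\alpha}(\tau)=(\alpha-\imath\tau)\left(\text{Id}-\mathsf{P}_{X_R}\right)\widehat{u}_h(\alpha-\imath\tau)$ via the conjugate symmetry $\widehat{u}_h(\overline{s})=\overline{\widehat{u}_h(s)}$, exploit the mirror symmetry of $\mathcal{C}_{\eta,\alpha,\beta}$ and $\mathcal{C}_{1/\eta,\alpha,\beta}$ about $\Re\{s\}=\alpha$ to place the two factors' holomorphy regions, and conclude with Cauchy--Schwarz and nonexpansiveness of the projection, exactly as the paper does (your real-orthonormal-basis expansion is just a coordinate rendering of the paper's bilinear-form argument). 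Your added care about the disks lying strictly off $\IR$, the connectedness of $\mathcal{E}^c_{1/\eta,\alpha,\beta}\cap\mathcal{E}^c_{\eta,\alpha,\beta}$ for uniqueness, and the commutation of $\mathsf{P}_{X_R}$ with conjugation makes explicit details the paper leaves implicit.
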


\begin{proof}
Since $\widehat{u}_h \in \mathscr{H}^2_{\alpha_0}(\mathbb{V}^\mathbb{C}_h)$ the map
$\Pi_\alpha \ni s \mapsto \widehat{u}_h(s)  \in H^1_0(\Omega)$ is analytic, therefore
straightforwardly the map $\Pi_\alpha \ni s \mapsto \widehat{u}_h (s) -\mathsf{P}_{X_R} \widehat{u}_h
(s) \in H^1_0(\Omega)$ is so as well.  
As in the proof of Lemma~\ref{eq:approximation_Hardy_spaces} and recalling
item (i) and (ii) in Assumption~\ref{assump:data_f},
we may conclude that for any $\alpha>\alpha_0$ and $\beta$ there exists
$\eta_{\alpha,\beta}>1$ such that for any $\eta \in (1,\eta_{\alpha,\beta})$
the map $\Pi_\alpha \ni s \mapsto \widehat{u}_h (s) -\mathsf{P}_{X_R} \widehat{u}_h
(s) \in H^1_0(\Omega)$ admits a holomorphic extension to $\mathcal{C}^c_{\eta,\alpha,\beta}$, i.e.,
the complement of $\mathcal{C}_{\eta,\alpha,\beta}$.

Next, for each $s \in \mathcal{C}_{\eta,\alpha,\beta}$
\begin{equation}
\begin{aligned}
	\norm{
		s
		\left(
  		\widehat{u}_h(s)
  		- 
  		\mathsf{P}_{X_R}
  		\widehat{u}_h(s)
		\right)
	}^2_{H^1_0(\Omega)}
	&
	=
	\dotp{
		s
		\left(
		\widehat{u}_h(s)
		- 
		\mathsf{P}_{X_R}
		\widehat{u}_h(s)
		\right)
	}{
		s
		\left(
		\widehat{u}_h(s)
		-
		\mathsf{P}_{X_R} 
		\widehat{u}_h(s)
		\right)
	}_{H^1_0(\Omega)}
	\\
	&
	=
	\dotp{
		s
		\left(
		\widehat{u}_h(s)
		-
		\mathsf{P}_{X_R} 
		\widehat{u}_h(s)
		\right)
	}{
		\overline{s}
		\left(
		\overline{\widehat{u}_h(\overline{s})}
		-
		\mathsf{P}_{X_R} 
		\overline{\widehat{u}_h(\overline{s})}
		\right)
	}_{H^1_0(\Omega)},
%	\\
%	&
%	=
%	\dotp{
%		g_{1,\alpha}(\tau))
%	}{
%		\overline{g_{2,\alpha}(\tau)}
%	}_{H^1_0(\Omega)},
\end{aligned}
\end{equation}
where we have used that $\widehat{u}_h(s) = \overline{\widehat{u}_h(\overline{s})}$
since $\widehat{u}_h$ is the Laplace transform of ${u}_h \in L^2_{\alpha_0}(\mathbb{R}_+;\mathbb{V}_h)$,
which is real-valued provided that the problem's data is real-valued as is the case. 

For $s = \alpha+\imath \tau$
\begin{equation}
	\norm{
		s
		\left(
      		\widehat{u}_h(s)
      		-
      		\mathsf{P}_{X_R} 
      		\widehat{u}_h(s)
		\right)
	}^2_{H^1_0(\Omega)}
	=
	\dotp{
		g_{1,\alpha}(\tau)
	}{
		\overline{g_{2,\alpha}(\tau)}
	}_{H^1_0(\Omega)},
\end{equation}
where 
\begin{equation}
\begin{aligned}
	g_{1,\alpha}(\tau)
	&
	\coloneqq
	(\alpha+\imath \tau)
	\left(
      	\widehat{u}_h(\alpha+\imath \tau)
      	-
      	\mathsf{P}_{X_R} 
      	\widehat{u}_h(\alpha+\imath \tau)
	\right),
	\quad
	\text{and},\\
	g_{2,\alpha}(\tau)
	&
	\coloneqq
	(\alpha-\imath \tau)
	\left(
      	{\widehat{u}_h(\alpha-\imath \tau)}
      	-
      	\mathsf{P}_{X_R} 
      	{\widehat{u}_h(\alpha-\imath \tau)}
	\right).
\end{aligned}
\end{equation}
The maps $$\IR\ni \tau \mapsto g_{1,\alpha}(\tau),g_{1,\alpha}(\tau) \in H^1_0(\Omega)$$
admit bounded holomorphic extensions to $\mathcal{E}^c _{\eta,\alpha,\beta}$
and $\mathcal{E}^c_{1/\eta,\alpha,\beta}$, respectively, as a consequence of the fact
that $\Pi_\alpha \ni s \mapsto \widehat{u}_h (s) -\mathsf{P}_{X_R} \widehat{u}_h
(s) \in H^1_0(\Omega)$ admits a holomorphic extension to $\mathcal{C}^c_{\eta,\alpha,\beta}$, 
as argued previously, and the fact that $
	\mathcal{E}_{1/\eta,\alpha,\beta}
	=
	\left \{
		\tau \in \mathbb{C}: \;
		\alpha - \imath \tau \in \mathcal{C}_{\eta,\alpha,\beta}
	\right\}$, which follows from the fact that 
$\mathcal{C}_{\eta,\alpha,\beta}$ and $\mathcal{C}_{1/\eta,\alpha,\beta}$
are mirror images of each other with respect to the vertical line $\Re\{s\} = \alpha$. 

In addition, the map $H^1_0(\Omega) \times H^1_0(\Omega) \ni (u,v)
\mapsto (u,\overline{v})_{H^1_0(\Omega)} \in \mathbb{C}$
is bilinear, i.e., linear in each component, therefore holomorphic as well. 
Thus, we may conclude that
\begin{equation}
	\IR\ni \tau 
	\mapsto 
	\dotp{g_{1,\alpha}(\tau))}{\overline{g_{2,\alpha}(\tau)}}_{H^1_0(\Omega)}
	\in
	\mathbb{R_+}
\end{equation}
admits a (unique) holomorphic extension to $\mathcal{E}^c_{1/\eta,\alpha,\beta} \cap \mathcal{E}^c_{\eta,\alpha,\beta}$,
which is bounded according to \eqref{eq:bound_hol_ext}.
\end{proof}

\begin{remark}\label{eq:extension_holmorphic_Re}
Define for $\tau \in \mathbb{R}$
\begin{equation}
\begin{aligned}
	g_{1,\Re}(\tau)
	&
	\coloneqq
	\Re
	\left\{
	\widehat{u}_h(\alpha+\imath \tau)
	\right\}
	-
	\mathsf{P}_{X_R} 
	\Re
	\left\{
	\widehat{u}_h(\alpha+\imath \tau)
	\right\}
	\quad
	\text{and},\\
	g_{2,\Re}(\tau)
	&
	\coloneqq
	\Re
	\left\{
	\widehat{u}_h(\alpha-\imath \tau)
	\right\}
	-
	\mathsf{P}_{X_R} 
	\Re
	\left\{
	\widehat{u}_h(\alpha-\imath \tau)
	\right\}.
\end{aligned}
\end{equation}
The maps 
\begin{equation}
	\IR\ni \tau \mapsto g_{1,\alpha}(\tau),g_{1,\alpha}(\tau) \in \mathbb{V}_h
\end{equation}
also admit bounded holomorphic extensions to $\mathcal{E}^c _{\eta,\alpha,\beta}$
and $\mathcal{E}^c_{1/\eta,\alpha,\beta}$, respectively. 
Using $\widehat{u}_h(s) = \overline{\widehat{u}_h(\overline{s})}$, for $\tau \in \mathbb{R}$ one has
\begin{equation}
\begin{aligned}
	\Re
	\left\{
	\widehat{u}_h(\alpha \pm \imath \tau)
	\right\}
	&
	=
	\frac{\widehat{u}_h(\alpha\pm\imath \tau) +\overline{\widehat{u}_h(\alpha\pm\imath \tau)}}{2}
	\\
	&
	=
	\frac{\widehat{u}_h(\alpha\pm\imath \tau) +{\widehat{u}_h(\alpha\mp\imath \tau)}}{2},
\end{aligned}
\end{equation}
which admits a bounded holomorphic to $\mathcal{E}^c_{1/\eta,\alpha,\beta} \cap \mathcal{E}^c_{\eta,\alpha,\beta}$.
\end{remark}

For $\eta>1$, let us define
\begin{equation}
	\mathcal{A}_\eta
	\coloneqq
	\left\{
		z \in \mathbb{C}: \;
		\eta^{-1} < \snorm{z} <\eta
	\right\}.
\end{equation}

Equipped with the properties established in Remark~\ref{lmm:holomorphic_extensions}
we can state the following result.

\begin{lemma}\label{lmm:exponential_convergence_rule}
Let Assumption~\ref{assump:data_f} be satisfied and
let $u_h \in \mathcal{W}_{\rev{\alpha_0}}(\IR_+;\mathbb{V}_h)$
for some $\rev{\alpha_0 \geq 0}$
be the solution to Problem~\ref{prbm:semi_discrete_problem}.

Then, for any $\alpha>\alpha_0$ and $\beta > 0$ such that $\alpha \neq \beta$,
there exists $\eta_{\alpha,\beta}>1$ (depending on $\alpha,\beta$)
such that for $M \in \IN$ and any $\eta \in (1,\eta_{\alpha,\beta})$
it holds that
\begin{equation}
\begin{aligned}
	\snorm{
		\varepsilon^{(M)}_{\alpha,\beta}
		\left(
			X_R
		\right)
		-
		\varepsilon_{\alpha}
		\left(
			X_R
		\right)
	}
	\lesssim
	\frac{\Gamma }{\eta^{M}-1},
\end{aligned}
\end{equation}
where
\begin{equation}\label{eq:gamma}
\begin{aligned}
	\Gamma
	\coloneqq
	\Upsilon_{\eta,\alpha,\beta}
	%\frac{\beta \eta}{\Lambda(\alpha-\beta)^2\snorm{1-\snorm{\frac{\alpha+\beta}{\alpha-\beta}}}}
	&
	\left(
	\sup_{ s \in \mathcal{C}^c_{\eta,\alpha,\beta}\cap \mathcal{C}^c_{1/\eta,\alpha,\beta}}
	\norm{
		\Pi_h (\mathcal{L}\{\partial_t f\}(s))
	}^2_{H^1_0(\Omega)}
	\right.
	\\
	&
	\left.
	\sup_{ s \in \mathcal{C}^c_{\eta,\alpha,\beta} \cap \mathcal{C}^c_{1/\eta,\alpha,\beta} }
	\norm{
		\Pi_h (\mathcal{L}\{\partial_t f\}(\overline{s}))
	}^2_{H^1_0(\Omega)}
	\right.
	\\
	&
	\left.
	+
	\norm{u_{0}}^2_{H^1_0(\Omega)}
	+
	\norm{\Pi_h (f(0))}^2_{H^1_0(\Omega)}
	+
	\norm{\Pi_h ( \nabla \cdot( \bm{A} \nabla u_{0}))}^2_{H^1_0 (\Omega)}
	\right),
\end{aligned}
\end{equation}
where $\Lambda>0$ is as in \eqref{eq:lambda} and
\begin{equation}
	\Upsilon_{\eta,\alpha,\beta}
	\coloneqq
	\frac{\beta \eta}{\Lambda(\alpha-\beta)^2\left(1-\snorm{\frac{\alpha+\beta}{\alpha-\beta}}\right)^2}.
\end{equation}
\end{lemma}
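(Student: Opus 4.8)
The lemma claims that the quadrature approximation $\varepsilon^{(M)}_{\alpha,\beta}(X_R)$ of the integral $\varepsilon_\alpha(X_R)$ converges exponentially in the number of quadrature points $M$. The quadrature rule in question is the one defined in (\ref{eq:snapshots}): the points $s_i = \alpha + \imath\beta\cot(\theta_i/2)$ with equispaced $\theta_i = 2\pi i/M$ and weights $\omega_i$. Under the change of variable $\tau = \beta\cot(\theta/2)$, the integral $\varepsilon_\alpha(X_R)$ over $\tau\in(-\infty,+\infty)$ becomes an integral of a $2\pi$-periodic integrand over $\theta\in(0,2\pi)$, and the quadrature becomes the equispaced trapezoidal/midpoint rule on the circle. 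The whole point is that equispaced quadrature on a periodic integrand converges exponentially provided the integrand admits an analytic extension to an annulus $\mathcal{A}_\eta$ around the unit circle, with rate $\eta^{-M}$ (more precisely $1/(\eta^M-1)$). So the engine of the proof is the classical error bound for the trapezoidal rule applied to analytic periodic functions.

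**The plan.**

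First I would make the change of variables explicit. Setting $z=\exp(\imath\theta)$ and recalling $\cot(\theta/2) = \imath\frac{z+1}{z-1}$, the map $\theta\mapsto s(\theta)$ corresponds precisely to the Möbius transform $\mathcal{M}^{-1}$ from (\ref{eq:mobius}), carrying the unit circle $\partial\mathcal{D}$ onto the line $\Re\{s\}=\alpha$. Under this substitution the integrand of $\varepsilon_\alpha(X_R)$ becomes, up to the Jacobian factor $\phi(\theta) = \beta/(2\sin^2(\theta/2))$, the squared $H^1_0$-norm of the projection residual evaluated along $s(\theta)$; and $\varepsilon^{(M)}_{\alpha,\beta}(X_R)$ is exactly the $M$-point equispaced quadrature of this same $2\pi$-periodic function, with the weights in (\ref{eq:snapshots}) being the trapezoidal weights $\frac{2\pi}{M}\phi(\theta_i)$. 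The subtlety at $\theta=2\pi$ (where $\phi$ diverges) is handled exactly as in the discussion following (\ref{eq:snapshots}): one multiplies and divides by $|s(\theta)|^2$ and works with $s(\theta)\widehat{u}_h(s(\theta))$ rather than $\widehat{u}_h(s(\theta))$, so the integrand $\kappa_\alpha$ in Lemma~\ref{lmm:holomorphic_extensions} is the genuinely bounded quantity. This is why the lemma is phrased in terms of the function $\kappa_\alpha(\tau) = \|(\alpha+\imath\tau)(\widehat{u}_h - \mathsf{P}_{X_R}\widehat{u}_h)(\alpha+\imath\tau)\|^2_{H^1_0}$: after the change of variable it becomes a $2\pi$-periodic analytic function of $\theta$.

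Second, I would invoke Lemma~\ref{lmm:holomorphic_extensions}, which is precisely tailored for this: it furnishes a bounded analytic extension of $\kappa_\alpha$ to the set $\mathcal{E}^c_{1/\eta,\alpha,\beta}\cap\mathcal{E}^c_{\eta,\alpha,\beta}$. Pulling this back through the Möbius map, the corresponding function of $z$ extends analytically to the annulus $\mathcal{A}_\eta = \{\eta^{-1}<|z|<\eta\}$ — this is exactly why $\mathcal{A}_\eta$ was introduced just before the statement, and why $\mathcal{C}_{\eta,\alpha,\beta}$ and $\mathcal{C}_{1/\eta,\alpha,\beta}$ (the mirror-image circles noted after (\ref{eq:center_radius_Mobius})) appear symmetrically. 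Then I would apply the standard trapezoidal-rule error estimate for analytic periodic integrands: if $g$ is analytic on the annulus $\mathcal{A}_\eta$ and bounded by $G$ there, then the $M$-point equispaced rule error is bounded by $C\, G/(\eta^M - 1)$. This yields the claimed $1/(\eta^M-1)$ decay directly.

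Third, I would supply the constant $\Gamma$. The bound $G$ on the analytic extension is obtained from (\ref{eq:bound_hol_ext}) in Lemma~\ref{lmm:holomorphic_extensions}, namely $|\kappa_\alpha|\le \|(\alpha+\imath\tau)\widehat{u}_h(\alpha+\imath\tau)\|_{H^1_0}\|(\alpha-\imath\tau)\widehat{u}_h(\alpha-\imath\tau)\|_{H^1_0}$, combined with the explicit pointwise representation of $\widehat{u}_h(s)$ from (\ref{eq:exp_u}) in the proof of Lemma~\ref{eq:approximation_Hardy_spaces}. Bounding $\|s\,\widehat{u}_h(s)\|_{H^1_0}$ over $s\in\mathcal{C}^c_{\eta,\alpha,\beta}\cap\mathcal{C}^c_{1/\eta,\alpha,\beta}$ produces the supremum of $\|\Pi_h(\mathcal{L}\{\partial_t f\}(s))\|_{H^1_0}$ terms, together with $\|\Pi_h(f(0))\|_{H^1_0}$ and $\|\Pi_h(\nabla\cdot(\bm A\nabla u_0))\|_{H^1_0}$, and the factor $1/\Lambda$ with $\Lambda$ as in (\ref{eq:lambda}) arises exactly as in the earlier lemma from bounding $|s+\lambda_{h,i}|$ below along the relevant circle. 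The remaining geometric prefactor $\Upsilon_{\eta,\alpha,\beta}$ collects the Jacobian factor $\beta$, the radius/pole distances $(\alpha-\beta)^2(1-|\frac{\alpha+\beta}{\alpha-\beta}|)^2$ coming from the location of the pole at the origin under the Möbius pullback (hence the hypothesis $\alpha\neq\beta$, ensuring the denominator does not vanish), and the annulus factor $\eta$.

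**The main obstacle.**

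The genuinely delicate step is the bookkeeping in the third paragraph: tracking how the analyticity width $\eta_{\alpha,\beta}$ and the bound $G$ propagate through the Möbius pullback to produce the precise constant $\Upsilon_{\eta,\alpha,\beta}$. The analytic-periodic trapezoidal estimate itself is textbook, and Lemma~\ref{lmm:holomorphic_extensions} already does the hard analytic-continuation work; what remains is verifying that the pole of the pullback (located at the origin $z=0$, mapped from $s=\infty$) and the pole structure inherited from $1/(s+\lambda_{h,i})$ all lie strictly inside the annulus of radius $\eta^{-1}<1$ or outside radius $\eta>1$, so that the integrand is honestly analytic on $\mathcal{A}_\eta$. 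The hypothesis $\alpha\neq\beta$ is precisely what keeps the origin's image away from $\partial\mathcal{D}$, and assembling the denominator $(\alpha-\beta)^2(1-|\frac{\alpha+\beta}{\alpha-\beta}|)^2$ correctly is where one must be careful. Once that geometry is pinned down, the exponential rate follows immediately.
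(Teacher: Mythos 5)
Your proposal follows essentially the same route as the paper's own proof: the substitution $\tau=\beta\cot(\theta/2)$ converting $\varepsilon_{\alpha}(X_R)$ into a $2\pi$-periodic integral whose $M$-point equispaced quadrature is exactly $\varepsilon^{(M)}_{\alpha,\beta}(X_R)$, the holomorphic extension of the integrand to the annulus $\mathcal{A}_\eta$ via Lemma~\ref{lmm:holomorphic_extensions} and Remark~\ref{eq:extension_holmorphic_Re}, and the classical exponentially convergent trapezoidal-rule bound for analytic periodic integrands (the paper invokes Theorem 2.2 of Ref.~\refcite{trefethen2014exponentially}), assembled with the representation \eqref{eq:exp_u} into the constant $\Gamma$. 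One small correction to your pole bookkeeping: the singularity of the pulled-back weight responsible for the factor $(\alpha-\beta)^2\left(1-\snorm{\frac{\alpha+\beta}{\alpha-\beta}}\right)^2$ sits at $z=\frac{\alpha+\beta}{\alpha-\beta}$ --- the image of $s=0$ under $\mathcal{M}$, together with its mirror at the reciprocal point --- not at $z=0$ mapped from $s=\infty$; the Jacobian's double pole at $z=1$ (the actual image of $s=\infty$) is cancelled by the double zero of $1/\snorm{s(\theta)}^2$ there, and the hypothesis $\alpha\neq\beta$ is what keeps $\snorm{\frac{\alpha+\beta}{\alpha-\beta}}$ finite and strictly greater than $1$, so that for $\eta$ below it both poles avoid the closed annulus.
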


 \begin{proof}
% Let $u_h \in L^2_\alpha(\IR_+,H^1_0(\Omega;\mathbb{R}))$ be the solution to 
% Eq.~(\ref{prbm:semi_discrete_problem}).
% It follows from Eq.~(\ref{lmm:laplace_transform_u_bounds_discrete}) that 
% for any $s \in \Pi_\alpha$
% \begin{equation}\label{eq:bound_U_F}
% \begin{aligned}
% 	\norm{\widehat{u}_h(s)}_{H^1_0(\Omega)}
% 	\leq
% 	&
% 	\frac{C_P(\Omega)}{\underline{c}_{\bm{A}}}
% 	\norm{
% 		\widehat{f}(s)
% 	}_{L^2(\Omega)}
% 	+
% 	\frac{1}{\snorm{s}}
% 	\left(
% 		1
% 		+
% 		\frac{\overline{c}_{\bm{A}}}{\underline{c}_{\bm{A}}}
% 	\right)
% 	\norm{
% 		u_{0,h}
% 	}_{H^1_0(\Omega)}
% 	\\
% 	\leq
% 	&
% 	\frac{1}{\snorm{s}}
% 	\left(
% 		C_P(\Omega)
% 		\frac{C_f}{\underline{c}_{\bm{A}}}
% 		+
% 		\left(
% 			1
% 			+
% 			\frac{\overline{c}_{\bm{A}}}{\underline{c}_{\bm{A}}}
% 		\right)
% 		\norm{
% 			u_{0,h}
% 		}_{H^1_0(\Omega)}
% 	\right)
% 	\\
% 	\leq
% 	&
% 	\frac{1}{\snorm{s}}
% 	\left(
% 		C_P(\Omega)
% 		\frac{C_f}{\underline{c}_{\bm{A}}}
% 		+
% 		C^{\normalfont\text{inv}}_h
% 		\left(
% 			1
% 			+
% 			\frac{\overline{c}_{\bm{A}}}{\underline{c}_{\bm{A}}}
% 		\right)
% 		\norm{
% 			u_{0}
% 		}_{L^2(\Omega)}
% 	\right).
% \end{aligned}
% \end{equation}
% where $C^{\normalfont\text{inv}}_h>0$ is as in \ref{rmk:inv_inequality}.
Set $\tau = \beta\cot(\theta)$, then
%{\small
\begin{equation}
 \begin{aligned}
 	\varepsilon_{\alpha}
 	\left(
 		X_R
 	\right)
 	&
 	=
 	\int_{-\infty}^{+\infty}
 	\,
 	\norm{
 		\Re\left\{\widehat{u}_h(\alpha + \imath \tau)\right\}
 		- 
 		\mathsf{P}_{X_R}
 		\Re\left\{\widehat{u}_h(\alpha + \imath \tau) \right\}
 	}^2_{H^1_0(\Omega)}
 	\text{d}
 	\tau.
 	\\
 	&
 	=
 	\int_{0}^{2\pi}
 	q(\theta)
 	\text{d}
 	\theta,
 \end{aligned}
 \end{equation}
 %}%
 where for $\theta \in (0,2\pi)$ and $s(\theta) = \alpha +\imath \beta \cot\left(\frac{\theta}{2}\right)$ 
 %{\small
 \begin{equation}
 \begin{aligned}
 	q(\theta)
 	=
 	\frac{\beta}{2\sin^2
 		\left(
 			\frac{\theta}{2}
 	\right)\snorm{s(\theta)}^2}
 	\norm{
 %		{s(\theta)}
 %		\left(
 		s(\theta)
 		\Re\left\{
 		\widehat{u}_h
 		\left(s(\theta)\right)
 		\right\}
 		-
 		\mathsf{P}_{X_R} 
		s(\theta)
 		\Re\left\{
 		\widehat{u}_h
 		\left(s(\theta)\right)
 		\right\}
 %		\right)
 	}^2_{H^1_0(\Omega)}
 \end{aligned}
 \end{equation}
 
 Let us define for each $z \in \mathcal{D}$
 \begin{equation}
 	h(z)
	\coloneqq
	w(z)
 	\norm{
		g(z)
 		-
 		\mathsf{P}_{X_R} 
		g(z)
 	}^2_{H^1_0(\Omega)},
\end{equation}
where $g(z)\coloneqq \mathcal{M}^{-1}(z)\Re\!\left\{\widehat{u}\!\left(\mathcal{M}^{-1}(z)\right)\right\}$
and $w(z) \coloneqq-\frac{\beta z}{(\alpha-\beta)^2\left(z - \frac{\alpha+\beta}{\alpha-\beta}\right)^2}$, $z \in \mathcal{D}$.

Observe that $w(z)$ admits a holomorphic extension to $\mathcal{D}_\eta$
for any $\eta \in \left(1,\snorm{\frac{\alpha+\beta}{\alpha-\beta}}\right)$.
Arguing as in the proof of Lemma~\ref{eq:approximation_Hardy_spaces},
together with Lemma~\ref{lmm:holomorphic_extensions} and Remark~\ref{eq:extension_holmorphic_Re},
we may conclude that there exist $\eta_{\alpha,\beta}>1$
(depending on $\alpha,\beta$) such that for any $\eta \in (1,\eta_{\alpha,\beta})$, $h(z)$
admits a holomorphic extension to $\mathcal{A}_{\eta}$, which is bounded according to
\begin{equation}
\begin{aligned}
 	\snorm{h(z)}
 	\leq
	&
	\frac{\beta \eta}{(\alpha-\beta)^2\left(1-\snorm{\frac{\alpha+\beta}{\alpha-\beta}}\right)^2}
	\left(
	\sup_{s \in  \mathcal{C}^c_{\eta,\alpha,\beta} \cap \mathcal{C}^c_{1/\eta,\alpha,\beta}}
	\norm{s\widehat{u}_h(s)}_{H^1_0(\Omega)}^2
	\right.
	\\
	&
	\left.
	+
	\sup_{s \in  \mathcal{C}^c_{\eta,\alpha,\beta} \cap \mathcal{C}^c_{1/\eta,\alpha,\beta}}
	\norm{\overline{s}\widehat{u}_h(\overline{s})}_{H^1_0(\Omega)}^2
	\right).
\end{aligned}
 \end{equation}

 Recalling Theorem 2.2 in Ref.~\refcite{trefethen2014exponentially}, for any $\eta \in (1,\eta_{\alpha,\beta})$
 it holds that
 \begin{equation}
 \begin{aligned}
 	\snorm{
 		\varepsilon^{(M)}_{\alpha,\beta}
 		\left(
 			X_R
 		\right)
 		-
 		\varepsilon_{\alpha,\beta}
 		\left(
 			X_R
 		\right)
 	}
 	=
 	\snorm{
 		\frac{2\pi}{M}
 		\sum_{k=1}^{M}
 		h(z_k)
 		-
 		\int_{0}^{2\pi}
 		g(\theta)
 		\text{d}
 		\theta
 	}
 	\lesssim
 	\frac{\Gamma}{\eta^{M}-1},
 \end{aligned}
 \end{equation}
 with $z_k = \exp\left({2\pi\imath\frac{k}{M}}\right)$, $k=1,\dots,M$,
 and $\Gamma$ as in \eqref{eq:gamma}.
\end{proof}
}

\subsection{Fully Discrete Error Analysis}
The result stated in Theorem~\ref{eq:convergence_lt_rb_method_full} assumes that
$\IV^{\normalfont\text{(rb)}}_{R}$ can be computed. The next result, takes into 
account the snapshot sampling.

\begin{theorem}[Fully Discrete Exponential Convergence of the LT-MOR]
\label{eq:convergence_lt_rb_method_full}
Let Assumption~\ref{assump:data_f} be satisfied with $\alpha_0>0$.
Furthermore, let $u_h \in \mathcal{W}_{\rev{\alpha_0}}(\IR_+;\mathbb{V}_h)$
be the solution to Problem~\ref{prbm:semi_discrete_problem},
and let $u^{\normalfont\text{(rb)}}_{R,M}\in \mathcal{W}_{\rev{\alpha_0}}(\IR_+;\IV^{\normalfont\text{(rb)}}_{R,M})$
be the solution to Problem~\ref{pr:sdpr} with 
$\IV^{\normalfont\text{(rb)}}_{R,M}$ be as in \eqref{eq:compuation_RB_space_quad}.

Then, for any $\alpha>\alpha_0$ and any $\beta > 0$ such that $\beta \neq \alpha$
there exists $\eta_{\alpha,\beta}>1$
such that for $\eta \in (1,\eta_{\alpha,\beta})$,
$R \in \{1,\dots,N_h\}$ and $M \in \mathbb{N}$ it holds that
\begin{equation}
\begin{aligned}
	\norm{
		u_h - u^{\normalfont\text{(rb)}}_{R,M}
	}_{L^2_\alpha\left(\IR_+;H^1_0(\Omega)\right)}
	\lesssim
    &
	\frac{ C^{\normalfont\text{inv}}_h \eta^2}{(\eta-1)\sqrt{\alpha \Lambda}}
	\left(
	\sup_{ s \in \mathcal{C}_{\eta,\alpha, \beta}}
	\norm{
		\Pi_h (\mathcal{L}\{\partial_t f\}(s))
	}_{H^1_0(\Omega)}
	\right.
	\\
	&
	\left.
	+
	\norm{\Pi_h (f(0))}_{H^1_0(\Omega)}
	+
	\norm{\Pi_h ( \nabla \cdot( \bm{A} \nabla u_{0}))}_{H^1_0 (\Omega)}
	\right)
	\eta^{-R}
	\\
	&
	+
	\sqrt{\frac{\Gamma }{\eta^{M}-1}}
    +
    C^{\normalfont\text{inv}}_h
    \norm{
    \left(
		\normalfont\text{Id}
		-
		\mathsf{P}^{\text{(rb)}}_{R,M}
	\right)
    u_{0,h}
    }_{H^1_0(\Omega)},
\end{aligned}
\end{equation}
where the implicit constant is independent of the discretization parameter $h>0$
and $\Lambda>0$ is as in \eqref{eq:lambda} and
$\mathsf{P}^{\normalfont\text{(rb)}}_{R,M}: \mathbb{V}_h \rightarrow \IV^{\normalfont\text{(rb)}}_{R,M}$
corresponds to the $H^1_0(\Omega)$-projection operator onto $\IV^{\normalfont\text{(rb)}}_{R,M}$.
\end{theorem}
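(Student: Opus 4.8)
The plan is to mirror the proof of Theorem~\ref{eq:convergence_lt_rb_method} verbatim, noting that its Galerkin--stability part is entirely agnostic to how the reduced space was produced: re-running the argument that introduces $\eta^{\text{(rb)}}_{R}$ and $\zeta^{\text{(rb)}}_{R}$ with $\mathsf{P}^{\text{(rb)}}_R$ replaced throughout by $\mathsf{P}^{\text{(rb)}}_{R,M}$, and invoking the a priori stability estimate of Theorem~\ref{eq:well_posedness} together with the inverse inequality of Remark~\ref{rmk:inv_inequality}, one first obtains
\begin{equation}
\begin{aligned}
	\norm{u_h - u^{\text{(rb)}}_{R,M}}_{L^2_\alpha(\IR_+;H^1_0(\Omega))}
	\lesssim
	&\;
	(1+C^{\text{inv}}_h)\,
	\norm{(\text{Id} - \mathsf{P}^{\text{(rb)}}_{R,M})\,u_h}_{L^2_\alpha(\IR_+;H^1_0(\Omega))}
	\\
	&\;
	+ C^{\text{inv}}_h \norm{(\text{Id} - \mathsf{P}^{\text{(rb)}}_{R,M})\,u_{0,h}}_{H^1_0(\Omega)}.
\end{aligned}
\end{equation}
The second summand is already the initial-condition term of the statement, so everything reduces to estimating the projection defect $\norm{(\text{Id} - \mathsf{P}^{\text{(rb)}}_{R,M})u_h}_{L^2_\alpha(\IR_+;H^1_0(\Omega))}$.

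Next I would move this defect into the Laplace domain. By the Paley--Wiener isometry (Theorem~\ref{eq:laplace_transform_bijective}) and the real-part identity established in the proof of Lemma~\ref{lmm:real_basis} --- which holds for an arbitrary \emph{real} subspace $X_R\subset\mathbb{V}_h$, since $\Re\{\mathsf{P}_{X_R}\widehat{u}_h\}=\mathsf{P}_{X_R}\Re\{\widehat{u}_h\}$ --- one gets
\begin{equation}
	\norm{(\text{Id} - \mathsf{P}^{\text{(rb)}}_{R,M})\,u_h}^2_{L^2_\alpha(\IR_+;H^1_0(\Omega))}
	=
	\norm{\widehat{u}_h - \mathsf{P}_{\IV^{\text{(rb)}}_{R,M}}\widehat{u}_h}^2_{\mathscr{H}^2_\alpha(H^1_0(\Omega))}
	=
	\frac{1}{\pi}\,\varepsilon_\alpha(\IV^{\text{(rb)}}_{R,M}),
\end{equation}
with $\varepsilon_\alpha$ as in \eqref{eq:exact_error}; that is, the time-domain defect equals, up to a constant, the \emph{continuous} error functional evaluated at the \emph{sampled} space.

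The heart of the proof is then a quasi-optimality comparison between $\varepsilon_\alpha$ and its quadrature surrogate $\varepsilon^{(M)}_{\alpha,\beta}$ of \eqref{eq:discrete_error_measure}. Writing $\IV^{\text{(rb)}}_R$ for the idealized minimizer of $\varepsilon_\alpha$ (equivalently \eqref{eq:min_laplace_domain}) and using that $\IV^{\text{(rb)}}_{R,M}$ minimizes $\varepsilon^{(M)}_{\alpha,\beta}$ by \eqref{eq:compuation_RB_space_quad}, I would apply Lemma~\ref{lmm:exponential_convergence_rule} twice --- at $X_R=\IV^{\text{(rb)}}_{R,M}$ and at $X_R=\IV^{\text{(rb)}}_R$, the bound $\Gamma/(\eta^M-1)$ being uniform in $X_R$ --- and chain
\begin{equation}
	\varepsilon_\alpha(\IV^{\text{(rb)}}_{R,M})
	\leq
	\varepsilon^{(M)}_{\alpha,\beta}(\IV^{\text{(rb)}}_{R,M}) + \frac{C\Gamma}{\eta^M-1}
	\leq
	\varepsilon^{(M)}_{\alpha,\beta}(\IV^{\text{(rb)}}_R) + \frac{C\Gamma}{\eta^M-1}
	\leq
	\varepsilon_\alpha(\IV^{\text{(rb)}}_R) + \frac{2C\Gamma}{\eta^M-1}.
\end{equation}
The remaining term $\varepsilon_\alpha(\IV^{\text{(rb)}}_R)=\pi\min_{X_R}\norm{(\text{Id}-\mathsf{P}_{X_R})u_h}^2_{L^2_\alpha(\IR_+;H^1_0(\Omega))}$ is controlled by the squared right-hand side of Lemma~\ref{eq:approximation_Hardy_spaces}, supplying the $\eta^{-2R}$ decay. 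Taking square roots, using $\sqrt{a+b}\le\sqrt a+\sqrt b$ to split the best-approximation ($\eta^{-R}$) and quadrature ($\sqrt{\Gamma/(\eta^M-1)}$) contributions, and reinserting into the stability bound of the first paragraph yields the claimed estimate with an $h$-independent implicit constant; one takes $\eta_{\alpha,\beta}$ to be the smaller of the thresholds provided by Lemmas~\ref{eq:approximation_Hardy_spaces} and~\ref{lmm:exponential_convergence_rule}, and the hypothesis $\beta\neq\alpha$ is inherited from the latter.

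I expect the main obstacle to be precisely this chaining step: one must confirm that Lemma~\ref{lmm:exponential_convergence_rule} applies with the \emph{same} constant $\Gamma$ to both the data-driven space $\IV^{\text{(rb)}}_{R,M}$ and the idealized $\IV^{\text{(rb)}}_R$. This is admissible because $\Gamma$ in \eqref{eq:gamma} is a supremum of data-only quantities over the fixed contours $\mathcal{C}_{\eta,\alpha,\beta}$ and $\mathcal{C}_{1/\eta,\alpha,\beta}$ and never references $X_R$; what genuinely requires care is verifying that the holomorphic-extension hypotheses underpinning that lemma (Lemma~\ref{lmm:holomorphic_extensions} and Remark~\ref{eq:extension_holmorphic_Re}) hold for \emph{both} subspaces, which they do, since those extensions rely only on the analyticity of $\widehat{u}_h$ and on $\mathsf{P}_{X_R}$ being a bounded projection onto a subspace of $\mathbb{V}_h$, true for any admissible $X_R$.
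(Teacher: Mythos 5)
Your proposal is correct and follows essentially the same route as the paper's proof: the Galerkin-stability reduction with $\mathsf{P}^{\text{(rb)}}_{R,M}$ in place of $\mathsf{P}^{\text{(rb)}}_{R}$, the Paley--Wiener/Lemma~\ref{lmm:real_basis} translation of the projection defect into $\varepsilon_\alpha(\IV^{\text{(rb)}}_{R,M})$, the two-sided application of Lemma~\ref{lmm:exponential_convergence_rule} combined with the minimizing property of $\IV^{\text{(rb)}}_{R,M}$ for $\varepsilon^{(M)}_{\alpha,\beta}$, and the final appeal to Lemma~\ref{eq:approximation_Hardy_spaces}. Your closing observation --- that $\Gamma$ in \eqref{eq:gamma} depends only on the data and not on the subspace $X_R$, so the quadrature lemma applies uniformly to both $\IV^{\text{(rb)}}_{R,M}$ and $\IV^{\text{(rb)}}_{R}$ --- is a point the paper's proof uses tacitly, and you justify it correctly.
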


\begin{proof}
Exactly as in the proof of Theorem~\ref{eq:convergence_lt_rb_method},
we have that
\begin{equation}
\begin{aligned}
    \norm{
		u_h - u^{\normalfont\text{(rb)}}_{R,M}
	}_{L^2_\alpha\left(\IR_+;H^1_0(\Omega)\right)}
	\lesssim
    &
    (1+C^{\normalfont\text{inv}}_h)
    \norm{
		\left(
			\text{Id}
			-
			\mathsf{P}^{\text{(rb)}}_{R,M}
		\right)
		  u_h
    }_{L^2_\alpha(\mathbb{R}_+;H^1_0(\Omega))}
    \\
    &
    +
    C^{\normalfont\text{inv}}_h
    \norm{
    \left(
		\text{Id}
		-
		\mathsf{P}^{\text{(rb)}}_{R,M}
	\right)
    u_{0,h}
    }_{H^1_0(\Omega)}.
\end{aligned}
\end{equation}
Recalling Lemma~\ref{lmm:real_basis} and the Paley-Wiener theorem, we obtain
\begin{equation}
\begin{aligned}
    \norm{
		u_h - u^{\normalfont\text{(rb)}}_{R,M}
	}^2_{L^2_\alpha\left(\IR_+;H^1_0(\Omega)\right)}
	\lesssim
    &
    (1+C^{\normalfont\text{inv}}_h)^2
	\varepsilon_{\alpha}
	\left(
		\IV^{\normalfont\text{(rb)}}_{R,M}
	\right)
    \\
    &
    +
    \left(C^{\normalfont\text{inv}}_h\right)^2
    \norm{
    \left(
		\text{Id}
		-
		\mathsf{P}^{\text{(rb)}}_{R,M}
	\right)
    u_{0,h}
    }_{H^1_0(\Omega)}.
\end{aligned}
\end{equation}
Furthermore, recalling Lemma~\ref{lmm:exponential_convergence_rule}
\begin{equation}
\begin{aligned}
	\varepsilon_{\alpha}
	\left(
		\IV^{\normalfont\text{(rb)}}_{R,M}
	\right)
	\leq
	&
	\snorm{
      	\varepsilon_{\alpha}
      	\left(
      		\IV^{\normalfont\text{(rb)}}_{R,M}
      	\right)
		-
		\varepsilon^{(M)}_{\alpha,\beta}
		\left(
			\IV^{\normalfont\text{(rb)}}_{R,M}
		\right)
	}
	+
	\varepsilon^{(M)}_{\alpha,\beta}
	\left(
		\IV^{\normalfont\text{(rb)}}_{R,M}
	\right)
	\\
	\lesssim
	&
	\frac{\Gamma }{\eta^{M}-1}
	+
	\varepsilon^{(M)}_{\alpha,\beta}
	\left(
		\IV^{\normalfont\text{(rb)}}_{R,M}
	\right),
\end{aligned}
\end{equation}
where $\Gamma$ is as in \eqref{eq:gamma}.
Recalling that $\IV^{\normalfont\text{(rb)}}_{R,M}$ minimizes
$\varepsilon^{(M)}_{\alpha,\beta}\left(\cdot\right)$ defined in \eqref{eq:discrete_error_measure},
we have that
\begin{equation}
\begin{aligned}
	\varepsilon^{(M)}_{\alpha,\beta}\left(\IV^{\normalfont\text{(rb)}}_{R,M}\right)
	&
	\leq
	\varepsilon^{(M)}_{\alpha,\beta}\left(\IV^{\normalfont\text{(rb)}}_{R}\right)
	\\
	&
	\leq
	\snorm{
		\varepsilon^{(M)}_{\alpha,\beta}\left(\IV^{\normalfont\text{(rb)}}_{R}\right)
		-
		\varepsilon_{\alpha}\left(\IV^{\normalfont\text{(rb)}}_{R}\right)
	}
	+
	\varepsilon_{\alpha}\left(\IV^{\normalfont\text{(rb)}}_{R}\right)
	\\
	&
	\lesssim
	\frac{\Gamma }{\eta^{M}-1}
	+
	\varepsilon_{\alpha}
	\left(
		\IV^{\normalfont\text{(rb)}}_{R}
	\right),
\end{aligned}
\end{equation}
where we have used again Lemma~\ref{lmm:exponential_convergence_rule}.
Finally, we bound
$\varepsilon_{\alpha}\left(\IV^{\normalfont\text{(rb)}}_{R}\right)$ using Lemma~\ref{eq:approximation_Hardy_spaces}
and obtain the final result.
\end{proof}

\subsection{Halving the Number of Snapshots}
\label{sec:halving_snapshots}
As pointed out in Remark 1 of Ref.~\refcite{Bigoni2020a}, the total 
number of snapshots can be halved.
Let  $u_h \in L^2_\alpha(\IR_+,H^1_0(\Omega;\mathbb{R}))$ be the solution to 
Problem~\ref{pbm:wave_equation}, and let $\widehat{u}_h = \mathcal{L}\{{u}_h \} \in \mathscr{H}^2(\Pi_{\alpha};H^1_0(\Omega))$
be its Laplace transform.

For $i=1,\dots,M$ one can easily verify that $\overline{s_i} =s_{M+1-i}$.
The application of complex conjugation to Problem~\ref{pbrm:laplace_discrete}
yields the following problem for $i=1,\dots,M$
\begin{align}
	\overline{s_i}
	\dotp{\overline{\widehat{u}_h(s_i)}}{v_h}_{L^2(\Omega)} 
	+ 
	\mathsf{a}
	\left(
		\overline{\widehat{u}_h(s_i)},v_h
	\right) 
	= 
	\dotp{\overline{\widehat{f}(s_i)}}{v_h}_{L^2(\Omega)}
	+
	\dotp{u_{0,h}}{v_h}_{L^2(\Omega)} , 
\end{align}
where $\widehat{f}(s) =\mathcal{L}\{f\}(s)$ corresponds to the Laplace transform
of $f\in L^2_\alpha(\IR_+;L^2(\Omega;\mathbb{R}))$, and we have used that $u_0 \in L^2(\Omega;\mathbb{R})$.
Recalling the definition of the Laplace transform, one can readily verify that
$\overline{\widehat{f}(s)} = {\widehat{f}(\overline{s})}$.
Consequently, ${\widehat{u}_h(s_i)} =\overline{\widehat{u}_h(s_{M+1-i})} $
for $i=1,\dots,M$.
Assuming that $M \in \IN$ is even, we can readily 
notice that one only \rev{needs to compute}
the snapshots $u(s_i)$, $i=1,\dots,\frac{M}{2}$,
as $u({s_i}) = {u({s_{M+1-i}})}$, for $i=\frac{M}{2}+1,\dots,M$.

In addition, since $\Re\left\{u(s_i)\right\} = \Re\left\{\overline{u({s_{M+1-i}})}\right\}$,
and in view of Proposition~\ref{prop:titchmarsh}, we do not need to include 
the samples computed for $i=\frac{M+1}{2}+1,\dots,M$ in the construction 
of the snapshot matrix introduced in \eqref{eq:snapshot_M_LT_RB}.

\section{Numerical Results}
\label{sec:numerical_results}
We present numerical results validating our theoretical claims
and portraying the computational advantages of the \rev{LT-MOR} method
over traditional methods to solve the linear, second-order parabolic problem
in bounded domains as described in Section~\ref{sec:parabolic_problems}.

We are \rev{interested} in assessing the performance of the \rev{LT-MOR} method
in three aspects: 
\begin{itemize}
	\item[(i)]
	Accuracy with respect to the high-fidelity solution.
	To this end, we set $\mathfrak{J} = (0,T)$ and 
	consider the following metric
	\begin{equation}\label{sec:example_I}
	\begin{aligned}
		\text{Rel\_Error}^{\normalfont\text{(rb)}}_R(\mathfrak{J};X)
		&
		=
		\frac{
			\norm{
				u_h - u^{\normalfont\text{(rb)}}_R
			}_{L^2(\mathfrak{J};X)}
		}{
			\norm{
				u_h
			}_{L^2(\mathfrak{J};X)}
		}
		\\
		&
		\approx
		\frac{
			\left(
			\displaystyle\sum_{j=0}^{N_t}
			\norm{
				u_h(t_j) - u^{\normalfont\text{(rb)}}_R(t_j)
			}^2_{X}
			\right)^{\frac{1}{2}}
		}{
			\left(
			\displaystyle\sum_{j=0}^{N_t}
			\norm{
				u_h(t_j)
			}^2_{X}
			\right)^{\frac{1}{2}}
		},
%		\quad
%		R = 1,\dots, R_{\text{max}},
	\end{aligned}
	\end{equation}
	where $X \in \{L^2(\Omega),H^1_0(\Omega)\}$, i.e.~we
	compute (an approximation of) the $L^2(\mathfrak{J};X)$-relative error for a number of
	reduced spaces of dimension ranging from $R \in \left\{1,\dots,R_{\text{max}}\right\}$.
	\item[(ii)]
	Accuracy with respect to the number of snapshots in the offline phase, i.e.~the 
	number of samples in the Laplace domain.
	\item[(iii)]
	Speed-up with respect to the high-fidelity solver.
\end{itemize}

\subsection{High-Fidelity Solver: Finite Element Discretization}
Let $\Omega \subset \IR^d$, $d\in \{2,3\}$ be a bounded Lipschitz polygon/polyhedron
with boundary $\partial \Omega$. We consider a conforming,
uniformly shape regular triangulation $\mathcal{T}_h$ of $\Omega$ with mesh-size $h>0$.
We consider the space $\mathcal{S}^{p,1}_0(\mathcal{T}_h)$ of $H^1_0(\Omega)$-conforming
Lagrangian finite element space of order $p$, which plays the role of $\mathbb{V}_h$ in \eqref{ssec:fe_problem}.
The finite element implementation is conducted in the {\tt MATLAB}
library {\tt Gypsilab}\cite{alouges2018fem}.

\subsection{Problem Set-up}
\label{sec:setting}
We consider the domain $\Omega = (-\frac{1}{2},\frac{1}{2})^d$, 
for $d \in \{2,3\}$, i.e.~the unit square or unit cube centred in the origin
of the cartesian coordinate system.

Two initial conditions are used
\begin{subequations}
\begin{align}
	u^{(1)}_{0}
	(\bm{x} )
	&
	=
	0,
	\quad
	\text{and},
	\label{eq:ic_1}\\
	u^{(2)}_{0}
	(\bm{x})
	&
	=
	\prod_{i=1}^{d}
	\sin\left(\zeta_i\pi\left(x_i-\frac{1}{2}\right)\right),
	\quad
	\bm{x} = (x_1,\dots,x_d) \in \Omega,
	\label{eq:ic_2}
\end{align}
\end{subequations}
where $\zeta = (\zeta_i)_{i=1}^{d}\subset \mathbb{R}^d$.
Observe that $u^{(1)}_{0}$ and $u^{(2)}_{0}$ belong to $H^1_0(\Omega)$.

We consider a separable forcing term
of the form $f(\bm{x},t) = b(t)g(\bm{x})$ with
\begin{equation}\label{eq:forcing_term}
\begin{aligned}
	b(t)
	&
	=	
	\left(
		\vartheta_1
		\sin(\omega t)
		+
		\vartheta_2
		\cos(\omega t)
	\right)
	\exp(\nu t)
	\\
	g(\bm{x})
	&
	=
	\cos(\lambda x_1)x_{d-1}(1+x_d)^2,
	\quad
	\bm{x} = (x_1,\dots,x_d) \in \Omega,
\end{aligned}
\end{equation}
where $\vartheta_1,\vartheta_2,\omega,\nu$ and $\lambda$ are
hyper-parameters of the problem.

Observe that for $s \in \Pi_\nu$
\begin{equation}
	\widehat{b}(s)
	=
	\mathcal{L}
	\left\{
		{b}(t)
	\right\}
	=
	\vartheta_1
	\frac{\omega}{(s-\nu)^2+\omega^2}
	+
	\vartheta_2
	\frac{s-\nu}{(s-\nu)^2+\omega^2}.
\end{equation}

\subsection{Results in the Unit Square}\label{sec:results_square}
We compute the solution to the linear, second-order parabolic
problem in a square with the following set-up.
\begin{itemize}
	\item[(i)] {\bf FE Discretization.}
	We consider a FE discretization using $\mathbb{P}^2$
	elements on a mesh $\mathcal{T}_h$ of $20000$ triangles, with a total number of degrees of freedom equal to $39601$,
	i.e. $\text{dim}\left(\mathcal{S}^{2,1}_0\left(\mathcal{T}_h\right)\right)=39601$
	and a mesh size $h = 1.41 \times 10^{-2}$.
	\item[(ii)] {\bf Construction of the Reduced Space.}
	The space $\mathbb{V}^{\text{(rb)}}_R \subset \mathbb{V}_h$ is computed following the computations described in 
	Section~\ref{sec:FRB}, in particular following the considerations of \eqref{rmk:comp_Vrb}, together
	with the choice of snapshots described in \eqref{eq:snap_shot_selection}.
	As per the setting for the computation of the snapshots, we set $\alpha = 1$ and $\beta = 2$ 
	in \eqref{eq:snapshots} and consider $M \in \{50,150,250,350,450\}$. However,
	in view of the insights of Section~\ref{sec:halving_snapshots} we only effectively compute $\frac{M}{2}$ samples
	as described therein. 
	\item[(iii)] {\bf Hyper-parameters Configuration}.
	We consider the following configurations of hyper-parameters: $\vartheta_1 = \vartheta_2 = 1$,
	$\nu = 0.5$, $\lambda =10$, and $\omega =10$ in \eqref{eq:forcing_term}, and we set $\zeta_1 = 4$
	and $\zeta_2 =1$ in \eqref{eq:ic_2}.
	\item[(iv)] {\bf Time-stepping Scheme.}
	For both the computation of the high-fidelity solution and the reduced basis solution, 
	i.e., the numerical approximation of \eqref{prbm:semi_discrete_problem} and Problem~\ref{pr:sdpr},
	respectively, we consider the backward Euler time-stepping scheme.
%	, which corresponds to the
%	$\theta$-method $as in Section~\ref{sec:analysis_theta_method} with $\theta =1$.
	We set the final time to $T=10$, and the total number of time steps to $N_t = 2.5\times 10^4$.
\end{itemize}

%%%%%%%%%%%%%%%%%%%%%%%%%%%%%%%%%%%%%%%%%%%%%
%%%%%%%%%%%%% Square: Decay of the Singular Values %%%%%%%%%%%%%
%%%%%%%%%%%%%%%%%%%%%%%%%%%%%%%%%%%%%%%%%%%%%

\subsubsection{Singular Values of the Snapshot Matrix}
Figure~\ref{fig:modes_pm} portrays the decay of the singular values of the snapshot matrix for 
the initial conditions $u^{(1)}_{0}$ and $u^{(2)}_{0}$ defined in \eqref{eq:ic_1} and \eqref{eq:ic_2},
respectively, and the set-up described in Sections~\ref{sec:setting} and \ref{sec:results_square}, where
the considerations of the latter section are particular for the problem in the unit square.

\begin{figure}[!ht]
	\centering
	\begin{subfigure}{0.49\linewidth}
		\includegraphics[width=\textwidth]
		{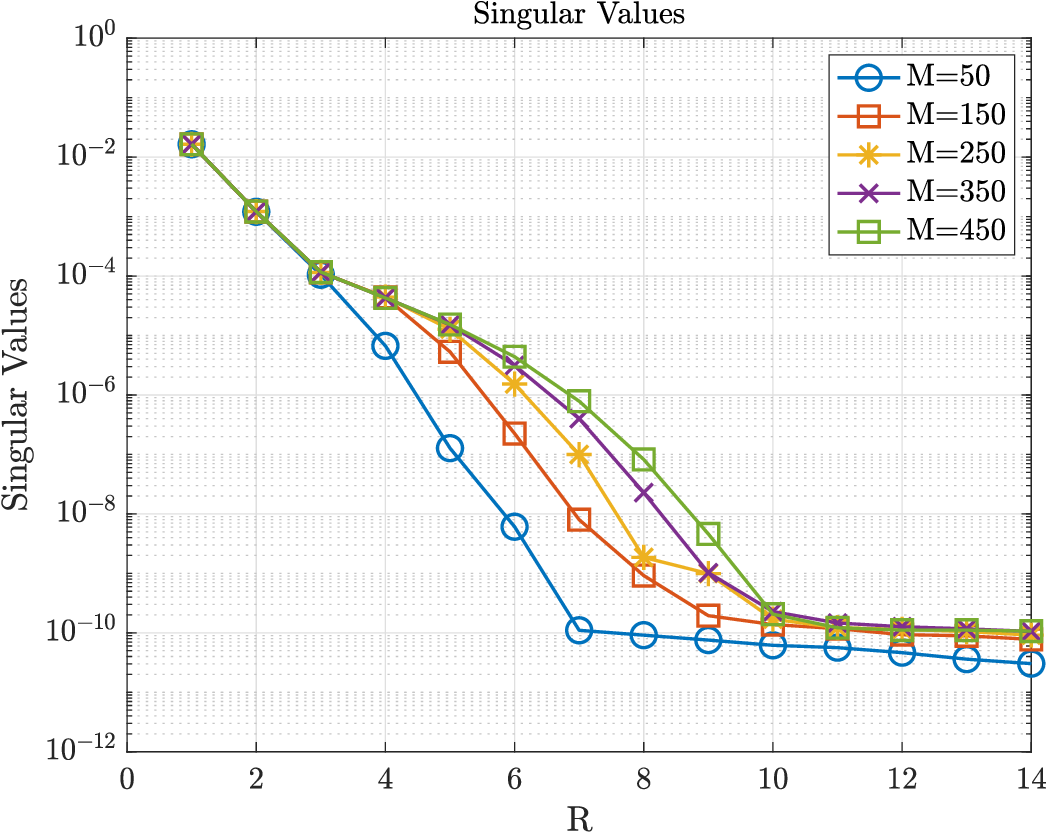}
		\subcaption{\label{fig:setting_1_square_decay_sing_values} Initial Condition $u^{(1)}_{0}$.}
	\end{subfigure}
	\begin{subfigure}{0.49\linewidth}
		\includegraphics[width=\textwidth]
		{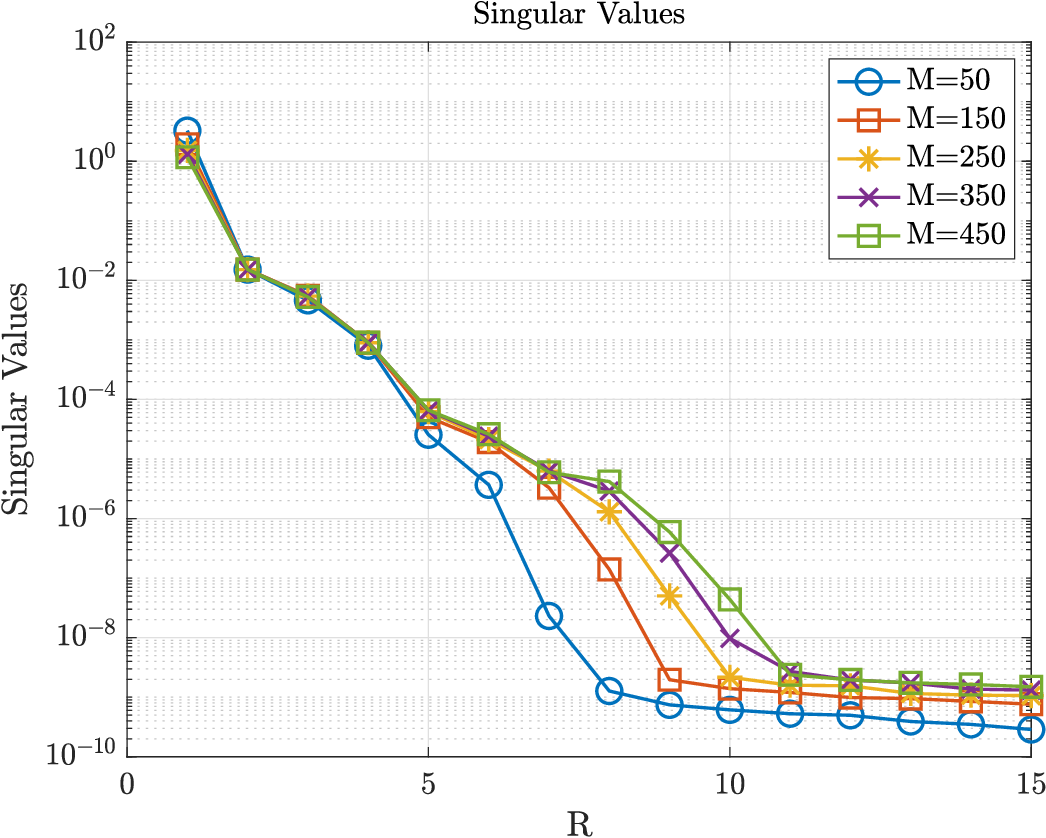}
		\subcaption{\label{fig:setting_2_square_decay_sing_values}  Initial Condition $u^{(2)}_{0}$.}
	\end{subfigure}
	\caption{\label{fig:modes_pm} 
	Singular values of the snapshot matrix for the setting considered in Section~\ref{sec:results_square}
	for the unit square in two dimensions, i.e. $\Omega = (-\frac{1}{2},\frac{1}{2})^2 \subset \mathbb{R}^2$,
	and for the initial conditions $u^{(1)}_{0}$ and $u^{(2)}_{0}$ defined in \eqref{eq:ic_1} and \eqref{eq:ic_2},
	respectively.
	}
\end{figure}

%%%%%%%%%%%%%%%%%%%%%%%%%%%%%%%%%%%%%%%
%%%%%%%%%%%%% Square: L2 and H1 Error %%%%%%%%%%%%%
%%%%%%%%%%%%%%%%%%%%%%%%%%%%%%%%%%%%%%%

\subsubsection{Convergence of the Relative Error}

Figure~\ref{fig:error_rel_U1} and Figure~\ref{fig:error_rel_U2} portray the
convergence of the relative error as defined in Section~\ref{sec:example_I}
between the high-fidelity solution and the reduced solution as the dimension of the reduced space
increases for initial conditions $u^{(1)}_{0}$ and $u^{(2)}_{0}$, respectively.
More precisely, Figure~\ref{fig:plot_relative_error_U1_L2} and Figure~\ref{fig:plot_relative_error_U1_H1}
present the aforementioned error measure with $X = L^2(\Omega)$ and $X=H^1_0(\Omega)$
in Section~\ref{sec:example_I}, respectively, and for $M \in \{50,150,250,350,450\}$.
Again, we remark that under the considerations presented in Section~\ref{sec:halving_snapshots},
effectively only half, i.e. $\frac{M}{2}$, the number of snapshots are required. The same
holds for Figure~\ref{fig:plot_relative_error_U2_L2} and Figure~\ref{fig:plot_relative_error_U2_H1}.

\begin{figure}[!ht]
	\centering
	\begin{subfigure}{0.49\linewidth}
		\includegraphics[width=\textwidth]
		{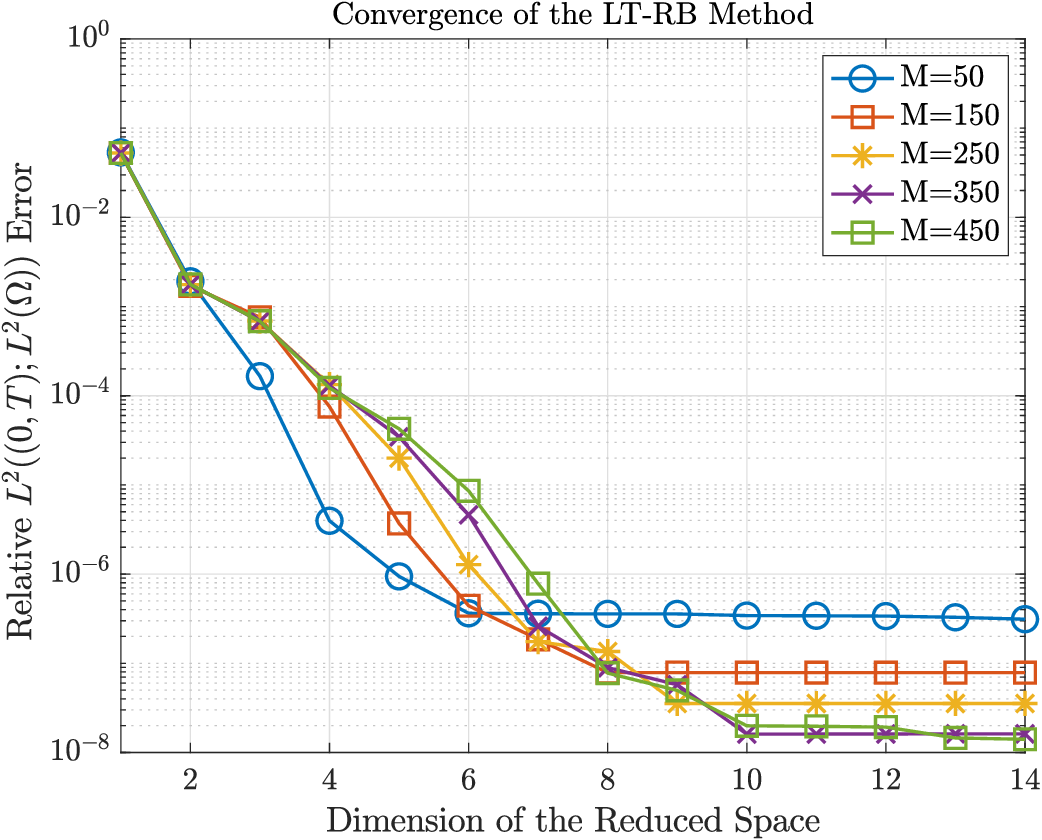}
		\subcaption{$\text{Rel\_Error}^{\normalfont\text{(rb)}}_R(\mathfrak{J};L^2(\Omega))$.}
		\label{fig:plot_relative_error_U1_L2}
	\end{subfigure}
	\begin{subfigure}{0.49\linewidth}
		\includegraphics[width=\textwidth]
		{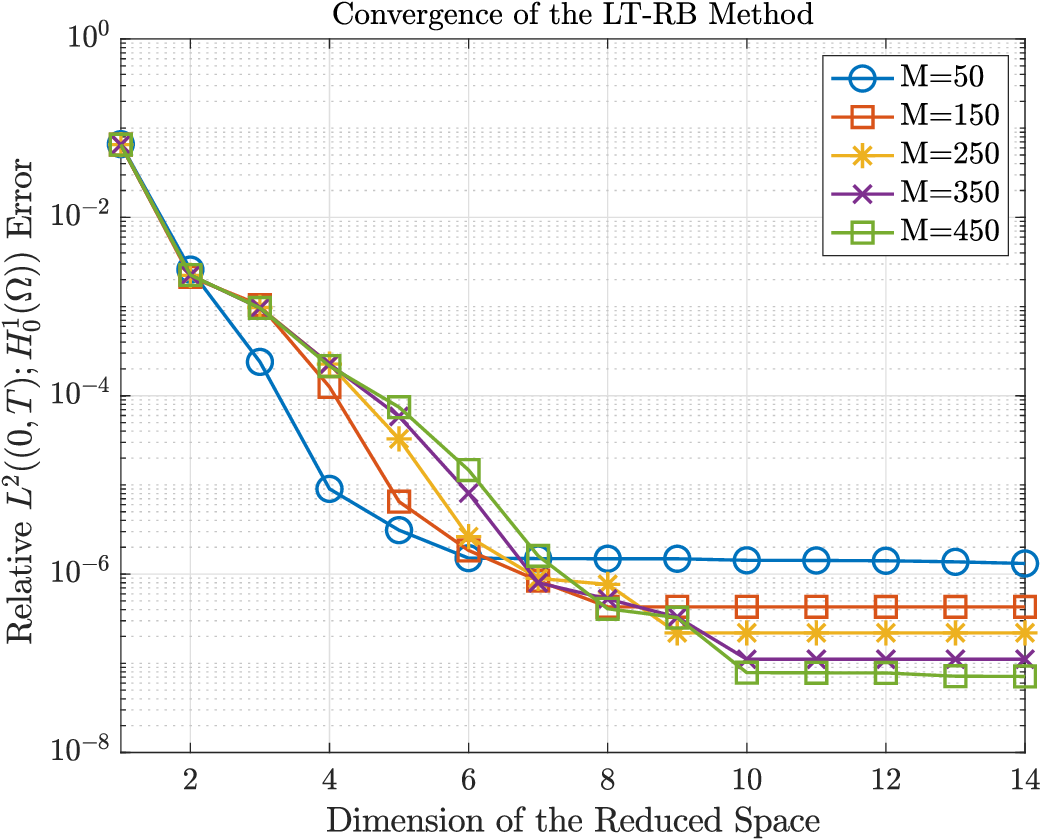}
		\subcaption{$\text{Rel\_Error}^{\normalfont\text{(rb)}}_R(\mathfrak{J};H^1_0(\Omega))$.}
		\label{fig:plot_relative_error_U1_H1}
	\end{subfigure}
	\caption{\label{fig:error_rel_U1} 
	Convergence of the relative error as defined in Section~\ref{sec:example_I} between the high-fidelity solution 
	and the reduced solution as the dimension of the reduced space
	increases from $R=1$ up to and including $R=14$, for $M \in \{50,150,250,350,450\}$.
	In Figure~\ref{fig:plot_relative_error_U1_L2} the relative error is computed in the $L^2(\Omega)$-norm
	and in Figure~\ref{fig:plot_relative_error_U1_L2} in the $H^1_0(\Omega)$-norm.
	The geometrical setting corresponds to the one described in Section~\ref{sec:results_square},
	i.e.~the unit square in two dimensions $\Omega = (-\frac{1}{2},\frac{1}{2})^2 \subset \mathbb{R}^2$,
	and for the initial conditions $u^{(1)}_{0}$ defined in \eqref{eq:ic_1}.
	}
\end{figure}

\begin{figure}[!ht]
	\centering
	\begin{subfigure}{0.49\linewidth}
		\includegraphics[width=\textwidth]
		{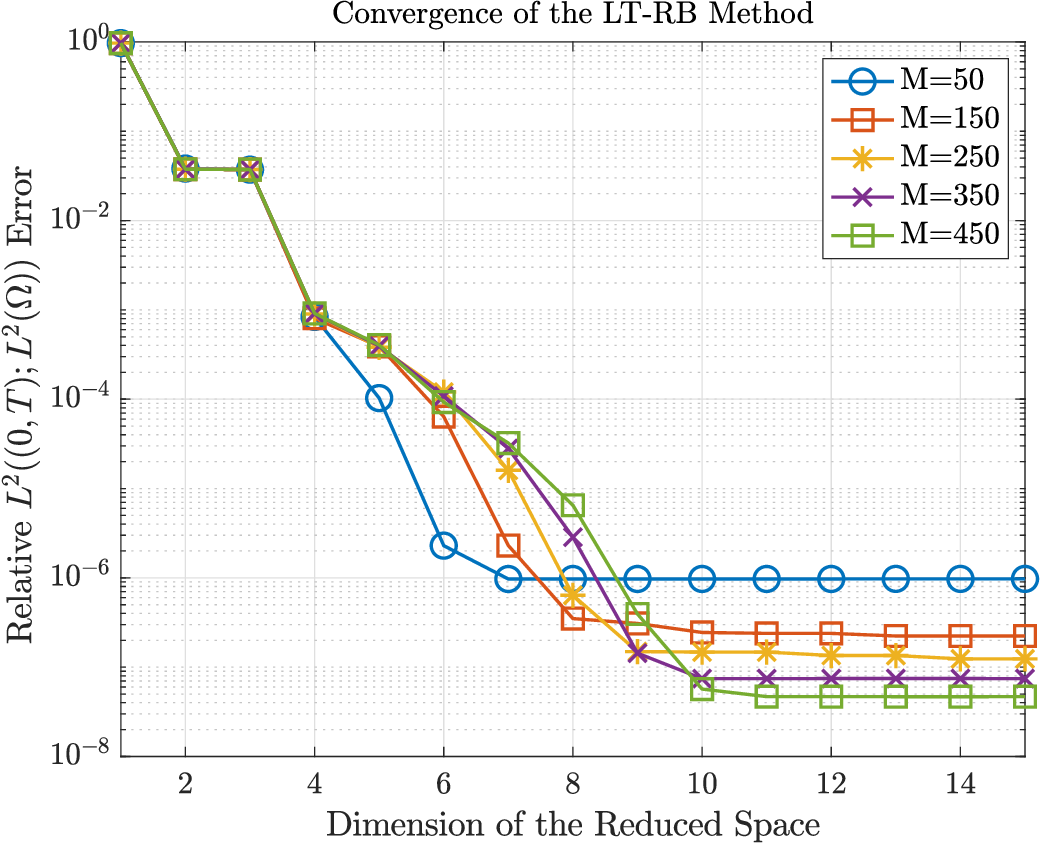}
		\subcaption{$\text{Rel\_Error}^{\normalfont\text{(rb)}}_R(\mathfrak{J};L^2(\Omega))$.}
		\label{fig:plot_relative_error_U2_L2}
	\end{subfigure}
	\begin{subfigure}{0.49\linewidth}
		\includegraphics[width=\textwidth]
		{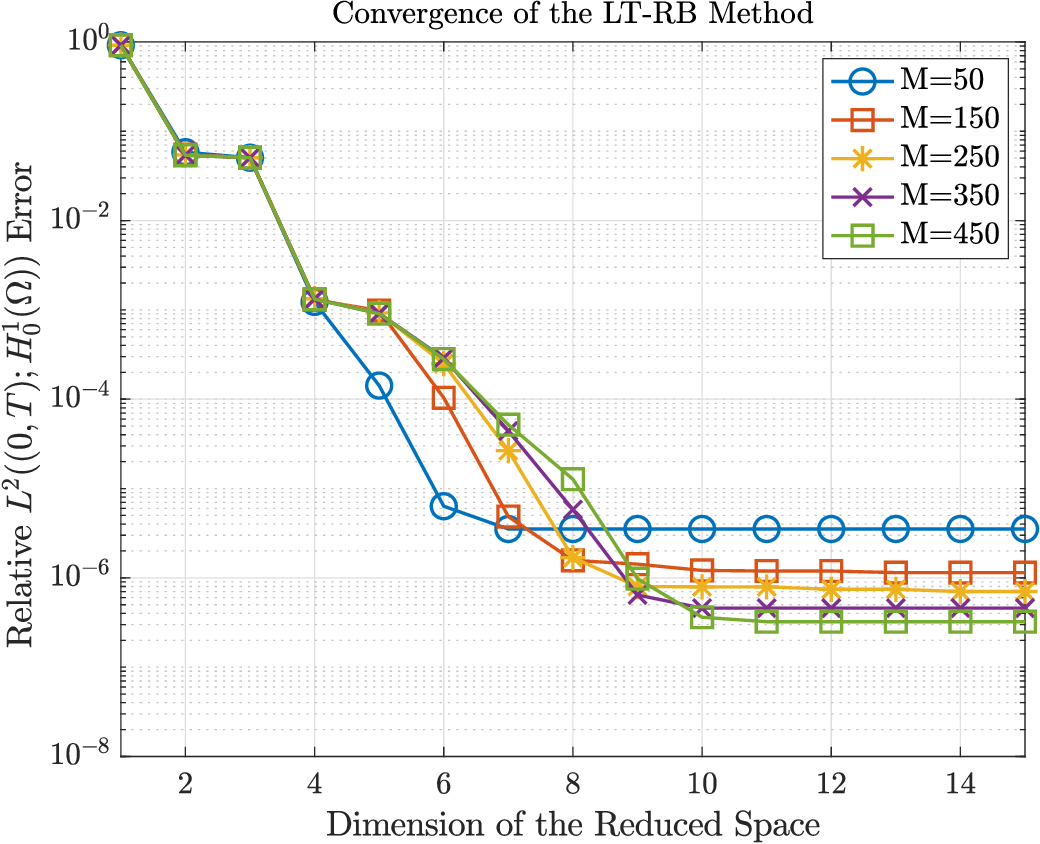}
		\subcaption{$\text{Rel\_Error}^{\normalfont\text{(rb)}}_R(\mathfrak{J};H^1_0(\Omega))$.}
		\label{fig:plot_relative_error_U2_H1}
	\end{subfigure}
	\caption{\label{fig:error_rel_U2} 
	Convergence of the relative error as defined in Section~\ref{sec:example_I} between the high-fidelity solution 
	and the reduced solution as the dimension of the reduced space
	increases from $R=1$ up to and including $R=15$, for $M \in \{50,150,250,350,450\}$.
	In Figure~\ref{fig:plot_relative_error_U2_L2} the relative error is computed in the $L^2(\Omega)$-norm
	and in Figure~\ref{fig:plot_relative_error_U2_L2} in the $H^1_0(\Omega)$-norm.
	The geometrical setting corresponds to the one described in Section~\ref{sec:results_square},
	i.e.~the unit square in two dimensions $\Omega = (-\frac{1}{2},\frac{1}{2})^2 \subset \mathbb{R}^2$,
	and for the initial conditions $u^{(2)}_{0}$ defined in \eqref{eq:ic_2}.
	}
\end{figure}

%%%%%%%%%%%%%%%%%%%%%%%%%%%%%%%%%%%%%%%
%%%%%%%%%%%%% Square: Speed Up %%%%%%%%%%%%%%%%
%%%%%%%%%%%%%%%%%%%%%%%%%%%%%%%%%%%%%%%
\subsubsection{Speed-up}

Figure~\ref{fig:plot_speed_U1_HF} presents the execution times
for the computation of the high-fidelity solution with initial condition $u^{(1)}_{0}$ split into two contributions: (1)
{\sf Assemble FEM}, which consists in the time-required to set up the FE linear system of 
equations, (2) {\sf Solve TD-HF}, which corresponds to the total time required to solve
the high-fidelity model using the backward Euler scheme.

Figure~\ref{fig:plot_speed_U1_RB_50} through Figure~\ref{fig:plot_speed_U1_RB_450}
show the execution times of the \rev{LT-MOR} method for $M \in \{50,150,250,350,450\}$.
In each of these plots, the total time is broken down into the following contributions:
(1) Assembling the FE discretization ({\sf Assemble FEM}), (2) computing the snapshots 
or high-fidelity solutions in the Laplace domain ({\sf LD-HF}), (3) build the reduced basis ({\sf Build RB}),
and (4) compute the reduce solution in the time domain ({\sf Solve TD-RB}). 

\begin{figure}[!ht]
	\centering
	\begin{subfigure}{0.49\linewidth}
		\includegraphics[width=\textwidth]
		{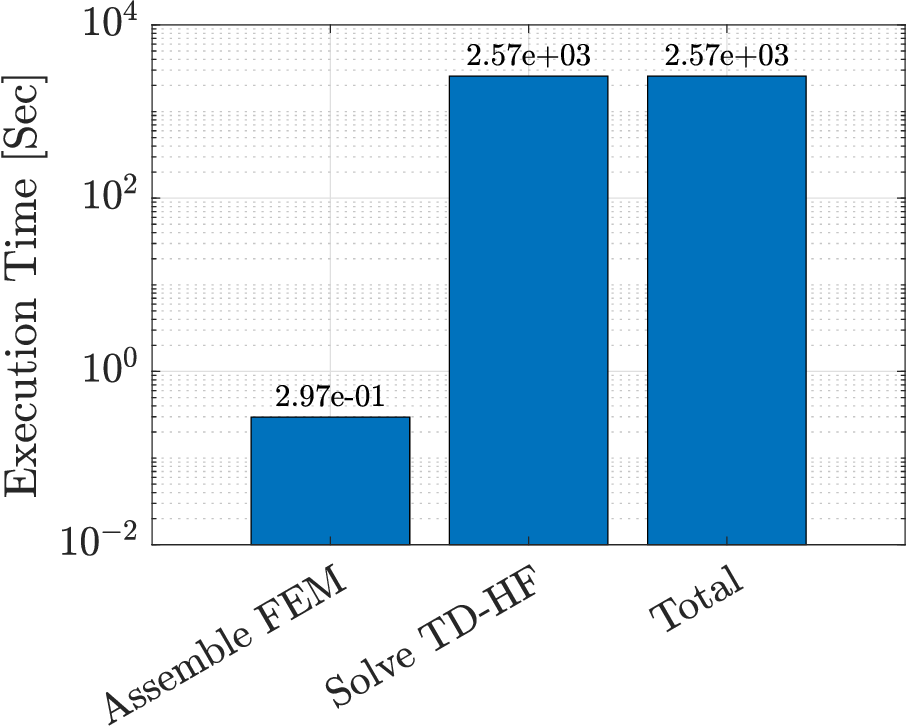}
		\subcaption{High-Fidelity Solution.}
		\label{fig:plot_speed_U1_HF}
	\end{subfigure}
	\begin{subfigure}{0.49\linewidth}
		\includegraphics[width=\textwidth]
		{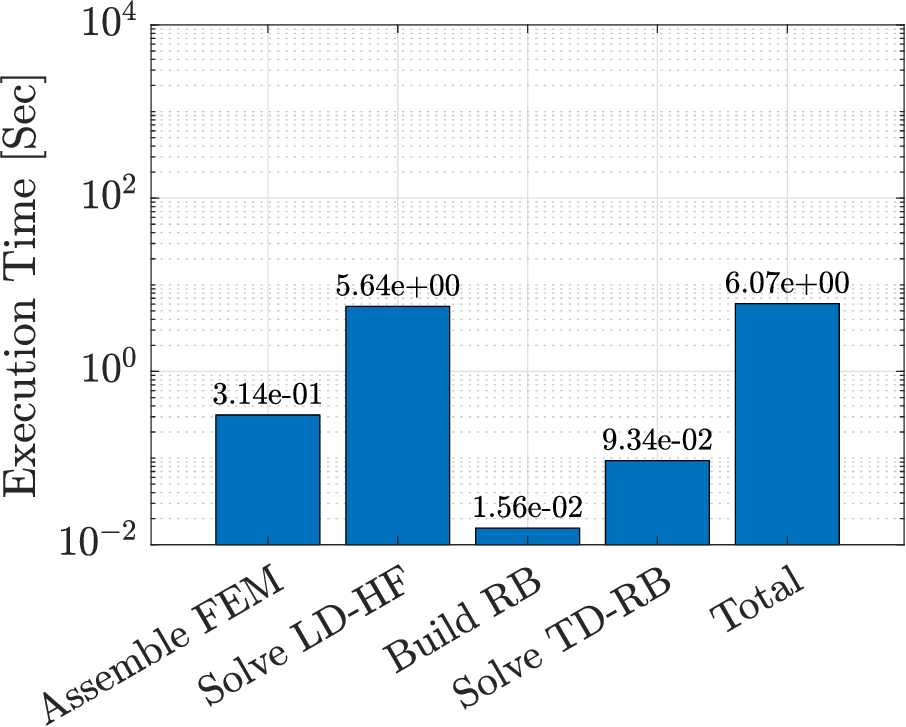}
		\subcaption{\rev{LT-MOR} with $M=50$ and $R=14$.}
		\label{fig:plot_speed_U1_RB_50}
	\end{subfigure}
	\centering
	\begin{subfigure}{0.49\linewidth}
		\includegraphics[width=\textwidth]
		{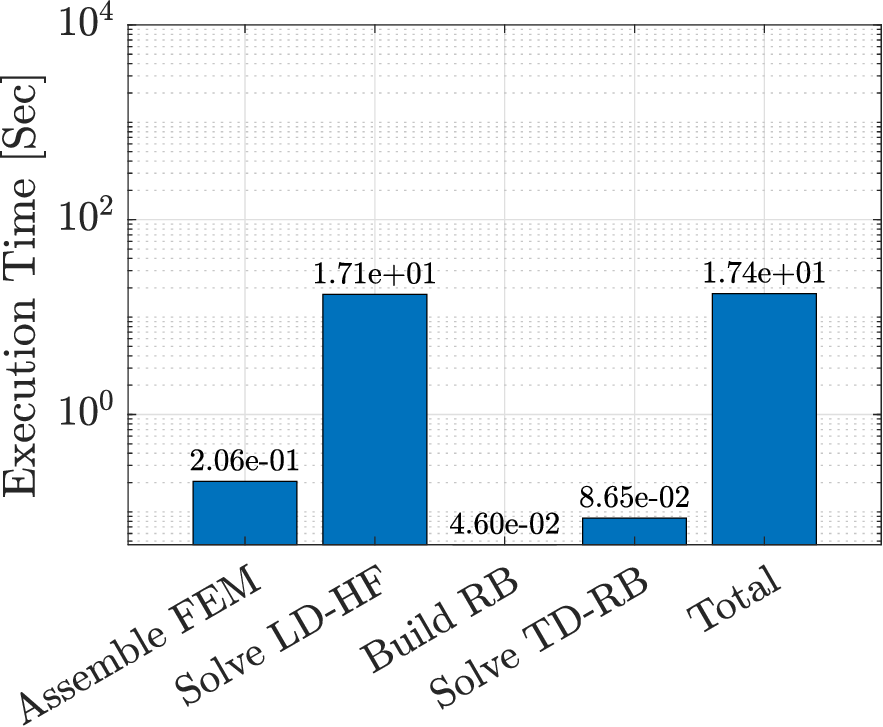}
		\subcaption{\rev{LT-MOR} with $M=150$ and $R=14$.}
		\label{fig:plot_speed_U1_RB_150}
	\end{subfigure}
	\begin{subfigure}{0.49\linewidth}
		\includegraphics[width=\textwidth]
		{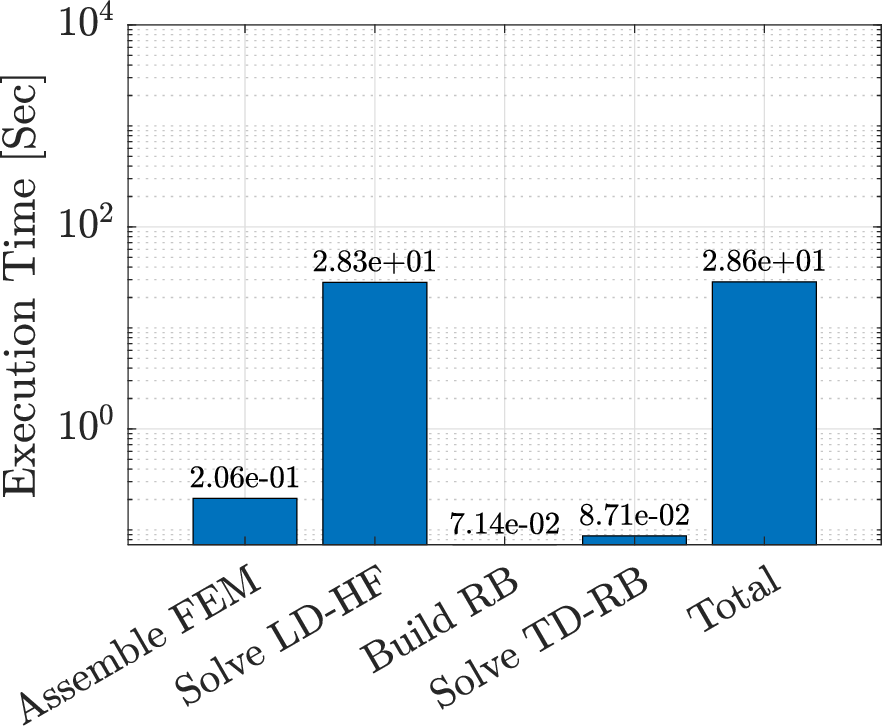}
		\subcaption{\rev{LT-MOR} with $M=250$ and $R=14$.}
		\label{fig:plot_speed_U1_RB_250}
	\end{subfigure}
	\centering
	\begin{subfigure}{0.49\linewidth}
		\includegraphics[width=\textwidth]
		{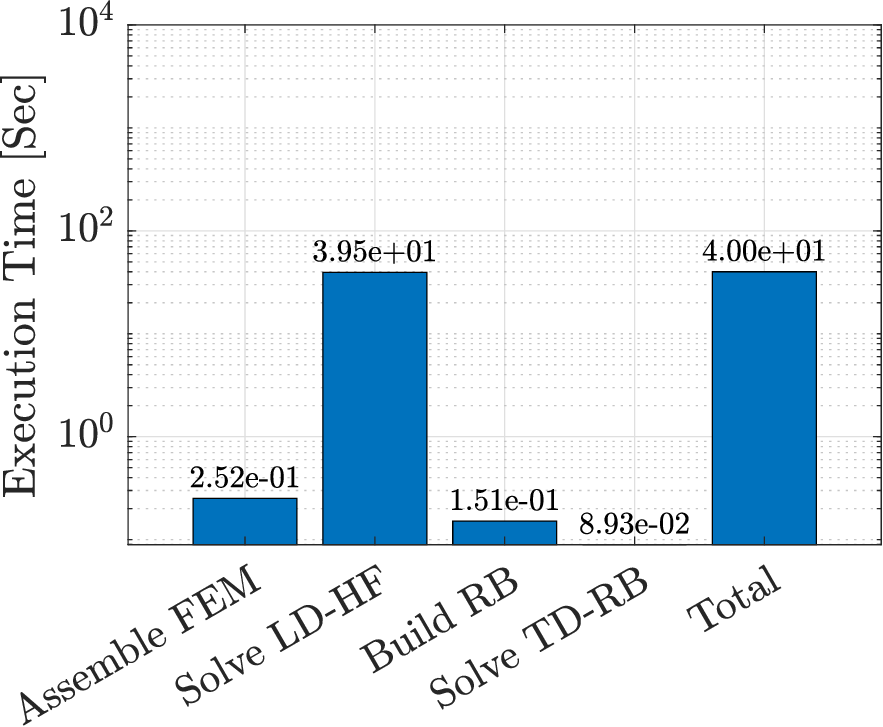}
		\subcaption{\rev{LT-MOR} with $M=350$ and $R=14$.}
		\label{fig:plot_speed_U1_RB_350}
	\end{subfigure}
	\begin{subfigure}{0.49\linewidth}
		\includegraphics[width=\textwidth]
		{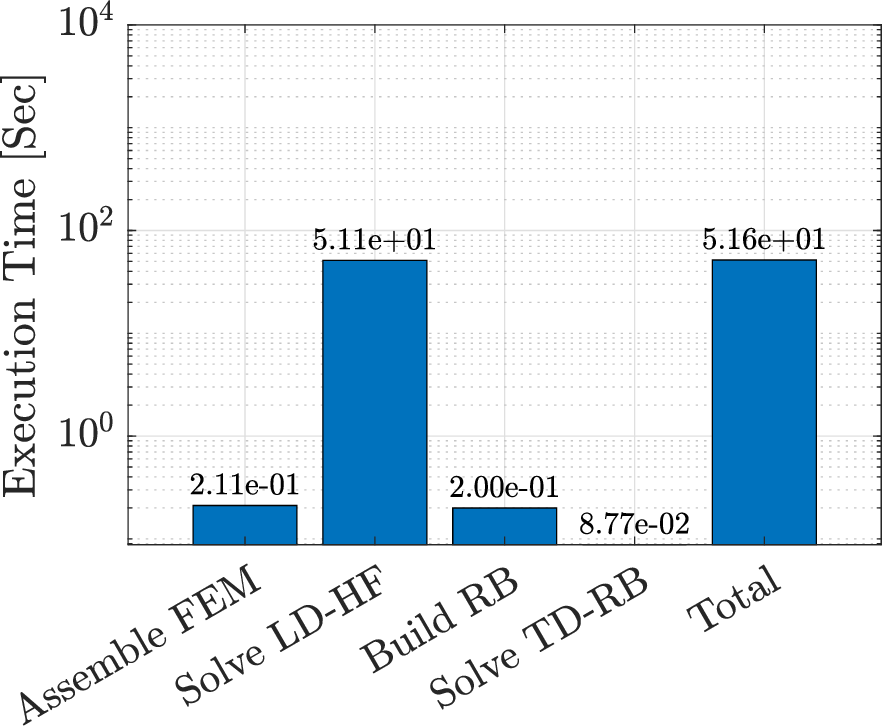}
		\subcaption{\rev{LT-MOR} with $M=450$ and $R=14$.}
		\label{fig:plot_speed_U1_RB_450}
	\end{subfigure}
	\caption{\label{fig:speed_square} 
	Execution times for the computation of the high-fidelity
	solution with initial condition $u^{(1)}_{0}$ and the 
	reduced one for the parabolic problem in the unit
	square as described in Section~\ref{sec:results_square}.
	Figure~\ref{fig:plot_speed_U1_HF} presents the execution time
	for the computation of the high-fidelity solution split into the two main contributions: (1)
	Assembling the FE discretization ({\sf Assemble FEM}), and (2)
	solving the high-fidelity model ({\sf Solve TD-HF}).
	Figure~\ref{fig:plot_speed_U1_RB_50} through Figure~\ref{fig:plot_speed_U1_RB_450}
	show the execution time of the \rev{LT-MOR} method for $M \in \{50,150,250,350,450\}$.
	In each of these plots, the total time is broken down into four main contributions:
	(1) Assembling the FE discretization ({\sf Assemble FEM}), (2) computing the snapshots 
	or high-fidelity solutions in the Laplace domain ({\sf LD-HF}),
	(3) \rev{building} the reduced basis ({\sf Build RB}),
	and (4) \rev{computing} the \rev{reduced} solution in the time domain ({\sf Solve TD-RB}). 
	}
\end{figure}

%%%%%%%%%%%%%%%%%%%%%%%%%%%%%%%%%%%%%%%
%%%%%%%%%%%%% Square: Modes %%%%%%%%%%%%%%%%
%%%%%%%%%%%%%%%%%%%%%%%%%%%%%%%%%%%%%%%
\subsubsection{Visualization of the Reduced Basis Space}
Let $\mathbb{V}^{\text{(rb)}}_R$ be as in \eqref{eq:fPOD} for some $R\in\IN$.
Then, $\left\{\varphi^{\textrm{(rb)}}_1,\cdots,\varphi^{\textrm{(rb)}}_R\right\}$
constitutes an orthonormal basis of $\IV_R^{(\textrm{rb})}$ with $\varphi^{\textrm{(rb)}}_j$ as
in \eqref{eq:def_basis_rb}. Indeed, provided that $\bm{\Phi}^{\textrm{(rb)}}_R$
solution to \eqref{eq:fPOD} has been computed, one can plot the corresponding representation in $\mathbb{V}_h$
by using the expression stated in \eqref{eq:def_basis_rb} (though originally introduced for time-dependent
approach for MOR, it is also valid for the \rev{LT-MOR} method).
In Figure~\ref{fig:reduced_basis_1} and Figure~\ref{fig:reduced_basis_2},
we visualize the basis $\left\{\varphi^{\textrm{(rb)}}_1,\cdots,\varphi^{\textrm{(rb)}}_R\right\}$
in the space $\mathbb{V}_h$ for the initial conditions \eqref{eq:ic_1} and \eqref{eq:ic_2}, respectively.
\begin{figure}[!ht]
	\centering
	\begin{subfigure}{0.49\linewidth}
		\includegraphics[width=\textwidth]
		{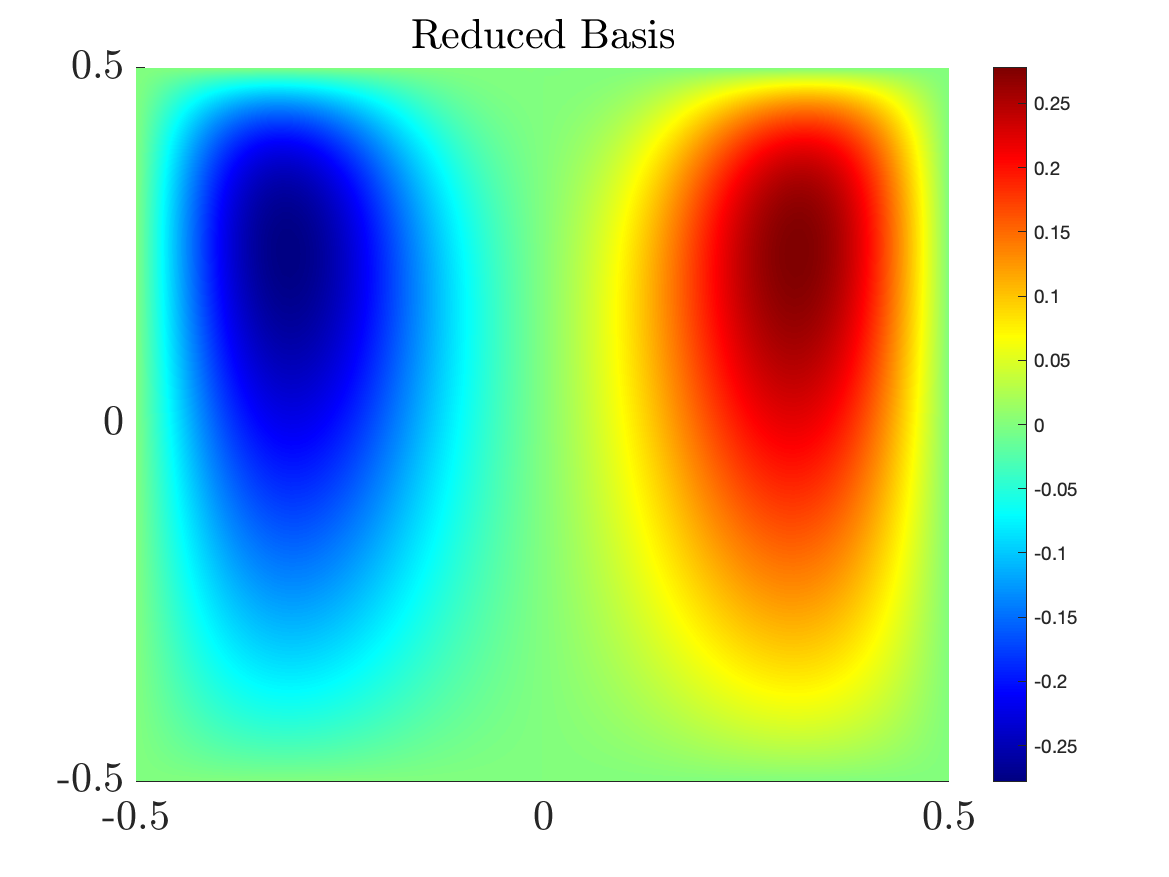}
		\subcaption{$\varphi^{\textrm{(rb)}}_1$}
		\label{fig:plot_times_speed_omega_x_10_omega_25_nu_2_M_45}
	\end{subfigure}
	\begin{subfigure}{0.49\linewidth}
		\includegraphics[width=\textwidth]
		{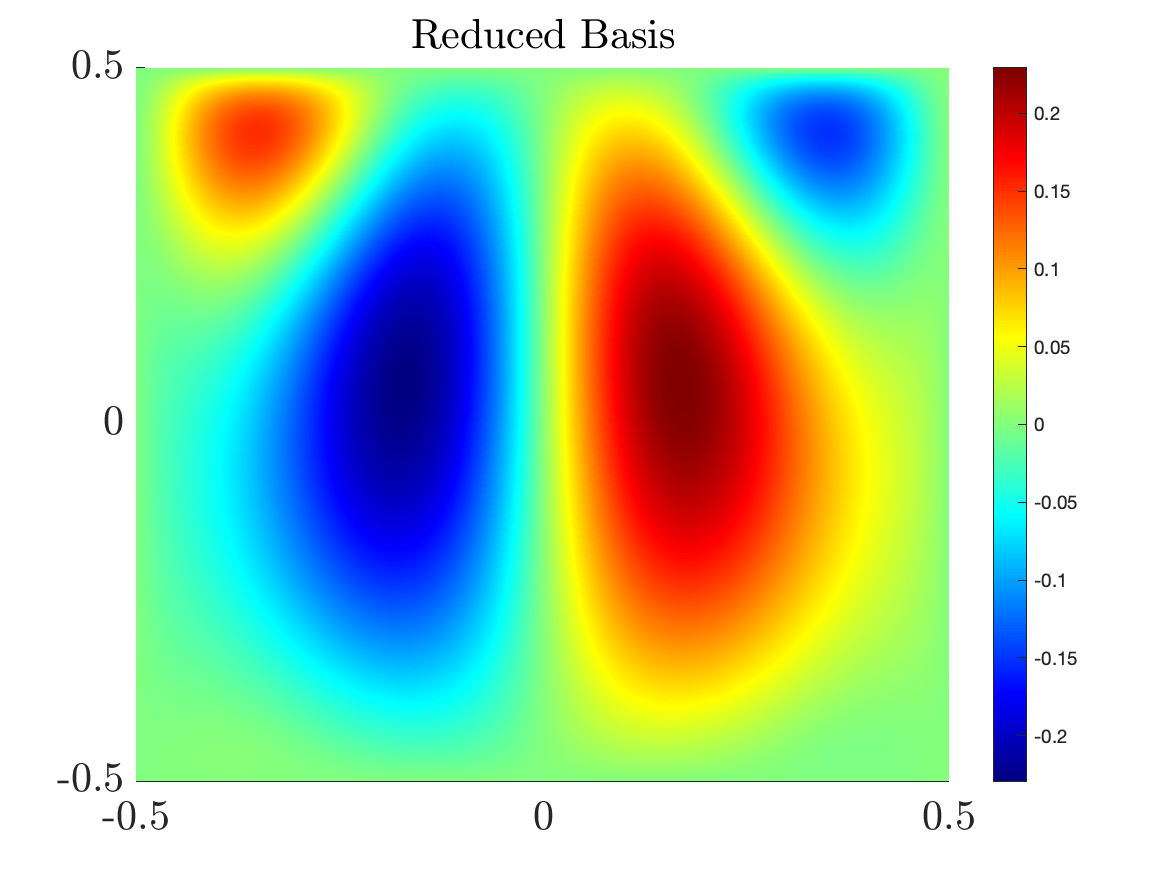}
		\subcaption{$\varphi^{\textrm{(rb)}}_2$}
		\label{fig:plot_times_speed_omega_x_10_omega_25_nu_2_M_125}
	\end{subfigure}
	\centering
	\begin{subfigure}{0.49\linewidth}
		\includegraphics[width=\textwidth]
		{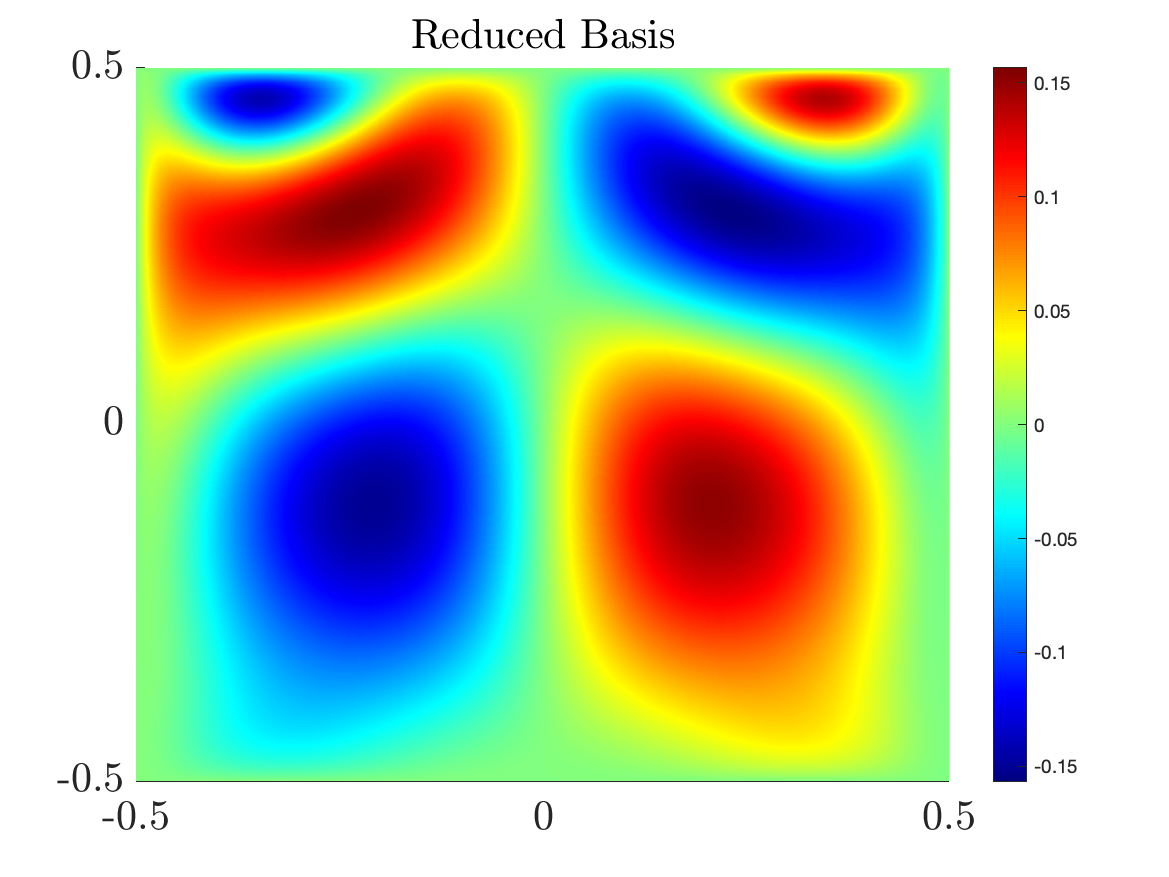}
		\subcaption{$\varphi^{\textrm{(rb)}}_3$}
		\label{fig:plot_times_speed_omega_x_10_omega_25_nu_2_M_45}
	\end{subfigure}
	\begin{subfigure}{0.49\linewidth}
		\includegraphics[width=\textwidth]
		{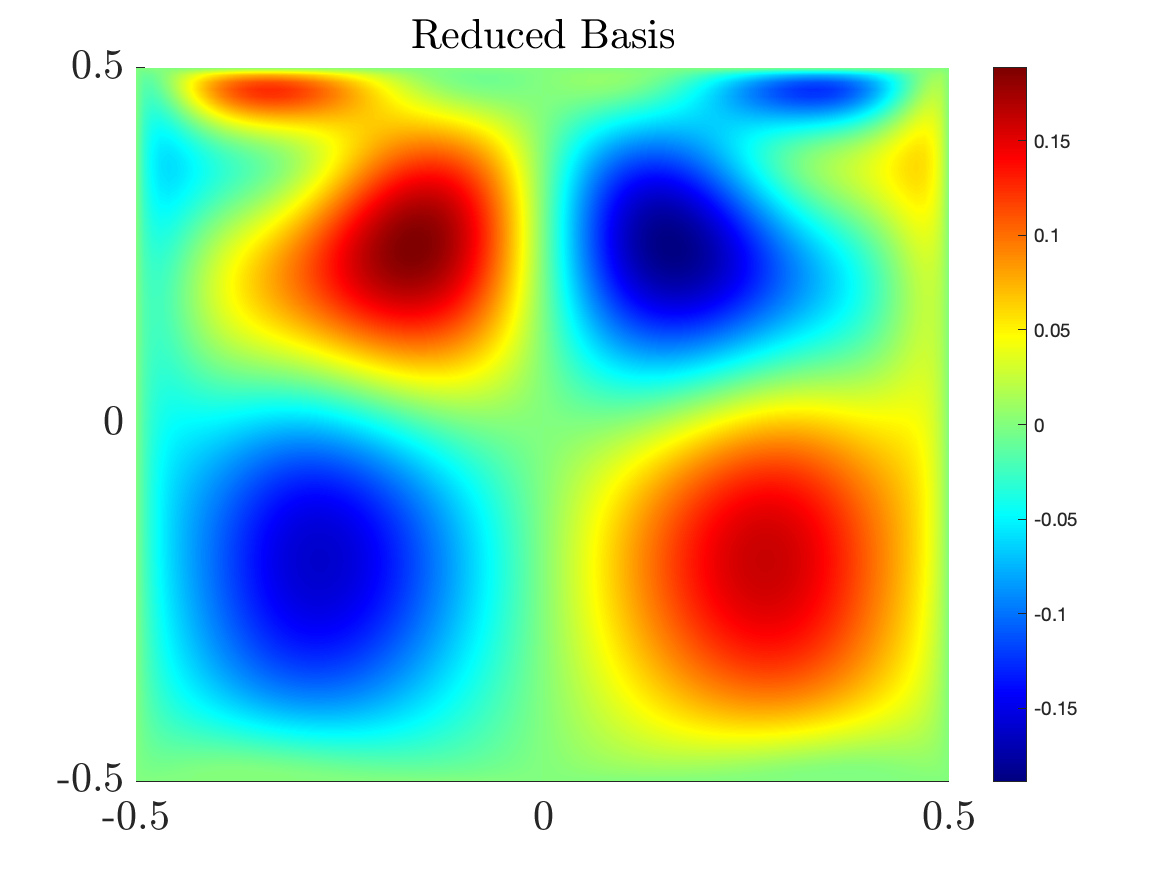}
		\subcaption{$\varphi^{\textrm{(rb)}}_4$}
		\label{fig:plot_times_speed_omega_x_10_omega_25_nu_2_M_125}
	\end{subfigure}
	\centering
	\begin{subfigure}{0.49\linewidth}
		\includegraphics[width=\textwidth]
		{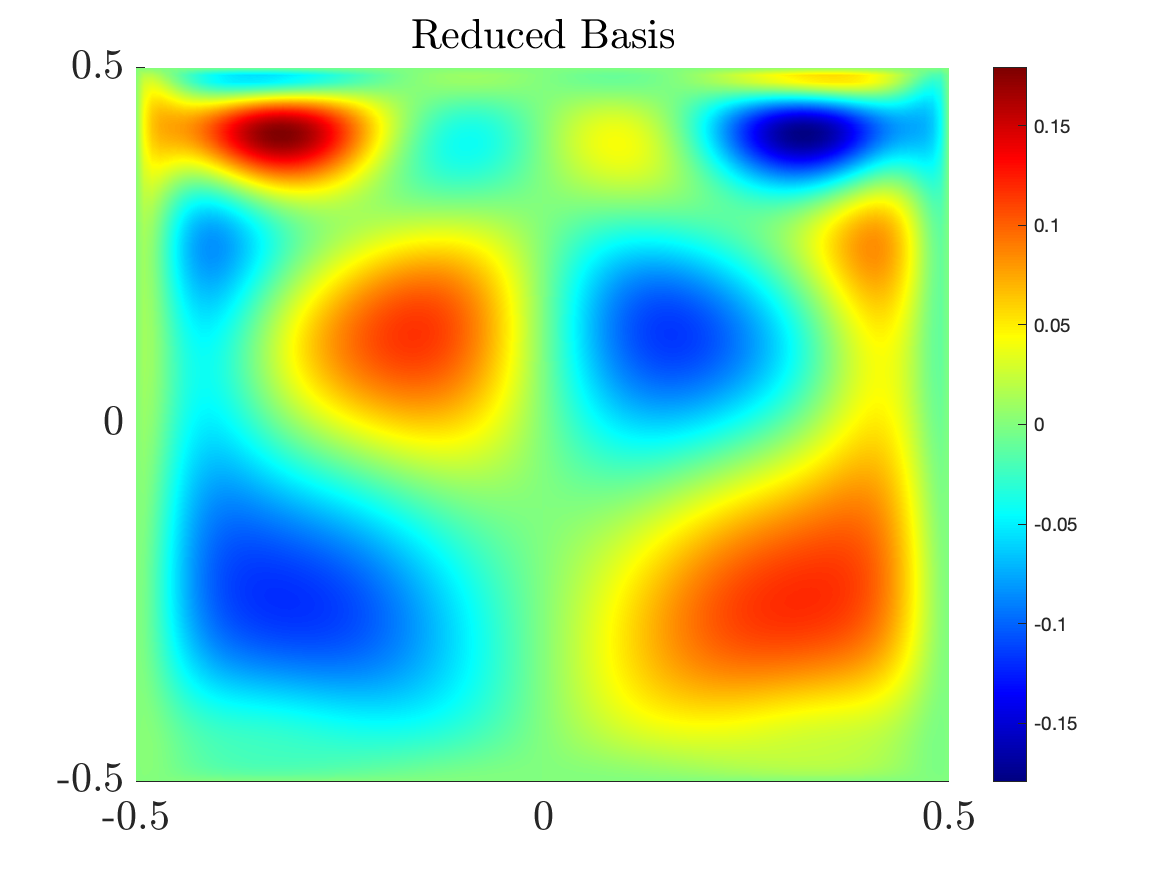}
		\subcaption{$\varphi^{\textrm{(rb)}}_5$}
		\label{fig:plot_times_speed_omega_x_10_omega_25_nu_2_M_45}
	\end{subfigure}
	\begin{subfigure}{0.49\linewidth}
		\includegraphics[width=\textwidth]
		{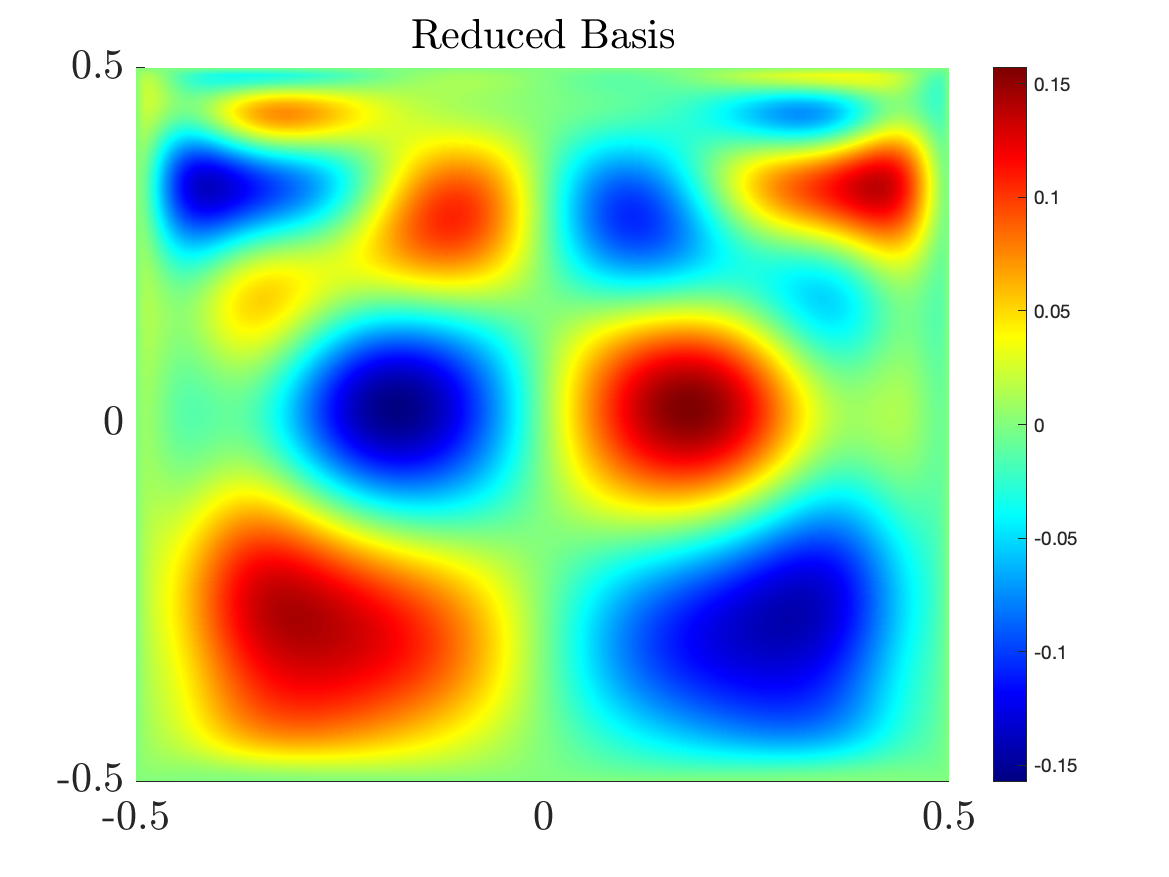}
		\subcaption{$\varphi^{\textrm{(rb)}}_6$}
		\label{fig:plot_times_speed_omega_x_10_omega_25_nu_2_M_125}
	\end{subfigure}
	\caption{\label{fig:reduced_basis_1} 
	Visualization of the first six elements of the reduced space \rev{$\mathbb{V}^{\text{(rb)}}_{R,M}$}
	for the initial condition $u^{(1)}_{0}$ in the square.
	}
\end{figure}

\begin{figure}[!ht]
	\centering
	\begin{subfigure}{0.49\linewidth}
		\includegraphics[width=\textwidth]
		{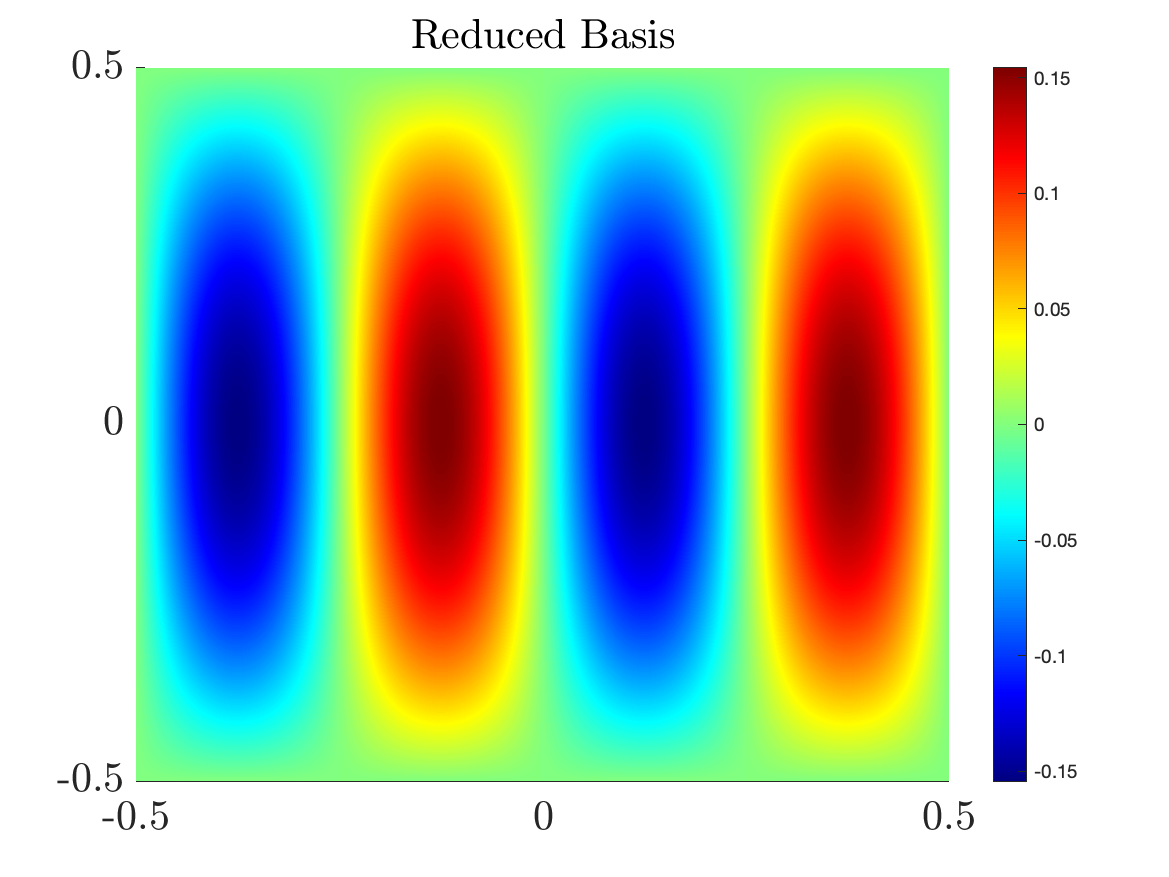}
		\subcaption{$\varphi^{\textrm{(rb)}}_1$}
		%\label{fig:plot_times_speed_omega_x_10_omega_25_nu_2_M_45}
	\end{subfigure}
	\begin{subfigure}{0.49\linewidth}
		\includegraphics[width=\textwidth]
		{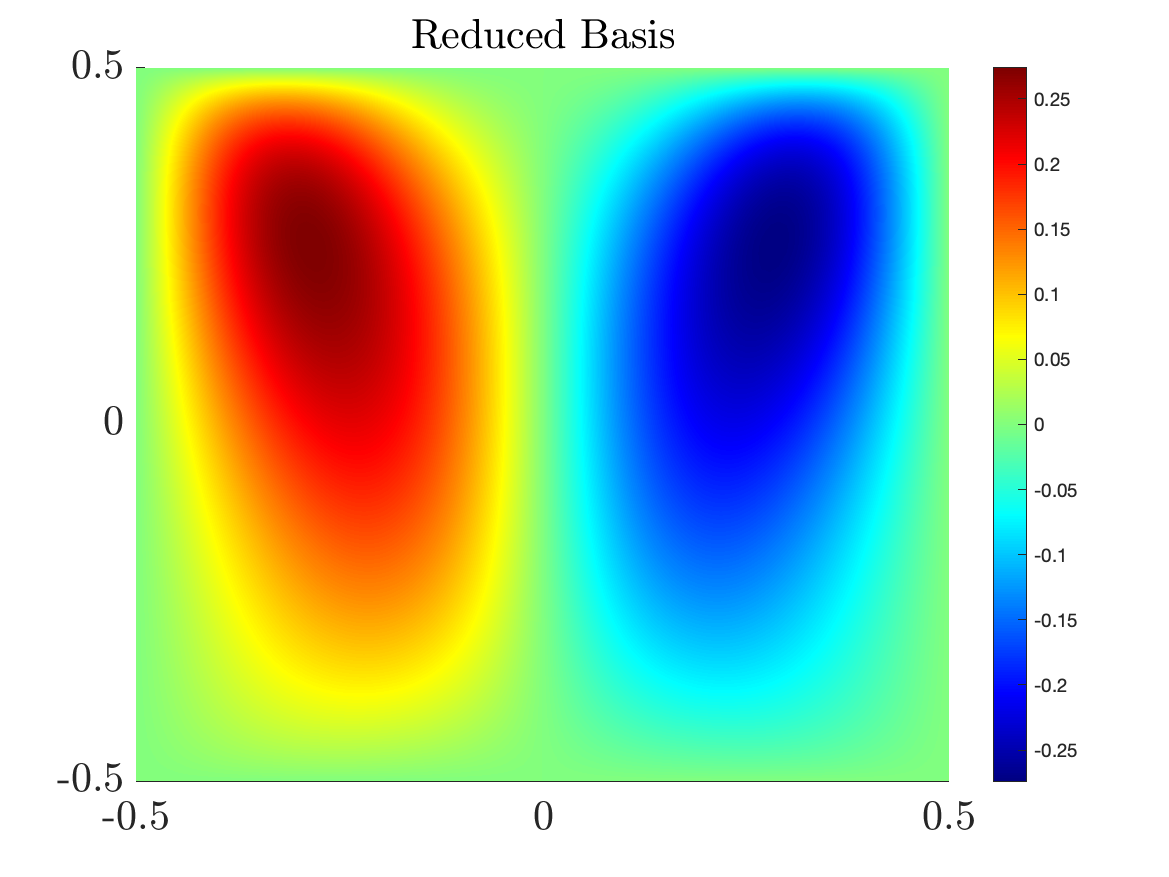}
		\subcaption{$\varphi^{\textrm{(rb)}}_2$}
		%\label{fig:plot_times_speed_omega_x_10_omega_25_nu_2_M_125}
	\end{subfigure}
	\centering
	\begin{subfigure}{0.49\linewidth}
		\includegraphics[width=\textwidth]
		{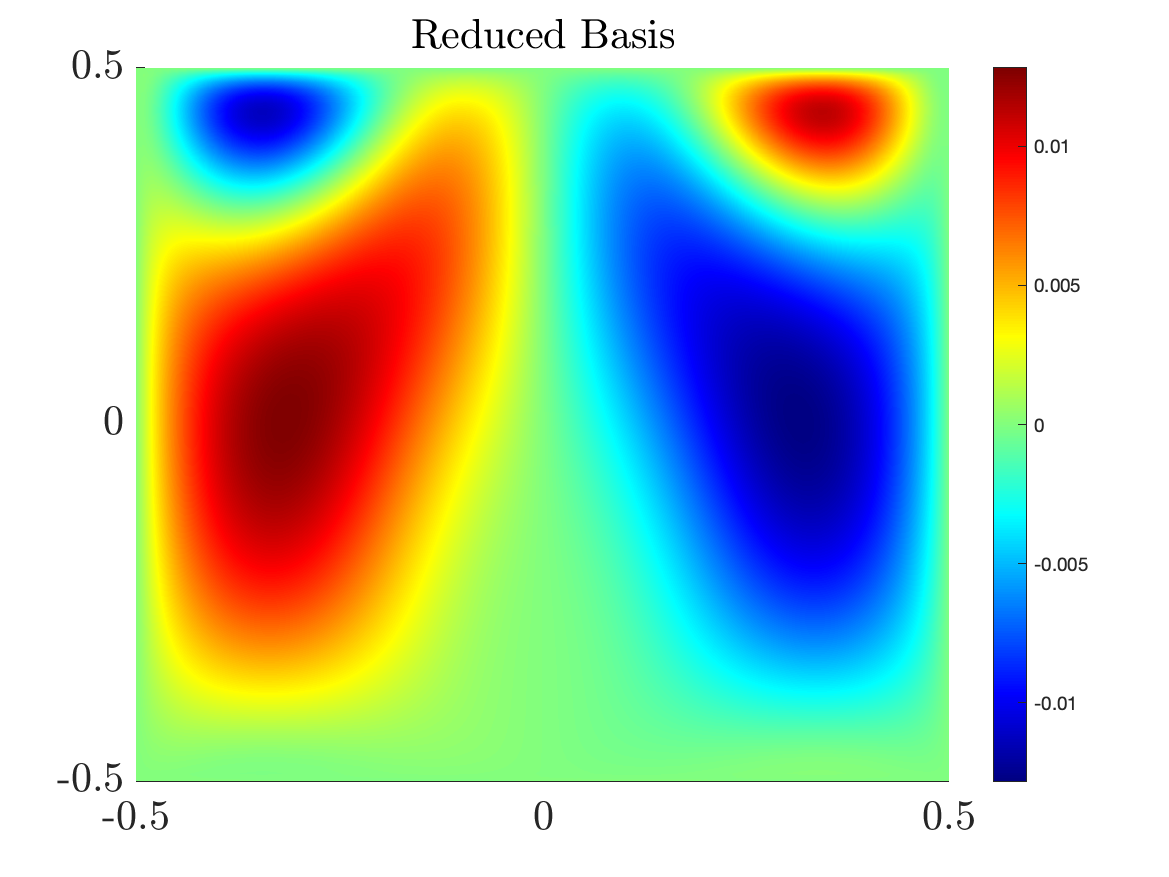}
		\subcaption{$\varphi^{\textrm{(rb)}}_3$}
		%\label{fig:plot_times_speed_omega_x_10_omega_25_nu_2_M_45}
	\end{subfigure}
	\begin{subfigure}{0.49\linewidth}
		\includegraphics[width=\textwidth]
		{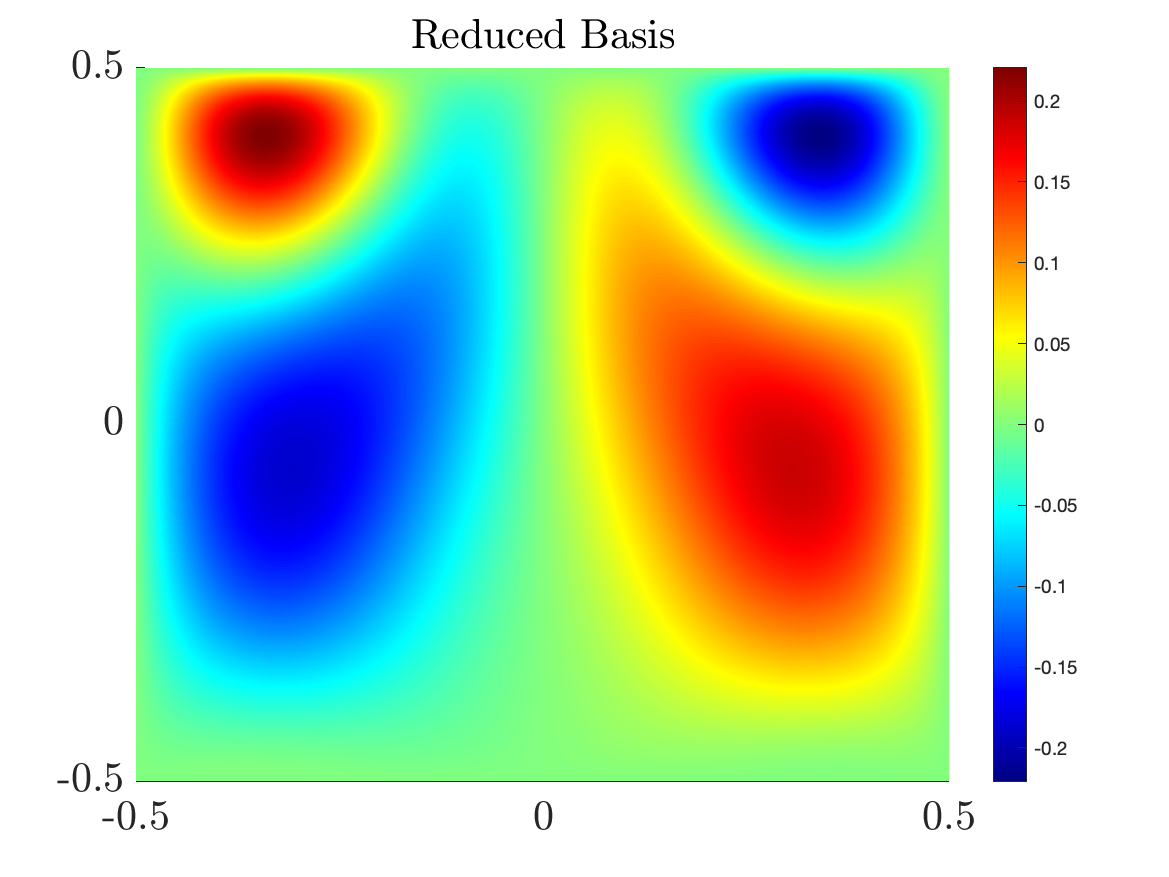}
		\subcaption{$\varphi^{\textrm{(rb)}}_4$}
		%\label{fig:plot_times_speed_omega_x_10_omega_25_nu_2_M_125}
	\end{subfigure}
	\centering
	\begin{subfigure}{0.49\linewidth}
		\includegraphics[width=\textwidth]
		{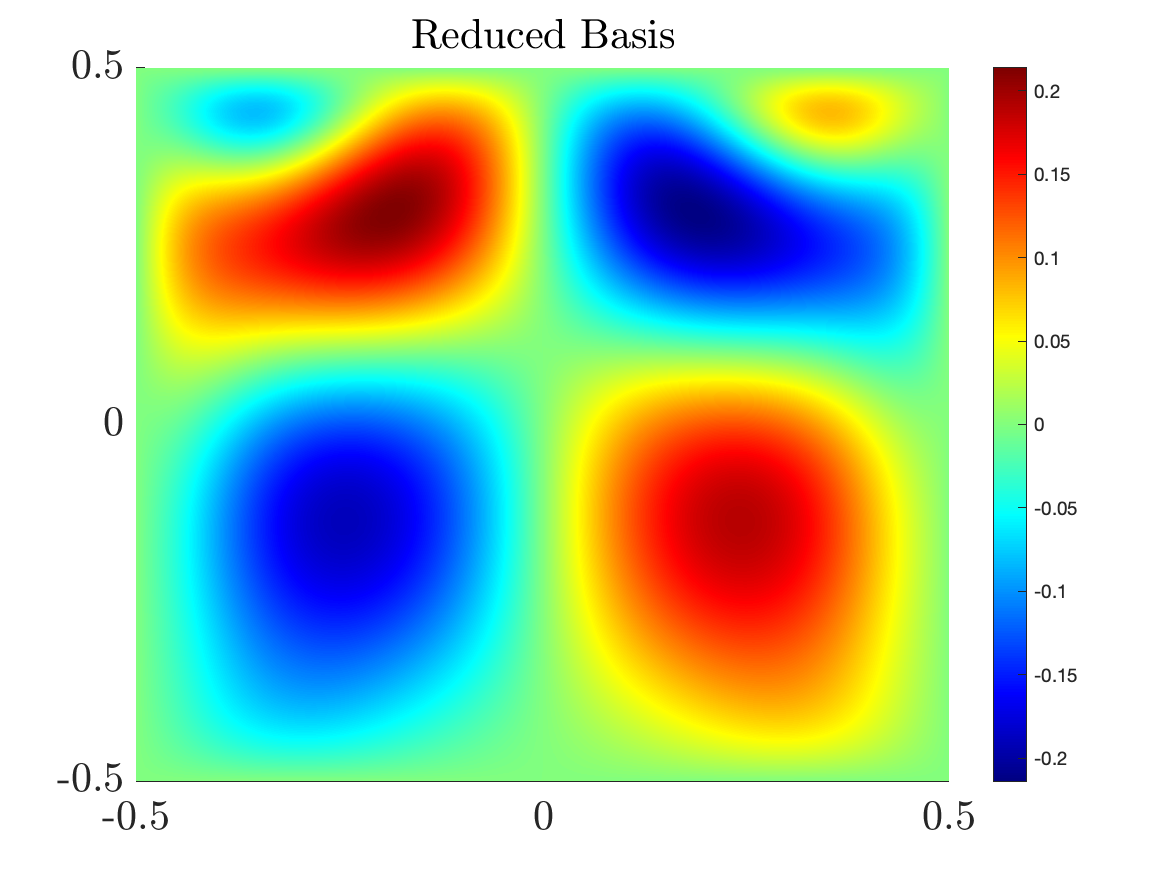}
		\subcaption{$\varphi^{\textrm{(rb)}}_5$}
		%\label{fig:plot_times_speed_omega_x_10_omega_25_nu_2_M_45}
	\end{subfigure}
	\begin{subfigure}{0.49\linewidth}
		\includegraphics[width=\textwidth]
		{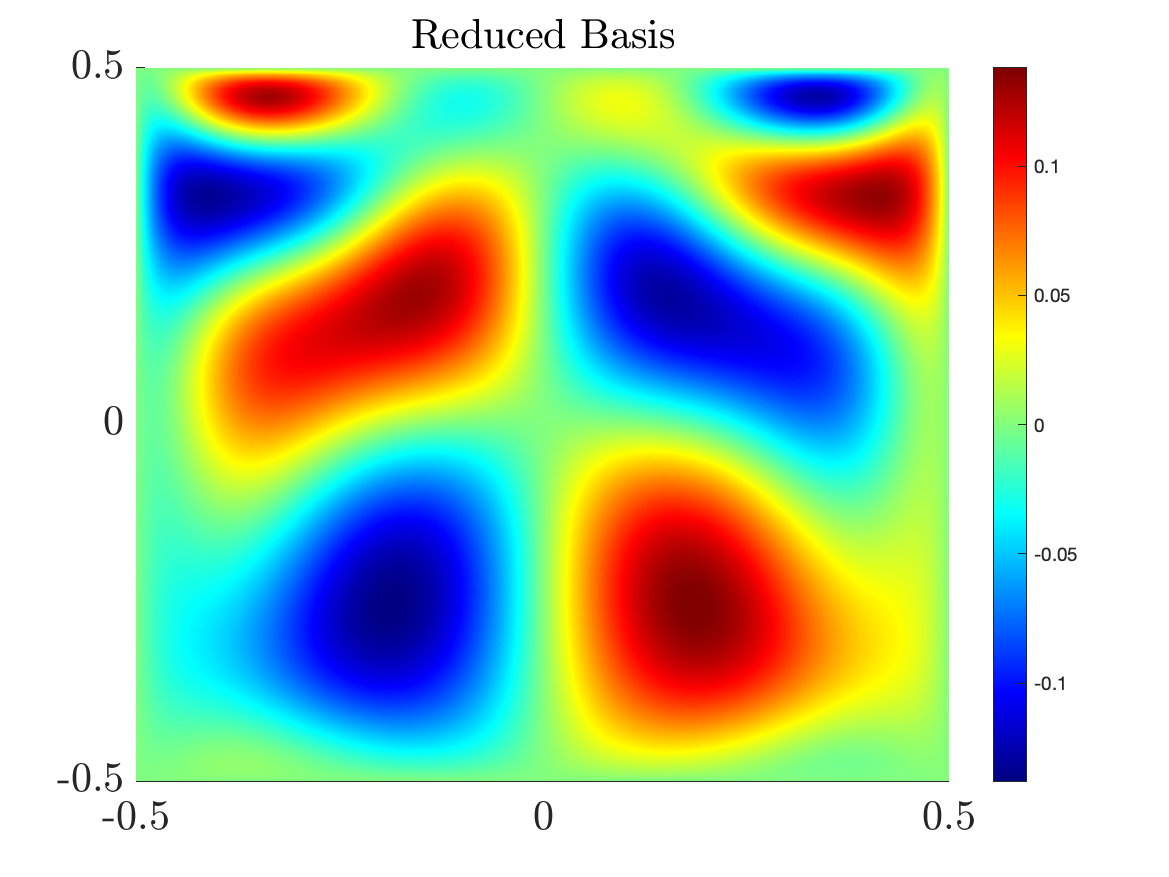}
		\subcaption{$\varphi^{\textrm{(rb)}}_6$}
		%\label{fig:plot_times_speed_omega_x_10_omega_25_nu_2_M_125}
	\end{subfigure}
	\caption{\label{fig:reduced_basis_2} 
	Visualization of the first six elements of the reduced space \rev{$\mathbb{V}^{\text{(rb)}}_{R,M}$}
	for the initial condition $u^{(2)}_{0}$ in the square.
	}
\end{figure}

%\begin{comment}

\subsection{Results in the Cube}
\label{sec:results_cube}
We compute the solution to the linear, second-order parabolic
problem in a cube with the following set-up.
\begin{itemize}
	\item[(i)] {\bf FE Discretization.}
	We consider a FE discretization using $\mathbb{P}^1$
	elements on a mesh $\mathcal{T}_h$ of $303918$ tetrahedrons, with a total number of degrees of freedom equal to $46656$,
	i.e. $\text{dim}\left(\mathcal{S}^{1,1}_0\left(\mathcal{T}_h\right)\right)=46656$
	and a mesh size $h = 4.68 \times 10^{-2}$.
	\item[(ii)] {\bf Construction of the Reduced Space.}
	The space $\mathbb{V}^{\text{(rb)}}_R \subset \mathbb{V}_h$ is computed following the approach described in 
	Section~\ref{sec:FRB}, in particular following the considerations of \eqref{rmk:comp_Vrb}, together
	with the choice of snapshots described in \eqref{eq:snap_shot_selection}.
	As per the setting for the computation of the snapshots, we set $\alpha = 1$ and $\beta = 2$ 
	in \eqref{eq:snapshots} and consider $M \in \{50,150,250\}$. However,
	in view of the analysis of Section~\ref{sec:halving_snapshots} we only compute
	effectively $\frac{M}{2}$ samples.
	\item[(iii)] {\bf Hyper-parameters Configuration}.
	We consider the following configurations of hyper-parameters: $\vartheta_1 = \vartheta_2 = 1$,
	$\nu = 0.5$, $\lambda =5$, and $\omega =5$ in \eqref{eq:forcing_term}.	
	\item[(iv)] {\bf Time-stepping Scheme.}
	For both the computation of the high-fidelity solution and the reduced basis solution, 
	i.e., the numerical approximation of \eqref{prbm:semi_discrete_problem} and Problem~\ref{pr:sdpr},
	we consider the backward Euler time-stepping scheme.
%	, which corresponds to the
%	$\theta$-method as in Section~\ref{sec:analysis_theta_method} with $\theta =1$.
	Again, we set the final time to $T=10$, and the total number of time steps to $N_t = 10^4$.
\end{itemize}
\begin{comment}
In the following, we compute the solution to the linear, second order parabolic
problem in a cuve, i.e.~$d =3$.
For each of the settings to be described, we consider a discretization using $\mathbb{P}^1$ FE
on a mesh consisting of $303918$ tetrahedrons, with a total number of degrees of freedom equal to $46656$,
and a mesh size $h = 4.68 \times 10^{-2}$.

Again, for both the computation of the high-fidelity solution and the reduced basis solution, 
we consider the backward Euler time stepping scheme.
%, i.e.~the $\theta$-method as in Section~\ref{sec:analysis_theta_method}
%with $\theta =1$.
We set the final time to $T=10$, and the total number of time steps to $N_t = 10^4$.
As per the setting for the computation of the snapshots, we set $\alpha = 2$ and $\beta =3$ 
in eq.~(\ref{eq:snapshots}) and consider $M \in \{50,150,250,350,450\}$. However,
in view of the analysis of 
We consider the following configurations of hyper-parameters: $\vartheta_1 = \vartheta_2 = 1$,
$\nu = 0.5$, $\lambda =5$, and $\omega =5$ in eq.~(\ref{eq:forcing_term}), and we set $\zeta_1 = 4$
and $\zeta_2 =\zeta_3=1$ in \eqref{eq:ic_2}.
\end{comment}
%%%%%%%%%%%%%%%%%%%%%%%%%%%%%%%%%%%%%%%%%%%%%
%%%%%%%%%%%%% Square: Decay of the Singular Values %%%%%%%%%%%%%
%%%%%%%%%%%%%%%%%%%%%%%%%%%%%%%%%%%%%%%%%%%%%
\subsubsection{Singular Values of the Snapshot Matrix}
Figure~\ref{fig:svd_cube} portrays the decay of the singular values of the snapshot matrix for 
the initial condition $u^{(1)}_{0}$ defined in \eqref{eq:ic_1}, and the set-up described in
Section~\ref{sec:setting} and Section~\ref{sec:results_square}, where
the considerations of the latter section are particular for the cube.

\begin{figure}[!ht]
	\centering
	\begin{subfigure}{0.6\linewidth}
		\includegraphics[width=\textwidth]
		{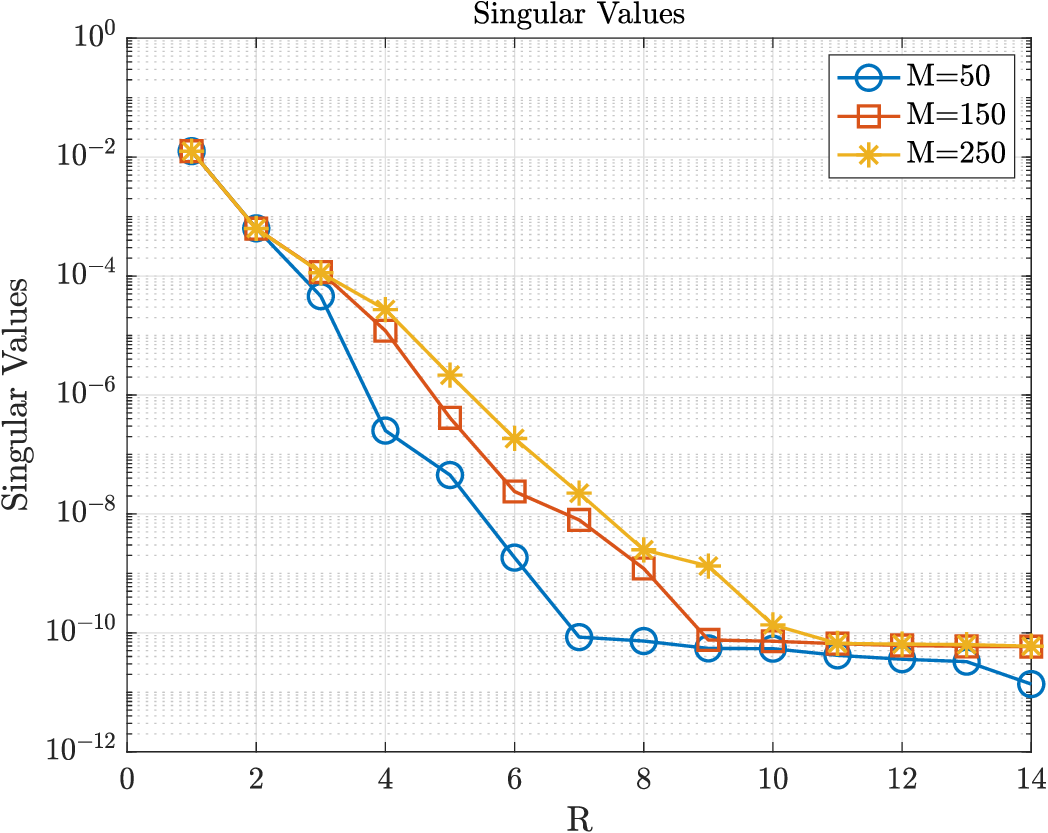}
		\subcaption{\label{fig:setting_1_cube_decay_sing_values} Initial Condition $u^{(1)}_{0}$}
	\end{subfigure}
	\caption{\label{fig:svd_cube} 
	Singular values of the snapshot matrix for the setting considered in Section~\ref{sec:results_cube}
	for the cube in three dimensions, i.e., $\Omega = (-\frac{1}{2},\frac{1}{2})^3 \subset \mathbb{R}^3$,
	and for the initial conditions $u^{(1)}_{0}$ defined in \eqref{eq:ic_1}.
	}
\end{figure}

%%%%%%%%%%%%%%%%%%%%%%%%%%%%%%%%%%%%%%%
%%%%%%%%%%%%% Square: L2 and H1 Error %%%%%%%%%%%%%
%%%%%%%%%%%%%%%%%%%%%%%%%%%%%%%%%%%%%%%
\subsubsection{Convergence of the Relative Error}
Figure~\ref{fig:error_rel_cube} \rev{portrays} the
convergence of the relative error as defined in Section~\ref{sec:example_I}
between the high-fidelity solution and the reduced solution as the dimension of the reduced space
increases for the initial conditions $u^{(1)}_{0}$.
More precisely, Figure~\ref{fig:plot_relative_error_cube_L2_cube} and Figure~\ref{fig:plot_relative_error_cube_H1_cube}
present the aforementioned error measure with $X = L^2(\Omega)$ and $X=H^1_0(\Omega)$
in Section~\ref{sec:example_I}, respectively, and for $M \in \{50,150,250,350,450\}$.
Again, we remark that under the considerations presented in Section~\ref{sec:halving_snapshots},
effectively only half, i.e. $\frac{M}{2}$, the number of snapshots are required. The same
holds for Figure~\ref{fig:plot_relative_error_U2_L2} and Figure~\ref{fig:plot_relative_error_U2_H1}.

\begin{figure}[!ht]
	\centering
	\begin{subfigure}{0.49\linewidth}
		\includegraphics[width=\textwidth]
		{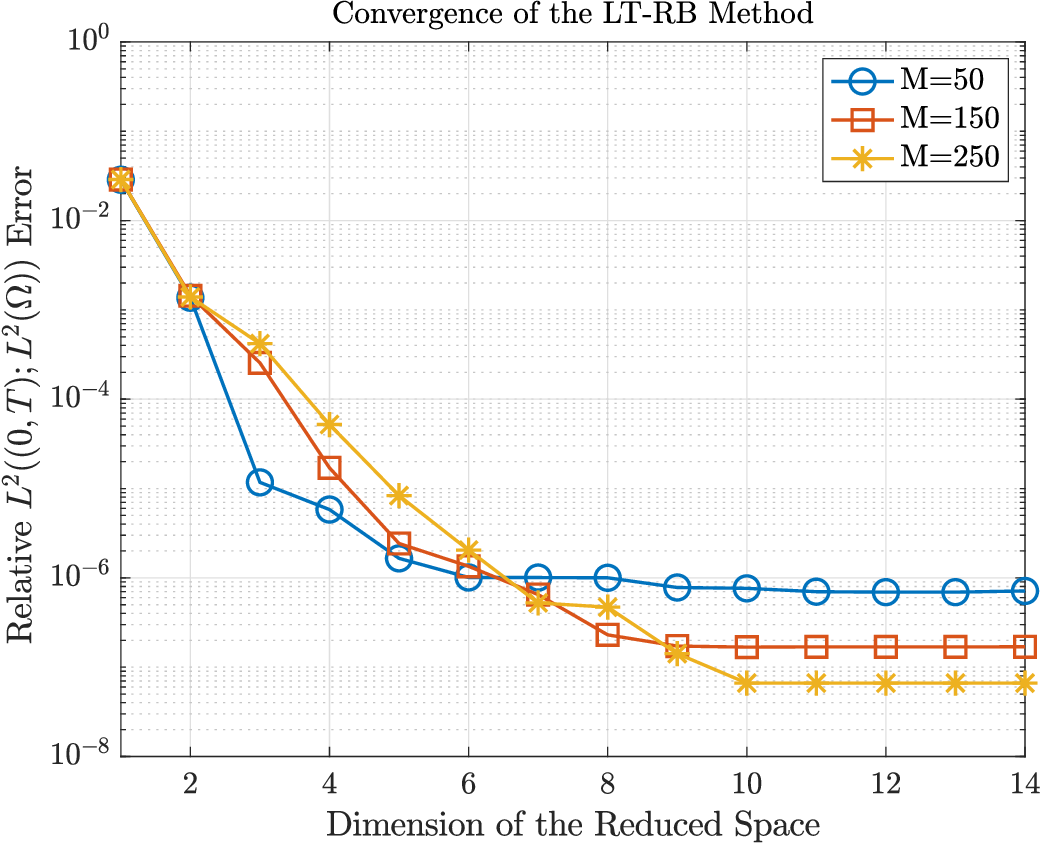}
		\subcaption{$\text{Rel\_Error}^{\normalfont\text{(rb)}}_R(\mathfrak{J};L^2(\Omega))$.}
		\label{fig:plot_relative_error_cube_L2_cube}
	\end{subfigure}
	\begin{subfigure}{0.49\linewidth}
		\includegraphics[width=\textwidth]
		{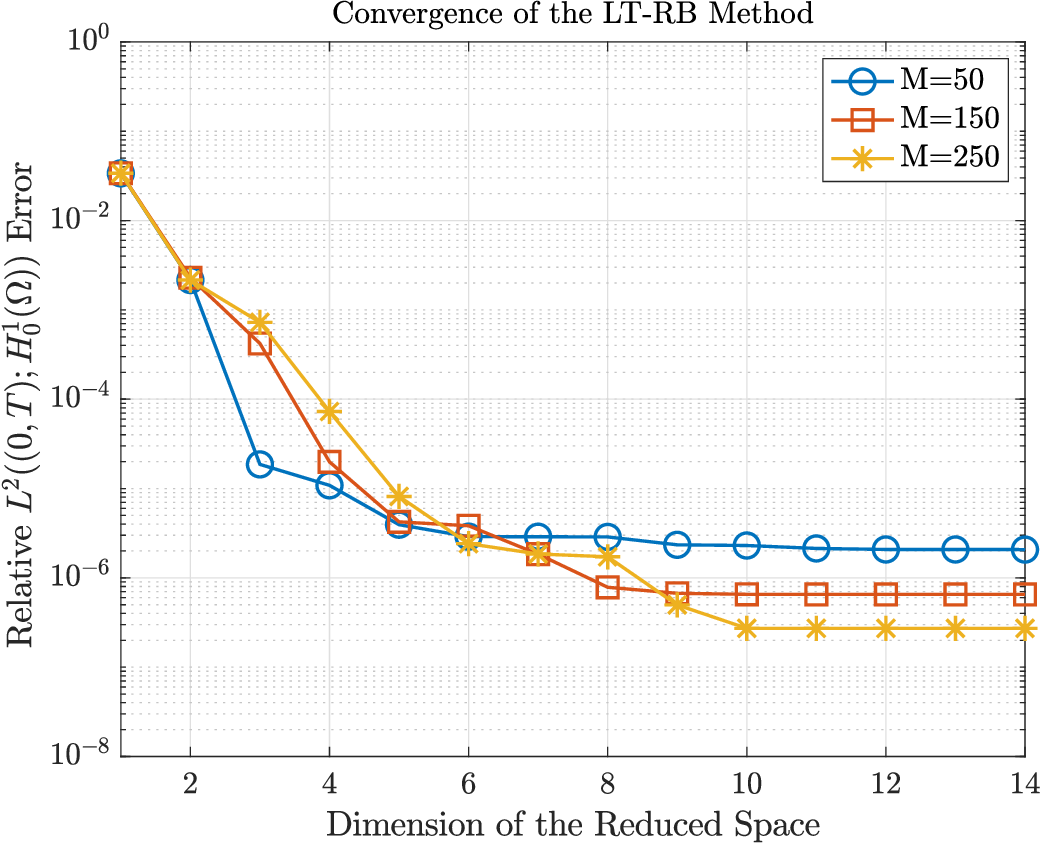}
		\subcaption{$\text{Rel\_Error}^{\normalfont\text{(rb)}}_R(\mathfrak{J};L^2(\Omega))$.}
		\label{fig:plot_relative_error_cube_H1_cube}
	\end{subfigure}
	\caption{\label{fig:error_rel_cube} 
	Relative error as defined in Section~\ref{sec:example_I}
	for the cube in three dimensions, i.e., $\Omega = (-\frac{1}{2},\frac{1}{2})^3 \subset \mathbb{R}^3$,
	and for the initial conditions $u^{(1)}_{0}$ defined in \eqref{eq:ic_1}.
	}
\end{figure}

%%%%%%%%%%%%%%%%%%%%%%%%%%%%%%%%%%%%%%%
%%%%%%%%%%%%% Square: Speed Up %%%%%%%%%%%%%%%%
%%%%%%%%%%%%%%%%%%%%%%%%%%%%%%%%%%%%%%%

\subsubsection{Speed-up of the \rev{LT-MOR} Method}
Figure~\ref{fig:speed_up_cube} presents the execution times for the \rev{LT-MOR} and 
for the computation of the high-fidelity solution with initial condition $u^{(1)}_{0}$. More precisely,
Figure~\ref{fig:plot_times_speed_omega_x_5_omega_5_nu_5_HF} displays the execution time
for the computation of the high-fidelity solution split into two main contributions: (1)
{\sf Assemble FEM}, which consists in the time required to set up the FE linear system of 
equations, and (2) {\sf Solve TD-HF}, which corresponds to the total time required to solve
the high-fidelity model using the backward Euler scheme.

Figure~\ref{fig:plot_times_speed_omega_x_5_omega_5_nu_5_M_50} through Figure~\ref{fig:plot_times_speed_omega_x_5_omega_5_nu_5_M_250}
show the execution \rev{times} of the \rev{LT-MOR} method for $M \in \{50,150,250\}$.
In each of this plots, the total time is broken down into the following contributions:
(1) Assembling the FE discretization ({\sf Assemble FEM}), (2) computing the snapshots 
or high-fidelity solutions in the Laplace domain ({\sf LD-HF}), (3) \rev{building} the reduced basis ({\sf Build RB}),
and (4) \rev{computing} the \rev{reduced} solution in the time domain ({\sf Solve TD-RB}).

\begin{figure}[!ht]
	\centering
	\begin{subfigure}{0.49\linewidth}
		\includegraphics[width=\textwidth]
		{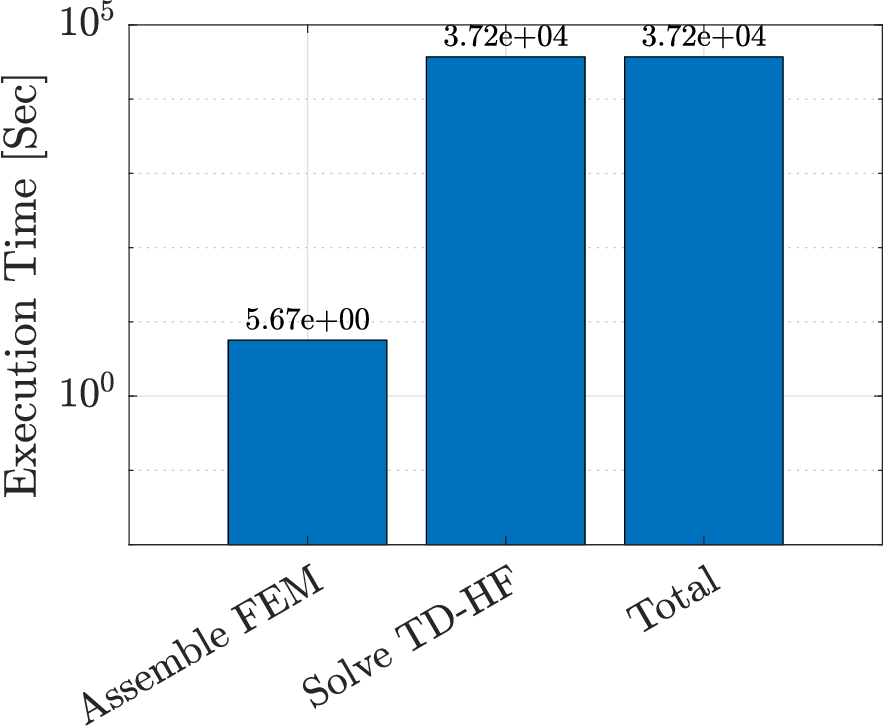}
		\subcaption{High-fidelity Solution.}
		\label{fig:plot_times_speed_omega_x_5_omega_5_nu_5_HF}
	\end{subfigure}
	\begin{subfigure}{0.49\linewidth}
		\includegraphics[width=\textwidth]
		{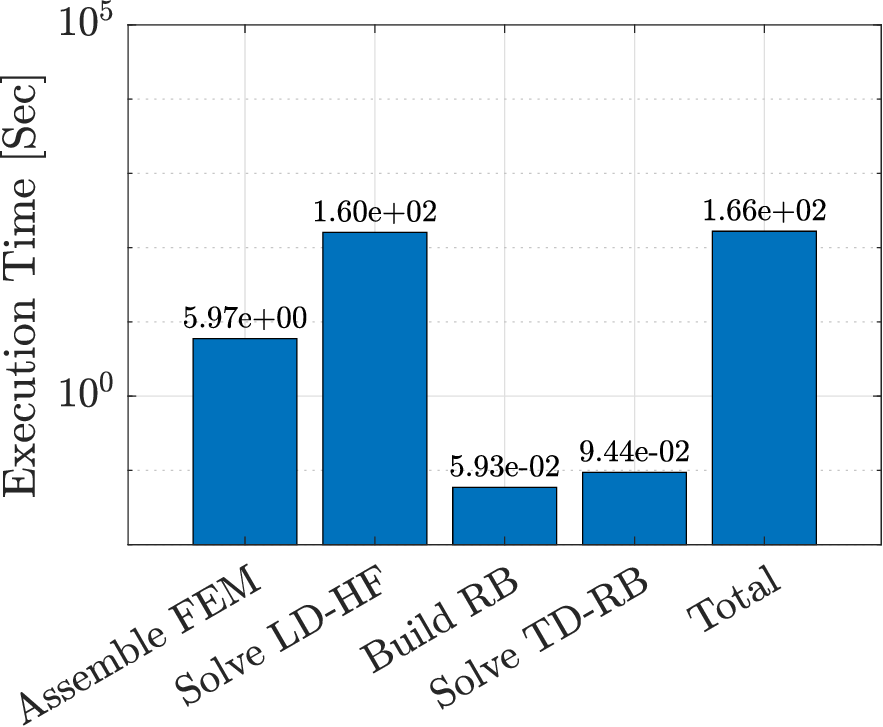}
		\subcaption{\rev{LT-MOR} with $M=50$ and $R=14$.}
		\label{fig:plot_times_speed_omega_x_5_omega_5_nu_5_M_50}
	\end{subfigure}
	\centering
	\begin{subfigure}{0.49\linewidth}
		\includegraphics[width=\textwidth]
		{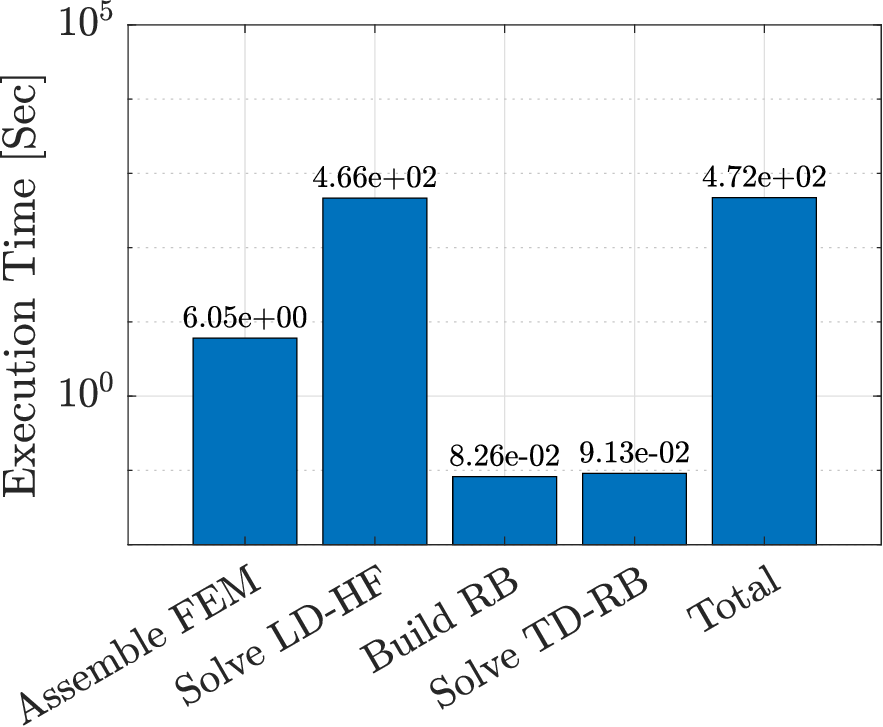}
		\subcaption{\rev{LT-MOR} with $M=150$ and $R=14$.}
		\label{fig:plot_times_speed_omega_x_5_omega_5_nu_5_M_150}
	\end{subfigure}
	\begin{subfigure}{0.49\linewidth}
		\includegraphics[width=\textwidth]
		{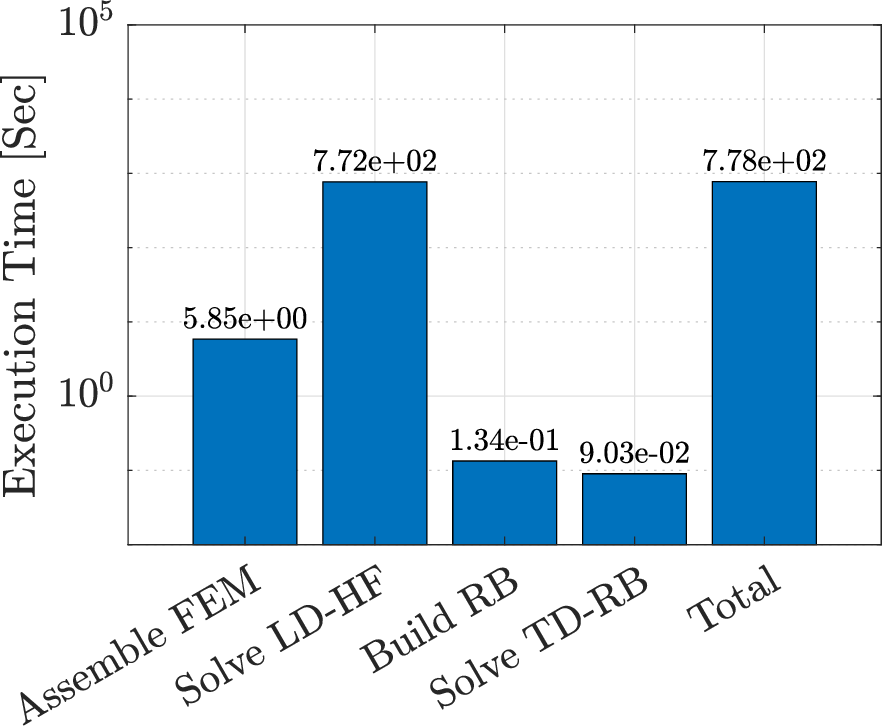}
		\subcaption{\rev{LT-MOR} with $M=250$ and $R=14$.}
		\label{fig:plot_times_speed_omega_x_5_omega_5_nu_5_M_250}
	\end{subfigure}
	\caption{\label{fig:speed_up_cube} 
	Execution times for the computation of the high-fidelity solution and the 
	reduced one for the parabolic problem in the unit cube as described in Section~\ref{sec:results_cube}.
	Figure~\ref{fig:plot_times_speed_omega_x_5_omega_5_nu_5_HF} presents the execution times
	for the computation of the high-fidelity solution split into the two main contributions: (1)
	Assembling the FE discretization ({\sf Assemble FEM}),
	and (2) solving the high-fidelity model ({\sf Solve TD-HF}).
	Figure~\ref{fig:plot_times_speed_omega_x_5_omega_5_nu_5_M_50} through Figure~\ref{fig:plot_times_speed_omega_x_5_omega_5_nu_5_M_250}
	show the execution times of the \rev{LT-MOR} method for $M \in \{50,150,250\}$.
	As in Figure~\ref{fig:speed_square}, in each of this plots,
	the total time is broken down into the four main contributions:
	(1) Assembling the FE discretization ({\sf Assemble FEM}), (2) computing the snapshots 
	or high-fidelity solutions in the Laplace domain ({\sf LD-HF}),
	(3) \rev{building} the reduced basis ({\sf Build RB}),
	and (4) \rev{computing} the reduce solution in the time domain ({\sf Solve TD-RB}). 
	}
\end{figure}

%\end{comment}

%\subsection{Example II: Linear Parabolic Problem in the Fichera Cube}
%We consider the so-called Fichera cube of side equal to $1$ as in Figure~\ref{fig:fichera}.
%In Figure~\ref{fig:fichera_geo} we see a geometrical depiction of the Fichera cube, and in 
%Figure~\ref{fig:fichera_mesh} a mesh created 
%
%\begin{figure}[!ht]
%\begin{center}
%	\begin{subfigure}{0.45\linewidth}
%		\includegraphics[width=0.9\textwidth]
%		{figures/fichera/coin}
%		\subcaption{Fichera cube of size $1$, with a removed cube of size $0.5$.}
%		\label{fig:fichera_geo}
%	\end{subfigure}
%	\hspace{0.1cm}
%	\begin{subfigure}{0.45\linewidth}
%		\includegraphics[width=1\textwidth]
%		{figures/fichera/coin_msh}
%		\subcaption{Mesh of the Fichera cube created using {\tt Gmsh} \cite{geuzaine2009gmsh}. 
%		The mesh consists of \fh{NN} tetrahedral elements.}
%		\label{fig:fichera_mesh}
%	\end{subfigure}
%	\caption{\label{fig:fichera} 
%	Relative 
%	}
%\end{center}
%\end{figure}
%
%
%\subsection{Example III: Linear Parabolic in a Mechanical Component}
%We consider a more challenging geometry. 

\subsection{Discussion}
In the view of the results presented in Section~\ref{sec:results_square} and Section~\ref{sec:results_cube},
we proceed to comment on the performance of the \rev{LT-MOR} method regarding the three aspects 
presented at the beginning of Section~\ref{sec:numerical_results}. 

\begin{itemize}
	\item
	{\bf Accuracy with respect to the high-fidelity solution.}
	As portrayed in Figure~\ref{fig:error_rel_U1} and Figure~\ref{fig:error_rel_U2} for the setting 
	described in Section~\ref{sec:results_square}, and in Figure~\ref{fig:error_rel_cube} for the setting
	of Section~\ref{sec:results_cube}, we observe that the relative accuracy of the \rev{LT-MOR} with 
	respect to the high-fidelity solver rapidly improves as we increase the dimension of the 
	reduced space. Indeed, with a reduced space of only dimension $R=6$ we obtain
	a relative error of $10^{-6}$. The explanation of this behaviour is the exponential
	convergence result stated in Theorem~\ref{eq:convergence_lt_rb_method}
	together with Theorem~\ref{eq:convergence_lt_rb_method_full}. 
	\item
	{\bf Accuracy with respect to the number of snapshots in the offline phase.}
	As observed in Figure~\ref{fig:error_rel_U1},  Figure~\ref{fig:error_rel_U2}, and Figure~\ref{fig:error_rel_cube},
	as we increase the number of snapshots in the offline phase, the relative error with respect to the
	high-fidelity solution remains very similar when the dimension of the reduced space ranges between
	$R=1$ and $R=6$. However, starting at $R = 6$, for each curve in  Figure~\ref{fig:error_rel_U1},  Figure~\ref{fig:error_rel_U2}, and Figure~\ref{fig:error_rel_cube},
	the relative error tapers off at different levels, and the exact value of these levels is determined by the total number 
	of snapshots in the offline phase.
	The higher the number of snapshots the lower the level is at which this \emph{plateau}
	is attained. 
%	This is is accordance to the result of Corollary~\ref{cor:fully_discrete_error}, which states that to 
%	improve the accuracy of the reduced solution, not only one needs to increase the dimension of the reduced space, 
%	but also increase the number of snapshots.
	\item
	{\bf Speed-up.}
	The most important advantage of the \rev{LT-MOR} is its speed-up with respect to the high-fidelity solver
	for a fixed prescribed relative accuracy. As portrayed in Figure~\ref{fig:speed_square} and 
	Figure~\ref{fig:speed_up_cube}, the \rev{LT-MOR} is able to compute an approximation of the high-fidelity solution
	within a prescribed accuracy while incurring considerably less computational time.
	Figure~\ref{fig:plot_speed_U1_HF} and Figure~\ref{fig:plot_times_speed_omega_x_5_omega_5_nu_5_HF}
	portray the execution time required in the computation
	of the high-fidelity solutions for the square and the cube, respectively, with initial condition $u^{(1)}_0$ as in \eqref{eq:ic_1}.
	The bulk of the computational time is used to compute the high-fidelity solution (labeled {\sf Solve TD-HF} in
	Figure~\ref{fig:plot_speed_U1_HF} and Figure~\ref{fig:plot_times_speed_omega_x_5_omega_5_nu_5_HF}),
	while an inspection of execution times for the \rev{LT-MOR}, i.e.~Figure~\ref{fig:plot_speed_U1_RB_50} through Figure~\ref{fig:plot_speed_U1_RB_450}
	for the square and Figure~\ref{fig:plot_times_speed_omega_x_5_omega_5_nu_5_M_50} through
	Figure~\ref{fig:plot_times_speed_omega_x_5_omega_5_nu_5_M_250} for the cube, reveals that the bulk of the total
	execution time is used in the computation of the snapshots during the offline phase.
	The solution of the time evolution problem projected in the reduced space (contribution labeled as {\sf Solve TD-RB}) 
	necessitates a negligible amount of computational time for $R=14$ and $R=15$, which is the largest
	dimension of the reduced space used in the relative error computation.
	In the case of the two-dimensional computations in the square,
	we observe that for $M \in \{50,150,250,350,450\}$ the \rev{LT-MOR} method is $423.3$, $147.7$,
	$89.8$, $64.5$, and $49.8$ times faster than the full-order model, respectively, while maintaining a
	relative accuracy of $0.001\%$, at the very least.
	In the case of the cube for $M \in \{50,150,250\}$ the \rev{LT-MOR} method is $224.1$, $78.8$,
	$47.8$ times faster than the full-order model, respectively, while again maintaining a
	relative accuracy of $0.001\%$, at the very least. Observe that the comparison 
	has been made by fixing the time discretization.
	We remark that the computation of the snapshots in the offline phase
	has been executed serially. It is certainly possible to perform these computations in 
	a parallel fashion. This would increase the effective speed-up of the \rev{LT-MOR}
	method with respect to the full-order model.
\end{itemize}

\section{Concluding Remarks} \label{sec:Summary}
In this work, we have introduced a fast solver 
for \rev{the} numerical approximation of linear, second-order parabolic PDEs. 
The method introduced here relies on two existing mathematical
tools: The reduced basis method and the Laplace transform.
After applying the Laplace transform to the time-evolution problem, 
we obtain a parametric elliptic PDE, where the parameter corresponds to
the Laplace \rev{variable} itself. We construct a reduced basis for this particular 
problem by sampling on a carefully selected \rev{set of complex points in the Laplace
domain}. We argue that the basis constructed in the Laplace domain is, up to 
discretization in the Laplace \rev{variable}, optimal to solve the time-evolution. 
In an online phase, we project this problem onto the reduced basis and obtain
a time-evolution problem for the reduced coefficients. Numerical experiments validate our theoretical 
claims and pose the \rev{LT-MOR} method as a competing algorithm for the fast numerical approximation
of parabolic problems.

At the end of Section~\ref{sec:FRB} we posed important questions
arising from the description of the \rev{LT-MOR} method. 
Based on the work presented here, we would like to present the
answer, indicate where the technical details supporting the
corresponding answer may be found, and comment on issues that may be improved.

\begin{itemize}
	\item[{\sf \encircle{A1}}]
	\emph{Why is $\bm{\Phi}^{\normalfont\textrm{(rb)}}_R$ as in \eqref{eq:reduced_basis_Phi}
	a suitable reduced basis for Problem~\ref{pr:sdpr}?}
	The norm equivalence established in Theorem~(\ref{eq:laplace_transform_bijective})
	and the properties of Hardy spaces (Proposition~\ref{prop:properties_hardy}) imply that the construction of
	\rev{an} optimal reduced basis for the time-evolution problem can be \rev{performed} in the Laplace domain, up
	to a sampling in the Laplace domain, as discussed in Section~\ref{sec:laplace_hardy}.
	\item[{\sf \encircle{A2}}]
	\emph{How does the accuracy of the reduced solution improve as the
	dimension of the reduced space increases?}
	In Theorems \ref{eq:convergence_lt_rb_method} and \ref{eq:convergence_lt_rb_method_full},
	we prove that the error between the high-fidelity solution
	and the reduced solution decays exponentially fast.
	\item[{\sf \encircle{A3}}]
	\emph{How can one judiciously \emph{a priori} select the snapshots and the weights in \eqref{eq:tPOD_algebraic_frequency}?}
	In Section~\ref{eq:snap_shot_selection}, we have proposed a construction of
	discretization points and weights for the computations of the snapshots. The exact expression is given in \eqref{eq:snapshots}.
	However, we remark that, after wrapping the infinite integration contour in \eqref{eq:snap_shot_selection}
	around the unit circle, greedy strategies may be put in place as well.
	\item[{\sf \encircle{A4}}]
	\emph{How does the quality of the reduced basis improve as the number of snapshots increases?}
	As proved in Lemma~\ref{lmm:exponential_convergence_rule}, the error between the discrete \eqref{eq:discrete_error_measure}
	and its continuous counterpart \eqref{eq:exact_error} decays exponentially fast.
	In practice, this indicates that only a handful of discretization points and weights
	are needed to appropriately sample the Laplace domain,
	as observed in the numerical experiments presented in Section~\ref{sec:numerical_results}.
	\item[{\sf \encircle{A5}}]
	\emph{Why is only the real part of the snapshots required for the construction of $\bm{\Phi}^{\normalfont\textrm{(rb)}}_R$?}
	As stated in Section~\ref{sec:real_complex_basis}, the reduced basis obtained by using only the real part of 
	the snapshots is exactly the same as the one obtained by doing a complex SVD, and in both cases
	the basis is real-valued. This statement is rigorously proved in Lemma~\ref{lmm:real_basis}.
\end{itemize} 

So far we have restricted our work to the fast numerical approximation of linear,
second-order parabolic problems. However, many extensions to this approach are possible. 
First and foremost, one could use the \rev{LT-MOR} method to tackle the parametric linear, second-order 
parabolic problems. Indeed, one could construct a reduced basis by sampling in both the Laplace domain
and in the parameter space, as opposed to usual approaches that solve the high-fidelity problem in time
for each sample in the parameter space. This is the subject of ongoing research.
In addition, current work encompasses the extension of the \rev{LT-MOR}
method to the linear, second-order wave equation.

\appendix
%\section{Proof of Lemma~\ref{lmm:exponential_approximation}}
%\label{sec:proof_appendix}

\section{Proof of Theorem~\ref{eq:well_posedness}}
\label{sec:existence}
Let $\widehat{w}(s)$ be as in \eqref{eq:decomposition_h_u_s}
and set 
\begin{equation}
	\widehat{p}(s) \coloneqq  s \widehat{w}(s) = s \widehat{u}(s) -u_0 \in H^1_0(\Omega).
\end{equation}
According to Lemma~\ref{lmm:error_bound_L2} and \eqref{eq:equation_w_s} for $s\in \Pi_{\alpha}$
\begin{equation}
	\norm{\widehat{p}(s)}_{H^{-1}(\Omega)}
	\leq
	\left(
	1
	+
	\frac{
		\overline{c}_{\bm{A}}
	}{
		\underline{c}_{\bm{A}}
	}
	\right)
	\left(
		C_P(\Omega)
		\norm{
			\widehat{f}(s)
		}_{L^2(\Omega)}
		+
		\frac{\overline{c}_{\bm{A}}}{\snorm{s}}
		\norm{
			u_0
		}_{H^1_0(\Omega)}
	\right).
\end{equation}
Recalling the definition
of the $\mathscr{H}^2(\Pi_\alpha;H^{-1}(\Omega))$--norm
\begin{equation}\label{eq:calculation_hardy_2}
\begin{aligned}
	\norm{
		\widehat{p}
	}^2_{\mathscr{H}^2(\Pi_\alpha;H^{-1}(\Omega))}
	=
	&
	\norm{
		s\widehat{w}
	}^2_{\mathscr{H}^2(\Pi_\alpha;H^{-1}(\Omega))}
	%\hspace{-2cm}
	\\
	=
	&
	\int\limits_{-\infty}^{+\infty}
	\snorm{s}^2
	\norm{
		\widehat{w}
		(\alpha + \imath \tau)
	}^2_{H^{-1}(\Omega)}
	\,
	\frac{\text{d}\tau}{2\pi}
	\\
	%\hspace{-2cm}
	\leq
	&
	2
	\left(
	1
	+
	\frac{
		\overline{c}_{\bm{A}}
	}{
		\underline{c}_{\bm{A}}
	}
	\right)^2
	\left(
	C^2_P(\Omega)
	\int\limits_{-\infty}^{+\infty}
	\norm{
		\widehat{f}(\alpha + \imath \tau)
	}^2_{L^2(\Omega)}
	\,
	\frac{\text{d}\tau}{2\pi}
	\right.
	\\
	&
	\left.
	+
	\frac{\overline{c}^2_{\bm{A}}}{2\pi}
	\norm{
		u_0
	}^2_{H^1_0(\Omega)}
	\int\limits_{-\infty}^{+\infty}
	\frac{	\text{d}\tau}{\snorm{\alpha + \imath \tau}^2}.
	\right)
%	\\
%	&
%	+
%	\norm{
%		u_0
%	}^2_{H^1_0(\Omega)}
%	\int\limits_{-\infty}^{+\infty}
%	\frac{1}{\snorm{\alpha + \imath \tau}^2}
%	\,
%	\text{d}
%	\tau.
\end{aligned}
\end{equation}
Therefore,
\begin{equation}\label{eq:a_priori_2}
\begin{aligned}
	\norm{
		\widehat{p}
	}_{\mathscr{H}^2(\Pi_\alpha;H^{-1}(\Omega))}
	\leq
	&
	\sqrt{2}
	\left(
	1
	+
	\frac{
		\overline{c}_{\bm{A}}
	}{
		\underline{c}_{\bm{A}}
	}
	\right)
	\left(
		C_P(\Omega)
		\norm{
			\widehat{f}
		}_{\mathscr{H}^2(\Pi_\alpha;L^2(\Omega))}
	\right.
	\\
	&
	+
	\left.
		\overline{c}_{\bm{A}}
		\sqrt{\frac{1}{{2\alpha}}}
		\norm{
			u_0
		}_{H^1_0(\Omega)}
	\right).
\end{aligned}
\end{equation}
Then, according to Theorem~\ref{eq:laplace_transform_bijective}
\begin{equation}\label{eq:inv_Laplace_u}
	u(t) = \mathcal{L}^{-1}\left\{\widehat{u}\right\}(t)
	\in 
	L^2_\alpha\left(\mathbb{R}_+;H^1_0(\Omega)\right),
\end{equation}
and
\begin{equation}
	p(t) 
	= 
	\mathcal{L}^{-1}\left\{\widehat{p}\right\}(t)
	\in 
	L^2_\alpha\left(\mathbb{R}_+;H^{-1}(\Omega)\right).
\end{equation}
Recalling Proposition~\ref{eq:operational_properties}, item (ii), 
for a.e. $t\geq 0$
\begin{equation}
	\dotp{
		u(t)
	}{
		v
	}_{L^2(\Omega)}
	-
	\dotp{
		u(0)
	}{
		v
	}_{L^2(\Omega)}
	=
	\int\limits_{0}^{t}
	\dual{
		p(\tau)
	}{
		v
	}_{H^{-1}(\Omega) \times H^{1}_0(\Omega) }	
	\;
	\text{d}
	\tau,
	\quad
	\forall
	v \in
	H^{1}_0(\Omega)
\end{equation}
therefore $p(t) = \partial_t u(t) \in H^{-1}(\Omega)$ for a.e. $t>0$, 
and $u(0) = u_0$ in $L^2(\Omega)$.

It follows from Problem~\ref{prb:frequency_problem} that
\begin{equation}
	\dual{
		\widehat{p}(s)
	}{
		v
	}_{{H^{-1}(\Omega) \times H^{1}_0(\Omega) }}
	+
	\mathsf{a}
	\left(
		\widehat{u}(s)
		,
		v
	\right)
	=
	\dotp{
		\widehat{f}(s)
	}{v}_{L^2(\Omega)},
	\quad
	\forall
	v \in H^1_0(\Omega).
\end{equation}
Recalling item (i) in Proposition~\ref{eq:operational_properties}, we get
for a.e. $t>0$
\begin{equation}
	\dual{
		 \partial_t u(t)
	}{
		v
	}_{{H^{-1}(\Omega) \times H^{1}_0(\Omega) }}
	+
	\mathsf{a}
	\left(
		u(t)
		,
		v
	\right)
	=
	\dotp{
		f(t)
	}{v}_{L^2(\Omega)},
	\;\;
	\forall
	v \in H^1_0(\Omega),
\end{equation}
thus $u(t) = \mathcal{L}^{-1}\left\{\widehat{u}\right\}(t)$ as in 
\ref{eq:inv_Laplace_u} is solution to Problem~\ref{pbm:wave_equation},
and uniqueness follows from Theorem~\ref{eq:laplace_transform_bijective}
together with the uniqueness of $\widehat{u} \in \mathscr{H}^2_\alpha(H^1_0(\Omega))$.

The {a priori} estimate stated in \eqref{eq:a_priori_estimate} follows from
\eqref{eq:a_priori_1} and \eqref{eq:a_priori_2} combined with, again, Theorem~\ref{eq:laplace_transform_bijective}.
%Indeed, we have
%\begin{equation}
%\begin{aligned}
%	\norm{u}_{L^2_\alpha(\IR_+;H^1_0(\Omega))}
%	+
%	&
%	\norm{\partial_t {u}}_{L^2_\alpha\left(\IR_+;H^{-1}(\Omega)\right)}
%	\\
%	=
%	&
%	\norm{
%		\widehat{u}
%	}_{\mathscr{H}^2(\Pi_\alpha;H^1_0(\Omega))}
%	+
%	\norm{
%		\widehat{p}
%	}_{\mathscr{H}^2(\Pi_\alpha;H^{-1}(\Omega))}
%	\\
%	\leq
%	&
%	\frac{\sqrt{2}}{\underline{c}_{\bm{A}}}
%	\norm{
%		\widehat{f}
%	}_{\mathscr{H}^2(\Pi_\alpha;L^2(\Omega))}
%	+
%	\sqrt{
%		\frac{{1}}{{\alpha}}
%	}
%	\left(
%		1
%		+
%		\frac{1}{\underline{c}_{\bm{A}}}
%	\right)
%	\norm{
%		u_0
%	}_{L^2(\Omega)}
%	\\
%	&
%	+
%	\sqrt{2}
%	\left(
%	1
%	+
%	\frac{
%		\overline{c}_{\bm{A}}
%	}{
%		\underline{c}_{\bm{A}}
%	}
%	\right)
%	\left(
%		\norm{
%			\widehat{f}
%		}_{\mathscr{H}^2(\Pi_\alpha;L^2(\Omega))}
%		+
%		\sqrt{\frac{1}{{2\alpha}}}
%		\norm{
%			u_0
%		}_{L^2(\Omega)}
%	\right).
%\end{aligned}
%\end{equation}
%where the implied constant is independent of $\alpha>0$,
%\begin{comment%}
%\end{comment}

\section*{Acknowledgment}
\rev{The authors thank Dr. Ricardo Reyes for his input during the initial phase of this work.}

%----------------------------------------
%	REFERENCE LIST
%----------------------------------------

\bibliographystyle{ws-m3as}
\bibliography{references}

%----------------------------------------
\end{document}